\let\chooseClass1   
\let\chooseClass3   
    \def\chaptermark#1{%
      \markboth {{%
        \ifnum \c@secnumdepth >\m@ne
          \if@mainmatter
            \thechapter. \ %
          \fi
        \fi
        #1}}{}}
    \def\sectionmark#1{%
      \markright {{%
        \ifnum \c@secnumdepth >\z@
          \thesection. \ %
        \fi
        #1}}}
\def\@makechapterhead#1{%
  \vspace*{0\p@}%
  {\parindent \z@ \raggedright \normalfont
    \ifnum \c@secnumdepth >\m@ne
      \if@mainmatter
        \large\bfseries \@chapapp\space \thechapter
        \par\nobreak
        \vskip 20\p@
      \fi
    \fi
    \interlinepenalty\@M
    \large \bfseries #1\par\nobreak
    \vskip 40\p@
  }}
\def\@makeschapterhead#1{%
  \vspace*{0\p@}%
  {\parindent \z@ \raggedright
    \normalfont
    \interlinepenalty\@M
    \large \bfseries  #1\par\nobreak
    \vskip 40\p@
  }}
\def\@seccntformat#1{\csname the#1\endcsname.\quad}
\renewcommand\section{\@startsection {section}{1}{\z@}%
                                    {3.25ex \@plus1ex \@minus.2ex}%
                                   {-2.3ex \@plus -.2ex}%
                                   {\normalfont\normalsize\bfseries}}
\renewcommand\subsection{\@startsection{subsection}{2}{\z@}%
                                    {3.25ex \@plus1ex \@minus.2ex}%
                                     {-1.5ex \@plus -.2ex}%
                                     {\normalfont\normalsize\bfseries}}
\newtheorem{theorem}[subsection]{Theorem}
\newtheorem{corollary}[subsection]{Corollary}
\newtheorem{lemma}[subsection]{Lemma}
\newtheorem{proposition}[subsection]{Proposition}
\theoremstyle{definition}
\newtheorem{definition}[subsection]{Definition}
\newtheorem{example}[subsection]{Example}
\newtheorem{exercise}[subsection]{Exercise}
\theoremstyle{remark}
\newtheorem{remark}[subsection]{Remark}
\newtheorem{warning}[subsection]{Warning}
\newtheoremstyle{boldhead}
{\topsep}
{\topsep}
{\slshape}
{}
{\bfseries}
{.}
{ }
{\thmname{#1}\thmnumber{ #2}\thmnote{ (#3)}}
\newtheoremstyle{boldremark}
{\topsep}
{\topsep}
{\upshape}
{}
{\bfseries}
{.}
{ }
{\thmname{#1}\thmnumber{ #2}\thmnote{ (#3)}}
\theoremstyle{boldhead}
\newtheorem{theorem}[subsection]{Theorem}
\newtheorem{corollary}[subsection]{Corollary}
\newtheorem{lemma}[subsection]{Lemma}
\newtheorem{proposition}[subsection]{Proposition}
\theoremstyle{boldremark}
\newtheorem{definition}[subsection]{Definition}
\newtheorem{example}[subsection]{Example}
\newtheorem{exercise}[subsection]{Exercise}
\newtheorem{remark}[subsection]{Remark}
\newtheorem{warning}[subsection]{Warning}
\numberwithin{section}{chapter}
\numberwithin{equation}{section}
\providecommand{\url}[1]{{\tt #1}}
\newcommand\1{{\mathds 1}}}{\newcommand\1{{1\mkern-5mu {\mathrm I}}}}
\newlength{\mylabelwidths}
\newenvironment{myitemize}{\begin{list}{}{%
\setlength{\labelwidth}{\mylabelwidths}%
\setlength{\leftmargin}{\mylabelwidths}\addtolength{\leftmargin}{0.5em}%
\setlength{\itemsep}{-0.0\baselineskip}}}
{\end{list}}
\newcommand\DD{{\mathbb D}}
\newcommand\NN{{\mathbb N}}
\newcommand{\QQ}{{\mathbb Q}}
\newcommand\ZZ{{\mathbb Z}}
\newcommand{\cb}{{\mathcal B}}
\newcommand{\cc}{{\mathcal C}}
\newcommand{\ce}{{\mathcal E}}
\newcommand{\co}{{\mathcal O}}
\newcommand{\cp}{{\mathcal P}}
\newcommand{\cq}{{\mathcal Q}}
\newcommand{\cs}{{\mathcal S}}
\newcommand{\ct}{{\mathcal T}}
\newcommand{\cv}{{\mathcal V}}
\newcommand{\cw}{{\mathcal W}}
\newcommand{\cz}{{\mathcal Z}}
\newcommand{\mm}{{\mathsf M}}
\newcommand{\one}{{\mathsf1}}
\newcommand{\sfb}{{\mathsf b}}
\newcommand{\sfi}{{\mathsf i}}
\newcommand{\sfj}{{\mathsf j}}
\newcommand{\sfh}{{\mathsf h}}
\newcommand{\sfv}{{\mathsf v}}
\newcommand{\sfw}{{\mathsf w}}
\newcommand{\bb}{{\mathbf b}}
\newcommand{\bd}{{\mathbf d}}
\newcommand{\bdelt}{{\boldsymbol\delta}}
\newcommand{\beps}{{\boldsymbol\eps}}
\newcommand{\bfeta}{{\boldsymbol\eta}}
\newcommand{\bi}{{\mathbf i}}
\newcommand{\bj}{{\mathbf j}}
\newcommand{\bn}{{\mathbf n}}
\newcommand{\bone}{{\mathbf1}}
\newcommand{\bv}{{\mathbf v}}
\newcommand{\bull}{{\scriptscriptstyle\bullet}}
\newcommand{\bw}{{\mathbf w}}
\newcommand{\hu}{{\mathsf{hu}}}
\newcommand{\Sim}{{\mkern-4.5mu\sim\mkern1.5mu}}
\newcommand{\sk}{{\mathsf{sk}}}
\newcommand{\su}{{\mathsf{su}}}
\newcommand{\tdt}{\otimes\dots\otimes}
\newcommand{\lar}[1]{\hbox{\,\large #1\,}}
\newcommand{\sS}[2]{\vphantom{#2}#1 #2}
\newcommand{\n}[1]{\nobreakdash-\hspace{0pt}}
\newcommand{\ainf}[1]{$A_\infty$\nobreakdash-\hspace{0pt}}
\newcommand{\ainfm}[1]{$\mathrm{A}_\infty$\nobreakdash-\hspace{0pt}}
\newcommand{\mainf}{\mathrm{A}_\infty}
\newcommand{\Cat}{{\mathcal C}at}
\newcommand{\bbodot}{\bigodot^{\vrule height-.1em width.7em depth.14em}}
\let\con\triangleright
\let\emptyset\varnothing
\let\eps\varepsilon
\let\ge\geqslant
\let\kk\Bbbk
\let\le\leqslant
\let\mb\mathbf
\let\msf\mathsf
\let\rto\xrightarrow
\let\sss\scriptstyle
\let\tens\otimes
\let\ttt\textstyle
\let\und\underline
\let\wh\widehat
\newcommand\bott{{\bot\mkern-11mu\bot}}
\newcommand\botth{\hat{\bot\mkern-11mu\bot}}
\newcommand\bottho{\hat{\bot\mkern-11mu\bot}_\circ}
\newcommand\botto{{\bot\mkern-11mu\bot}_\circ}
\newcommand\TT{{\top\mkern-11mu\top}}
\newcommand\cO{{\mathcal O}}
\newcommand\mcC{{\mathsf C}}
\newcommand\sff{{\mathsf f}}
\newcommand\sfg{{\mathsf g}}
\newcommand{\tensu}[1]{\underset{#1}{\otimes}}
\newcommand{\Tu}[1]{T_{#1}}
\newcommand{\alg}{\textup{-alg}}
\newcommand{\bimod}{\textup{-bimod}}
\newcommand{\cAlg}{\textup{-cAlg}}
\newcommand{\cCoalg}{\textup{-cCoalg}}
\newcommand{\coalg}{\textup{-coalg}}
\newcommand{\comm}{\textup{comm-}}
\newcommand{\comodul}{\textup{-comod}}
\newcommand{\comodur}{\textup{comod-}}
\newcommand{\comodplur}{\textup{comod}_+\text-}
\newcommand{\comodplurnd}{\textup{comod}^{nd}_+\text-}
\newcommand{\comodcoopl}{\textup{comod}_+\textup{Coop}_{++}}
\newcommand{\Fc}{\textup{Fc}}
\newcommand{\Fo}{\textup{Fo}}
\newcommand{\modul}{\textup{-mod}}
\newcommand{\modur}{\textup{mod-}}
\newcommand{\modplur}{\textup{mod}_+\text-}
\newcommand{\modplurnd}{\textup{mod}^{nd}_+\text-}
\newcommand{\Pa}{P}
\DeclareMathOperator\Ab{Ab}
\DeclareMathOperator\Ass{{\it As1}}
\DeclareMathOperator\augcoop{augCoop}
\DeclareMathOperator{\Bbar}{Bar}
\DeclareMathOperator{\sfBar}{{\sf Bar}}
\DeclareMathOperator{\Card}{Card}
\DeclareMathOperator{\Cobar}{Cobar}
\DeclareMathOperator{\sfCobar}{{\sf Cobar}}
\DeclareMathOperator{\augcoopl}{augCoop_{++}}
\DeclareMathOperator{\Coop}{Coop}
\DeclareMathOperator{\acCoop}{acCoop}
\DeclareMathOperator{\caCoop}{caCoop}
\DeclareMathOperator{\CACoop}{CACoop}
\DeclareMathOperator{\cCoop}{cCoop}
\DeclareMathOperator{\ucCoop}{ucCoop}
\DeclareMathOperator{\cOp}{cOp}
\DeclareMathOperator\dg{\mathbf{dg}}
\DeclareMathOperator\End{End}
\DeclareMathOperator\END{{\mathcal E}{\it nd}}
\DeclareMathOperator\ev{ev}
\DeclareMathOperator\gr{\mathbf{gr}}
\DeclareMathOperator\height{height}
\DeclareMathOperator\Hom{Hom}
\DeclareMathOperator\id{id}
\DeclareMathOperator\Id{Id}
\DeclareMathOperator\im{Im}
\DeclareMathOperator\inj{in}
\DeclareMathOperator\Inp{Inp}
\DeclareMathOperator\IV{v}
\DeclareMathOperator\uv{u}
\DeclareMathOperator\Ker{Ker}
\DeclareMathOperator{\ModOp}{ModOp}
\DeclareMathOperator\Mor{Mor}
\DeclareMathOperator\nucoop{nuCoop}
\DeclareMathOperator\Ob{Ob}
\DeclareMathOperator\oin{\overline{\textup{in}}}
\newcommand{\op}{{\operatorname{op}}}
\DeclareMathOperator\Op{Op}
\DeclareMathOperator\opr{\overline{\textup{pr}}}
\DeclareMathOperator\pr{pr}
\DeclareMathOperator\troot{root}
\DeclareMathOperator\Set{\mathcal Set}
\DeclareMathOperator\str{\mathbf{str}}
\DeclareMathOperator\tr{\mathbf{tr}}
\DeclareMathOperator{\Tw}{Tw}
\DeclareMathOperator{\ucOp}{ucOp}
\DeclareMathOperator{\uccOp}{uccOp}
\DeclareMathOperator{\ucdgOp}{uc\mathbf{dg}Op}
\DeclareMathOperator{\UCCOp}{UCCOp}
\newcommand{\corref}[1]{Corollary~\ref{#1}}
\newcommand{\defref}[1]{Definition~\ref{#1}}
\newcommand{\exaref}[1]{Example~\ref{#1}}
\newcommand{\exeref}[1]{Exercise~\ref{#1}}
\newcommand{\lemref}[1]{Lemma~\ref{#1}}
\newcommand{\propref}[1]{Proposition~\ref{#1}}
\newcommand{\remref}[1]{Remark~\ref{#1}}
\newcommand{\secref}[1]{Section~\ref{#1}}
\begin{document}
\frontmatter
\title{Curved cooperads and homotopy unital $A_\infty$-algebras}
\author{Volodymyr Lyubashenko}
\ifx\chooseClass1
\address{Institute of Mathematics NASU \\
3 Tereshchenkivska st. \\
Kyiv-4, 01601 MSP \\
Ukraine}
\email{lub@imath.kiev.ua}
\date{February 2014}

\subjclass[2010]{Primary 18D50; Secondary 18C15 }

\keywords{Curved operad, curved cooperad, bar construction, cobar construction, twisting cochain, homotopy unital $A_\infty$-algebra}

\begin{abstract}
We provide bar and cobar constructions as functors acting between various categories of curved operads and curved cooperads.
Cobar and bar constructions are adjoint to each other.
Given a twisting cochain between a curved augmented cooperad $C$ with an extra grading and a curved operad $\co$ we construct a couple of adjoint functors between the category of curved $\co$\n-modules and the category of curved $C$\n-comodules.
The important feature is that the curved operad $\co$ is not necessarily augmented.
\end{abstract}
\fi

\maketitle

\tableofcontents

\allowdisplaybreaks[1]

\chapter*{Introduction}
Accordingly to Positselski \cite{0905.2621,1202.2697} a curved (co)algebra is a graded (co)unital associative (co)algebra equipped with a degree 1 (co)derivation which nearly turns it into a $\dg$-(co)algebra.
Namely, the square of this (co)derivation does not vanish, but it is an inner (co)derivation.
Curved algebras arise naturally in geometry, when a connection and its curvature are involved \cite[Section~0.6]{0905.2621}, \cite{Polishchuk:curved-dg}, \cite[Example~1.5]{Lyu-curved-coalgebras}.

In this article curved notions extend to operads and cooperads.
In distinction from the algebra case, there is a naturally arising example of a curved cooperad.
Namely, there is an augmented curved cooperad $C$ whose cobar construction $\Cobar C$ is isomorphic to the $\dg$\n-operad $A_\infty^\hu$ of homotopy unital \ainf-algebras. 
The latter are studied in \cite{FukayaOhOhtaOno:Anomaly,LyuMan-unitalAinf,Lyu-Ainf-Operad,1110.1959}.
The peculiar isomorphism \(\Cobar C\simeq A_\infty^\hu\) was already observed by Hirsh and Mill{\`e}s \cite[Theorem~6.1.8, Proposition~6.1.9]{MR2993002}.
We extend the bar and cobar constructions to wider categories of operads and cooperads than \cite{MR2993002}.
In particular, the operad can be really curved instead of being differential graded.
Besides, Hirsh and Mill{\`e}s work also with (co)properads.
The latter notions were introduced by Vallette \cite{MR2320654}.
He studied bar and cobar constructions for augmented (co)properads \cite[Section~4.2]{math/0609002}, see also Merkulov and Vallette \cite{0707.0889}.

As we shall see, the augmentation is not necessary for curved operads, as it was for ordinary $\dg$\n-operads in the work of Getzler and Jones \cite[Section~2]{Getzler:hep-th/9403055}.
Instead it is assumed that the unit \(\eta:\1\to\co\) of the operad $\co$ has a direct complement.
That is, there is a functional \(\sfv:\co\to\1\) splitting the embedding $\eta$.
Unlike an augmentation this functional is not supposed to agree with multiplication.
On the other hand, for curved cooperads the augmentation is more significant.
Currently, the picture looks asymmetrically: unit-complemented $\dg$\n-operads are related by bar-cobar adjunction to augmented curved cooperads.
This asymmetry persists when the considered categories are enlarged.

In notation style we follow \cite{Lyu-curved-coalgebras} where the case of curved algebras and coalgebras is considered.
However, the ground is taken from \cite{1402.0408}.
While in the former article the studied (co)algebras are graded $\kk$\n-modules over a graded commutative ring $\kk$, in the latter one (co)algebras are objects of a category $\cv$ with suitable properties.
The category $\gr=\gr\text-\kk\modul$ of $\ZZ$\n-graded $\kk$\n-modules is one of the instances of $\cv$.

In the first chapter we list in axiomatic form properties of the ground category $\cv$ which resemble the category of graded $\kk$\n-modules.
These properties allow to consider morphisms of arbitrary (integer) degree.

In the second chapter we explain what are operads and cooperads in $\cv$.
For us these terms mean non-symmetric operads and cooperads.
We do not work with symmetric ones.
Operads can be defined as algebras for certain monad which is a sum over trees.
We describe this kind of trees in detail.
The same kind of trees is used in defining a comonad, whose coalgebras are called conilpotent non-counital cooperads.
General cooperads can be defined not only in traditional way, but also as coalgebras in a colax Monoidal category of collections in $\cv$.

Chapter 3 deals with cofree conilpotent non-counital cooperads which are cofree coalgebras over the mentioned comonad.
Categories of non-counital cooperads and of augmented cooperads are equivalent.
This gives a large supply of augmented cooperads and we study this notion from the point of view of comonads.

In chapter 4 we consider derivations (resp. coderivations) of certain degree of operads (resp. cooperads).
These are identified with certain infinitesimal morphisms.
It's easy to describe derivations from a free operad to an arbitrary operad.
Similarly using relevant comonad one describes coderivations from a conilpotent non-counital cooperad to a cofree conilpotent non-counital cooperad.
This allows to describe also coderivations between associated augmented cooperads.
We discuss also infinitesimal deformations of comodules over cooperads.
An important example of a cooperad is described.

In chapter 5 we define curved operads and curved cooperads as well as their versions: unit-complemented curved operads and counit-complemented curved cooperads.
Their special cases such as unit-complemented $\dg$\n-operads, curved augmented cooperads, augmented curved cooperads are also defined.
Cobar construction $\Cobar$ and bar construction $\Bbar$ are described as functors.

These functors are adjoint to each other as shown in chapter~6.
Moreover, the relevant sets of morphisms are in bijection with the set of twisting cochains.
There is a functor sending curved cofree coalgebras over a cooperad $C$ to curved $\Cobar C$-algebras.
Similarly, there is a functor sending curved free algebras over an operad $\co$ to curved $\Bbar\co$-coalgebras.
Furthermore, a twisting cochain determines a functor taking a curved module over the curved operad to a curved comodule over the curved cooperad.
With some extra conditions and restrictions a twisting cochain determines a functor taking a curved comodule over the curved augmented cooperad to a curved module over the curved operad.
The latter two functors are adjoint to each other.

\ifx\chooseClass4
\subsection*{Summary.}
We provide bar and cobar constructions as functors acting between various categories of curved operads and curved cooperads.
Cobar and bar constructions are adjoint to each other.
Given a twisting cochain between a curved augmented cooperad $C$ with an extra grading and a curved operad $\co$ we construct a couple of adjoint functors between the category of curved $\co$\n-modules and the category of curved $C$\n-comodules.
The important feature is that the curved operad $\co$ is not necessarily augmented.
\fi

\mainmatter

\chapter{The ground category}
\setcounter{section}1
In this article \((\cv,\tens,\1)\) denotes\index{TTsymb}{V@$\cv$} a closed symmetric Monoidal additive category with countable colimits and countable products.
Countable direct sum in $\cv$ means the coproduct.
We assume that $\cv$ is idempotent-\hspace{0pt}split (Karoubian) and the tensor product preserves countable colimits.
Often we present it as a colax Monoidal category \((\cv,\tens,\lambda)\), defined as an opposite to a lax Monoidal category \((\cv^\op,\tens,\lambda^\op)\), \textit{cf.} \cite[Definition~2.5]{BesLyuMan-book}.
For the sake of convenience we use the structure morphisms \(\lambda^\phi:\tens^{j\in J}\tens^{i\in\phi^{-1}j}X_i\to\tens^{i\in I}X_i\), \(\phi:I\to J\in\co_\sk\), for colax Monoidal categories, assuming them invertible in Monoidal case.

Define a symmetric strict monoidal category $\cz$ with \(\Ob\cz=\ZZ$.
The sets of morphisms $\cz(m,n)$ are empty if $m\ne n$ and \(\cz(n,n)=\mu_2\overset{\text{def}}=\{1,-1\}\) is the group of two elements for $n\in\ZZ$.
The tensor product on objects is the addition, the tensor product on morphisms \(\tens:\cz(a,a)\times\cz(b,b)\to\cz(a+b,a+b)\) is the multiplication \(\mu_2\times\mu_2\to\mu_2\) in the group $\mu_2$.
Finally, the symmetry is chosen as
\[ c_{a,b} =(-1)^{ab} \in \cz(a+b,b+a), \qquad a,b\in\ZZ.
\]

We assume given a strong colax symmetric monoidal functor \((\1[-],\nu):\cz\to\cv\), \(n\mapsto\1[n]\), \(\nu_{a,b}:\1[a+b]\rto\sim \1[a]\tens\1[b]\).
For the main exposition we shall suppose that this monoidal functor is strict.
In particular \(\1[0]=\1\) is the unit object of $\cv$.
We require that \(-1\in\cz(n,n)\) were represented by $-\id_{\1[n]}$.
Let us derive some consequences of this structure.

First of all, there are functors \([n]=\_\tens\1[n]:\cv\to\cv\), \(X\mapsto X[n]=X\tens\1[n]\), $n\in\ZZ$.
Together they form a translation structure \cite[Definition~13.2]{BesLyuMan-book}, that is, a monoidal functor
\[ [-]: \ZZ \rMono \cz \rTTo^{\1[-]} \cv \rTTo^{R^\tens} \End\cv,
\]
where $\ZZ$ is the discrete monoidal subcategory of $\cz$ with the set of objects $\ZZ$, \(X(YR^\tens)=X\tens Y\).
Almost the same notion is called a weak action of the group $\ZZ$ on a category $\cv$ \cite[D\'efinition~1.2.2]{MR1453167}.
For the sake of simplicity in the next chapters we act as if the monoidal functor $[-]$ were strict.

Let us use the closedness of $\cv$.
The identity map $\id_{X[n]}$ admits a presentation
\[ \id_{X[n]} =\bigl\langle X\tens\1[n] \rTTo^{1\tens\sigma^n} X\tens\und\cv(X,X[n]) \rTTo^\ev X[n] \bigr\rangle
\]
for a unique morphism
\[ \sigma^n =\sigma_X^n: \1[n] \to \und\cv(X,X[n]) \in \cv.
\]

\begin{proposition}
For all $a,b\in\ZZ$, \(X\in\Ob\cv\)
\begin{multline}
\bigl\langle \1[a+b] \rTTo_\sim^{\nu_{a,b}} \1[a]\tens\1[b] \rTTo^{\sigma_X^a\tens\sigma_{X[a]}^b} \und\cv(X,X[a])\tens\und\cv(X[a],X[a][b])
\\
\rTTo^{1\tens\und\cv(1,X\tens\nu_{a,b}^{-1})} \und\cv(X,X[a])\tens\und\cv(X[a],X[a+b]) \rto m \und\cv(X,X[a+b]) \bigr\rangle =\sigma_X^{a+b}.
\label{eq-X1[a+b]1nu-X1[a]1[b]}
\end{multline}
\end{proposition}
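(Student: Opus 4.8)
The plan is to lean on the universal property of the internal hom object $\und\cv(X,X[a+b])$, which characterises $\sigma_X^{a+b}$ uniquely: it is the only morphism $\1[a+b]\to\und\cv(X,X[a+b])$ for which $\ev\circ(1_X\tens\sigma_X^{a+b})=\id_{X[a+b]}$. Writing $g$ for the composite appearing on the left-hand side of the stated identity, it therefore suffices to verify the single equation $\ev\circ(1_X\tens g)=\id_{X[a+b]}$; the proposition then follows at once from uniqueness of the name, and no direct manipulation of $\sigma_X^{a+b}$ is ever needed.

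To evaluate $\ev\circ(1_X\tens g)$ I will use three standard consequences of closedness, all in the strict form licensed by the running convention that $[-]$ and the monoidal structure are treated as strict. First, the defining equations of the shift names themselves, $\ev\circ(1_X\tens\sigma_X^a)=\id_{X[a]}$ and $\ev\circ(1_{X[a]}\tens\sigma_{X[a]}^b)=\id_{X[a][b]}$. Second, the compatibility of the internal composition $m$ with evaluation, namely $\ev_{X,X[a+b]}\circ(1_X\tens m)=\ev_{X[a],X[a+b]}\circ(\ev_{X,X[a]}\tens 1)$ as morphisms out of $X\tens\und\cv(X,X[a])\tens\und\cv(X[a],X[a+b])$, the $1$ being the identity on the second hom object. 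Third, the naturality of the internal hom in its second argument: for any $h\colon X[a][b]\to X[a+b]$ one has $\ev\circ(1_{X[a]}\tens\und\cv(1,h))=h\circ\ev$.

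Substituting the factorisation $g=m\circ(1\tens\und\cv(1,h))\circ(\sigma_X^a\tens\sigma_{X[a]}^b)\circ\nu_{a,b}$, where $h=1_X\tens\nu_{a,b}^{-1}\colon X[a][b]\to X[a+b]$, and distributing $1_X\tens(-)$ over the composite, the $m$-compatibility identity turns the outermost $\ev\circ(1_X\tens m)$ into $\ev_{X[a],X[a+b]}\circ(\ev_{X,X[a]}\tens 1)$. The interchange law then splits the evaluation into its two tensor legs: the left leg is $\ev_{X,X[a]}\circ(1_X\tens\sigma_X^a)=\id_{X[a]}$, while on the right leg the naturality identity converts $\und\cv(1,h)$ into $h\circ(-)$, giving $h\circ\ev_{X[a],X[a][b]}\circ(1_{X[a]}\tens\sigma_{X[a]}^b)=h\circ\id_{X[a][b]}=h$. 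The surviving expression is therefore $h\circ(1_X\tens\nu_{a,b})=(1_X\tens\nu_{a,b}^{-1})\circ(1_X\tens\nu_{a,b})=\id_{X[a+b]}$, using $\nu_{a,b}^{-1}\circ\nu_{a,b}=\id$. This is exactly the equation characterising $\sigma_X^{a+b}$, so $g=\sigma_X^{a+b}$ as claimed.

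I expect the only genuine difficulty to be bookkeeping rather than conceptual: matching the sources and targets of the various evaluation and composition morphisms, and applying the interchange law with the correct grouping of the three tensor factors $X$, $\1[a]$, $\1[b]$. In a genuinely non-strict setting one would additionally have to insert and cancel the associativity and unit coherence isomorphisms and invoke coherence to see that they assemble consistently; under the strictness convention adopted before the statement these vanish, and the verification collapses to the telescoping cancellation described above.
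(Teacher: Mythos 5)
Your proof is correct and follows essentially the same route as the paper's: both verify that the evaluation of the left-hand composite against $X$ equals $\id_{X[a+b]}$ — using the compatibility of $m$ with $\ev$, the naturality of $\und\cv(X[a],-)$, and the defining equations of $\sigma_X^a$ and $\sigma_{X[a]}^b$ — and then conclude by the uniqueness clause in the definition of $\sigma_X^{a+b}$. The paper simply writes this computation as a single displayed chain of equalities rather than isolating the three auxiliary identities beforehand.
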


\begin{proof}
This equation follows from the computation in notation from \cite{BesLyuMan-book}:
\begin{multline*}
\bigl\langle X\tens\1[a+b] \rTTo_\sim^{1\tens\nu} X\tens\1[a]\tens\1[b]
\rTTo^{1\tens\sigma^a\tens\sigma^b} X\tens\und\cv(X,X[a])\tens\und\cv(X[a],X[a][b])
\\
\rTTo^{1\tens1\tens\und\cv(1,X\tens\nu^{-1})} X\tens\und\cv(X,X[a])\tens\und\cv(X[a],X[a+b])
\\
\hfill \rTTo^{1\tens m} X\tens\und\cv(X,X[a+b]) \rto\ev X[a+b] \bigr\rangle \quad
\\
=\bigl\langle X\tens\1[a+b] \rTTo_\sim^{1\tens\nu} X\tens\1[a]\tens\1[b] \rTTo^{1\tens\sigma^a\tens\sigma^b} X\tens\und\cv(X,X[a])\tens\und\cv(X[a],X[a][b])
\\
\hfill \rTTo^{\ev\tens1} X[a]\tens\und\cv(X[a],X[a][b]) \rto\ev X[a][b] \rTTo^{X\tens\nu^{-1}} X[a+b] \bigr\rangle \quad
\\
\hskip\multlinegap =\bigl\langle X\tens\1[a+b] \rTTo_\sim^{1\tens\nu} X\tens\1[a]\tens\1[b] \rTTo^{1\tens\nu^{-1}} X\tens\1[a+b] \bigr\rangle =\id. \hfill
\end{multline*}
Thus composition \eqref{eq-X1[a+b]1nu-X1[a]1[b]} equals $\sigma_X^{a+b}$.
\end{proof}

\begin{proposition}
The degree $n$ map $\sigma_X^n$ is dinatural in $X$: for all \(f:X\to Y\in\cv\)
\begin{diagram}[LaTeXeqno]
\1[n] &\rTTo^{\sigma_X^n} &\und\cv(X,X[n])
\\
\dTTo<{\sigma_Y^n} &= &\dTTo>{\und\cv(X,f[n])}
\\
\und\cv(Y,Y[n]) &\rTTo^{\und\cv(f,Y[n])} &\und\cv(X,Y[n])
\label{dia-sigma-dinatural}
\end{diagram}
\end{proposition}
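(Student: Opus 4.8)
The plan is to exploit the closedness of $\cv$ to reduce the claim to an equality of two transposes. Since $\und\cv(X,-)$ is right adjoint to $X\tens-$, two morphisms $\1[n]\to\und\cv(X,Y[n])$ coincide iff their adjuncts $X\tens\1[n]\to Y[n]$, obtained by tensoring on the left with $X$ and composing with the evaluation $\ev\colon X\tens\und\cv(X,Y[n])\to Y[n]$, agree. So I would first transpose both legs of \eqref{dia-sigma-dinatural} and then compare them, the only nontrivial input being the defining equation $\ev\circ(1\tens\sigma_X^n)=\id_{X[n]}$ of $\sigma_X^n$ together with the two naturality squares of evaluation.

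For the composite $\und\cv(X,f[n])\circ\sigma_X^n$ (top then right) I would use that post-composition is characterised by $\ev\circ(1_X\tens\und\cv(X,f[n]))=f[n]\circ\ev$, whence the transpose is
\[
f[n]\circ\ev\circ(1_X\tens\sigma_X^n)=f[n]\circ\id_{X[n]}=f[n],
\]
the last equality being precisely the definition of $\sigma_X^n$. For the composite $\und\cv(f,Y[n])\circ\sigma_Y^n$ (left then bottom) I would instead use that pre-composition is characterised by $\ev\circ(1_X\tens\und\cv(f,Y[n]))=\ev\circ(f\tens1)$, so that its transpose is
\[
\ev\circ(f\tens1)\circ(1_X\tens\sigma_Y^n)=\ev\circ(1_Y\tens\sigma_Y^n)\circ(f\tens1_{\1[n]}),
\]
where I have rewritten $f\tens\sigma_Y^n=(1_Y\tens\sigma_Y^n)\circ(f\tens1_{\1[n]})$ by the interchange law. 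Since $\ev\circ(1_Y\tens\sigma_Y^n)=\id_{Y[n]}$ by the definition of $\sigma_Y^n$, this transpose equals $f\tens1_{\1[n]}=f[n]$.

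Both legs therefore transpose to the single morphism $f[n]\colon X[n]\to Y[n]$, and closedness of $\cv$ forces the two morphisms $\1[n]\to\und\cv(X,Y[n])$ to be equal, which is exactly the asserted dinaturality. I do not expect a genuine obstacle; the only point requiring care is the bookkeeping of the two distinct evaluation naturality squares for $\und\cv(-,-)$, covariant in the target and contravariant in the source, combined with the identification $f[n]=f\tens1_{\1[n]}$ supplied by the translation functor $[n]=\_\tens\1[n]$.
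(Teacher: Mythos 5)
Your proposal is correct and is essentially the paper's own proof: the paper likewise tensors both legs with $X$, composes with evaluation, and shows each transpose equals $f[n]$ using the defining equation of $\sigma^n$, the two functoriality properties of $\und\cv(-,-)$ under evaluation, and the identification $f[n]=f\tens1_{\1[n]}$. Nothing is missing; uniqueness of adjuncts then closes the argument exactly as you state.
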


\begin{proof}
In fact,
\begin{align*}
&\bigl\langle X\tens\1[n] \rTTo^{1\tens\sigma^n} X\tens\und\cv(X,X[n]) \rTTo^{1\tens\und\cv(1,f[n])} X\tens\und\cv(X,Y[n]) \rto\ev Y[n] \bigr\rangle
\\
&=\bigl\langle X\tens\1[n] \rTTo^{1\tens\sigma^n} X\tens\und\cv(X,X[n]) \rto\ev X[n] \rTTo^{f[n]} Y[n] \bigr\rangle =f[n] 
\\
&=\bigl\langle X\tens\1[n] \rTTo^{f\tens1} Y\tens\1[n] \bigr\rangle =\bigl\langle X\tens\1[n] \rTTo^{f\tens\sigma^n} Y\tens\und\cv(Y,Y[n]) \rto\ev Y[n] \bigr\rangle
\\
&=\bigl\langle X\tens\1[n] \rTTo^{1\tens\sigma^n} X\tens\und\cv(Y,Y[n]) \rTTo^{1\tens\und\cv(f,1)} X\tens\und\cv(X,Y[n]) \rto\ev Y[n] \bigr\rangle,
\end{align*}
which implies the desired statement.
\end{proof}

Equip the category $\Ab^\ZZ$ of graded abelian groups $A=(A^n)_{n\in\ZZ}$ with the usual monoidal product \((A\tens B)^n=\coprod_{l+m=n}A^l\tens_\ZZ B^m\) and the signed symmetry \(c:a\tens b\mapsto(-1)^{ab}b\tens a\).
Consider the functor
\begin{equation}
-^\bull: \cv \to \Ab^\ZZ, \qquad X \mapsto X^\bull =(X^n)_{n\in\ZZ}, \qquad X^n =\cv(\1[-n],X).
\label{eq-V-AbZ}
\end{equation}
Equipped with the natural transformation
\[ \tens: X^n\tens_\ZZ Y^m =\cv(\1[-n],X)\tens_\ZZ\cv(\1[-m],Y) \to \cv(\1[-n-m],X\tens Y) =(X\tens Y)^{n+m},
\]
it becomes lax symmetric monoidal.
The graded abelian group $\kk=\1^\bull$, \(\kk^n=\1^\bull=\cv(\1[-n],\1)\) is actually a graded ring, graded commutative in the sense that \(ba=(-1)^{ab}ab\) for all \(a,b\in\kk^\bull\).
Moreover, the graded abelian groups coming from $\cv$ are actually commutative $\kk$\n-bimodules (whose category is denoted $\gr=\gr_\kk$), so lax monoidal functor~\eqref{eq-V-AbZ} has the latter category as the target: \(-^\bull:\cv\to\gr\).
We use the term `$\kk$\n-modules' instead of longer term `commutative $\kk$\n-bimodules'.

Being closed $\cv$ is enriched into itself (or rather $\und\cv$ is enriched into $\cv$).
The lax monoidal functor $-^\bull$ makes $\cv$ enriched in $\gr$.
Denote this $\gr$\n-enriched category \(\overline\cv\).
Thus for each pair $X,Y$ of objects of $\cv$ there is a graded $\kk$\n-module \(\overline\cv(X,Y)=(\und\cv(X,Y)^n)_{n\in\ZZ}\).
Elements of \(\und\cv(X,Y)^n=\cv(\1[-n],\und\cv(X,Y))\) are called morphisms $X\to Y$ of degree $n$.
For instance, \(\und\cv(X,Y)^0=\cv(X,Y)\) and \(\sigma_X^n\in\und\cv(X,X[n])^{-n}\) are morphisms of degree $-n$.
The $\gr$\n-category structure of $\overline\cv$ includes, in particular, the composition of morphisms of certain degrees.
\textit{E.g.} equation~\eqref{dia-sigma-dinatural} for \(f:X\to Y\in\cv\) can be presented as
\[ \bigl(X \rTTo^{\sigma_X^n} X[n] \rTTo^{f[n]} Y[n] \bigr) =\bigl(X \rTTo^f Y \rTTo^{\sigma_Y^n} Y[n] \bigr) \in \overline\cv,
\]
which can be converted to
\[ f[n] =\bigl( X[n] \rTTo^{\sigma_X^{-n}} X \rTTo^f Y \rTTo^{\sigma_Y^n} Y[n] \bigr) \in \overline\cv.
\]
Therefore, there are natural isomorphisms (of degree 0)
\[ \sigma^{-n}(1^{\tens(a-1)}\tens\sigma^n\tens1^{\tens(n-a)}): (\tens_{i=1}^kX_i)[n] \to X_1\tdt X_{a-1}\tens X_a[n]\tens X_{a+1}\tdt X_k
\]
which we use explicitly or implicitly.
Given \(X,Y,Z\in\Ob\cv\) and \(k\in\ZZ\) we have the following diagram.
More precisely, for each \(p:X\tens Y\to Z\in\cv\) there is a unique \(p^!:Y\to\und\cv(X,Z)\in\cv\) which makes the small triangle commutative
\begin{diagram}
X\tens\und\cv(X,Z)[k] &\rTTo^{(1\tens\sigma^{-k})\sigma^k} &(X\tens\und\cv(X,Z))[k] &\rTTo^{\ev[k]} &Z[k]
\\
&= &\uTTo<{(1\tens p^!)[k]} &\ruTTo^{\ttt=\quad}_{p[k]} &
\\
\uTTo<{1\tens p^![k]} &&(X\tens Y)[k] &&\uTTo>q
\\
&\ruTTo^{(1\tens\sigma^{-k})\sigma^k} &&= &
\\
X\tens Y[k] &&\rEq &&X\tens W
\end{diagram}
where \(W=Y[k]\).
For an arbitrary \(q:X\tens W\to Z[k]\in\cv\) there is a unique \(p:X\tens Y\to Z\in\cv\) which makes the big lower triangle commute.
Thus there is a unique \(q^!=p^![k]:W\to\und\cv(X,Z)[k]\in\cv\) which makes the exterior commutative.
Therefore, $\und\cv(X,Z)[k]$ is isomorphic to $\und\cv(X,Z[k])$ and the isomorphism $i$ satisfies
\begin{diagram}
X\tens\und\cv(X,Z)[k] &\rTTo^{(1\tens\sigma^{-k})\sigma^k} &(X\tens\und\cv(X,Z))[k]
\\
\dTTo<{1\tens i}>\wr &= &\dTTo>{\ev[k]}
\\
X\tens\und\cv(X,Z[k]) &\rTTo^\ev &Z[k]
\end{diagram}
so we have natural bijections
\begin{multline}
\und\cv(X,Z)^k =\cv(\1[-k],\und\cv(X,Z)) \simeq \cv(\1,\und\cv(X,Z)[k])
\\
\rTTo^{\cv(\1,i)} \cv(\1,\und\cv(X,Z[k])) \simeq \cv(X,Z[k]).
\label{eq-V(XZ)k-V(XZk)}
\end{multline}
Let us consider the case when \(-^\bull:\cv\to\gr\) is an equivalence.

\begin{example}
Let $\cv=\gr=\gr\text-\kk\modul$.
For any graded \(\kk\)-module $M$ and an integer $a$ denote by $M[a]$ the same module with the cohomological grading shifted by $a$: \(M[a]^k=M^{a+k}\).
The unit object of $\gr$ is $\1=\kk$ and its shifts are \(\1[a]=\kk[a]\).
We identify $M[a]$ with \(M\tens\kk[a]\) via \(x\mapsto x\tens1\) where \(1\in\kk[a]^{-a}\).
Then \(\sigma^a:M\to M[a]\) is the ``identity map'' \(M^k\ni x\mapsto x\in M[a]^{k-a}\) of degree \(\deg\sigma^a=-a\).
Write elements of $M[a]$ as \(m\sigma^a\).
When \(f:V\to X\) is a homogeneous map of certain degree, the map \(f[a]:V[a]\to X[a]\) is defined as \(f[a]=(-1)^{a\deg f}\sigma^{-a}f\sigma^a=(-1)^{af}\sigma^{-a}f\sigma^a\).
In particular, the differential \(d:M\to M[-1]\) in a $\dg$\n-module $M$ induces the differential \(d[a]:M[a]\to M[a-1]\) in $M[a]$.
The degree 0 isomorphisms \(\sigma^{-a}\cdot(\sigma^a\tens1):(V\tens W)[a]\to(V[a])\tens W\),
\((v\tens w)\sigma^a\mapsto(-1)^{wa}v\sigma^a\tens w\), and \(\sigma^{-a}\cdot(1\tens\sigma^a):(V\tens W)[a]\to V\tens(W[a])\), \((v\tens w)\sigma^a\mapsto v\tens w\sigma^a\), are graded natural.
This means that for arbitrary homogeneous maps \(f:V\to X\), \(g:W\to Y\) the following squares commute:
\begin{diagram}[w=5em]
(V[a])\tens W &\lTTo^{\sigma^{-a}\cdot(\sigma^a\tens1)}_\sim &(V\tens W)[a]
&\rTTo^{\sigma^{-a}\cdot(1\tens\sigma^a)}_\sim &V\tens(W[a])
\\
\dTTo<{(f[a])\tens g} &&\dTTo<{(f\tens g)[a]} &&\dTTo>{f\tens(g[a])}
\\
(X[a])\tens Y &\lTTo^{\sigma^{-a}\cdot(\sigma^a\tens1)}_\sim &(X\tens Y)[a]
&\rTTo^{\sigma^{-a}\cdot(1\tens\sigma^a)}_\sim &X\tens(Y[a])
\end{diagram}
Actually, the second isomorphism is ``more natural'' than the first one, not only because it does not have a sign, but also because it suits better the right operator system of notations, accepted in this paper.
We often identify \((V\tens W)[a]\) with \(V\tens(W[a])\) via \(\sigma^{-a}\cdot(1\tens\sigma^a)\).
\end{example}

We use some shorthand: $XT^>$ means $XT^{>0}=\oplus_{n>0}X^{\tens n}$ and $XT^\ge$ means $XT^{\ge0}=\oplus_{n\ge0}X^{\tens n}$.
The reason to use right handed notation is illustrated here: this is what we write
\begin{equation*}
V[1]T^>[2]T^>[-2]T^>[2]T^>[-3].
\end{equation*}
And this is what we mean
\[ (T^>((T^>((T^>((T^>(V[1]))[2]))[-2]))[2]))[-3].
\]
Very often (but not always) we shall write an object first and a functor operating on it second, an argument first and a function second, \textit{etc}.

Let $\bot:\cc\to\cc$ be a comonad.
The category of $\bot$\n-coalgebras is denoted $\cc_\bot$.
The following lemma is well known.

\begin{lemma}\label{lem-forgetful-functor-has-right-adjoint-T}
The forgetful functor $\cc_\bot\to\cc$ has the right adjoint $\bot:\cc\to\cc_\bot$: for any $V\in\Ob\cc$ and any
$\bot$\n-coalgebra \((X,\delta)\) there are mutually inverse bijections
\begin{align*}
\cc(X,V)\; &\longleftrightarrow \;\cc_\bot(X,V\bot),
\\
f\; &\rMapsTo \;\hat f=\delta\cdot(f\bot),
\\
\check g = g\cdot\varepsilon\; &\lMapsTo \;g.
\end{align*}
\end{lemma}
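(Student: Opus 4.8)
The plan is to exhibit the two assignments as mutually inverse bijections and to check that $\hat f$ really lands in $\cc_\bot$, the formula for $\check g$ needing no verification since $g\cdot\varepsilon$ is automatically a morphism of $\cc$. Write $\Delta:\bot\to\bot\bot$ for the comultiplication and $\varepsilon:\bot\to\Id$ for the counit of the comonad, so that a $\bot$\n-coalgebra $(X,\delta)$ satisfies the counit law $\delta\cdot(X\varepsilon)=\id_X$ and coassociativity $\delta\cdot(X\Delta)=\delta\cdot(\delta\bot)$, while the cofree coalgebra carried by $V\bot$ has structure morphism $V\Delta$.

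First I would check that $\hat f=\delta\cdot(f\bot)$ is a morphism of coalgebras $(X,\delta)\to(V\bot,V\Delta)$, that is, that $\hat f\cdot(V\Delta)=\delta\cdot(\hat f\bot)$. By functoriality of $\bot$ the right-hand side expands to $\delta\cdot(\delta\bot)\cdot(f\bot\bot)$; replacing $\delta\cdot(\delta\bot)$ by $\delta\cdot(X\Delta)$ (coassociativity of the coalgebra) and then using naturality of $\Delta$ at $f$, in the form $(X\Delta)\cdot(f\bot\bot)=(f\bot)\cdot(V\Delta)$, turns it into $\delta\cdot(f\bot)\cdot(V\Delta)$, which is the left-hand side. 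Hence $\hat f\in\cc_\bot(X,V\bot)$.

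Next I would verify the two round-trips. For $\check{\hat f}=\delta\cdot(f\bot)\cdot(V\varepsilon)$, naturality of $\varepsilon$ at $f$ gives $(f\bot)\cdot(V\varepsilon)=(X\varepsilon)\cdot f$, so $\check{\hat f}=\delta\cdot(X\varepsilon)\cdot f=f$ by the counit law of the coalgebra. Conversely, for a coalgebra morphism $g$ one has $\hat{\check g}=\delta\cdot(g\bot)\cdot((V\varepsilon)\bot)$; since $g$ is a coalgebra morphism, $\delta\cdot(g\bot)=g\cdot(V\Delta)$, whence $\hat{\check g}=g\cdot(V\Delta)\cdot((V\varepsilon)\bot)=g$, the last equality being the comonad counit identity $\Delta\cdot(\varepsilon\bot)=\id_\bot$ evaluated at $V$.

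The computations are short; the only place demanding care is the bookkeeping of whiskerings, namely distinguishing $(V\varepsilon)\bot$ (apply $\bot$ to the morphism $V\varepsilon$) from $\varepsilon_{V\bot}$, and correspondingly selecting the correct one of the two comonad counit laws $\Delta\cdot(\varepsilon\bot)=\id=\Delta\cdot(\bot\varepsilon)$. Naturality of the displayed bijection in $X$ and in $V$, which promotes it to the claimed adjunction, follows from these same naturality squares, and I would only remark on it rather than write it out.
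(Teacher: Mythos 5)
Your proof is correct. Note that the paper gives no proof of this lemma at all: it is stated as ``well known'' and accompanied only by a pointer to its multicategory generalization in \cite[Lemma~5.3]{BesLyuMan-book}, so there is nothing to compare against; your verification is exactly the standard argument the paper leaves implicit --- the coalgebra-morphism property of $\hat f$ via coassociativity of $\delta$ and naturality of $\Delta$, the round-trip $\check{\hat f}=f$ via naturality of $\varepsilon$ and the coalgebra counit law, and the round-trip $\hat{\check g}=g$ via the morphism property of $g$ and the comonad counit identity, where you correctly single out the whiskering $(V\varepsilon)\bot$ (the functor $\bot$ applied to $V\varepsilon$) rather than the component $\varepsilon_{V\bot}$.
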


This is generalized to multicategories in \cite[Lemma~5.3]{BesLyuMan-book}.

Besides the ground category $\cv$ consider the strong symmetric Monoidal category $\cw=\NN\text-\gr\text-\cv$.
Objects of\index{TTsymb}{W@$\cw$} $\cw$ are $\NN$-graded objects \((X_k)_{k\in\NN}\) of $\cv$.
The tensor product of a family \((X^i)_{i\in I}\) of objects of $\cw$ is \((\tens_\cw^{i\in I}X^i)_k=\oplus_{\sum_{i\in I}k_i=k}\tens_\cv^{i\in I}X^i_{k_i}\).
The structure isomorphisms $\lambda_\cw$ (the associativity and the symmetry) are induced by $\lambda_\cv$.
The monoidal category $\cw$ is closed with the inner hom \(\und\cw(X,Y)\) given by \(\und\cw(X,Y)_c=\prod_{a\in\NN}\und\cv(X_a,Y_{a+c})\), $c\in\NN$.
The shifts in $\cw$ ignore the lower degree, coming from the functor \(\cz\rTTo^{\1[-]} \cv\to\cw\), where the image of latter functor is concentrated in lower degree~0.
For instance, when $\cv=\gr$ is the category of $\ZZ$\n-graded $\kk$\n-modules, the category $\cw$ consists of $\NN$-$\ZZ$-graded $\kk$\n-modules \(X=(X_n^k)_{n\in\NN}^{k\in\ZZ}\).

\chapter{Ordinary (co)operads}
We describe our conventions for trees.
This kind of trees is used to describe the monad algebras over which are operads.
Trees determine also certain colax Monoidal structure on the category of collections in $\cv$.
Coalgebras in this colax Monoidal category are precisely cooperads in $\cv$.

\section{Trees}
A \emph{rooted tree} $t$ can\index{TTindex}{rooted tree} be defined as a\index{TTindex}{parent map} \emph{parent map} \(\Pa_t:V(t)\to V(t)\), where $V(t)$ is a finite set (of vertices), such that \(|\im(\Pa_t^k)|=1\) for some $k\in\NN$.
The only element \(r\in\im(\Pa_t^k)\) is called the root.
An oriented graph without loops $G$ is constructed out of $\Pa$, whose set of vertices is $V(t)$ and arrows are \(v\to\Pa_t(v)\) if vertex $v$ is not the root.
Since $G$ is a connected graph, whose number of edges is one less than the number of vertices, it is a tree.
Thus the rooted tree is oriented towards the root. 
The only oriented path connecting a vertex $v$ with the root consists of $v$, $\Pa(v)$, $\Pa^2(v)$, \dots, the root.
This gives a partial ordering on the set of all vertices $V(t)$, namely, $u\preccurlyeq v$ iff $v$ lies on the oriented path connecting $u$ with the root. 
For each vertex \(p\in V(t)\) the non-negative number \(|p|=|\Pa_t^{-1}(p)\setminus\{\textup{root}\}|\) of its children is called its arity.

An \emph{ordered (planar) rooted tree} is\index{TTindex}{ordered rooted tree} a rooted tree with a chosen total ordering $\le$ of the set of incoming edges for each vertex.
Up to an isotopy there is only one embedding of an ordered rooted tree into the oriented plane with coordinates $x$, $y$, which agrees with the orientation and such that each descendant vertex $u$ is higher than its ancestor vertex $v$ ($u$ has bigger coordinate $y$). 
Thus, smaller incoming edges are to the left of the bigger ones.

The set of\index{TTindex}{staged tree} \emph{staged trees} $\str(m)$ consists\index{TTsymb}{str@$\str$} of sequences in $\co_\sk$
\begin{gather}
t =\bigl( t(0) \rTTo^{t_1} t(1) \rTTo^{t_2} t(2) \rTTo^{t_3} \dots \; t(m-1) \rTTo^{t_m} t(m) =\mb1 \bigr)
\label{eq-staged-tree-t(0)-t(1)-t(2)}
\\
\str(m) =\{ t\in \Cat([m],\co_\sk) \mid t(m) =\mb1 \}. \notag
\end{gather}
The number \(\height(t)=m\ge0\) is called the height of $t$.
In this presentation \(V(t)=\sqcup_{j=0}^mt(j)\), the restriction of the parent map is
\(\Pa_t|_{t(j)}=t_{j+1}:t(j)\to t(j+1)\) for \(0\le j\le m-1\), the root \(1\in t(m)\) is taken by $\Pa_t$ to itself.
The ordering of \(t_j^{-1}(i)\) is induced by that of \(t(j-1)\).
The set of all vertices \((V(t),\le)=t(0)\sqcup_<t(1)\sqcup_<\dots\sqcup_<t(m-1)\sqcup_<t(m)\) is lexicographically totally ordered.
Thus the root is the biggest element.
The set of isomorphism classes of ordered rooted trees is in bijection with the subset of staged trees consisting of sequences \eqref{eq-staged-tree-t(0)-t(1)-t(2)} such that \(t(0)\) is not empty.

Notice the following properties of the parent map \(\Pa_t:(V(t),\le)\to(V(t),\le)\):
\begin{myitemize}
\item[(a)] $\Pa_t$ is a non-decreasing map;

\item[(b)] \(x\le \Pa_tx\) for all \(x\in V(t)\);
\end{myitemize}
As a consequence of (b) $u\preccurlyeq v$ implies $u\le v$ for all \(u,v\in V(t)\).

On the other hand, a rooted tree \(\Pa_t:V(t)\to V(t)\) with totally ordered set $V(t)$ where the parent map satisfies conditions (a)--(b) admits a unique presentation as above staged tree.
Thus, the set of isomorphism classes of such \(\Pa_t:(V(t),\le)\to(V(t),\le)\) is in bijection with the set of staged trees (and with the set of isomorphism classes of ordered rooted trees).
One more total ordering $\trianglelefteqslant$ of the set $V(t)$ of vertices of an ordered rooted tree $t$ is defined as follows.
For any two vertices $u,v\in V(t)$ either they are comparable with respect to $\preccurlyeq$ and we set \(u\trianglelefteqslant v\) iff \(u\preccurlyeq v\), or they are not $\preccurlyeq$\n-comparable.
Then there are unique $n,m\in\NN_{>0}$ such that \(P^n(u)=P^m(v)\) and both \(P^{n-1}(u)\ne P^{m-1}(v)\) are distinct from the root.
We set \(u\triangleleft v\) iff \(P^{n-1}(u)<P^{m-1}(v)\) in the set of descendants (incoming edges) of $P^n(u)$.

A \emph{rooted tree with inputs} is\index{TTindex}{rooted tree with inputs} a rooted tree $t$ with a chosen subset $\Inp(t)$ of the set $L(t)$ of leaves, vertices without incoming edges. 
For instance, a 1\n-vertex tree has one leaf -- the root.
It gives rise to two rooted trees with inputs: \(\tau[0]=\bullet=(\mb1;\Inp\bullet=\varnothing)\) and
\(\theta_0=\circ=(\mb1;\Inp\circ=\mb1)\).
The \emph{set of internal vertices} is\index{TTindex}{set of internal vertices of a tree with inputs} defined as \(\IV(t)=V(t)-\Inp(t)\).
The subsets \(\IV(t)\) and $\Inp(t)$ of \(V(t)\) get their induced canonical total ordering.
For example, the picture
\[ \hstretch140
\vstretch70
\begin{tanglec}
\object1\step\object2\Step\object3\step \\
\node\Step \\
\nw1\Put(-5,0)[0,0]{\mbox{\normalsize4}}\node\nw1\node\Put(0,10)[0,0]{\mbox{\normalsize5}}\ne1
\Put(3,0)[0,0]{\mbox{\normalsize6}}\step \\
\nw1\n\ne1 \\
\object7
\end{tanglec}
\]
describes an ordered rooted tree with inputs $t$ with the root 7, nullary vertex (leaf) 2, unary vertex~4, the set of leaves $L(t)=\{1,2,3,6\}$, the subset of inputs $\Inp(t)=\{1,3,6\}$, non-input leaf~2 and the set of internal vertices \(\IV(t)=\{2,4,5,7\}\).
Combination of the two structures described above gives an\index{TTindex}{ordered rooted tree with inputs} \emph{ordered rooted tree with inputs}.
These coincide with the trees used by Leinster \cite[Example~2.3.3, Definition~E.1.3]{math.CT/0305049}.
The definition used by Muro \cite[Definition~3.4]{MR2821434} of planted 
planar tree with leaves coincides with ours up to different order $\trianglelefteqslant$ of vertices.
The origin of such version of trees comes from works of Kontsevich, \textit{e.g.} \cite[Section~6.6]{hep-th/9402147}.
The notion is brought to perfection by Borisov and Manin \cite[Section~1.1]{math/0609748}.
In the spirit of their approach we define a \emph{graph $G$ drawn on the plane} as a finite set $F$ of flags (half-edges) equipped with an involution $j:F\to F$ and a bijection $r:F\to F$, the clock-wise rotation.
Internal vertices of $G$ are orbits of $r$, edges are orbits of $j$: internal edges \(\{f,j(f)\}\) for $f\ne j(f)$ and tails for $f=j(f)$.
A connected graph drawn on the plane is a \emph{planar tree} if\index{TTindex}{planar tree} the number of internal vertices exceeds by 1 the number of internal edges.
A \emph{rooted planar tree} is\index{TTindex}{rooted planar tree} a planar tree with a distinguished tail, the root tail drawn downwards.
There is a bijection between the set of rooted planar trees $G$ defined in terms of flags and the set of ordered rooted trees with inputs $t$ defined above.
Notice that for $G$ and $t$ corresponding to each other, the sets of internal vertices $\IV(G)$ and $\IV(t)$ coincide except for
\(G=\thicklines\hstretch140\vstretch45\,\begin{tanglec}\n\\ \id\end{tanglec}\,\)
with \(|\IV(G)|=1\) and $t=\circ$ with \(\IV(t)=\emptyset\).
The sets of edges are related by \(E(G)=E(t)\sqcup\{\text{the root tail}\}\) with the same exceptional pair
\(\thicklines\hstretch140\vstretch45\,\begin{tanglec}\n\\ \id\end{tanglec}\,\)
and $\circ$.
All vertices of $t$ are defined as
\[ V(t) =\IV(t) \sqcup \{\text{tails } f=j(f)\in F\} - \{\text{the root tail}\}.
\]
When \(v\in\IV(G)\) has the outgoing flag $f$ and arity $n$, the set of incoming flags obtains the total ordering \(r(f)<r^2(f)<\dots<r^n(f)\).
Of course, \(r^{n+1}(f)=f\).

For instance, \(G=\thicklines\hstretch140\vstretch45\,\begin{tanglec}\s\end{tanglec}\,\) and $t=\bullet$ correspond to each other.

An ordered rooted tree with inputs can be conveniently presented as a uniquely determined \emph{staged tree with inputs}, which\index{TTindex}{staged tree with inputs} is a pair \((t;\Inp t)\), where $t$ is a staged tree and
\[ \Inp t \subset t(0)\cup \{ (j,i) \mid 1\le j\le m,\, i\in t(j),\, t_j^{-1}(i)=\varnothing \}
\]
is a subset of nullary vertices (leaves).
For any ordered rooted tree with inputs $t\ne\circ$ we have \(\Inp t=P_t^{-1}\IV(t)\setminus\IV(t)\), for \(P_t^{-1}\IV(t)=V(t)\).

A \emph{collection} is\index{TTindex}{collection} a function $f$ on the class of finite sets such that \(f(I)=f(J)\) if $I\simeq J$.
It is identified with the sequence \(f=(f(n))_{n\ge0}\).

Let $\tr$ be\index{TTsymb}{tr@$\tr$} the set of isomorphism classes of ordered rooted trees with inputs.
It splits up, \(\tr=\sqcup_{n\ge0}\tr(n)\), in components \(\tr(n)=\{t\in\tr\mid|\Inp t|=n\}\).
We view \(\tr=(\tr(n))_{n\ge0}\) as a collection of sets, thus, for any finite set $S$ the notation $\tr(S)$ means $\tr(|S|)$.
Also for any internal vertex $p$ the notation $\tr|p|$ means $\tr(|p|)$.
For each tree $t\in\tr$ there is an operation of substituting trees into internal vertices:
\begin{equation}
I_t: \prod_{p\in\IV(t)} \tr|p| \to \tr(\Inp t)
\label{eq-It-substituting-trees}
\end{equation}
which takes a family \((t_p)_{p\in\IV(t)}\) with \(|\Inp t_p|=|p|\) to the tree \(I_t(t_p\mid p\in\IV(t))\) obtained from $t$ by replacing each internal vertex \(p\in\IV(t)\) with the tree $t_p$.
Namely, if $t=\circ$, then \(I_\circ()=\circ\).
If $t\ne\circ$, then \(\tau=I_t(t_p\mid p\in\IV(t))\) has
\begin{equation}
V(\tau) =\bigsqcup_{p\in\IV(t)} V(t_p)/\sim,
\label{eq-V(tau)-V(t)}
\end{equation}
where the equivalence relation is generated by the following relation.
Let \(v\in\IV(t)\), \(v\ne\troot(t)\) and denote \(u=P_tv\ne v\).
Then \(\troot(t_v)\sim\phi_u(v)\), where $\phi_u$ is the order preserving bijection
\begin{equation}
\phi_u: (P_t^{-1}(u)\setminus\{\troot(t)\},\le) \to (\Inp t_u,\trianglelefteqslant).
\label{eq-phi-(P)-(Inp)}
\end{equation}
Equivalence classes of vertices are either single elements $(p,x)$, $p\in\IV(t)$, \(x\in\IV(t_p)\setminus\{\troot(t_p)\}\), or they are identified with maximal strings of consecutive vertices \(v\ne Pv\ne P^2v\ne\dots\ne P^mv\) such that \(t_{Pv}=t_{P^2v}=\dots=t_{P^mv}=\circ\), $m\ge0$, and, moreover, \(t_v\ne\circ\) or \(v\notin\IV(t)\).
Maximal means that either \(P^{m+1}v=P^mv\) is the root of $t$, or \(t_{P^{m+1}v}\ne\circ\).
If \(v\in\IV(t)\), then \(\troot(t_v)\) is equivalent to the only vertex of $t_{P^kv}$ for \(1\le k\le m\) and equivalent to \(\phi_{P^{m+1}v}(P^mv)\) if \(P^mv\) is not the root of $t$.
Moreover, no other vertex is equivalent to these.
When \(v\notin\IV(t)\), the vertices of $t_{P^kv}$ are equivalent for \(1\le k\le m\) and equivalent to \(\phi_{P^{m+1}v}(P^mv)\) if \(P^mv\) is not the root of $t$.

Whenever \(a,b\in V\bigl(I_t(t_p\mid p\in\IV(t))\bigr)\) are two such equivalence classes, there is no more than one triple \((p,x,y)\) such that \(p\in\IV(t)\), $(p,x)\in a$, $(p,y)\in b$.
We set \(P_\tau a=b\) iff \(P_{t_p}(x)=y\).
This defines \(\tau=I_t(t_p\mid p\in\IV(t))\) as an ordered rooted tree.
Its root is the equivalence class of \(\troot(t_{\troot(t)})\) if $t\ne\circ$ (that is, if \(\troot(t)\in\IV(t)\)).
Notice that the projection map
\begin{equation}
\pi: \bigsqcup_{p\in(\IV(t),\trianglelefteqslant)} (V(t_p),\trianglelefteqslant) \rEpi (V(\tau),\trianglelefteqslant)
\label{eq-Vtp<-Vtau<}
\end{equation}
is non-decreasing, where the first set is lexicographically totally ordered ($p\triangleleft q$ for \(p,q\in\IV(t)\) implies \((p,x)\triangleleft(q,y)\) for all \(x\in\IV(t_p)\), \(y\in\IV(t_q)\)).

The set $\Inp\tau$ of input vertices of $\tau$ by definition consists of single elements $(u,x)$, \(u\in\IV(t)\), \(x\in\Inp t_u\setminus \phi_u\bigl(\IV(t)\cap(P_t^{-1}u\setminus\troot(t))\bigr)\).
Clearly, \(\Inp\tau\subset L(\tau)\) and \(\Inp\tau=V(\tau)-\IV(\tau)\), where 
\begin{equation}
\IV(\tau) =\IV\bigl(I_t(t_p\mid p\in\IV(t))\bigr) \simeq \bigsqcup_{p\in\IV(t)} \IV(t_p),
\label{eq-v(It)-Uv(tp)}
\end{equation}
This formula relies on $\IV(\circ)=\emptyset$.
Moreover, for \(t\ne\circ\)
\begin{align}
\Inp\tau &=P_\tau^{-1}\IV(\tau)-\IV(\tau) =\bigl[\bigsqcup_{p\in\IV(t)} P_{t_p}^{-1}\IV(t_p)\bigr] -\bigl[\bigsqcup_{p\in\IV(t)}\IV(t_p)\bigr] \notag
\\
&=\bigl[\bigsqcup_{p\in\IV(t)}\Inp t_p\bigr]\setminus \bigl[\bigsqcup_{p\in\IV(t)}\IV(t_p)\bigr] \rto\sim \bigsqcup_{p\in\IV(t)}P_t^{-1}p -\IV(t) =P_t^{-1}\IV(t)-\IV(t) =\Inp t,
\label{eq-Inp(It)-Inp-t}
\end{align}
where $[S]$ means the set of equivalence classes containing elements of $S$.
For $t=\circ$ the equation \(\Inp\tau=\Inp t\) holds, since $\tau=\circ$ as well.
Since \eqref{eq-Vtp<-Vtau<} is a non-decreasing map, so is \((\Inp\tau,\trianglelefteqslant)\rto\sim (\Inp t,\trianglelefteqslant)\) given by \eqref{eq-Inp(It)-Inp-t}.

\begin{example}
Consider trees
\[ \hstretch140
\vstretch70
t=
\begin{tanglec}
\nw1\Put(-5,0)[0,0]{\mbox{\normalsize1}}\node\ne1 \\
\Put(-5,0)[0,0]{\mbox{\normalsize2}}\n \\
\step\Put(-5,0)[0,0]{\mbox{\normalsize3}}\n\step
\node\Put(5,0)[0,0]{\mbox{\normalsize4}} \\
\nw1\n\ne1 \\
\object5
\end{tanglec}
\quad, \quad t_1=
\begin{tanglec}
\nw1\node\ne1
\end{tanglec}
\;, \quad t_2 =t_3 =\circ, \quad t_4=
\begin{tanglec}
\node\Step\node \\
\nw1\node\ne1
\end{tanglec}
\;, \quad t_5 =
\begin{tanglec}
\nw1\node\ne1\step\ne1 \\
\nw1\node\ne1
\end{tanglec}
\;.
\]
Then
\[ \hstretch140
\vstretch70
\tau =I_t(t_1,t_2,t_3,t_4,t_5)=
\begin{tangles}{*4r}
&&\node &\node \\
&\nw1\n &\n &\ne1 \\
\nw1\node &\ne1 &\ne1 \\
&\nw1\node &\ne1\step
\end{tangles}
\;.
\]
\end{example}

There is a monad \(\TT:\Set^\NN\to\Set^\NN\), \(X=(X(n))_{n\ge0}\mapsto X\TT\),
\[ (X\TT)(n) =\bigsqcup_{t\in\tr(n)} \, \bigsqcap_{p\in\IV(t)} X|p|,
\]
on the category of collections of sets.
For any $p\in\IV(t)$ the notation $X|p|$ means $X(|p|)$.

The unit \(i:\Id\to\TT\) is given by the obvious isomorphism \(X(n)\to\prod_{p\in\IV(\tau[n])}X|p|\) into the summand indexed by the\index{TTindex}{n-corolla@$n$-corolla} \emph{$n$\n-corolla}, \(\tau[n]\overset{\text{def}}=(\bn\to\mb1;\Inp\tau[n]=\bn)\) for $n>0$ and \(\tau[0]=(\mb1;\varnothing)\) for $n=0$.
Notice that \(\IV(\tau[n])=\mb1\) for all $n\ge0$.
The multiplication \(m:\TT^2\to\TT\) takes a summand of
\begin{equation*}
(X\TT^2)(n) =\bigsqcup_{t\in\tr(n)} \, \bigsqcap_{p\in\IV(t)} \, \bigsqcup_{t_p\in\tr|p|} \, \bigsqcap_{q\in\IV(t_p)} X|q| \simeq
\bigsqcup_{t\in\tr(n)} \, \bigsqcup_{(t_p\in\tr|p|)_{p\in\IV(t)}} \, \bigsqcap_{(p,q)\in\sqcup_{r\in\IV(t)}\IV(t_r)} X|q|
\end{equation*}
identically to the summand indexed by the tree \(I_t(t_p\mid p\in\IV(t))\) of $X\TT$.
This monad is described by Leinster \cite[Example~4.1.11]{math.CT/0305049} as the monad substituting trees into vertices.

\begin{definition}
An \emph{operad} in $\Set$ is\index{TTindex}{operad in sets} an algebra over the monad $\TT$.
\end{definition}

In detail, an operad is a collection of sets \(\cO=(\cO(n))_{n\ge0}\), a multiplication map
\[ \mu_t: \prod_{p\in\IV(t)} \cO|p| \to \cO(\Inp t)
\]
for each \(t\in\tr\) such that for any corolla $\tau[n]$, $n\ge0$, the multiplication \(\mu_{\tau[n]}:\cO(n)\to\cO(n)\) is the identity map and for each tree $t\in\tr$ and each family \((t_p)_{p\in\IV(t)}\) with \(|\Inp t_p|=|p|\) the following equation holds
\begin{diagram}[LaTeXeqno,h=2.6em]
\prod_{p\in\IV(t)} \, \prod_{q\in\IV(t_p)} \cO|q| &\rTTo^{\prod_{p\in\IV(t)}\mu_{t_p}} &\prod_{p\in\IV(t)} \cO|p|
\\
\dTTo<\wr &= &\dTTo>{\mu_t}
\\
\prod_{x\in\IV(I_t(t_p\mid p\in\IV(t)))} \cO|x| &\rTTo^{\mu_{I_t(t_p\mid p\in\IV(t))}} &\cO(\Inp t)
\label{dia-OOOO-equation-ass}
\end{diagram}
The monad $\TT$ is also called the free operad monad.

Notice that the tree \(\circ=(\mb1;\Inp\circ=\mb1)\) with \(\IV(\circ)=\varnothing\) gives in the image of \(\eta=\mu_{\circ}:\mb1\to\cO(1)\) a distinguished element $1_\co$ called the unit of the operad.

For instance, applying $\TT$ to the terminal collection \(\mb1=(\mb1)_{n\ge0}\) we get \(\tr=\mb1\TT\), which automatically becomes an operad with the operation $I_t$ as the multiplication.
In particular, \(I_\circ()=\circ\in\tr(1)\) is the unit of the operad $\tr$.

\section{Operads}
An \emph{operad} $\co$ in $\cv$ is\index{TTindex}{operad} a collection of objects \(\co(n)\in\Ob\cv\) and multiplication morphism for each tree \(t\in\tr\)
\[ \mu_t^\co: \otimes_\cv^{p\in\IV(t)} \co|p| \to \co(\Inp t),
\]
such that
\begin{diagram}[h=2.2em,LaTeXeqno]
\otimes_\cv^{p\in\IV(t)} \, \otimes_\cv^{q\in\IV(t_p)} \co|q| &\rTTo^{\otimes_\cv^{p\in\IV(t)}\mu^\co_{t_p}}
&\otimes_\cv^{p\in\IV(t)} \co|p|
\\
\dTTo<\wr &= &\dTTo>{\mu^\co_t}
\\
\otimes_\cv^{x\in\IV(I_t(t_p\mid p\in\IV(t)))} \co|x| 
&\rTTo^{\mu^\co_{I_t(t_p\mid p\in\IV(t))}} &\co(\Inp t)
\label{dia-ass-operad-O}
\end{diagram}
and for all $n\ge0$ the morphism \(\mu^\co_{\tau[n]}=\rho_{\co(n)}:\tens_\cv^{\mb1}\co(n)\to\co(n)\) is the structural isomorphism.

A cooperad $C$ in $\cv$ (that is in the strong $\Cat$-operad \(\mcC=(\cv)_{n\ge0}\)) is a collection of objects \(C(n)\in\Ob\cv\) and comultiplication morphism for each tree \(t\in\tr\)
\[ \Delta_t: C(\Inp t) \to \otimes^{p\in\IV(t)} C|p|,
\]
which satisfies the coassociativity equation
\begin{diagram}[h=2.6em,LaTeXeqno]
C(\Inp t) &\rTTo^{\Delta_{I_t(t_p\mid p\in\IV(t))}\;} &\bigotimes_{x\in\IV(I_t(t_p\mid p\in\IV(t)))} C|x|
\\
\dTTo<{\Delta_t} &= &\dTTo>\wr
\\
\bigotimes_{p\in\IV(t)} C|p| &\rTTo^{\tens^{p\in\IV(t)}\Delta_{t_p}}
&\bigotimes_{p\in\IV(t)} \, \bigotimes_{q\in\IV(t_p)} C|q|
\label{dia-CCCC-equation-coass}
\end{diagram}
and the normalization condition
\begin{equation}
\Delta_{\tau[n]}=\rho_{C(n)}^{-1}:C(n)\rTTo^\sim \tens_\cv^{\mb1}C(n) \quad \textup{for } n\ge0.
\label{eq-Delta-tau-n-normalization}
\end{equation}

An arbitrary multiplication in an operad \(\mu_t^\co\), $t\in\tr$, can be presented as composition of several such multiplications \(\mu_{t'}^\co\), where $t'$ is taken from a proper subset of $\tr$.
On this way we come to several equivalent definitions of an operad, \textit{e.g.} from \cite{MR1898414}.

First of all, we consider the set of staged trees $\str$ which we map to $\tr$ as follows.
Staged tree $t$ from \eqref{eq-staged-tree-t(0)-t(1)-t(2)} is declared to have \(\Inp t=t(0)\).
Note that \(\circ=\theta_0\in\str\), but \(\bullet=\tau[0]\notin\str\), however, \((\emptyset\to\mb1)\in\str\).
The set $\str$ is partitioned into subsets \(\str(n,m)\) of staged trees $t$ of height $m$ with \(t(0)=\mb n\).
Whenever \(t\in\str(n,m)\) and for each \(k\in t(j)\), $j>0$, a tree \(t_{j,k}\in\str(|(j,k)|,m_j)\), \(|(j,k)|=|t_j^{-1}(k)|\) is given, the substitution result \(\tau=I_t(t_{j,k}\mid k\in t(j),\,j>0)\) is again a staged tree of height \(\sum_{j=1}^mm_j\) with \(t(0)=\mb n\).
In this situation $\mu_\tau$ expresses via $\mu_t$ and \(\mu_{t_{j,k}}\) accordingly to equation~\eqref{dia-ass-operad-O}.

\begin{proposition}\label{pro-operad-via-str}
An operad structure on a collection \(\co\in\Ob\cv^\NN\) amounts to multiplications $\mu_t$, $t\in\str$, such that equation~\eqref{dia-ass-operad-O} holds for each family $t\in\str(n,m)$, \(t_{j,k}\in\str(|(j,k)|,m_j)\), \(k\in t(j)\), $m_j\ge0$, $j\in\mb m$, and the normalization conditions \(\mu_{\emptyset\to\mb1}=\rho_{\co(0)}:\tens_\cv^{\mb1}\co(0)\to\co(0)\), \(\mu_{\tau[n]}=\rho_{\co(n)}:\tens_\cv^{\mb1}\co(n)\to\co(n)\) hold for $n>0$.
\end{proposition}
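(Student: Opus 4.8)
The statement asserts an equivalence of two packages of data, which I would prove in two directions. The forward direction is by restriction. Via the convention $\Inp t=t(0)$ every staged tree is an ordered rooted tree with inputs, so $\str$ embeds into $\tr$ and an operad $\co$ furnishes in particular the morphisms $\mu_t$, $t\in\str$. The general associativity \eqref{dia-ass-operad-O} holds for every substitution, hence in particular whenever $t$ and all the $t_{j,k}$ are staged (the substitution result being again staged), and this is exactly the asserted restricted associativity. Finally the general normalization $\mu_{\tau[n]}=\rho_{\co(n)}$, $n\ge0$, contains the condition for $n>0$ directly and, for $n=0$, the condition $\mu_{\emptyset\to\mb1}=\rho_{\co(0)}$, since $\tau[0]=\bullet$ and $(\emptyset\to\mb1)$ present the same tree with inputs.

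The substance is the converse: from multiplications $\mu_t$, $t\in\str$, satisfying the restricted conditions I must rebuild an operad, i.e.\ produce $\mu_t$ for every $t\in\tr$ and verify \eqref{dia-ass-operad-O} and the full normalization. Given $t\in\tr$ with its canonical staged presentation, the inputs are designated leaves that may sit at various levels, and there may also be non-input nullary leaves (such as the vertex $2$ in the example above), some possibly at level $0$. I would therefore attach to every input a descending chain of unary vertices, prolonging all inputs to one common depth $D$ exceeding the depth of every non-input leaf; the outcome is a staged tree $\hat t\in\str$ (all inputs now at level $0$, every non-input nullary vertex at level $\ge1$) with $\IV(\hat t)=\IV(t)\sqcup\{\textup{leg vertices}\}$, all leg vertices unary, and $\Inp\hat t\cong\Inp t$. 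I then set
\[
\mu_t=\bigl(\tens_\cv^{p\in\IV(t)}\co|p|\cong\tens_\cv^{p\in\IV(t)}\co|p|\tens\tens_{\textup{legs}}\1\rTTo^{1\tens(\tens_{\textup{legs}}\eta)}\tens_\cv^{p\in\IV(\hat t)}\co|p|\rTTo^{\mu_{\hat t}}\co(\Inp t)\bigr),
\]
inserting the unit $\eta=\mu_\circ$ (available since $\circ=\theta_0\in\str$) in each leg slot. For $t\in\str$ this returns the given $\mu_t$, and for $t=\bullet$ it returns $\mu_{\emptyset\to\mb1}=\rho_{\co(0)}$; fixing $D$ minimally makes the construction choice-free.

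With $\mu_t$ in hand for all trees, the normalization conditions are immediate on corollas, as just noted. The real work is the full associativity \eqref{dia-ass-operad-O} for arbitrary $t\in\tr$ and arbitrary substituted trees $(t_p)_{p\in\IV(t)}$, with $\tau=I_t(t_p\mid p\in\IV(t))$. The difficulty is that the canonical staged presentation of $\tau$ does not split as a naive juxtaposition of those of $t$ and of the $t_p$: under $I$ the levels of the pieces $t_p$ interleave with those of $t$ (formula \eqref{eq-v(It)-Uv(tp)} reorganizes the internal vertices, and the level occupied by a vertex of $t_p$ depends on the depth of $p$ together with the slack of shorter sibling branches), and distinct $t_p$ grafted at vertices of one level of $\hat t$ will in general have distinct heights, whereas the restricted associativity applies only to uniform refinements $t_{j,k}\in\str(|(j,k)|,m_j)$ with a single height $m_j$ per level $j$.

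I would overcome this exactly as in the construction of $\mu_t$ itself, by padding: insert further unary-unit chains so that, level by level of $\hat t$, all trees grafted at that level share a common height, turning the substitution into a sequence of uniform level-wise staged substitutions to which the restricted associativity applies and composes across the levels. The unit normalization $\mu_\circ=\eta$ guarantees that each such padding leaves both sides of \eqref{dia-ass-operad-O} unchanged, so the padded computation coincides with the ones defining $\mu_\tau$, $\mu_t$, and the $\mu_{t_p}$. The crux, and the step I expect to cost the most, is precisely this coherence bookkeeping: verifying that any two realizations of a substituted tree as an iterated uniform staged refinement, decorated with unit legs, are linked by instances of the restricted associativity and the unit normalization, so that both sides of \eqref{dia-ass-operad-O} collapse to one and the same composite of $\str$-multiplications.
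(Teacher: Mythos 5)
Your strategy is the paper's own: pad a tree with unary vertices to make it staged, insert the unit $\eta=\mu_\circ$ at the padding vertices, and reduce full associativity to the restricted one by choosing presentations of the substituted trees with uniform height at each level. The genuine gap is that the step you yourself call ``the crux'' is never carried out, and it is not mere bookkeeping: it is where the paper spends most of its proof. Fixing $D$ minimally makes your \emph{definition} of $\mu_\tau$ choice-free, but it does not dispose of presentation-independence. In the associativity argument the tree $I_t(t_p\mid p\in\IV(t))$ arrives with the non-minimal presentation $I_t(\tilde t_p\mid p\in\IV(t))^M$, $M=\bigsqcup_p M_p$, obtained by composing the padded presentations of $t$ and of the $t_p$; to identify the composite built from it with your $\mu_{I_t(t_p\mid p\in\IV(t))}$ you must already know that \emph{every} pair $(t,M)$ with $\tau=t^M$ yields the same map \eqref{eq-mu-tau-mu-t-M}. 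Your justification --- ``the unit normalization $\mu_\circ=\eta$ guarantees that each such padding leaves both sides of \eqref{dia-ass-operad-O} unchanged'' --- is precisely the assertion to be proved, not an axiom: $\eta$ is \emph{defined} as $\mu_\circ$, and no normalization condition constrains it. The paper establishes this invariance first, as equation \eqref{eq-mu-overline-t-mu-t-mu-und-t}: an instance of the restricted associativity applied to the leaf-padded tree $\overline t$ (resp.\ the root-padded $\und t$) with substitution family consisting of $\circ$'s at the new unary vertices and corollas elsewhere; iterating it, one moves padding vertices off the paths to the root and concludes that all presentations $(t,M)$ of $\tau$ give one and the same composite, which is then \emph{taken as} the definition of $\mu_\tau$. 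Without this lemma proved up front, your comparison of ``any two realizations'' is circular.

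A second, smaller gap: your recipe is only defined when $\Inp\tau\ne\emptyset$. If $\tau$ has no inputs, so that $\IV(\tau)=V(\tau)$ (already a two-vertex tree whose leaf is not an input), there are no legs to attach, and the canonical staged presentation of $\tau$ places its deepest non-input leaves at level $0$; read as an element of $\str$ it would acquire a non-empty input set, so the source and target of $\mu_{\hat t}$ do not match those required of $\mu_\tau$. The paper treats this case separately: it assigns to such $\tau$ the staged tree $(\emptyset\to r')$ with an empty level prepended, sets $\mu_\tau=\mu_{\emptyset\to r'}$, and checks, again from restricted associativity with suitably chosen data, that $\mu_{\emptyset\to\dots\to\emptyset\to r'}=\mu_{\emptyset\to r'}$ --- needed because substitution produces presentations with several empty levels. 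Your remark that $\bullet$ is sent to $\mu_{\emptyset\to\mb1}$ suggests you intend something of this sort, but it is not a degenerate instance of input-prolonging and has to be built explicitly into both the definition and the coherence argument.
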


\begin{proof}
First of all, for an arbitrary \(t\in\str(n,m)\) there are two more staged trees \(\overline{t}\), \(\und t\in\str(n,m+1)\) of the form
\[ \overline{t} =\bigl( \overline{t}(0) =t(0) \rTTo^{\overline{t}_1}_\id \overline{t}(1) =t(0) \rTTo^{\overline{t}_2}_{t_1} \overline{t}(2) =t(1) \rTTo^{\overline{t}_3}_{t_2} \dots \; \rTTo^{\overline{t}_{m+1}}_{t_m} \overline{t}(m+1) =\mb1 \bigr),
\]
where $n$ unary vertices are added near leaves, and
\[ \und t =\bigl( t(0) \rTTo^{t_1} t(1) \rTTo^{t_2} t(2) \rTTo^{t_3} \dots \; t(m-1) \rTTo^{t_m} t(m) =\mb1 \rTTo^\id \und t(m+1) =\mb1 \bigr),
\]
where one unary vertex is added at the root.
Consider families of staged trees \((\circ)_{p\in\overline{t}(1)}\sqcup(t_p^>)_{p\in\IV(t)}\) indexed by \(\IV(\overline{t})\) and \((t_p^>)_{p\in\IV(t)}\sqcup(\circ)_{p\in\und t(m+1)}\) indexed by \(\IV(\und t)\).
We have
\[ I_{\overline{t}}\bigl((\circ)_{p\in\overline{t}(1)}\sqcup(t_p^>)_{p\in\IV(t)}\bigr) =t =I_{\und t}\bigl((t_p^>)_{p\in\IV(t)}\sqcup(\circ)_{p\in\und t(m+1)}\bigr).
\]
Equation~\eqref{dia-ass-operad-O} says that
\begin{multline}
\bigl[ \tens^{p\in\IV(t)}\co|p| \simeq \1^{\tens n}\tens\bigl(\tens^{p\in\IV(t)}\co|p|\bigr) \rTTo^{\eta^{\tens n}\tens\id} \co(1)^{\tens n}\tens\bigl(\tens^{p\in\IV(t)}\co|p|\bigr)
\\
\hfill \simeq \tens^{p\in\IV(\overline t)}\co|p| \rTTo^{\mu_{\overline t}} \co(n) \bigr] \quad
\\
\hskip\multlinegap =\mu_t =\bigl[ \tens^{p\in\IV(t)}\co|p| \simeq \bigl(\tens^{p\in\IV(t)}\co|p|\bigr)\tens\1 \rTTo^{\id\tens\eta} \bigl(\tens^{p\in\IV(t)}\co|p|\bigr)\tens\co(1) \hfill
\\
\simeq \tens^{p\in\IV(\und t)}\co|p| \rTTo^{\mu_{\und t}} \co(n) \bigr].
\label{eq-mu-overline-t-mu-t-mu-und-t}
\end{multline}
Applying this equation several times we prove the following statement.
Let \(\tau\in\tr\), \(\Inp\tau\ne\emptyset\).
There is a staged tree \(t\in\str\) and a subset \(M\subset\uv(t)\) such that $\tau$ is obtained from $t$ by removing all vertices from $M$ together with the incoming edge.
Formally, \(\tau=t^M\overset{\text{def}}=I_t\bigl((t_p^>)_{p\in\IV(t)-M},(\circ)_{p\in M}\bigr)\).
The claim is that for fixed $\tau$ all compositions
\begin{multline}
\bigl[ \tens^{p\in\IV(\tau)}\co|p| \simeq \1^{\tens M}\tens\bigl(\tens^{p\in\IV(t)-M}\co|p|\bigr) \rTTo^{\eta^{\tens M}\tens\id} \co(1)^{\tens M}\tens\bigl(\tens^{p\in\IV(t)-M}\co|p|\bigr)
\\
\simeq \tens^{p\in\IV(t)}\co|p| \rTTo^{\mu_t} \co(\Inp t) =\co(\Inp\tau) \bigr]
\label{eq-mu-tau-mu-t-M}
\end{multline}
are equal due to described relations between \(\mu_r\) for $r\in\str$ (thus, compositions do not depend on the choice of $t$ and $M$).
In fact, composition~\eqref{eq-mu-tau-mu-t-M} can be transformed via \eqref{eq-mu-overline-t-mu-t-mu-und-t} to such that no element of $M$ lies on the oriented path from a vertex \(p\in\IV(t)-M\) to the root.
Relations for $\mu_{\tr}$ imply that $\mu_\tau$ must be equal to \eqref{eq-mu-tau-mu-t-M}, and we take this expression as a definition of $\mu_\tau$.

When \(\Inp r=\emptyset\), \(r\in\tr\), hence, \(\IV(r)=V(r)\), we present $r$ as a staged tree $r'$ in a unique way as described after \eqref{eq-staged-tree-t(0)-t(1)-t(2)}.
We assign the staged tree \((\emptyset\to r')\in\str\) to $r$.
Respectively, we request that \(\mu_r=\mu_{\emptyset\to r'}:\otimes_\cv^{p\in\IV(r)}\co|p|\to\co(0)\).
Notice that \(\mu_{\emptyset\to\emptyset\to\dots\to\emptyset\to r'}=\mu_{\emptyset\to r'}\) for an arbitrary number $l\ge1$ of $\emptyset$'s preceding $r'$.
This follows from equation~\eqref{dia-ass-operad-O} for \(t=(\emptyset\to\emptyset\to\dots\to\emptyset\to r')\in\str\), \(m_1=m_2=\dots=m_{l-1}=0\), \(m_l=m_{l+1}=\dots=1\), \(t_{j,k}=\tau|(j,k-l)|\) for $k\ge l$, \(j\in r(k-l)\).
In fact, \(I_t(t_p\mid p\in\IV(t))=(\emptyset\to r')\) in this case.

In order to prove relation~\eqref{dia-ass-operad-O} for trees with inputs $t$, $(t_p)$, \(I_t(t_p\mid p\in\IV(t))\), we do the following:
\begin{myitemize}
\item[---] Represent $t$ as some \(\tilde{t}^M\), \(\tilde{t}\in\str\).
The equation to prove is equivalent to the equation written for \(\tilde{t}\), \((\circ)_{p\in M}\sqcup(t_p)_{p\in\IV(t)}\), \(I_{\tilde t}\bigl((\circ)_{p\in M}\sqcup(t_p)_{p\in\IV(t)}\bigr)\).
Thus we may assume $t\in\str$ without loss of generality.

\item[---] For each \(1\le j\le\height(t)\) represent \(t_p\in\tr\) for \(p\in t(j)\) as \(\tilde{t}_p^{M_p}\), \(\tilde{t}_p\in\str\), in such a way that \(\height(\tilde{t}_p)=m_j\) does not depend on \(p\in t(j)\).
\end{myitemize}
Then \(I_t(t_p\mid p\in\IV(t))\in\tr\) is represented as \(I_t(\tilde{t}_p\mid p\in\IV(t))^M\) for \(M=\sqcup_{p\in\IV(t)}M_p\).
Note that \(I_t(\tilde{t}_p\mid p\in\IV(t))\in\str\).
Simultaneously the required equation for \(\mu_{I_t(t_p\mid p\in\IV(t))}\) reduces to valid equation for \(\mu_{I_t(\tilde{t}_p\mid p\in\IV(t))}\).
\end{proof}

\begin{corollary}\label{cor-staged-operations}
An operad is unambiguously specified by operations $\mu_t$ for all \(t\in\str(-,2)\) and for the only element \(t=\circ\in\str(1,0)\) subject to associativity equation and two unitality equations.
The associativity equation is written for each \(t\in\str(n,3)\) with the help of \(t'=\bigl(\mb n\rTTo^{t_1} t(1)\rTTo^\con \mb1,\mb n\bigr)\in\str(n,2)\), \(t'_p=t_p^>\) for \(p\in t(1)\), \(t'_1=\bigl(t(1)\rTTo^{t_2} t(2)\rto\con \mb1,t(1)\bigr)\in\str(t(1),2)\) for \(1\in\mb1=t'(2)\) and \(t''=\bigl(\mb n\rTTo^{t_1\cdot t_2} t(2)\rto\con \mb1,\mb n\bigr)\in\str(n,2)\), \(t''_p=\bigl(t_1^{-1}t_2^{-1}p\rTTo^{t_1|} t_2^{-1}p\rTTo^\con \{p\},t_1^{-1}t_2^{-1}p\bigr)\in\str(-,2)\) for \(p\in t(2)=t''(1)\), \(t''_1=\tau[t(2)]\) for \(1\in\mb1=t''(2)\), namely,
\begin{diagram}[h=2.2em]
\otimes_\cv^{p\in\IV(t')} \, \otimes_\cv^{q\in\IV(t'_p)} \co|q| &\rTTo^{\otimes_\cv^{p\in\IV(t')}\mu^\co_{t'_p}} &\otimes_\cv^{p\in\IV(t')} \co|p|
\\
\uTTo<\wr &&\dTTo>{\mu^\co_{t'}}
\\
\otimes_\cv^{v\in\IV(t)} \co|v| &\rDashTo^{\mu^\co_t} &\co(\Inp t)
\\
\dTTo<\wr &&\uTTo>{\mu^\co_{t''}}
\\
\otimes_\cv^{p\in\IV(t'')} \, \otimes_\cv^{q\in\IV(t''_p)} \co|q| &\rTTo^{\otimes_\cv^{p\in\IV(t'')}\mu^\co_{t''_p}} &\otimes_\cv^{p\in\IV(t'')} \co|p|
\end{diagram}
since
\[ I_{t'}(t'_p\mid p\in\IV(t')) =t =I_{t''}(t''_p\mid p\in\IV(t'')).
\]
The first unitality equation uses \(t=\bigl(\mb n\rTTo^\con \mb1\rTTo^\id \mb1,\mb n\bigr)\in\str(n,2)\), \(t_{1'}=\tau[n]\) for \(1'\in t(1)\), \(t_{1''}=\circ\) for \(1''\in t(2)\).
Then \(I_t(\tau[n],\circ)=\tau[n]\) and
\[ \bigl[ \co(n) \simeq \co(n)\tens\1 \rTTo^{1\tens\eta} \co(n)\tens\co(1) \rTTo^{\mu_t} \co(n) \bigr] =1.
\]
The second unitality equation uses \(t=\bigl(\mb n\rTTo^\id \mb n\rTTo^\con \mb1,\mb n\bigr)\in\str(n,2)\), \(t_p=\circ\) for \(p\in t(1)=\mb n\), \(t_1=\tau[n]\) for \(1\in t(2)\).
Then \(I_t((\circ)_{p\in\mb n},\tau[n])=\tau[n]\) and
\[ \bigl[ \co(n) \simeq \1^{\tens n}\tens\co(n) \rTTo^{\eta^{\tens n}\tens1} \co(1)^{\tens n}\tens\co(n) \rTTo^{\mu_t} \co(n) \bigr] =1.
\]
\end{corollary}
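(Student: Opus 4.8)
The plan is to deduce the corollary from \propref{pro-operad-via-str}, which already reduces an operad structure to a family of multiplications $\mu_t$ indexed by \emph{all} staged trees $t\in\str$, subject to the substitution form of the associativity equation~\eqref{dia-ass-operad-O} and the two normalization conditions. What remains is to show that this family is equivalent to its restriction to the height-$2$ trees $t\in\str(-,2)$ together with $\mu_\circ$, subject only to the three displayed relations.

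First I would establish that the height-$2$ data determines everything. Recall from the discussion preceding \propref{pro-operad-via-str} that substituting staged trees of heights $m_1,\dots,m_m$ into the successive stages of a height-$m$ tree produces a staged tree of height $\sum_j m_j$. Hence any $t\in\str(n,m)$ with $m\ge3$ can be written as $t=I_s\bigl((s_p)_{p\in\IV(s)}\bigr)$ for a height-$2$ tree $s\in\str(-,2)$ into whose two stages one substitutes families of trees of heights $m_1,m_2\ge1$ with $m_1+m_2=m$; concretely one slices off either the bottom stage or the top stage of $t$. By equation~\eqref{dia-ass-operad-O}, $\mu_t$ is then the tensor product of the $\mu_{s_p}$ followed by $\mu_s$, the pieces $s_p$ having strictly smaller height. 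Induction on $m$ (the base cases being the height-$1$ corollas $\tau[n]$ and $\emptyset\to\mb1$, whose $\mu$ is the structural isomorphism $\rho$ by normalization, and the height-$0$ tree $\circ$) expresses every $\mu_t$ as an iterated composite of height-$2$ operations and $\mu_\circ$. This proves uniqueness, and shows that the three relations listed in the corollary, being instances of~\eqref{dia-ass-operad-O} and of the normalization conditions, are necessary.

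For the converse I would instead \emph{define} $\mu_t$ for an arbitrary $t\in\str$ by choosing such a slicing and iterating down to height $\le2$, and then check that the result is independent of the choices and satisfies the hypotheses of \propref{pro-operad-via-str}. The decisive point is well-definedness: any two ways of slicing a tree into height-$2$ blocks are connected by a chain of elementary moves, each of which either regroups which adjacent pair of stages is contracted first, or inserts or deletes a stage of unary vertices. The first kind of move is precisely the height-$3$ associativity square of the corollary, written for the two decompositions $t=I_{t'}(t'_p)=I_{t''}(t''_p)$; the second kind is precisely one of the two unitality equations, reflecting the identities $I_t(\tau[n],\circ)=\tau[n]$ and $I_t((\circ)_{p},\tau[n])=\tau[n]$, which do not change $\mu$ by the same computation that gave~\eqref{eq-mu-overline-t-mu-t-mu-und-t}. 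A confluence argument then shows that all slicings yield the same $\mu_t$ and that the normalization conditions $\mu_{\tau[n]}=\rho$ and $\mu_{\emptyset\to\mb1}=\rho$ hold.

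Finally, granted well-definedness, the full associativity equation~\eqref{dia-ass-operad-O} for a substitution $I_t(t_p\mid p\in\IV(t))$ follows by expanding both sides into height-$2$ blocks and matching them via the same elementary moves. I expect the main obstacle to be exactly this coherence step: organising the elementary moves so that an arbitrary reslicing is realised as a sequence of height-$3$ associativity and height-$2$ unitality relations, i.e.\ verifying that these two relations generate all the coherence needed. Once coherence is in hand, the remaining verifications are routine bookkeeping with the structural isomorphisms $\rho$.
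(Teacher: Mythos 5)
Your proposal is correct in substance, but it follows the route the paper only gestures at in its opening sentence (``induction on height of a staged tree''), whereas the argument the paper actually spells out is categorical: using the standing assumption that $\cv$ has countable coproducts preserved by $\tens_\cv$, it assembles the staged-tree operations into a strong Monoidal structure $\odot^m$ on $\cv^\NN$ (the coproduct over $\str(n,m)$ displayed right after the corollary), observes that an operad in the sense of \propref{pro-operad-via-str} is literally an algebra in $(\cv^\NN,\odot^m)$, and then quotes \cite[Proposition~2.28]{BesLyuMan-book}, which is exactly the general statement that an algebra structure in a lax Monoidal category amounts to a binary multiplication and a unit subject to associativity and two unitality equations. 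In other words, the coherence problem you single out as the ``decisive point'' is not solved by hand in the paper either --- it is absorbed wholesale by that citation. Comparing the two: your route is self-contained and needs no coproduct hypothesis, but it obliges you to actually carry out the coherence; the paper's route is two lines long but leans on external machinery. Your necessity direction and the uniqueness-by-slicing induction are correct as written. To close the one step you leave open, the confluence-of-reslicings formulation is harder than necessary: it is cleaner to \emph{define} $\mu_t$ by one fixed slicing (say, repeatedly cutting off the top stage) and then prove \eqref{dia-ass-operad-O} for arbitrary substitutions by induction on height, invoking the height-$3$ relation for the \emph{coarsened} trees in which each block of consecutive stages is contracted to a single stage (such coarsened trees are honest elements of $\str(n,3)$, so the relation applies verbatim), and the unitality relations whenever a substitution by $\circ$ deletes a stage of unary vertices. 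This replaces the connectivity claim for the graph of elementary moves, which you would otherwise have to prove separately, by a straightforward induction; your observation that each elementary move is an instance of the three listed relations is the part of your sketch that is already correct and is what makes either variant work.
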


The proof is by induction on height of a staged tree similarly to \cite[Proposition~2.28]{BesLyuMan-book}.
Furthermore, if countable coproducts exist in $\cv$ and $\tens_\cv$ commutes with countable coproducts, the above corollary follows immediately from the mentioned proposition.
In fact, the category $\cv^\NN$ admits in this case a strong Monoidal structure \(\odot^m:(\cv^\NN)^m\to\cv^\NN\)
\[ \bigl(\bigodot_{j\in\mb m}X_j\bigr)(n)
=\coprod_{t\in\str(n,m)} \, \bigotimes_{j\in\mb m} \, \bigotimes_{k\in t(j)} X_j|(j,k)|, \qquad |(j,k)|=|t_j^{-1}(k)|.
\]
An operad $A$ in the sense of \propref{pro-operad-via-str} is exactly the same as an algebra $A$ in \((\cv^\NN,\odot^m)\) in the sense of \cite[Definition~2.25]{BesLyuMan-book}, which presumes morphisms \(\odot^mA\to A\) in $\cv^\NN$.
Proposition~2.28 of [\textit{ibid.}] says that such an algebra structure amounts to the binary multiplication \(\odot^2A\to A\) and the unit \(\odot^0A\to A\) which satisfy the associativity and two unitality equations.

Another approach is sketched by Ginzburg and Kapranov in the symmetric case \cite[1.2]{GinzbKapranov} and presented in detail by Muro for non-symmetric operads \cite[Proposition~3.10]{MR2821434}.

We extend the notion of a right $\co$\n-module to the following set-up.
Let \(\mm=(\mm,+,0)\) be a commutative monoid equipped with a morphism of monoids \(|\cdot|:\mm\to\NN\), \(m\mapsto|m|\), such that $|m|=0$ implies $m=0$ and for any $n\in\NN$ the set \(\{m\in\mm\mid|m|=n\}\) is finite.
In particular, $\mm$ is a M\"obius monoid (as defined \textit{e.g.} in \cite{MR2720184}).
In this article we shall be interested in \(\mm=0\) or $\mm=\NN$, occasionally in \(\mm=\NN^n\) for $n>1$ with \(|(a_1,\dots,a_n)|=\sum_{i=1}^na_i\).
The monoidal category \((\cv^\NN,\odot)\) acts on the category $\cv^\mm$ of $\mm$\n-graded objects ($\mm$ is viewed as a discrete category with the set of objects $\mm$).
The monoidal action \(\odot:\cv^\mm\times\cv^\NN\to\cv^\mm\), \(\cp\mapsto\cp\odot\cb\), was denoted $\odot_0$ in \cite{Lyu-A8-several-entries}.
For $\ell\in\mm$
\[ (\cp\odot\cb)(\ell) =\coprod_{n_1+\dots+n_k=\ell}^{k\ge0,\,n_r\in\mm} \Bigl(\bigotimes_{r=1}^k\cp(n_r)\Bigr)\tens\cb(k).
\]
When $\co$ is an operad in $\cv$, the functor \(-\odot\co:\cv^\mm\to\cv^\mm\) is a monad.
Algebras over this monad are called right $\co$\n-modules.
They are objects $M$ of $\cv^\mm$ equipped with a unital associative action \(\alpha:M\odot\co\to M\).
Algebras over an operad $\co$ are right $\co$\n-modules with $\mm=0$.
Right $\co$\n-modules in the monoidal category \((\cv^\NN,\odot)\) are right $\co$\n-modules in the above sense for $\mm=\NN$.

\section{Operad $A_\infty^\hu$}
Homotopy unital \ainfm-algebras\index{TTindex}{homotopy unital $A_\infty$-algebra} are algebras over the operad $\mainf^\hu$, obtained as follows.

1) The operad $\mainf^\su$ is generated over $\mainf$ by 
a nullary degree 0 cycle $\one^\su$ subject to the following relations:
\[ (1\tens\one^\su)m_2 =1, \quad (\one^\su\tens1)m_2 =1, \quad
(1^{\tens a}\tens\one^\su\tens1^{\tens c})m_{a+1+c} =0
\text{ \ if \ } a+c>1.
\]

2) There is a standard trivial cofibration and a homotopy isomorphism
	$\mainf^\su\rCof~\Sim \mainf^\su\langle\one^\su-\sfi,
	\sfj\rangle=\mainf^\su\langle \sfi,\sfj\rangle$,
where $\sfi$, $\sfj$ are two nullary operations, $\deg \sfi=0$,
$\deg \sfj=-1$, with \(\sfi d=0\), \(\sfj d=\one^\su-\sfi\). 

3) A cofibrant replacement \(\mainf^\hu\to\Ass\) of the operad of unital associative algebras is constructed as a 
$\dg$\n-suboperad of \(\mainf^\su\langle\sfi,\sfj\rangle\) generated as 
a graded operad by $\sfi$ and $n$\n-ary operations of degree $4-n-2k$
\[ m_{n_1;n_2;\dots;n_k} =(1^{\tens n_1}\tens \sfj\tens1^{\tens n_2} 
\tens\sfj\tdt1^{\tens n_{k-1}}\tens 
\sfj\tens1^{\tens n_k})m_{n+k-1},
\]
where \(n=\sum_{q=1}^kn_q\), $k\ge1$, $n_q\ge0$, \(n+k\ge3\).
Notice that the graded operad \(\mainf^\hu\) is free.
See \cite[Section~1.11]{Lyu-Ainf-Operad} for the proofs.

In order to construct its version -- the operad\index{TTsymb}{Ahu@$A_\infty^\hu$} $A_\infty^\hu$:

1) Add to $A_\infty$ a nullary degree $-1$ cycle $\bone^\su$ subject to the relations:
\begin{equation*}
(1\tens\bone^\su)b_2 =1, \quad (\bone^\su\tens1)b_2 =-1, \quad
(1^{\tens a}\tens\bone^\su\tens1^{\tens c})b_{a+1+c} =0 \text{ \ if \ } a+c>1.
\end{equation*}
The resulting operad is denoted $A_\infty^\su$.

2) Add to $A_\infty^\su$ two nullary operations $\bi$, $\bj$, $\deg\bi=-1$, $\deg\bj=-2$, with \(\bi d=0\), \(\bj d=\bi-\bone^\su\).
The standard trivial cofibration $A_\infty^\su\rCof~\Sim A_\infty^\su\langle\bi,\bj\rangle$ is a homotopy isomorphism.

3) $A_\infty^\hu$ is a $\dg$\n-suboperad of $A_\infty^\su\langle\bi,\bj\rangle$ generated as a graded operad by $\bi$ and $n$\n-ary operations of degree $3-2k$
\[ b_{n_1;n_2;\dots;n_k} 
=(1^{\tens n_1}\tens\bj\tens1^{\tens n_2}\tens\bj\tdt1^{\tens n_{k-1}}\tens\bj\tens1^{\tens n_k})b_{n+k-1},
\]
where \(n=\sum_{q=1}^kn_q\), $k\ge1$, $n_q\ge0$, \(n+k\ge3\).

Muro and Tonks presented this operad as an operad of cells for certain topological operad \cite{1110.1959}.
Muro has studied this kind of operads in symmetric model monoidal categories \cite{1111.2723}.

\section{Cooperads}
The picture for cooperads is very much the same as for operads since one passes from cooperads in $\cv$ to operads in $\cv^\op$ just by considering the opposite category.
Subtle differences are explained below.

The concrete form of coassociativity equation for cooperads is presented here.
A \emph{cooperad} $C$ in the category $\cv$ is\index{TTindex}{cooperad} a collection \((C(n))_{n\ge0}\), \(C(n)\in\Ob\cv\), together with coassociative comultiplications \(\Delta_{n^1,\dots,n^k}:C(n^1+\dots+n^k)\to C(n^1)\tdt C(n^k)\tens C(k)\)
and a two-sided counit \(\eps:C(1)\to\1\in\cv\).
Coassociativity says that for any $k\in\NN$, any \((l_1,\dots,l_k)\in\NN^k\), an arbitrary
\((i_1^j,\dots,i^j_{l_j})\in\NN^{l_j}\) for each \(1\le j\le k\) the following equation holds:
\begin{diagram}[nobalance,h=1.8em,LaTeXeqno]
C(n) &&\hphantom{C(i_1^1)\tdt C(i^1_{l_1})\tdt C(i_1^k)\tdt C(i^k_{l_k})\tens C(l_1+\dots+l_k)}
\\
\dTTo<{\Delta_{n^1,\dots,n^k}} &\rdTTo^{\Delta_{i_1^1,\dots,i^1_{l_1},\dots,i_1^k,\dots,i^k_{l_k}}}
\\
&&C(i_1^1)\tdt C(i^1_{l_1})\tdt C(i_1^k)\tdt C(i^k_{l_k})\tens C(l_1+\dots+l_k)
\\
C(n^1)\tdt C(n^k)\tens C(k)\hspace*{-3em} &&\dTTo>{1\tens\Delta_{l_1,\dots,l_k}}
\\
\dTTo~{\Delta_{i_1^1,\dots,i^1_{l_1}}\tdt\Delta_{i_1^k,\dots,i^k_{l_k}}\tens1} &=
\\
&&\hspace*{-3.5em}
C(i_1^1)\tens...\tens C(i^1_{l_1})\tens...\tens C(i_1^k)\tens...\tens C(i^k_{l_k})\tens C(l_1)\tens...\tens C(l_k)\tens C(k)
\\
&\ruTTo[hug]^\sim_{\text{unshuffle}}
\\
C(i_1^1)\tdt C(i^1_{l_1})\tens C(l_1)\tdt C(i_1^k)\tdt C(i^k_{l_k})\tens C(l_k)\tens C(k)\hspace*{-22em}
\label{dia-coassociativity-components}
\end{diagram}
where \(n^j=i_1^j+\dots+i^j_{l_j}\) for each \(1\le j\le k\) and \(n=n^1+\dots+n^k\).
Counitality means that equations
\begin{gather}
\bigl( C(n) \rTTo^{\Delta_n} C(n)\tens C(1) \rTTo^{1\tens\eps} C(n) \bigr) =1,
\label{eq-counitality-C(n)OC(1)}
\\
\bigl( C(n) \rTTo^{\Delta_{\sS{^n}1}} C(1)^{\tens n}\tens C(n) \rTTo^{\eps^{\tens n}\tens1} C(n) \bigr) =1
\label{eq-counitality-C(1)nOC(n)}
\end{gather}
hold for all $n\ge0$.

Let $V$ be a finite set, let $I_v$ be a set for each $v\in V$ and let $X_\alpha$ be a graded $\kk$\n-module for each $\alpha\in I_v$.
Fix an element \(\beta\in\prod_{v\in V}I_v\), \(\beta=(\beta(v))_{v\in V}\).
Tensoring the projections \(\pr_{\beta(v)}:\prod_{\alpha\in I_v}X_\alpha\to X_{\beta(v)}\) we get maps \(\tens_{v\in V}\pr_{\beta(v)}:\tens_{v\in V}\prod_{\alpha\in I_v}X_\alpha\to\tens_{v\in V}X_{\beta(v)}\).
They are combined into maps
\[ \zeta =\Bigl( \bigotimes_{v\in V}\pr_{\beta(v)} \Bigr)_{\beta\in\prod_{v\in V}I_v}: \bigotimes_{v\in V}\prod_{\alpha\in I_v}X_\alpha \to \prod_{\beta\in\prod_{v\in V}I_v} \bigotimes_{v\in V}X_{\beta(v)}.
\]
A particular case of this morphism is
\begin{equation}
\zeta: M\tens\prod_{\alpha\in A}X_\alpha \to \prod_{\alpha\in A}M\tens X_\alpha.
\label{eq-zMX-MX}
\end{equation}
On the other hand, one can show that general $\zeta$ is a composition of direct products of morphisms $\zeta$ of type \eqref{eq-zMX-MX}.

\begin{exercise}
If $\kk$ is concentrated in degree~0, \(\cv=\gr\text-\kk\modul\) and $M$ is a free $\kk$\n-module, then \eqref{eq-zMX-MX} is injective.
\end{exercise}

\begin{warning}
Consider $\kk=\ZZ$, $M=\QQ$, $A=\NN_{\ge2}$, \(X_n=\ZZ/n\ZZ\) for $n\ge2$, all concentrated in degree~0.
Then \(M\tens X_n=0\), hence, \(\prod_{n\ge2}M\tens X_n=0\).
On the other hand, \(W=\QQ\tens_\ZZ\prod_{n\ge2}\ZZ/n\ZZ\) contains $\QQ$\n-vector subspace of $\QQ_p$ generated by $\ZZ_p$ for any prime $p$.
In particular, $W\ne0$ and $\zeta:W\to0$ is not injective.
\end{warning}

In terminology of \cite[Definition~2.5]{BesLyuMan-book} a lax Monoidal category (or a represented multicategory) is the context for studying algebras.
The same notion is called an oplax monoidal category by R.~Street and many other researchers.
The context for studying coalgebras is called a colax Monoidal category by \cite{BesLyuMan-book} (opposite to a represented multicategory) or a lax monoidal category by R.~Street \emph{et al.}
There is no such a discrepancy for (co)lax monoidal (Monoidal) functors.

Recall that $\cv$ possesses countable products.
Aguiar and Mahajan suggest in \cite[Appendix~B.4.4]{AguiarMahajan:Species} that $\cv^\NN$ has a colax Monoidal category structure.
Let us describe all its ingredients \((\cv^\NN,\bar\odot^I,\lambda^f,\rho^L)\):
\[ \Bigl(\bbodot_{i\in I}X_i\Bigr)(n)
=\prod_{t\in\str(n,I)} \, \bigotimes_{i\in I} \, \bigotimes_{k\in t(i)} X_i(t_i^{-1}k).
\]
The valency of \((i,k)\in\IV(t)\) is \(|(i,k)|=|t_i^{-1}k|\).
For each singleton \(L=\{l\}\) the natural transformation \(\rho^L:X\to\bar\odot^LX\) is the obvious isomorphism
\[ \rho^L(n): X(n) \to \prod_{t=\tau[n]} \, \bigotimes_{l\in L} \, \bigotimes_{1\in\mb1} X(n).
\]
For \(f:I\to J\in\co\) the natural transformation
\(\lambda^f:(\bar\odot^{j\in J}\bar\odot^{i\in f^{-1}j}X_i)(n)\to(\bar\odot^{i\in I}X_i)(n)\) is given by the composite
\begin{multline*}
\lambda^f =\bigl[ \prod_{t\in\str(n,J)} \, \bigotimes_{j\in J} \, \bigotimes_{k\in t(j)} \, 
\prod_{\sS{^k_j}t\in\str(t_j^{-1}k,f^{-1}j)} \, \bigotimes_{i\in f^{-1}j} \, \bigotimes_{l\in\sS{^k_j}t(i)} X_i(\sS{^k_j}t_i^{-1}l) \rTTo^{(\prod_t\tens_j\zeta)\cdot\prod_t\zeta}
\\
\prod_{t\in\str(n,J)} \, \prod_{(\sS{^k_j}t)\in\prod_{(j,k)\in\IV(t)}\str(t_j^{-1}k,f^{-1}j)} \, \bigotimes_{j\in J}
\, \bigotimes_{k\in t(j)} \, \bigotimes_{i\in f^{-1}j} \, \bigotimes_{l\in\sS{^k_j}t(i)} X_i(\sS{^k_j}t_i^{-1}l) \rTTo^\sim
\\
\prod_{\tau\in\str(n,I)} \, \bigotimes_{i\in I} \, \bigotimes_{p\in\tau(i)} X_i(\tau_i^{-1}p) \bigr].
\end{multline*}
The second isomorphism takes the factor indexed by \((t,(\sS{^k_j}t))\) to the isomorphic factor indexed by \(\tau=I_t\bigl(\sS{^k_j}t\mid(j,k)\in\IV(t)\bigr)\).
It is easy to prove (opposite to) equations (2.5.1), (2.5.2) and (2.5.4) of \cite{BesLyuMan-book}.
For instance, the latter equation between $\lambda$'s follows from the observation that moving $\prod$ to the left in two steps in the diagram
\begin{diagram}[w=6em]
\ttt \prod\tens\tens\prod\tens\tens\prod\tens\,\tens &\rTTo &\ttt \prod\prod\tens\tens\tens\tens\prod\tens\,\tens
\\
\dTTo &\rdTTo &\dTTo
\\
\ttt \prod\tens\tens\prod\prod\tens\tens\tens\,\tens &\rTTo &\ttt \prod\prod\prod\tens\tens\tens\tens\tens\,\tens
\end{diagram}
amounts to moving them at once (along the diagonal arrow) since all three paths come from the projections \(\prod\tens\tens\prod\tens\tens\prod\tens\,\tens\to\tens\tens\tens\tens\tens\,\tens\).

Denote $*$ the terminal object of $\cv$.
The unit object \(\1=\bar\odot^\varnothing\) of \((\cv^\NN,\odot)\) is computed as
\[ \1(n) =
\begin{cases}
\1_\cv \quad & \text{if } n=1,
\\
* \quad & \text{if } n\ne1.
\end{cases}
\]
Assume further that initial and terminal objects of $\cv$ are isomorphic, hence, constitute the zero object 0.
Then there are isomorphisms
\begin{equation}
\begin{split}
Y\bar\odot\1 \rTTo^{\rho^{\mb1}\bar\odot1}_\sim (\bar\odot^{\mb1}Y)\bar\odot(\bar\odot^\varnothing) 
\rTTo^{\lambda_\cc^{\msf{I{\lar.}}}}_\sim \bar\odot^{\mb1}Y \rTTo^{(\rho^{\mb1})^{-1}}_\sim Y,
\\
\1\bar\odot Y \rTTo^{1\bar\odot\rho^{\mb1}}_\sim (\bar\odot^\varnothing)\bar\odot(\bar\odot^{\mb1}Y)
\rTTo^{\lambda_\cc^{\msf{{\lar.}I}}}_\sim \bar\odot^{\mb1}Y \rTTo^{(\rho^{\mb1})^{-1}}_\sim Y.
\end{split}
\label{eq-YO1-Y-1OY-Y}
\end{equation}

By \cite[Definition~2.25]{BesLyuMan-book} a coalgebra in \((\cv^\NN,\bar\odot^I)\) is a colax Monoidal functor
\(C:(\1,\tens)\to(\cv^\NN,\bar\odot^I)\) from the one-morphism category $\1$.
Equivalently, it is an object $C$ of $\cv^\NN$ equipped with a morphism \(\Delta_I:C\to\bar\odot^IC\) for each $I\in\Ob\co$ such that \(\Delta_L=\rho_L\) for each singleton $L$ and for every map $f:I\to J\in\co$ the following equation holds:
\[ \Delta_I = \bigl( C \rTTo^{\Delta_J} \bar\odot^JC \rTTo^{\bar\odot^{j\in J}\Delta_{f^{-1}j}\;}
\bar\odot^{j\in J}\bar\odot^{f^{-1}j}C \rTTo^{\lambda^f} \bar\odot^IC \bigr).
\]

\begin{remark}
It is proven in \cite[Proposition~2.28]{BesLyuMan-book} that a coalgebra structure of an object $C$ of
\((\cv^\NN,\bar\odot^I)\) amounts to comultiplication \(\Delta:C\to C\bar\odot C\) and counit \(\eps:C\to\1\) such that counitality equations hold
\[ \bigl( C \rto\Delta C\bar\odot C \rTTo^{1\bar\odot\eps} C\bar\odot\1 \rto\sim C \bigr) =\id_C, \qquad
\bigl( C \rto\Delta C\bar\odot C \rTTo^{\eps\bar\odot1} \1\bar\odot C \rto\sim C \bigr) =\id_C,
\]
where isomorphisms are those of \eqref{eq-YO1-Y-1OY-Y}.
Due to invertibility of \(\rho^{\mb1}\) equations (2.27.4), (2.27.5) of \cite{BesLyuMan-book} simplify to the above form.
Futhermore, coassociativity equation holds:
\begin{multline*}
\bigl[ C \rto\Delta C\bar\odot C \rTTo^{1\bar\odot\Delta} C\bar\odot(C\bar\odot C) \rTTo^{\rho^{\mb1}\bar\odot\ell^b}
(\bar\odot^{\mb1}C)\bar\odot(\bar\odot^{\{2,3\}}(C,C)) \rto{\lambda^{\msf{IV}}} C\bar\odot C\bar\odot C \bigr]
\\
=\bigl[ C \rto\Delta C\bar\odot C \rTTo^{\Delta\bar\odot\rho^{\{3\}}} (C\bar\odot C)\bar\odot(\bar\odot^{\{3\}}C)
\rto{\lambda^{\msf{VI}}} C\bar\odot C\bar\odot C \bigr].
\end{multline*}
Here \(b:\{2,3\}\to\{1,2\}\) is the only increasing bijection and the isomorphism
\[ \ell^b =\bigl[ A\bar\odot B \overset{\text{def}}= \bar\odot^{\{1,2\}}(A,B) \rTTo^{\rho^{\{2\}}\bar\odot\rho^{\{3\}}}
(\bar\odot^{\{2\}}A)\bar\odot(\bar\odot^{\{3\}}B) \rto{\lambda^b} \bar\odot^{\{2,3\}}(A,B) \bigr]
\]
from \cite[Proposition~2.12]{BesLyuMan-book} is the obvious one (the indexing set is replaced with a bijective one).
\end{remark}

\begin{proposition}
A coalgebra in \((\cv^\NN,\bar\odot^I)\) is the same as a cooperad.
\end{proposition}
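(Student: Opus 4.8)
The plan is to lean on the reduction already recorded in the preceding Remark: by \cite[Proposition~2.28]{BesLyuMan-book} a coalgebra in $(\cv^\NN,\bar\odot^I)$ amounts to an object $C$ equipped with a single binary comultiplication $\Delta:C\to C\bar\odot C$ and a counit $\eps:C\to\1$, subject to the two counitality triangles and the one coassociativity equation displayed there. It then suffices to unpack this binary data level by level and match it with the cooperad structure $(\Delta_{n^1,\dots,n^k},\eps)$. First I would identify the underlying data. Since the staged trees in $\str(n,2)$ are exactly the ordered compositions $(n^1,\dots,n^k)$ with $\sum n^i=n$, $k\ge0$, the defining formula for $\bar\odot$ gives $(C\bar\odot C)(n)=\prod_{n^1+\dots+n^k=n}C(n^1)\tdt C(n^k)\tens C(k)$. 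By the universal property of the product a morphism $\Delta(n):C(n)\to(C\bar\odot C)(n)$ is precisely a family $\Delta_{n^1,\dots,n^k}:C(n)\to C(n^1)\tdt C(n^k)\tens C(k)$, one per composition, which is exactly the comultiplication data of a cooperad. Likewise, as $\1(m)=\1_\cv$ for $m=1$ and $\1(m)=*$ otherwise, a morphism $\eps:C\to\1$ carries no information in degrees $\ne1$ and reduces to a single $\eps:C(1)\to\1$, the cooperad counit.

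Next I would match the counitality axioms. Projecting the first counit triangle to level $n$, the isomorphism $C\bar\odot\1\rto\sim C$ of \eqref{eq-YO1-Y-1OY-Y} selects the $k=1$ factor $C(n)\tens\1(1)$, so the composite picks out the component $\Delta_n:C(n)\to C(n)\tens C(1)$ of $\Delta$ followed by $1\tens\eps$; requiring it to be the identity is exactly \eqref{eq-counitality-C(n)OC(1)}. Symmetrically, $\eps\bar\odot1$ produces the terminal object in every leaf slot of arity $\ne1$, so the second triangle forces each part to equal $1$ and selects the composition $(1,\dots,1)$, yielding $\Delta_{1,\dots,1}:C(n)\to C(1)^{\tens n}\tens C(n)$ followed by $\eps^{\tens n}\tens1$; its coincidence with the identity is \eqref{eq-counitality-C(1)nOC(n)}.

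Finally, and this is where the real work lies, I would match coassociativity. The target $C\bar\odot C\bar\odot C=\bar\odot^{\mb3}C$ evaluated at $n$ is the product over $t\in\str(n,3)$, that is, over the iterated compositions $\bigl(k,(l_j)_j,(i^j_s)_{j,s}\bigr)$ indexing \eqref{dia-coassociativity-components}, with stage~$1$ carrying the $C(i^j_s)$, stage~$2$ the $C(l_j)$ and stage~$3$ the root $C(k)$. I would project the two composites of the Remark onto the factor indexed by a fixed such tree. The morphism $1\bar\odot\Delta$ decomposes the root factor a second time and, after $\rho^{\mb1}\bar\odot\ell^b$ and $\lambda^{\msf{IV}}$, becomes the leg of \eqref{dia-coassociativity-components} that applies the fine comultiplication $\Delta_{i_1^1,\dots,i^k_{l_k}}$ followed by $1\tens\Delta_{l_1,\dots,l_k}$; the morphism $\Delta\bar\odot\rho^{\{3\}}$ decomposes the leaf factors and, after $\lambda^{\msf{VI}}$, becomes the leg applying $\Delta_{n^1,\dots,n^k}$ (with $n^j=\sum_s i^j_s$) followed by $\bigl(\Delta_{i_1^1,\dots,i^1_{l_1}}\tdt\Delta_{i_1^k,\dots,i^k_{l_k}}\bigr)\tens1$ and the unshuffle. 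The crux is that $\lambda^{\msf{IV}}$ and $\lambda^{\msf{VI}}$, being built from the distributivity maps $\zeta$ and the substitution isomorphism $I_t$, act on each projection by selecting a single summand and reindexing it: post-composing with a projection turns every $\zeta$ into an ordinary projection of a product, so only the intended tensor factors survive. I expect the only genuine obstacle to be this bookkeeping — verifying that the reindexing performed by the second isomorphism in the definition of $\lambda^f$ reproduces the ``unshuffle'' of \eqref{dia-coassociativity-components} and that the right-handed tensor conventions keep it sign-free. Since each instance of \eqref{dia-coassociativity-components} is recovered by projecting to its factor, and the single product equation is equivalent to the totality of its projections, the two coassociativity conditions coincide.

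As a conceptual check that the computation must come out, one observes that $(\cv^\NN,\bar\odot^I)$ is the opposite colax Monoidal category of the Monoidal category $((\cv^\op)^\NN,\odot^I)$, so a coalgebra here is an algebra there; by the discussion following \corref{cor-staged-operations} the latter is an operad in $\cv^\op$, that is, a cooperad in $\cv$. The component analysis above simply renders this identification explicit, and in particular shows that the binary $\Delta$ recovers all the components $\Delta_{n^1,\dots,n^k}$.
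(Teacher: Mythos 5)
Your proof is correct and takes essentially the same route as the paper's: both reduce the coalgebra structure to the binary comultiplication and counit via the preceding Remark (Proposition~2.28 of \cite{BesLyuMan-book}), observe that the counitality triangles are exactly \eqref{eq-counitality-C(n)OC(1)} and \eqref{eq-counitality-C(1)nOC(n)}, and identify the single coassociativity equation with \eqref{dia-coassociativity-components} by matching the factors indexed by iterated compositions (staged trees of height $3$). The only cosmetic difference is that the paper performs this identification as one global reindexing of the products at the step $\lambda^{\msf{VI}}$, whereas you phrase it factor-by-factor via projections; your closing duality remark is also consistent with the paper's own framing at the start of the section.
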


\begin{proof}
The both structures on \(C\in\cv^\NN\) can be specified by a counit \(\eps:C(1)\to\1\) and a comultiplication
\[ \Delta: C(n) \to \prod_{t=(\mb n\rto f\mb k\rto\con\mb1)} \bigl(\tens^{j\in\mb k}C(f^{-1}j)\bigr)\tens C(k) 
=\prod_{(n^j)\in\NN^k}^{k\ge0} \bigl(\tens^{j\in\mb k}C(n^j)\bigr)\tens C(k),
\]
where \(n^j=|f^{-1}j|\).
Counitality equations for coalgebra are precisely counitality equations \eqref{eq-counitality-C(n)OC(1)},
\eqref{eq-counitality-C(1)nOC(n)} for cooperad.
Coassociativity equation for coalgebra takes the form
\begin{multline*}
\Bigl[ C(n) \rto\Delta \prod_{f:\mb n\to\mb l} \bigl(\tens^{p\in\mb l}C(f^{-1}p)\bigr)\tens C(l) \rTTo^{\prod1\tens\Delta}
\\
\prod_{f:\mb n\to\mb l} \bigl(\tens^{p\in\mb l}C(f^{-1}p)\bigr)\tens
\prod_{g:\mb l\to\mb k} \bigl(\tens^{j\in\mb k}C(g^{-1}j)\bigr)\tens C(k) \rto{\lambda^{\msf{IV}}}
\\
\hfill \prod_{\mb n\rto f\mb l\rto g\mb k}
\bigl(\tens^{p\in\mb l}C(f^{-1}p)\bigr)\tens \bigl(\tens^{j\in\mb k}C(g^{-1}j)\bigr)\tens C(k) \Bigr] \quad
\\
\hskip\multlinegap =\Bigl[ C(n) \rto\Delta \prod_{h:\mb n\to\mb k} \bigl(\tens^{j\in\mb k}C(h^{-1}j)\bigr)\tens C(k) \rTTo^{\prod(\tens\Delta)\tens1} \hfill
\\
\prod_{h:\mb n\to\mb k} \bigl\langle \tens^{j\in\mb k} \prod_{f_j:h^{-1}j\to\mb{l_j}}
\bigl[(\tens^{p\in\mb{l_j}}C(f_j^{-1}p))\tens C(l_j) \bigr] \bigr\rangle\tens C(k) \rto{\lambda^{\msf{VI}}}
\\
\prod_{\mb n\rto f\mb l\rto g\mb k} \bigl\langle \tens^{j\in\mb k}
\bigl[(\tens^{p\in g^{-1}j}C(f_j^{-1}p))\tens C(g^{-1}j) \bigr] \bigr\rangle\tens C(k) \rTTo^{\text{unshuffle}}_\sim
\\
\prod_{\mb n\rto f\mb l\rto g\mb k}
\bigl(\tens^{p\in\mb l}C(f^{-1}p)\bigr)\tens \bigl(\tens^{j\in\mb k}C(g^{-1}j)\bigr)\tens C(k) \Bigr].
\end{multline*}
Here at step \(\lambda^{\msf{VI}}\) we change the indexing sets of products as follows.
Given a family \((l_j)_{j\in\mb k}\) we define \(l=\sum_{j=1}^kl_j\) and a map \(g:\mb l\to\mb k\) such that \(|g^{-1}j|=l_j\).
The map \(f:\mb n\to\mb l\) is the disjoint union of maps \((f_j)_{j\in\mb k}\):
\[ f =\bigsqcup_{j\in\mb k}f_j: \mb n =\bigsqcup_{j\in\mb k}h^{-1}j \to \bigsqcup_{j\in\mb k}g^{-1}j=\mb l.
\]
Clearly, \(h=f\cdot g\).
Denoting \(i^j_p=|f_j^{-1}p|\), \(n^j=|h^{-1}j|\) we identify the above equation with \eqref{dia-coassociativity-components}.
\end{proof}

Introduce a right colax Monoidal action \(\bar\odot:\cv^\mm\times(\cv^\NN)^m\to\cv^\mm\), \((X_0,X_1,\dots,X_m)\mapsto\bar\odot^{i\in[m]}X_i\),
\[ \Bigl(\bbodot_{i\in[m]}X_i\Bigr)(\ell)
=\prod_{t\in\str(-,m),\,n\in\mm^{t(0)}}^{n_1+\dots+n_{t(0)}=\ell} \, \bigotimes_{i\in[m]} \biggl\langle \bigotimes_{j\in t(0)} X_0(n_j), \Bigl(\bigotimes_{j\in t(i)} X_i(t_i^{-1}j)\Bigr)_{i=1}^m\biggr\rangle.
\]
The relevant structure is similar to that of \((\cv^\NN,\bar\odot^I,\lambda^f,\rho^L)\).

\begin{definition}
Let $C$ be a cooperad in $\cv$.
A \emph{right $C$\n-comodule} is\index{TTindex}{comodule over a cooperad} a coaction morphism \(\delta_r=\delta:N\to N\bar\odot C\) in $\cv^\mm$ such that
\[ \bigl( N \rto\delta N\bar\odot C \rTTo^{1\bar\odot\eps} N\bar\odot\1 \rto\sim N \bigr) =\id_N,
\]
\begin{multline*}
\bigl[ N \rto\delta N\bar\odot C \rTTo^{1\bar\odot\Delta} N\bar\odot(C\bar\odot C) \rto{\lambda^{\msf{IV}}} N\bar\odot C\bar\odot C \bigr]
\\
=\bigl[ N \rto\delta N\bar\odot C \rTTo^{\delta\bar\odot1} (N\bar\odot C)\bar\odot C \rto{\lambda^{\msf{VI}}} N\bar\odot C\bar\odot C \bigr].
\end{multline*}
\end{definition}

In detail a right $C$\n-comodule \(N\in\cv^\mm\) is a family \(\delta_{n^1,\dots,n^k}:N(n^1+\dots+n^k)\to N(n^1)\tdt N(n^k)\tens C(k)\) of morphisms of $\cv$ such that for all $n\in\mm$
\begin{equation}
\bigl[ N(n) \rTTo^{\delta_n} N(n)\tens C(1) \rTTo^{1\tens\eps} N(n)\tens\1 \rTTo^\sim N(n) \bigr] =\id
\label{eq-X-XC(1)-Xk-X}
\end{equation}
and for any $k\in\NN$, any \((l_1,\dots,l_k)\in\NN^k\), an arbitrary \((i_1^j,\dots,i^j_{l_j})\in\mm^{l_j}\) for each \(1\le j\le k\) the following equation holds:
\begin{diagram}[nobalance,h=1.8em,LaTeXeqno]
N(n) &&\hphantom{N(i_1^1)\tdt N(i^1_{l_1})\tdt N(i_1^k)\tdt N(i^k_{l_k})\tens C(l_1+\dots+l_k)}
\\
\dTTo<{\delta_{n^1,\dots,n^k}} &\rdTTo^{\delta_{i_1^1,\dots,i^1_{l_1},\dots,i_1^k,\dots,i^k_{l_k}}}
\\
&&N(i_1^1)\tdt N(i^1_{l_1})\tdt N(i_1^k)\tdt N(i^k_{l_k})\tens C(l_1+\dots+l_k)
\\
N(n^1)\tdt N(n^k)\tens C(k)\hspace*{-3em} &&\dTTo>{1\tens\Delta_{l_1,\dots,l_k}}
\\
\dTTo~{\delta_{i_1^1,\dots,i^1_{l_1}}\tdt\delta_{i_1^k,\dots,i^k_{l_k}}\tens1} &=
\\
&&\hspace*{-3.5em}
N(i_1^1)\tens...\tens N(i^1_{l_1})\tens...\tens N(i_1^k)\tens...\tens N(i^k_{l_k})\tens C(l_1)\tens...\tens C(l_k)\tens C(k)
\\
&\ruTTo[hug]^\sim_{\text{unshuffle}}
\\
N(i_1^1)\tdt N(i^1_{l_1})\tens C(l_1)\tdt N(i_1^k)\tdt N(i^k_{l_k})\tens C(l_k)\tens C(k)\hspace*{-22em}
\label{dia-N-NNC-NNNNC}
\end{diagram}
where \(n^j=i_1^j+\dots+i^j_{l_j}\) for each \(1\le j\le k\) and \(n=n^1+\dots+n^k\).
A \emph{coalgebra over a cooperad} $C$ is\index{TTindex}{coalgebra over a cooperad} a right $C$\n-comodule with $\mm=0$.

\section{Binary multiplications for operads}
Since we know by \corref{cor-staged-operations} that an operad admits a classical definition, we may use an equivalent one with binary multiplications only.
Equivalence is well known, see \textit{e.g.} \cite{MR1898414} or \cite[Remark~2.6]{MR2821434}.
In particular the second definition applies to the operad $\tr$ in $\Set$.
So we define grafting operation
\[ \cup_i: \tr(n) \times \tr(m) \to \tr(n+m-1), \quad i\in\mb m, \quad (t',t) \mapsto (t'\sqcup t)/\sim \; =t'\cup_it,
\]
which glues together precisely two vertices -- the root of $t'$ and the vertex \(i\in\Inp t\simeq\mb m\).
The ordering of the set of incoming edges for each vertex is inherited from $t'$ and $t$ and, by definition,
\[ \Inp(t'\cup_it) =\Inp t' \sqcup (\Inp t-\{i\}).
\]
In particular, all trees with 2 internal vertices are of the form \(T(x,y,z)=\tau[y]\cup_{x+1}\tau[x+1+z]\in\tr(x+y+z)\), \(x,y,z\ge0\).
Thus,
\[ T(x,y,z) =\bigl( \mb y \rTTo^f \mb{x+1+z} \rTTo^\con \mb1, T(0)\sqcup(T(1)-\{x+1\}) \bigr), \qquad f(\mb y)\subset \{x+1\}.
\]
The corresponding multiplications in an operad $\co$
\[ \underset{x,y,z}\bull=\mu^\co_{T(x,y,z)}: \co(y)\tens\co(x+1+z)\to\co(x+y+z),
\]
satisfy a system of identities:
\begin{enumerate}
\renewcommand{\labelenumi}{(\arabic{enumi})}
\item \(\bigl[\co(x+1+z) \rTTo^{\eta\tens1} \co(1)\tens\co(x+1+z) \rTTo^{\underset{x,1,z}\bull} \co(x+1+z)\bigr]=\id\) for \(x,z\ge0\);

\item \(\bigl[\co(y)\rTTo^{1\tens\eta} \co(y)\tens\co(1) \rTTo^{\underset{0,y,0}\bull} \co(y)\bigr]=\id\) for \(y\ge0\);

\item for \(v,w,x,y,z\ge0\)
\begin{diagram}[LaTeXeqno,nobalance]
\co(w)\tens\co(y)\tens\co(v+1+x+1+z) &\rTTo^{1\tens\underset{v+1+x,y,z}\bull\;} &\co(w)\tens\co(v+1+x+y+z)
\\
\dTTo<{(12)}>\wr &
\\
\co(y)\tens\co(w)\tens\co(v+1+x+1+z) &= &\dTTo>{\underset{v,w,x+y+z}\bull}
\\
\dTTo<{1\tens\underset{v,w,x+1+z}\bull}
\\
\co(y)\tens\co(v+w+x+1+z) &\rTTo^{\underset{v+w+x,y,z}\bull} &\co(v+w+x+y+z)
\label{dia-operad-3-OOOOOOOOOO}
\end{diagram}

\item for \(v,w,x,y,z\ge0\)
\begin{diagram}[LaTeXeqno,nobalance]
\co(x)\tens\co(w+1+y)\tens\co(v+1+z) &\rTTo^{1\tens\underset{v,w+1+y,z}\bull\;} &\co(x)\tens\co(v+w+1+y+z)
\\
\dTTo<{\underset{w,x,y}\bull\tens1} &= &\dTTo>{\underset{v+w,x,y+z}\bull}
\\
\co(w+x+y)\tens\co(v+1+z) &\rTTo^{\underset{v,w+x+y,z}\bull} &\co(v+w+x+y+z)
\label{dia-operad-4-OOOOOOO}
\end{diagram}
\end{enumerate}
Equation~(3) is illustrated by an equation between trees with 3 vertices
\begin{equation}
\thicklines
\vstretch 120
\hstretch 140
\qquad\qquad
\begin{tangles}{rl}
\nodel{v+w+x+y+z}\hln6 &\hln5 \\
\nodel{v}\id\hln1\id\step\id\hln1\id\noder{w}\Step\id\hln1 &\id\hln1\id\hln1\id\hln2\id\hln1\id\noder{x+y+z} \\
\nw1\nw1\nw1\n\Step\nodel{x}\id\hln1 &\id\step\id\hln1\id\noder{y}\step\ne1\ne1 \\
\nw1\nw1\nw1\step\id\step &\id\step\n\ne1\ne1\ne1 \\
\nodel{v+1+x}\hln4 &\Step\hln1\noder{z} \\
\HH\hstr{70} \nw4\step\nw3\step\nw2\step\nw1\step &\hstr{70} \id\step\ne1\step\ne2\step\ne3 \\
\hln2 &\hln{1.5}\step[.3]\noder{v+1+x+1+z} \\
\HH\hstr{70} \nw4\nw3\nw2\nw1 &\hstr{70} \id\ne1\ne2\ne3 \\
\node
\end{tangles}
\quad=\qquad\qquad
\begin{tangles}{rl}
\nodel{v+w+x+y+z}\hln6 &\hln5 \\
\nodel{v+w+x}\id\hln1\id\hln2\id\hln1\id\hln1\id\hln1 &\id\Step\nodel{y}\id\hln1\id\step\id\hln1\id\noder{z} \\
\nw1\nw1\step\id\hln1\id\noder{w}\step\id\hln1 &\id\noder{x}\Step\n\ne1\ne1\ne1 \\
\nw1\nw1\nw1\n\step\id\hln1 &\id\step\ne1\ne1\ne1 \\
\nodel{v}\hln1\Step\hln1 &\hln3\noder{x+1+z} \\
\HH\hstr{70} \nw4\step\nw3\step\nw2\step\nw1\step &\hstr{70} \id\step\ne1\step\ne2\step\ne3 \\
\hln2 &\hln{1.5}\step[.3]\noder{v+1+x+1+z} \\
\HH\hstr{70} \nw4\nw3\nw2\nw1 &\hstr{70} \id\ne1\ne2\ne3 \\
\node
\end{tangles}
\label{eq-tangles-3-vertices}
\end{equation}
Equation~(4) describes two ways to collapse the tree
\begin{equation}
\thicklines
\vstretch 120
\hstretch 140
\begin{tangles}{rl}
\nodel{x}\hln1 &\hln1 \\
\nw1\nw1\nw1\nodel{w}\hln1\step[-1]\nw1\nw1\n &\ne1\ne1\step[-1]\hln1\noder{y}\ne1\ne1\ne1 \\
\nw1\nodel{v}\hln1\step[-1]\nw1\nw2\nw1\n &\ne1\ne2\ne1\step[-1]\hln1\noder{z}\ne1 \\
\nw3\nw2\step &\n\step\ne2\ne3
\end{tangles}
\label{eq-tangle-3-vertices-(4)}.
\end{equation}

Notice that for any collection \(\co\) in $\cv$ a system of operations $\eta$, $\underset{x,y,z}\bull$ that satisfies (1)--(4) amounts to an operad structure.
Furthermore, if $\eta$ is not given and only (3), (4) are satisfied, this datum is called a pseudo-operad \cite[Definition~1.18]{MR1898414}.

\section{Binary comultiplications for cooperads}
Corresponding equations for cooperads are obtained by replacing $\cv$ with the opposite category.
We list them for further reference.
\begin{enumerate}
\renewcommand{\labelenumi}{(\arabic{enumi})}
\item \(\bigl[C(x+1+z) \rTTo^{\Delta_{T(x,1,z)}} C(1)\tens C(x+1+z) \rTTo^{\eps\tens1} C(x+1+z)\bigr]=\id\) for \(x,z\ge0\);

\item \(\bigl[C(y)\rTTo^{\Delta_{T(0,y,0)}} C(y)\tens C(1) \rTTo^{1\tens\eps} C(y)\bigr]=\id\) for \(y\ge0\);

\item for \(v,w,x,y,z\ge0\)
\begin{equation}
\begin{diagram}[inline]
C(v+w+x+y+z) &\rTTo^{\Delta_{T(v,w,x+y+z)}} &C(w)\tens C(v+1+x+y+z)
\\
&&\dTTo>{1\tens\Delta_{T(v+1+x,y,z)}}
\\
\dTTo<{\Delta_{T(v+w+x,y,z)}} &= &C(w)\tens C(y)\tens C(v+1+x+1+z)
\\
&&\dTTo<\wr>{(12)}
\\
C(y)\tens C(v+w+x+1+z) &\rTTo^{1\tens\Delta_{T(v,w,x+1+z)}} &C(y)\tens C(w)\tens C(v+1+x+1+z)
\end{diagram}
\label{dia-cooperad-3-OOOOOOO}
\end{equation}

\item for \(v,w,x,y,z\ge0\)
\begin{diagram}[LaTeXeqno,nobalance]
C(v+w+x+y+z) &\rTTo^{\Delta_{T(v+w,x,y+z)}} &C(x)\tens C(v+w+1+y+z)
\\
\dTTo<{\Delta_{T(v,w+x+y,z)}} &= &\dTTo<{1\tens\Delta_{T(v,w+1+y,z)}}
\\
C(w+x+y)\tens C(v+1+z) &\rTTo^{\Delta_{T(w,x,y)}\tens1} &C(x)\tens C(w+1+y)\tens C(v+1+z)
\label{dia-cooperad-4-OOOOO}
\end{diagram}
\end{enumerate}

\begin{proposition}\label{pro-inner-hom-VN}
The monoidal category \((\cv^\NN,\odot)\) is closed with the inner hom \(\und{\cv^\NN}(M,N)\) given by
\[ \und{\cv^\NN}(M,N)(k) =\prod_{(n_i)\in\NN^k} \und\cv\bigl(M(n_1)\tdt M(n_k),N(n_1+\dots+n_k)\bigr).
\]
\end{proposition}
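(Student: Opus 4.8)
The plan is to exhibit the stated object $\und{\cv^\NN}(M,N)$ as the value of a right adjoint to the functor $M\odot-\colon\cv^\NN\to\cv^\NN$; that is, to produce a bijection $\cv^\NN(M\odot L,N)\cong\cv^\NN(L,\und{\cv^\NN}(M,N))$ natural in $L$ and $N$. First I would make the binary product explicit. Specializing the definition of the monoidal product to the binary case $m=2$ and to staged trees of height two (which are sequences $\mb n\to\mb k\to\mb1$, so that the second map is forced and $t$ is determined by $k$ together with the composition $n=n_1+\dots+n_k$ induced by $t_1$), one recovers precisely the formula already recorded for the action in the case $\mm=\NN$, namely
\[
(M\odot L)(\ell)=\coprod_{k\ge0}\coprod_{n_1+\dots+n_k=\ell}\bigl(M(n_1)\tdt M(n_k)\bigr)\tens L(k).
\]
Here $M$ sits at the leaves and $L(k)$ at the root, so $L$ enters linearly; this is what makes a clean internal hom possible for $M\odot-$ (and, by the same token, impossible for $-\odot M$).

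Then I would assemble the adjunction from the closedness of $\cv$ applied one summand at a time. Using that a morphism in $\cv^\NN$ is a family of morphisms of $\cv$ indexed by the grading, that $\cv(\coprod_i A_i,B)\cong\prod_i\cv(A_i,B)$, the tensor--hom adjunction of $\cv$ in the form $\cv(A\tens B,C)\cong\cv(B,\und\cv(A,C))$ with $A=M(n_1)\tdt M(n_k)$ and $B=L(k)$, and finally $\cv(B,\prod_i C_i)\cong\prod_i\cv(B,C_i)$, one obtains the chain
\begin{align*}
\cv^\NN(M\odot L,N) &\cong \prod_{\ell\ge0}\cv\bigl((M\odot L)(\ell),N(\ell)\bigr) \\
&\cong \prod_{k\ge0}\prod_{(n_1,\dots,n_k)\in\NN^k}\cv\bigl((M(n_1)\tdt M(n_k))\tens L(k),N(n_1+\dots+n_k)\bigr) \\
&\cong \prod_{k\ge0}\prod_{(n_1,\dots,n_k)\in\NN^k}\cv\bigl(L(k),\und\cv(M(n_1)\tdt M(n_k),N(n_1+\dots+n_k))\bigr) \\
&\cong \prod_{k\ge0}\cv\bigl(L(k),\und{\cv^\NN}(M,N)(k)\bigr)\cong\cv^\NN(L,\und{\cv^\NN}(M,N)).
\end{align*}
Note that no use is made of the symmetry of $\cv$: since $L(k)$ already occupies the right-hand tensor factor, the plain closedness of $\cv$ suffices.

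The only real work is bookkeeping, and the crucial step is the reindexing passing from the first to the second line. For fixed $\ell$ the summands of $(M\odot L)(\ell)$ are indexed by the compositions of $\ell$, and ranging over all $\ell$ these reorganize bijectively into $\coprod_{k\ge0}\NN^k$, with $\ell=n_1+\dots+n_k$ recovered as the total number of inputs. Countability of these index sets is exactly what guarantees that the coproducts in the definition of $\odot$ and the products in the definition of $\und{\cv^\NN}$ exist in $\cv$ and obey the displayed interchange laws, so every isomorphism above is legitimate and manifestly natural in $L$ and $N$. I would close by recording the sanity check $\und{\cv^\NN}(\1,N)\cong N$: the factors of $\und{\cv^\NN}(\1,N)(k)$ with some $n_i\ne1$ vanish because $\1(n_i)=0$, while the single surviving factor $(n_1,\dots,n_k)=(1,\dots,1)$ yields $\und\cv(\1_\cv,N(k))\cong N(k)$, consistent with $\1$ being the monoidal unit.
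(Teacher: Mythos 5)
Your proposal is correct and follows essentially the same route as the paper: both prove closedness by exhibiting the natural bijection $\cv^\NN(M\odot L,N)\cong\cv^\NN(L,\und{\cv^\NN}(M,N))$ through the identical chain of isomorphisms — hom out of the coproduct defining $\odot$ turned into a product, the tensor–hom adjunction of $\cv$ applied factorwise, and hom into a product reassembled into $\und{\cv^\NN}(M,N)$. The extra remarks (the explicit reindexing, the unit sanity check, the observation that symmetry of $\cv$ is not needed) are harmless additions to what is the paper's own argument.
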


\begin{proof}
There is an isomorphism natural in \(M,N,P\in\Ob\cv^\NN\)
\begin{multline*}
\cv^\NN(M\odot P,N) =\prod_{n\in\NN} \cv\biggl(\bigoplus^{k\in\NN}_{n_1+\dots+n_k=n}M(n_1)\tdt M(n_k)\tens P(k),N(n)\biggr)
\\
\simeq \prod_{k\in\NN} \prod_{(n_i)\in\NN^k} \cv\bigl(M(n_1)\tdt M(n_k)\tens P(k),N(n_1+\dots+n_k)\bigr)
\\
\simeq \prod_{k\in\NN} \prod_{(n_i)\in\NN^k} \cv\bigl(P(k),\und\cv(M(n_1)\tdt M(n_k),N(n_1+\dots+n_k))\bigr)
\\
\simeq \prod_{k\in\NN} \cv\bigl(P(k),\prod_{(n_i)\in\NN^k}\und\cv(M(n_1)\tdt M(n_k),N(n_1+\dots+n_k))\bigr) =\cv^\NN(P,\und{\cv^\NN}(M,N)).
\end{multline*}
This proves the claim.
\end{proof}

\begin{corollary}
\(\END M=\und{\cv^\NN}(M,M)\) is an operad.
\end{corollary}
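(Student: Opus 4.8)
The plan is to recognise $\END M=\und{\cv^\NN}(M,M)$ as the internal endomorphism monoid of $M$ in the closed monoidal category $(\cv^\NN,\odot)$, and then to invoke the identification of monoids in $(\cv^\NN,\odot)$ with operads. By \propref{pro-inner-hom-VN} the category $(\cv^\NN,\odot)$ is closed, with adjunction bijection $\cv^\NN(M\odot P,N)\simeq\cv^\NN(P,\und{\cv^\NN}(M,N))$ natural in $P,N$; write $\ev:M\odot\und{\cv^\NN}(M,N)\to N$ for the counit, i.e.\ the morphism corresponding to $\id$ of $\und{\cv^\NN}(M,N)$. Set $E=\und{\cv^\NN}(M,M)$. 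The strong Monoidal structure $\odot$, which exists because $\cv$ has countable coproducts preserved by $\tens_\cv$ (as recalled after \corref{cor-staged-operations}), supplies the associator and units needed in what follows.

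First I would equip $E$ with a monoid structure in $(\cv^\NN,\odot)$ by the standard closed-category construction. The unit $\eta:\1\to E$ is adjoint to the canonical isomorphism $M\odot\1\rto\sim M$, that is, to $\id_M\in\cv^\NN(M,M)\simeq\cv^\NN(\1,E)$. The multiplication $m:E\odot E\to E$ is adjoint to the composite
\begin{equation*}
M\odot(E\odot E)\rto\sim(M\odot E)\odot E\rTTo^{\ev\odot1}M\odot E\rto\ev M,
\end{equation*}
built from the associativity isomorphism of $\odot$ and two evaluations. Associativity of $m$ and its left and right unitality are then forced by naturality of the adjunction together with the defining property of $\ev$: transporting both sides of each law across the adjunction to morphisms out of $M\odot E\odot E$ (resp.\ out of $M\odot E$) reduces them to one and the same iterated evaluation, so the laws already hold before adjoining.

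Finally I would translate this monoid into operad language. By \propref{pro-operad-via-str} together with the identification of an operad with an algebra in $(\cv^\NN,\odot^m)$ in the sense of \cite[Definition~2.25]{BesLyuMan-book}, a monoid object in $(\cv^\NN,\odot)$ — an object carrying an associative multiplication $E\odot E\to E$ and a unit $\1\to E$ subject to the associativity and two unitality equations — is exactly the same datum as an operad. Hence $E=\END M$ is an operad, with operadic composition given componentwise by substitution: on $\und{\cv^\NN}(M,M)(k)=\prod_{(n_i)\in\NN^k}\und\cv(M(n_1)\tdt M(n_k),M(n_1+\dots+n_k))$ the map $m$ composes an outer internal morphism with $k$ inner ones. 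The only point requiring care is matching the abstractly produced $m$ and $\eta$ with the associativity and unitality diagrams of \corref{cor-staged-operations}; but those diagrams are precisely the monoid axioms for $\odot$ written out, so beyond the closed-category bookkeeping above no separate verification is needed, and this is where I expect the only real (though routine) effort to lie.
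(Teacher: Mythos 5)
Your proof is correct and follows exactly the route the paper intends: the corollary is stated without a separate proof precisely because, once \propref{pro-inner-hom-VN} establishes closedness of \((\cv^\NN,\odot)\), the internal endomorphism object \(\und{\cv^\NN}(M,M)\) carries the standard monoid structure given by evaluation, and the paper has already identified monoids (algebras) in \((\cv^\NN,\odot)\) with operads in the discussion following \corref{cor-staged-operations}. Your explicit construction of \(\eta\) and \(m\) via \(\ev\), and the reduction of the associativity and unitality laws to iterated evaluations across the adjunction, is just this implicit argument spelled out.
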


\begin{corollary}
Let $M$ be an object of $\cv^\NN$ and let $\co$ be an operad in $\cv$.
In assumptions of \propref{pro-inner-hom-VN} a structure of right $\co$\n-module on $M$ amounts to an operad morphism \(a:\co\to\END M\), which is a mapping \(a\in\cv^\NN\) that takes the unit to the unit and satisfies for all \(x,y,z\in\NN\) the equation
\begin{diagram}
\co(y)\tens\co(x+1+z) &\rTTo^{a\tens a} &\END M(y)\tens\END M(x+1+z)
\\
\dTTo<{\mu_{T(x,y,z)}} &= &\dTTo>{\mu_{T(x,y,z)}}
\\
\co(x+y+z) &\rTTo^a &\END M(x+y+z)
\end{diagram}
\end{corollary}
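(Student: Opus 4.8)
The plan is to deduce this from the closedness of $(\cv^\NN,\odot)$ established in \propref{pro-inner-hom-VN}, combined with the reduction to binary multiplications of \corref{cor-staged-operations}. By definition a right $\co$\n-module structure on $M$ is an algebra structure over the monad $-\odot\co$, that is, an action morphism $\alpha:M\odot\co\to M$ in $\cv^\NN$ that is unital and associative. First I would apply the adjunction isomorphism $\cv^\NN(M\odot\co,M)\simeq\cv^\NN(\co,\und{\cv^\NN}(M,M))=\cv^\NN(\co,\END M)$ furnished by \propref{pro-inner-hom-VN} to transport the action $\alpha$ to a morphism of collections $a:\co\to\END M$, and conversely. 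This already yields a bijection between action morphisms and collection morphisms, so it remains only to match the extra axioms on the two sides.

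Next I would translate the two monad-algebra axioms across this bijection. Unitality of $\alpha$ states that the composite through the monad unit $M\to M\odot\co$, which is induced by the operad unit $\eta:\1\to\co(1)$, is the identity; under the adjunction this is precisely the assertion that $a$ carries the unit of $\co$ to the unit $1_{\END M}$, i.e. that $a$ preserves units. Associativity of $\alpha$, written as $(\alpha\odot1)\cdot\alpha=(1\odot\mu^\co)\cdot\alpha$ (using associativity of $\odot$ to identify the two domains), transports under the same adjunction to the statement that $a$ is multiplicative, hence a morphism of operads. At this point I would invoke \corref{cor-staged-operations}: since an operad, and therefore the property of being an operad morphism, is unambiguously determined by behaviour on the unit and on the binary operations $\underset{x,y,z}\bull=\mu^\co_{T(x,y,z)}$ subject to the relations (1)--(4), it suffices to check that $a$ intertwines $\underset{x,y,z}\bull$ on $\co$ with $\underset{x,y,z}\bull$ on $\END M$ for all $x,y,z\in\NN$, which is exactly the displayed square.

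The step demanding the most care, and the main obstacle, is the explicit component-level identification between the restricted associativity of $\alpha$ and the binary-multiplication compatibility of $a$. Concretely I would unwind the adjunction isomorphism of \propref{pro-inner-hom-VN} on the relevant summands: a component of $M\odot\co$ lands in a single tensor factor $\bigl(\tens_r M(n_r)\bigr)\tens\co(k)$, while the operad multiplication $\underset{x,y,z}\bull$ on $\END M=\und{\cv^\NN}(M,M)$ is the partial composition inserting a $y$\n-ary endomorphism into the $(x+1)$st input of an $(x+1+z)$\n-ary one. Matching the two iterated actions against this single insertion is then a naturality and bookkeeping computation with the structural isomorphisms $\lambda$ and the evaluation maps, tracking the arities $n_1,\dots,n_k$ and the insertion point. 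Once these components are seen to agree for every choice of $x,y,z$, the two axioms on $\alpha$ correspond under the bijection of the first step to unit- and multiplication-preservation of $a$, which establishes the claimed equivalence.
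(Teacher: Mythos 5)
Your proof is correct and follows essentially the same route the paper intends: the corollary is stated there without an explicit proof, as an immediate consequence of the adjunction \(\cv^\NN(M\odot\co,M)\simeq\cv^\NN(\co,\END M)\) furnished by \propref{pro-inner-hom-VN} together with the reduction of operad structures and their morphisms to the unit and the binary multiplications \(\mu_{T(x,y,z)}\). The component-level bookkeeping you flag as the delicate step is exactly the content the paper leaves implicit and then restates componentwise immediately after the corollary, as equation~\eqref{eq-MMOO-MMO-M}.
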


The last statement can be rewritten as follows.
An $\co$\n-module $M$ amounts to a unital action \(\alpha_{n_1,\dots,n_k}:M(n_1)\tdt M(n_k)\tens\co(k)\to M(n_1+\dots+n_k)\), \(k,n_1,\dots,n_k\in\NN\), such that for all \(w,x,y\in\NN\) the equation holds
\begin{multline}
\bigl[ M(n_1)\tdt M(n_{w+x+y})\tens\co(x)\tens\co(w+1+y) \rto\sim
\\
\hskip\multlinegap M(n_1)\tdt M(n_w)\tens M(n_{w+1})\tdt M(n_{w+x})\tens\co(x) \hfill
\\
\hfill \tens M(n_{w+x+1})\tdt M(n_{w+x+y})\tens\co(w+1+y)\quad
\\
\rTTo^{1^{\tens w}\tens\alpha_{n_{w+1},\dots,n_{w+x}}\tens1^{\tens(y+1)}}
\\
M(n_1)\tdt M(n_w)\tens M(n_{w+1}+\dots+n_{w+x})\tens M(n_{w+x+1})\tdt M(n_{w+x+y})\tens\co(w+1+y)
\\
\hfill \rTTo^{\alpha_{n_1,\dots,n_w,n_{w+1}+\dots+n_{w+x},n_{w+x+1},\dots,n_{w+x+y}}} M(n_1+\dots+n_{w+x+y}) \bigr] \quad
\\
\hskip\multlinegap =\bigl[ M(n_1)\tdt M(n_{w+x+y})\tens\co(x)\tens\co(w+1+y) \rTTo^{1^{\tens(w+x+y)}\tens\mu_{T(w,x,y)}} \hfill
\\
M(n_1)\tdt M(n_{w+x+y})\tens\co(w+x+y) \rTTo^{\alpha_{n_1,\dots,n_{w+x+y}}} M(n_1+\dots+n_{w+x+y}) \bigr].
\label{eq-MMOO-MMO-M}
\end{multline}
Actually, this description of right $\co$\n-modules holds also for non-closed categories $\cv$.
In fact, any tree with more than one vertex can be presented as consecutive grafting of trees with two vertices.
Relation of type~\eqref{eq-tangles-3-vertices} holds automatically and another relation holds due to above equation.
Dually, one can describe right comodules over a cooperad $C$ as a counital coaction \(\delta_{n_1,\dots,n_k}:N(n_1+\dots+n_k)\to N(n_1)\tdt N(n_k)\tens C(k)\), \(k,n_1,\dots,n_k\in\NN\), such that for all \(w,x,y\in\NN\) the equation holds
\begin{multline}
\bigl[ N(n_1+\dots+n_{w+x+y}) \rTTo^{\delta_{n_1,\dots,n_w,n_{w+1}+\dots+n_{w+x},n_{w+x+1},\dots,n_{w+x+y}}}
\\
N(n_1)\tdt N(n_w)\tens N(n_{w+1}+\dots+n_{w+x})\tens N(n_{w+x+1})\tdt N(n_{w+x+y})\tens C(w+1+y)
\\
\rTTo^{1^{\tens w}\tens\delta_{n_{w+1},\dots,n_{w+x}}\tens1^{\tens(y+1)}}
\\
\hskip\multlinegap N(n_1)\tdt N(n_w)\tens N(n_{w+1})\tdt N(n_{w+x})\tens C(x) \hfill
\\
\hfill \tens N(n_{w+x+1})\tdt N(n_{w+x+y})\tens C(w+1+y)\quad
\\
\hfill \rto\sim N(n_1)\tdt N(n_{w+x+y})\tens C(x)\tens C(w+1+y) \bigr] \quad
\\
\hskip\multlinegap =\bigl[ N(n_1+\dots+n_{w+x+y}) \rTTo^{\delta_{n_1,\dots,n_{w+x+y}}} N(n_1)\tdt N(n_{w+x+y})\tens C(w+x+y) \hfill
\\
\rTTo^{1^{\tens(w+x+y)}\tens\Delta_{T(w,x,y)}} N(n_1)\tdt N(n_{w+x+y})\tens C(x)\tens C(w+1+y) \bigr].
\label{eq-NNNNNNNNNNNNNNNNNNNN}
\end{multline}

\chapter{Free operads and cofree cooperads}
We study cofree conilpotent non-counital cooperads, which amounts to considering coalgebras over certain comonad given by sum over trees.
We show also that the category of augmented cooperads is equivalent to the category of non-counital cooperads.

\section{Free operad}
Since countable coproducts exist in $\cv$ and $\tens_\cv$ commutes with countable coproducts we may consider one more monad \(\TT:\cv^\NN\to\cv^\NN\), \(P=(P(n))_{n\ge0}\mapsto P\TT\),
\[ (P\TT)(n) =\coprod_{t\in\tr(n)} \, \bigotimes_{v\in\IV(t)} P|v|.
\]
$\TT$-algebras are precisely operads in $\cv$.
Equivalence of this definition of an operad and the conventional one is also proven in \cite[Theorem~1.105]{MR1898414}.

\begin{proposition}
The forgetful functor \(U:\Op\to\cv^\NN\) from operads to collections has a left adjoint functor
\[ \Fo: \cv^\NN \leftrightarrows \Op: U,
\]
where $\Fo(P)=(P\TT,m:P\TT^2\to P\TT)$ is\index{TTsymb}{Fo@$\Fo$} the free operad generated by $P$.
\end{proposition}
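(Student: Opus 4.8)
The plan is to recognize this as the standard free–forgetful adjunction for algebras over a monad, formally dual to \lemref{lem-forgetful-functor-has-right-adjoint-T}. Since the paragraph preceding the statement identifies $\TT$\n-algebras with operads in $\cv$, the category $\Op$ is the Eilenberg–Moore category of $\TT$, the functor $U$ is the forgetful functor, and $\Fo(P)=(P\TT,m_P)$ is the free $\TT$\n-algebra on the collection $P$ (it is a $\TT$\n-algebra precisely by associativity of the monad $\TT$, and $P\TT$ exists because $\tens_\cv$ commutes with countable coproducts). So it suffices to produce, for every collection $P$ and every operad $(\co,\alpha\colon\co\TT\to\co)$ (the action $\alpha$ being the single morphism reassembling the multiplications $\mu^\co_t$), a bijection $\Op(\Fo P,\co)\cong\cv^\NN(P,U\co)$ natural in both variables.

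First I would write down the two assignments, mirroring the style of \lemref{lem-forgetful-functor-has-right-adjoint-T}. The unit is the monad unit $i_P\colon P\to P\TT=U\Fo(P)$. Given $f\colon P\to U\co$ in $\cv^\NN$ set
\[ \hat f =\bigl( P\TT \rto{f\TT} \co\TT \rto\alpha \co \bigr),
\]
and given an operad morphism $g\colon\Fo(P)\to\co$ set $\check g=i_P\cdot g$. The first thing to verify is that $\hat f$ is a morphism of $\TT$\n-algebras, i.e.\ an operad morphism: using functoriality of $\TT$, naturality of $m\colon\TT^2\to\TT$, and the algebra associativity $(\alpha\TT)\cdot\alpha=m_\co\cdot\alpha$, one computes
\[ (\hat f\TT)\cdot\alpha =(f\TT^2)\cdot(\alpha\TT)\cdot\alpha =(f\TT^2)\cdot m_\co\cdot\alpha =m_P\cdot(f\TT)\cdot\alpha =m_P\cdot\hat f,
\]
which is exactly the compatibility of $\hat f$ with the two algebra structures $m_P$ and $\alpha$.

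Next I would check that $f\mapsto\hat f$ and $g\mapsto\check g$ are mutually inverse. For $\check{\hat f}=f$: naturality of the unit gives $i_P\cdot(f\TT)=f\cdot i_\co$, and then the algebra unit law $i_\co\cdot\alpha=\id_\co$ yields $i_P\cdot\hat f=f$. For $\hat{\check g}=g$: since $g$ is an algebra morphism, $(g\TT)\cdot\alpha=m_P\cdot g$, so
\[ \hat{\check g} =\bigl((i_P\cdot g)\TT\bigr)\cdot\alpha =(i_P\TT)\cdot(g\TT)\cdot\alpha =(i_P\TT)\cdot m_P\cdot g =g,
\]
the last step being the monad unit law $(i_P\TT)\cdot m_P=\id_{P\TT}$. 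Naturality in $P$ and in $\co$ then follows routinely from naturality of $i$ and $m$ together with the definition of $\hat f$.

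I do not expect a genuine obstacle: once operads have been identified with $\TT$\n-algebras the assertion is purely formal. The only points requiring care are bookkeeping in the right-operator notation (where $\hat f\TT$ means $\TT$ applied to $\hat f$, etc.) and the fact that $m$ really is a monad multiplication. The latter rests entirely on the tree-substitution combinatorics already in place — the operation $I_t$ and the bijection \eqref{eq-v(It)-Uv(tp)} on internal vertices — so no new verification is needed here beyond invoking what has been established.
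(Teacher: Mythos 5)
Your proposal is correct and takes essentially the same approach as the paper: the paper's entire proof is the single observation that the statement follows from \lemref{lem-forgetful-functor-has-right-adjoint-T} applied to the comonad \(\TT^\op:(\cv^\NN)^\op\to(\cv^\NN)^\op\), which is exactly the duality you invoke at the outset. The explicit bijections \(f\mapsto\hat f=(f\TT)\cdot\alpha\), \(g\mapsto\check g=i_P\cdot g\) and the verifications you write out are precisely the unfolding of that dualized lemma, so your argument adds detail but no different idea.
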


Follows from \lemref{lem-forgetful-functor-has-right-adjoint-T} applied to the comonad \(\TT^\op:(\cv^\NN)^\op\to(\cv^\NN)^\op\).

\section{Cofree cooperads}
In order to have an explicit expression for cofree cooperads we restrict their class to coalgebras over certain comonad.
Denote by \(\bott:\cv^\NN\to\cv^\NN\) the endofunctor \(X=(X(n))_{n\ge0}\mapsto X\bott\),
\[ (X\bott)(n) =\coprod_{t\in\tr(n)}^{t\ne\circ} \, \bigotimes_{p\in\IV(t)} X|p|,
\]
Notice that \(X\TT=\1\oplus X\bott\), where \(\1(n)=0\) for $n\ne1$ and \(\1(1)=\1\) -- the unit object of $\cv$.
The endofunctor $\bott$ admits the following structure of a comonad.
The map $\Delta$ restricted to the summand indexed by \(\tau\ne\circ\) with \(|\Inp\tau|=n\)
\begin{equation*}
\bigotimes_{v\in\IV(\tau)} X|v| \to
\coprod_{t\in\tr(n)}^{t\ne\circ} \, \coprod_{(t_p\in\tr|p|)_{p\in\IV(t)}}^{\forall p\,t_p\ne\circ} \, \bigotimes_{(p,q)\in\sqcup_{r\in\IV(t)}\IV(t_r)} X|q| =(X\bott^2)(n)
\end{equation*}
is the sum of all canonical isomorphisms
\[ \otimes^{v\in\IV(\tau)} X|v| \rTTo^\sim \otimes^{(p,q)\in\sqcup_{r\in\IV(t)}\IV(t_r)} X|q| 
\]
to the summands indexed by \((t,(t_p)_{p\in\IV(t)})\in\sqcup_{t\in\tr(n)\setminus\circ}\sqcap_{p\in\IV(t)}(\tr|p|\setminus\circ)\) such that \(\tau=I_t(t_p\mid p\in\IV(t))\).
The number of such decompositions of $\tau$ is finite since \(\IV(t_p)\ne\varnothing\), see \eqref{eq-v(It)-Uv(tp)} and also \eqref{eq-Inp(It)-Inp-t}.
In fact all such decompositions are obtained as follows.
Partition the set $\IV(\tau)$ into non-empty subsets $\IV(t_p)$, where $p$ runs over some set $\IV(t)$, thus \(\IV(\tau)=\sqcup_{p\in\IV(t)}\IV(t_p)\).
For each $p\in\IV(t)$ it is required that the full subgraph of $\tau$ with the set of vertices \(\IV(t_p)\) were a tree (and not a mere forest).
Denote by $t_p$ the subtree of $\tau$ with the set of vertices \(V(t_p)=\Pa_t^{-1}(\IV(t_p))\cup\IV(t_p)\) whose subset of internal vertices is $\IV(t_p)$.
Thus \(\Inp t_p=\Pa_t^{-1}(\IV(t_p))\setminus\IV(t_p)\).
Glueing together all vertices of $\tau$ belonging to same $\IV(t_p)$ we obtain a graph which is necessarily a tree.
Its set of vertices is \(\Inp\tau\sqcup\IV(t)\), and we denote it by $t$ imposing \(\Inp t=\Inp\tau\).
Clearly, \(\tau=I_t(t_p\mid p\in\IV(t))\) and that gives all such decompositions of $\tau$.

Below we shall present $\bott$\n-coalgebras as a particular case of non-counital cooperads.
The counit \(e:\bott\to\Id\) is given by the projection to the summand indexed by $\tau[n]$ and the isomorphism \(\tens^{p\in\IV(\tau[n])}X|p|=\tens^{\mb1}X(n)\rto\rho X(n)\).

Define the linear tree of height $m$ as
\begin{equation*}
\theta_m =\bigl( \theta_m(0) =\mb1 \to \mb1 \to \mb1 \to \dots \; \to \mb1 \to \mb1 =\theta_m(m) \bigr)
\end{equation*}
with \(\Inp\theta_m=\theta_m(0)=\mb1\).

The unit cooperad $\1$, \(\1(1)=\1\), \(\1(n)=0\) for $n\ne1$, has the following comultiplication.
For $t$ with \(\Inp t\simeq\mb1\) the map \(\Delta_t:\1\to\otimes^{p\in\IV(t)}\1|p|\) has non-zero target iff \(\uv(t)=\IV(t)\) iff \(t=\theta_m\) for some \(m\ge0\) (including \(\theta_0=\circ\)).
For \(t=\theta_m\) the map \(\Delta_{\theta_m}:\1\to\otimes^{p\in\mb m}\1\) is the isomorphism inverse to the multiplication map \(\1^{\tens m}\to\1\).

Let us describe a way for obtaining new comonoids in a monoidal category \((\cc,\tens)\).
The following statement will be applied later.
In this section we use it only as an illustration.

\begin{proposition}\label{pro-cooperad-P-collection-J}
Let $P$ be a comonoid in \((\cc,\tens)\), let \(J\in\Ob\cc\).
Suppose given morphisms \(\eps:J\to\1\) and \(\vartheta:J\tens P\to J\tens P\tens J\).
Then cooperations on $J\tens P$
\begin{align}
\tilde{\Delta} &= \bigl( J\tens P \rTTo^{1\tens\Delta} J\tens P\tens P \rTTo^{\vartheta\tens1} J\tens P\tens J\tens P \bigr),
\label{eq-Delta-Delta-theta}
\\
\tilde{\eps} &= \bigl( J\tens P \rTTo^{\eps\tens\eps} \1\tens\1 \simeq \1\bigr)
\label{eq-eps-eps-O-eps}
\end{align}
turn $J\tens P$ into a comonoid if and only if
\begin{diagram}[LaTeXeqno]
J\tens P &\rTTo^{1\tens\Delta} &J\tens P\tens P &\rTTo^{\vartheta\tens1} &J\tens P\tens J\tens P
\\
\dTTo<\vartheta &&= &&\dTTo>{1\tens1\tens\vartheta}
\\
J\tens P\tens J &\rTTo^{1\tens\Delta\tens1} &J\tens P\tens P\tens J &\rTTo^{\vartheta\tens1\tens1} &J\tens P\tens J\tens P\tens J
\label{dia-JP-JPP-JPJP}
\end{diagram}
\begin{gather}
\bigl( J\tens P \rTTo^{\vartheta} J\tens P\tens J \rTTo^{1\tens1\tens\eps} J\tens P \bigr) =\id,
\label{eq-JP-JPJ-JP-1}
\\
\bigl( J\tens P \rTTo^{\vartheta} J\tens P\tens J \rTTo^{\eps\tens\eps\tens1} J \bigr) =1\tens\eps.
\label{eq-JP-JPJ-J-1e}
\end{gather}
\end{proposition}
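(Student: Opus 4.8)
The plan is to verify the three comonoid axioms for $(J\tens P,\tilde\Delta,\tilde\eps)$ separately — the two counit axioms $(1\tens\tilde\eps)\tilde\Delta=\id=(\tilde\eps\tens1)\tilde\Delta$ and coassociativity $(\tilde\Delta\tens1)\tilde\Delta=(1\tens\tilde\Delta)\tilde\Delta$ — and to match each with one of the three displayed conditions \eqref{dia-JP-JPP-JPJP}, \eqref{eq-JP-JPJ-JP-1}, \eqref{eq-JP-JPJ-J-1e}. Throughout I would read composites in the diagrammatic order of the paper, write $\eps$ also for the counit of the comonoid $P$, and use freely the coassociativity $(\Delta\tens1)\Delta=(1\tens\Delta)\Delta$ and the counit identities $\Delta\cdot(1\tens\eps)=1=\Delta\cdot(\eps\tens1)$ of $P$. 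The only input about $\tilde\Delta$ I need is its defining decomposition $\tilde\Delta=(1\tens\Delta)(\vartheta\tens1)$ from \eqref{eq-Delta-Delta-theta}.

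For the counit axioms the computation is short. Plugging $\tilde\eps=\eps\tens\eps$ into the second tensor factor of $\tilde\Delta$ and collapsing the $P$-comultiplication by the right counit of $P$, I expect $(1\tens\tilde\eps)\tilde\Delta$ to reduce to $\vartheta\cdot(1\tens1\tens\eps)$, so that the right counit axiom becomes literally \eqref{eq-JP-JPJ-JP-1}. Symmetrically, plugging $\tilde\eps$ into the first factor and using the left counit of $P$, I expect $(\tilde\eps\tens1)\tilde\Delta=(1\tens\Delta)\cdot(g\tens1)$ with $g=\vartheta\cdot(\eps\tens\eps\tens1)\colon J\tens P\to J$; then \eqref{eq-JP-JPJ-J-1e} (i.e.\ $g=1\tens\eps$) gives the left counit axiom immediately, while conversely post-composing the identity $(1\tens\Delta)(g\tens1)=\id$ with $1\tens\eps$ and cancelling by the right counit of $P$ recovers $g=1\tens\eps$. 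Hence each counit axiom is equivalent to its displayed condition, using only the counit of $P$.

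The real work is coassociativity, and keeping track of tensor factors is the main obstacle. Expanding both sides with $\tilde\Delta=(1\tens\Delta)(\vartheta\tens1)$, the composite $(\tilde\Delta\tens1)\tilde\Delta$ contains, on the first $J\tens P$ copy, the subword $\vartheta\cdot(1\tens\Delta\tens1)\cdot(\vartheta\tens1\tens1)$, which is exactly the right-hand path of \eqref{dia-JP-JPP-JPJP}. Replacing it by the left-hand path $(1\tens\Delta)(\vartheta\tens1)(1\tens1\tens\vartheta)$ and then invoking coassociativity of $P$ to rewrite the resulting $(\Delta\tens1)\Delta$ as $(1\tens\Delta)\Delta$, I expect to land precisely on the expanded form of $(1\tens\tilde\Delta)\tilde\Delta$; this shows that \eqref{dia-JP-JPP-JPJP} forces coassociativity of $\tilde\Delta$. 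For the converse I would probe the coassociativity identity, an equality of maps $J\tens P\to(J\tens P)^{\tens3}$, by applying the counit $\eps$ of $P$ to its last tensorand: by the right counit of $P$ this collapses the outer $\Delta$ on one side and the inner $\Delta$ on the other, and the two surviving five-factor maps are exactly the two paths of \eqref{dia-JP-JPP-JPJP}. Combining the three equivalences yields the proposition.
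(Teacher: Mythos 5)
Your proof is correct and, for the sufficiency direction, it is essentially the paper's own argument: your substitution of one path of \eqref{dia-JP-JPP-JPJP} for the other inside the expanded coassociativity identity, together with coassociativity of $P$ and the interchange law, is exactly the paper's pasted-diagram proof, and your two counit computations coincide with the paper's. The one genuine difference is coverage: the paper proves only that the three conditions suffice and dismisses necessity with ``left to the reader,'' whereas you supply it explicitly. Your key observation there — post-composing the coassociativity identity with the counit of $P$ on the last tensor factor collapses the outer $\Delta$ on one side and the inner $\Delta$ on the other, leaving precisely the two paths of \eqref{dia-JP-JPP-JPJP} — is a clean way to get the converse, and your counit arguments give the equivalences with \eqref{eq-JP-JPJ-JP-1} and \eqref{eq-JP-JPJ-J-1e} in both directions at once, so you obtain the full ``if and only if.'' Only cosmetic slips: you announce diagrammatic (left-to-right) composition but then write composites such as $(1\tens\tilde\eps)\tilde\Delta$ in function order, and what you call the ``right-hand path'' of \eqref{dia-JP-JPP-JPJP} is the left-then-bottom one; neither affects the argument.
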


\begin{proof}
Let us prove that the given equations suffice to turn \((J\tens P,\tilde{\Delta},\tilde{\eps})\) into a comonoid.
Coassociativity is proven below:
\begin{diagram}
J\tens P &\rTTo^{1\tens\Delta} &J\tens P\tens P &\rTTo^{\vartheta\tens1} &J\tens P\tens J\tens P
\\
\dTTo<{1\tens\Delta} &= &\dTTo>{1\tens1\tens\Delta} &= &\dTTo>{1\tens1\tens1\tens\Delta}
\\
J\tens P\tens P &\rTTo^{1\tens\Delta\tens1} &J\tens P\tens P\tens P &\rTTo^{\vartheta\tens1\tens1} &J\tens P\tens J\tens P\tens P
\\
\dTTo<{\vartheta\tens1} &&= &&\dTTo>{1\tens1\tens\vartheta\tens1}
\\
J\tens P\tens J\tens P &\rTTo^{1\tens\Delta\tens1\tens1} &J\tens P\tens P\tens J\tens P
&\rTTo^{\vartheta\tens1\tens1\tens1} &J\tens P\tens J\tens P\tens J\tens P
\end{diagram}
Counitality is proven as follows:
\begin{gather*}
\bigl( J\tens P \rto{\tilde{\Delta}} J\tens P\tens J\tens P \rTTo^{1\tens1\tens\tilde\eps} J\tens P \bigr)
=\bigl( J\tens P \rto{\vartheta} J\tens P\tens J \rTTo^{1\tens1\tens\eps} J\tens P \bigr) =\id,
\\
\begin{split}
&\bigl( J\tens P \rto{\tilde{\Delta}} J\tens P\tens J\tens P \rTTo^{\tilde\eps\tens1\tens1} J\tens P \bigr)
\\
&=\bigl( J\tens P \rTTo^{1\tens\Delta} J\tens P\tens P \rTTo^{\vartheta\tens1} J\tens P\tens J\tens P \rTTo^{\eps\tens\eps\tens1\tens1} J\tens P \bigr)
\\
&=\bigl( J\tens P \rTTo^{1\tens\Delta} J\tens P\tens P  \rTTo^{1\tens\eps\tens1} J\tens P \bigr) =\id.
\end{split}
\end{gather*}
Necessity of mentioned conditions is left to the reader.
\end{proof}

Let us apply \propref{pro-cooperad-P-collection-J} to the monoidal category \((\cc,\tens)=(\End\cv^\NN,\circ)\), the comonad $P=\bott$ and the endofunctor \(J:\cv^\NN\to\cv^\NN\), \(XJ=X\oplus\1\), \(fJ=f\oplus1_\1\) ($\1$ is concentrated in arity~1).
It does not apply literally, so in addition to the comonad\index{TTsymb}{bott@$\bott$}
\[ X\bott =\coprod_{t\in\tr}^{t\ne\circ} \, \bigotimes_{p\in\IV(t)} X|p|
\]
we introduce also the endofunctor\index{TTsymb}{botth@$\botth$}
\[ X\botth =\prod_{t\in\tr}^{t\ne\circ} \, \bigotimes_{p\in\IV(t)} X|p|
\]
with the obvious embedding \(\iota:\bott\hookrightarrow\botth\).
The last two endofunctors have companions\index{TTsymb}{botto@$\botto$}
\begin{gather}
X\botto =X\bott J =\coprod_{t\in\tr}\, \bigotimes_{p\in\IV(t)} X|p|, \notag
\\
X\bottho =X\botth J =\prod_{t\in\tr}\, \bigotimes_{p\in\IV(t)} X|p|,
\label{eq-XTo-XTJ-POXp}
\end{gather}
embedded\index{TTsymb}{bottoh@$\bottho$} via \(\iota_\circ=\iota J:\botto\hookrightarrow\bottho\).
The map \(\eps:J\to\Id\) required in \propref{pro-cooperad-P-collection-J} is the projection \(\pr_X:X\oplus\1\to X\).
It seems that a map \(\vartheta:\bott\cdot J\to J\bott J\) with good properties can not be defined.
Instead we take the natural transformation
\[ \vartheta_X: X\botto =X\bott J \to XJ\botth J =XJ\bottho, \quad
\vartheta_X: \coprod_{\tau\in\tr} \, \bigotimes_{v\in\IV(\tau)} X|v| \to \prod_{t\in\tr} \, \bigotimes_{p\in\IV(t)} (X\oplus\1)|p|
\]
defined as follows.
For any pair of trees \((\tau,t)\) find all subsets \(N\subset\uv(t)\) such that \(\tau=t^N\),   where $t^N$ means the tree $t$ with unary vertices from $N$ removed (and adjacent edges glued together), \(\IV(t^N)=\IV(t)-N\).
More precisely, \(t^N=I_t(t_p\mid p\in\IV(t))\), where \(t_p=\tau|p|\) for \(p\in\IV(t)-N\) and \(t_p=\circ\) for \(p\in N\).
In particular, \(t^\emptyset=t\).
With any such $N$ there is associated the obvious embedding
\[ \bigotimes_{v\in\IV(\tau)} X|v| \rTTo_\sim \bigotimes_{p\in\IV(t)} Y^N(p) \rMono \bigotimes_{p\in\IV(t)} (X\oplus\1)|p|,
\]
where
\[ Y^N(p)=
\begin{cases}
\1, &\quad \text{if } p\in N,
\\
X|p|, &\quad \text{if } p\in\IV(t)-N.
\end{cases}
\]
The matrix entry of $\vartheta_X$
\[ \bigotimes_{v\in\IV(\tau)} X|v| \to \bigotimes_{p\in\IV(t)} (X\oplus\1)|p|
\]
is the sum of all such embeddings.
In the particular case of \((\tau=\circ,t=\theta_m)\) for $m\ge0$, the summand $\1$ is sent by an obvious isomorphism to the summand \(\1^{\tens m}\) corresponding to \(N=\uv(\theta_m)=\mb m\).

Equation~\eqref{eq-JP-JPJ-JP-1} obtains the form
\[ \bigl( \botto =\bott\cdot J \rto\vartheta J\botth J \rTTo^{\eps\botth J} \botth J =\bottho \bigr) = \iota_\circ.
\]
It holds because $N=\emptyset$ implies $\tau=t$.
The counit \(\eps=\pr_{\tau[n]}:(X\bott)(n)\to X(n)\), \(n\ge0\), extends on three other endofunctors by the same projection
\begin{align*}
\eps &=\pr_{\tau[n]}: (X\botth)(n) \to X(n),
\\
(\eps &=\pr_{\tau[n]}: (X\botto)(n) \to X(n)) =(\eps\cdot\eps: (X\bott J)(n) \to X(n)),
\\
(\eps &=\pr_{\tau[n]}: (X\bottho)(n) \to X(n)) =(\eps\cdot\eps: (X\botth J)(n) \to X(n)),
\end{align*}
\textit{cf.} \eqref{eq-eps-eps-O-eps}.
Equation~\eqref{eq-JP-JPJ-J-1e} takes the form
\[ \bigl( (X\bott J)(n) \rto\vartheta (XJ\bottho)(n) \rto\eps (XJ)(n) \bigl) =\eps J. 
\]
It obviously holds true for $n\ne1$.
For $n=1$ we consider $t=\tau[1]$.
The set \(\uv(\tau[1])=\IV(\tau[1])\) consists of one element and either $N=\emptyset$, $\tau=\tau[1]$, or $N=\uv(\tau[1])$, $\tau=\circ$.

Let us define a natural transformation between two endofunctors of $\cv^\NN$:
\[ \hat{\Delta}_X: \prod_{\tau\in\tr} \, \bigotimes_{v\in\IV(\tau)} X|v| \to \prod_{t\in\tr} \, \prod_{(t_p)\in\prod_{p\in\IV(t)}\tr|p|} \, \bigotimes_{p\in\IV(t)} \, \bigotimes_{q\in\IV(t_p)} X|q|,
\]
``comultiplication''.
Its component indexed by the collection of trees $t\in\tr$, \((t_p)\in\prod_{p\in\IV(t)}\tr|p|\) is taken equal to
\[ \prod_{\tau\in\tr} \, \bigotimes_{v\in\IV(\tau)} X|v| \rTTo^{\pr_{I_t(t_p\mid p\in\IV(t))}} \bigotimes_{v\in\IV(I_t(t_p\mid p\in\IV(t)))} X|v| \rto\sim \bigotimes_{p\in\IV(t)} \, \bigotimes_{q\in\IV(t_p)} X|q|.
\]
One expects a kind of comultiplication for $\botto$ to be given by \eqref{eq-Delta-Delta-theta} which in our case takes the form
\[ \tilde{\Delta} = \bigl( X\botto =X\bott J \rTTo^{\Delta J} X\bott\bott J \rTTo^\vartheta X\bott J\botth J =X\botto\bottho \bigr).
\]
Comultiplications $\tilde{\Delta}$ and $\hat{\Delta}$ are related by the commutative diagram
\begin{diagram}[nobalance,LaTeXeqno,h=2.4em]
X\botto &\rMono^{\iota_\circ} &X\bottho &\rEq &\prod_{\tau\in\tr} \, \bigotimes_{v\in\IV(\tau)} X|v| \qquad
\\
\dTTo<{\Delta J} &\rdTTo^{\tilde{\Delta}} &&= &\dTTo>{\hat{\Delta}}
\\
X\bott\bott J &\rTTo^\vartheta &X\botto\bottho &\rMono^i &\prod_{t\in\tr} \, \prod_{(t_p)\in\prod_{p\in\IV(t)}\tr|p|} \, \bigotimes_{p\in\IV(t)} \, \bigotimes_{q\in\IV(t_p)} X|q|
\label{dia-tilde-Delta-hat-Delta}
\end{diagram}
where the embedding $i$ is
\begin{align*}
i =\Bigl( X\botto\bottho \rTTo^{\iota_\circ\bottho} X\bottho\bottho \rTTo^{\prod\zeta} &\prod_{t\in\tr} \, \prod_{(t_p)\in\prod_{p\in\IV(t)}\tr|p|} \, \bigotimes_{p\in\IV(t)} \, \bigotimes_{q\in\IV(t_p)} X|q| \Bigr)
\\
=\Bigl( \prod_{t\in\tr} \, \bigotimes_{p\in\IV(t)} \, \coprod_{t_p\in\tr|p|} \, \bigotimes_{q\in\IV(t_p)} X|q| \rto\sim &\prod_{t\in\tr} \, \coprod_{(t_p)\in\prod_{p\in\IV(t)}\tr|p|} \, \bigotimes_{p\in\IV(t)} \, \bigotimes_{q\in\IV(t_p)} X|q|
\\
\rMono^{\prod\iota_\circ} &\prod_{t\in\tr} \, \prod_{(t_p)\in\prod_{p\in\IV(t)}\tr|p|} \, \bigotimes_{p\in\IV(t)} \, \bigotimes_{q\in\IV(t_p)} X|q| \Bigr).
\end{align*}

Take any collection of trees $t\in\tr$, $t_p\in\tr|p|$ for $p\in\IV(t)$.
The only summand of $X\botto$ (and factor of $X\bottho$) taken by $\hat{\Delta}$ to $(t,(t_p))$-indexed factor is indexed by \(I_t(t_p\mid p\in\IV(t))\).
The left-bottom path composed with the projection to $(t,(t_p))$-indexed factor
\begin{multline*}
\coprod_{\tau\in\tr} \, \bigotimes_{v\in\IV(\tau)} X|v| \rTTo^{\Delta J} \coprod_{\theta\in\tr} \, \bigotimes_{p\in\IV(\theta)} \, \coprod_{t_p\in\tr|p|}^{t_p\ne\circ} \, \bigotimes_{q\in\IV(t_p)} X|q| \rto\vartheta \prod_{t\in\tr} \, \bigotimes_{p\in\IV(t)} \, \coprod_{t_p\in\tr|p|} \, \bigotimes_{q\in\IV(t_p)} X|q|
\\
\rto\sim \prod_{t\in\tr} \, \coprod_{(t_p)\in\prod_{p\in\IV(t)}\tr|p|} \, \bigotimes_{p\in\IV(t)} \, \bigotimes_{q\in\IV(t_p)} X|q| \rTTo^{\pr_{t.(t_p)}} \bigotimes_{p\in\IV(t)} \, \bigotimes_{q\in\IV(t_p)} X|q|
\end{multline*}
vanishes unless
\begin{equation*}
\tau =I_\theta(t_p\mid p\in\IV(\theta)), \quad t_p\ne\circ \ \ \forall \, p\in\IV(\theta),
\quad \theta =t^N, \quad N =\{p\in\IV(t) \mid t_p =\circ \}.
\end{equation*}
These data amount to two conditions:
\[ \tau =I_t(t_p\mid p\in\IV(t)), \qquad N =\{p\in\IV(t) \mid t_p =\circ \},
\]
which coincide with the previously obtained ones.
Under these conditions the both obtained maps
\[ \bigotimes_{v\in\IV(\tau)} X|v| \rto\sim \bigotimes_{p\in\IV(t)} \, \bigotimes_{q\in\IV(t_p)} X|q|
\]
coincide, hence, diagram~\eqref{dia-tilde-Delta-hat-Delta} is commutative.

The introduced comultiplication $\hat\Delta$ for $\bottho$ allows to define $\bottho$\n-coalgebras.
These are collections \(C\in\cv^\NN\) equipped with the coaction \(\delta:C\to C\bottho\) such that
\begin{equation}
\bigl( C \rTTo^\delta C\bottho \rTTo^\eps C \bigr) =\id,
\label{eq-delta-eps=id}
\end{equation}
\begin{diagram}[nobalance,LaTeXeqno,h=2.4em]
C &\rMono^\delta &C\bottho &\rEq &\prod_{\tau\in\tr} \, \bigotimes_{v\in\IV(\tau)} C|v| \qquad
\\
\dTTo<\delta &&= &&\dTTo>{\hat{\Delta}}
\\
C\bottho &\rMono^{\delta\bottho} &C\bottho\bottho &\rTTo^{\prod\zeta} &\prod_{t\in\tr} \, \prod_{(t_p)\in\prod_{p\in\IV(t)}\tr|p|} \, \bigotimes_{p\in\IV(t)} \, \bigotimes_{q\in\IV(t_p)} C|q|
\label{dia-delta-delta-delta-hat-Delta}
\end{diagram}
The coaction $\delta$ can be presented as the family \(\Delta_t:C(\Inp t)\to\otimes^{p\in\IV(t)}C|p|\), \(t\in\tr\).
Counitality equation \eqref{eq-delta-eps=id} expands to
\[ \bigl( C(n) \rTTo^{(\Delta_t)} \prod_{t\in\tr(n)} \, \bigotimes_{p\in\IV(t)} C|p| \rTTo^{\pr_{\tau[n]}} \bigotimes_{p\in\mb1} C(n) \rTTo^{\rho_{C(n)}}_\sim C(n) \bigr) =\id,
\]
which coincides with \eqref{eq-Delta-tau-n-normalization}, that is, \(\Delta_{\tau[n]}=\rho_{C(n)}^{-1}\).
Coassociativity equation~\eqref{dia-delta-delta-delta-hat-Delta} expands to
\begin{diagram}[nobalance,h=2.4em]
C &\rMono^{(\Delta_\tau)} &\prod_{\tau\in\tr} \, \bigotimes_{v\in\IV(\tau)} C|v| \qquad
\\
&&\dTTo>{\hat{\Delta}}
\\
\dMono<{(\Delta_t)} &= &\prod_{t\in\tr} \, \prod_{(t_p)\in\prod_{p\in\IV(t)}\tr|p|} \, \bigotimes_{p\in\IV(t)} \, \bigotimes_{q\in\IV(t_p)} C|q|
\\
&&\uTTo>{\prod\zeta}
\\
\prod_{t\in\tr} \, \bigotimes_{p\in\IV(t)} C|p| &\rTTo^{\prod\tens(\Delta_{t_p})} &\prod_{t\in\tr} \, \bigotimes_{p\in\IV(t)} \, \prod_{t_p\in\tr|p|} \, \bigotimes_{q\in\IV(t_p)} C|q|
\end{diagram}
Its composition with the projection to the \((t,(t_p))\)\n-indexed factor is precisely \eqref{dia-CCCC-equation-coass}.
Hence, $\bottho$\n-coalgebras are precisely cooperads.

Morphisms of $\bottho$\n-coalgebras are \(f:C\to D\in\cv^\NN\) such that
\[ \bigl( C \rto f D \rMono^{(\Delta_t)} D\bottho \bigr) =\bigl( C \rMono^{(\Delta_t)} C\bottho \rTTo^{f\bottho} D\bottho \bigr).
\]
That is, $f$ agrees with comultiplication \(\Delta_t:C(\Inp t)\to\otimes^{p\in\IV(t)}C|p|\) for each \(t\in\tr\).
Thus, morphisms of $\bottho$\n-coalgebras are precisely morphisms of cooperads.

All conditions of \propref{pro-cooperad-P-collection-J} are realized in  our setting, although not on the nose.
The remaining one \eqref{dia-JP-JPP-JPJP} takes the form of the following equation.

\begin{lemma}\label{lem-theta-theta-delta-hat-Delta}
The following diagram commutes
\begin{diagram}[nobalance,LaTeXeqno,h=2.4em]
X\botto &&\rTTo^\vartheta &&XJ\bottho &\rEq &\prod_{\tau\in\tr} \, \bigotimes_{v\in\IV(\tau)} (XJ)|v| \qquad\qquad\qquad\qquad
\\
\dTTo<{\Delta J} &\rdTTo^{\tilde{\Delta}} &&&= &&\dTTo>{\hat{\Delta}}
\\
X\bott\bott J &\rTTo^\vartheta &X\botto\bottho &\rTTo^{\vartheta\bottho} &XJ\bottho\bottho &\rTTo^{\prod\zeta} &\prod_{t\in\tr} \, \prod_{(t_p)\in\prod_{p\in\IV(t)}\tr|p|} \, \bigotimes_{p\in\IV(t)} \, \bigotimes_{q\in\IV(t_p)} (XJ)|q|
\label{dia-theta-theta-delta-hat-Delta}
\end{diagram}
\end{lemma}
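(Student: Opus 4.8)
The plan is to read \eqref{dia-theta-theta-delta-hat-Delta} as the concrete incarnation of condition~\eqref{dia-JP-JPP-JPJP} of \propref{pro-cooperad-P-collection-J}, and to verify it by comparing matrix coefficients, just as the already established diagram~\eqref{dia-tilde-Delta-hat-Delta} is the analogous identity with $\iota_\circ$ in place of $\vartheta$. Both composites in \eqref{dia-theta-theta-delta-hat-Delta} are natural transformations whose source $X\botto=\coprod_{\tau\in\tr}\bigotimes_{v\in\IV(\tau)}X|v|$ is a coproduct and whose target $\prod_{t\in\tr}\prod_{(t_p)\in\prod_{p\in\IV(t)}\tr|p|}\bigotimes_{p\in\IV(t)}\bigotimes_{q\in\IV(t_p)}(XJ)|q|$ is a product. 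Hence it suffices to fix a summand index $\tau\in\tr$ and a factor index $(t,(t_p)_{p\in\IV(t)})$ with $t\in\tr$, $t_p\in\tr|p|$, to precompose with the injection of the $\tau$-summand and postcompose with the projection to the $(t,(t_p))$-factor, and to check that the two resulting morphisms $\bigotimes_{v\in\IV(\tau)}X|v|\to\bigotimes_{p\in\IV(t)}\bigotimes_{q\in\IV(t_p)}(XJ)|q|$ coincide.

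First I would compute the top composite $\vartheta\cdot\hat\Delta$. Writing $\sigma=I_t(t_p\mid p\in\IV(t))$, the $(t,(t_p))$-component of $\hat\Delta_{XJ}$ is the projection onto the $\sigma$-indexed factor of $XJ\bottho$ followed by the canonical isomorphism $\bigotimes_{v\in\IV(\sigma)}(XJ)|v|\cong\bigotimes_{p\in\IV(t)}\bigotimes_{q\in\IV(t_p)}(XJ)|q|$ coming from \eqref{eq-v(It)-Uv(tp)}. By the definition of $\vartheta$, its component from the $\tau$-summand into this $\sigma$-factor is the sum, over all subsets $M\subset\uv(\sigma)$ with $\sigma^M=\tau$, of the canonical embeddings that place the tensor factor $\1$ at each vertex of $M$ and $X|v|$ at the remaining vertices. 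Thus the top composite on the $(\tau;t,(t_p))$-component is $\sum_{M\subset\uv(\sigma),\,\sigma^M=\tau}$ of these embeddings, transported along \eqref{eq-v(It)-Uv(tp)}.

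Next I would trace the bottom composite $\tilde\Delta\cdot(\vartheta\bottho)\cdot(\prod\zeta)$ through its four arrows. The decisive combinatorial observations are that $\IV(\sigma)=\bigsqcup_{p\in\IV(t)}\IV(t_p)$, that arities are preserved under substitution so that $\uv(\sigma)=\bigsqcup_{p\in\IV(t)}\uv(t_p)$, and that removing unary vertices commutes with substitution, giving $\sigma^M=I_t(t_p^{M_p}\mid p\in\IV(t))$ for $M=\bigsqcup_p M_p$ with $M_p=M\cap\uv(t_p)$. The map $\Delta J$ decomposes $\tau$ as $I_\theta(s_p)$ with $\theta,s_p\ne\circ$; the first $\vartheta$ reinserts the unary vertices that make up the outer tree, namely those $p\in\IV(t)$ for which $t_p^{M_p}=\circ$ so that an entire subtree collapses; the second $\vartheta$ reinserts the remaining unary vertices inside the surviving subtrees $t_p^{M_p}\ne\circ$; and $\prod\zeta$ converts the coproduct over the inner decompositions into the product factor indexed by $(t_p)$. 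Matching these two insertion stages against the single subset $M=\bigsqcup_p M_p$ and comparing the underlying associativity isomorphisms factor by factor shows that the two composites agree on the chosen component.

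I expect the main obstacle to be exactly this bottom-path bookkeeping: one must verify that the two-stage insertion of unary vertices — outer vertices where a subtree degenerates to $\circ$, then inner vertices — reproduces each admissible subset $M\subset\uv(\sigma)$ exactly once, with no vertex double-counted or omitted, and in particular one must treat the degenerate cases where $t$, some $t_p$, or $\sigma^M$ equals $\circ$ or a linear tree $\theta_m$. These are precisely the situations where the coproduct/product accounting governed by $\prod\zeta$ and by the $J=\_\oplus\1$ summands is most delicate, and where the normalization $\Delta_{\theta_m}$ and the exceptional behaviour noted after \eqref{eq-XTo-XTJ-POXp} must be invoked. Once the index bijection is established and the canonical embeddings and associativity isomorphisms are seen to match on each factor, commutativity of \eqref{dia-theta-theta-delta-hat-Delta} follows.
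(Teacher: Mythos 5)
Your proposal follows the paper's own proof essentially verbatim: one fixes a source summand $\tau$ and a target factor $(t,(t_p))$, identifies the top path as the sum of structural embeddings over all $M\subset\uv\bigl(I_t(t_p\mid p\in\IV(t))\bigr)$ with $I_t(t_p\mid p\in\IV(t))^M=\tau$, and matches these against the bottom path's data via $K_p=M\cap\uv(t_p)$ and $N=\{p\in\IV(t)\mid t_p^{K_p}=\circ\}$, after which both composites are the same obvious isomorphism on each matching summand. Your identification of the outer insertion with the collapsed subtrees (which are necessarily linear trees $\theta_m$) is exactly how the paper resolves the bookkeeping you flag as the main obstacle, so the plan is sound.
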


\begin{proof}
Compose this diagram with the embedding of the summand of the source $X\botto$ indexed by $t'\in\tr$ and with the projection to the factor of the target indexed by $t$, \((t_p)\in\prod_{p\in\IV(t)}\tr|p|\).
Then the top-right path is the sum of structural embeddings over all
\[ M \subset \uv\bigl(I_t(t_p\mid p\in\IV(t))\bigr) \simeq \coprod_{p\in\IV(t)}\uv(t_p)
\]
such that \(t'=I_t(t_p\mid p\in\IV(t))^M\).
Actually, \(M\subset\uv(t)\) determines the direct summand of \(\otimes^{v\in\IV(\tau)}(X\oplus\1)|v|\), namely, the factor indexed by $v$ is $X$ if $v\notin M$ and $\1$ if $v\in M$.
The left-bottom path takes the summand indexed by $t'$ as follows
\begin{multline*}
\bigotimes_{x\in\IV(t')} X|x| \rTTo^{\Delta J} \coprod_{t''\in\tr} \, \bigotimes_{y\in\IV(t'')} \, \coprod_{t_y''\in\tr|y|}^{t_y''\ne\circ} \, \bigotimes_{z\in\IV(t_y'')} X|z| \rto\vartheta \prod_{t\in\tr} \, \bigotimes_{p\in\IV(t)} \, \coprod_{t_p''\in\tr|p|} \, \bigotimes_{z\in\IV(t_p'')} X|z|
\\
\rTTo^{\prod\tens\vartheta} \prod_{t\in\tr} \, \bigotimes_{p\in\IV(t)} \, \prod_{t_p\in\tr|p|} \, \bigotimes_{q\in\IV(t_p)} (XJ)|q| \rTTo^{\pr_t\cdot\tens_p\pr_{t_p}} \bigotimes_{p\in\IV(t)} \, \bigotimes_{q\in\IV(t_p)} (XJ)|q|.
\end{multline*}
The matrix entries vanish unless \(t'=I_{t''}(t_y''\mid y\in\IV(t''))\) where \(t_y''\ne\circ\), furthermore \(t''=t^N\) where \(t_p''=\circ\) iff \(p\in N=\IV(t)-\IV(t'')\), at last, \(t_p''=t_p^{K_p}\) for \(K_p\subset\uv(t_p)\) under the condition that the family \((K_p)_{p\in\IV(t)}\) determines the direct summand of \(\otimes^{p\in\IV(t)}\otimes^{q\in\IV(t_p)}(X\oplus\1)|q|\), namely, the factor indexed by $(p,q)$ is $X$ if $q\notin K_p$ and it is $\1$ if $q\in K_p$.
The direct summands coincide iff \(M=\coprod_{p\in\IV(t)}K_p\), equivalently, \(K_p=M\cap\uv(t_p)\).
Once $M$ and \((K_p)_{p\in\IV(t)}\) are given, the subset \(N\subset\uv(t)\) is determined unambiguously by \((t_p)_{p\in\IV(t)}\), namely, \(N=\{p\in\IV(t)\mid K_p=\IV(t_p)\}\).
Clearly, $p\in N$ implies \(K_p=\uv(t_p)=\IV(t_p)\), hence, \(t_p=\theta_m\) for some $m\ge0$.
In particular, \(N\subset\uv(t)\).
Finally, the both maps from the source summand to the direct summand of the target are the obvious isomorphisms, hence, coincide.
Commutativity of \eqref{dia-theta-theta-delta-hat-Delta} is proven.
\end{proof}

\begin{definition}
\emph{A non-counital cooperad (pseudo-cooperad)}\index{TTindex}{non-counital cooperad} \((\bar C,\bar\Delta)\) consists of a collection $\bar C$ in $\cv$, morphisms in $\cv$
\begin{equation}
\bar\Delta_t: \bar C(\Inp t) \to \otimes^{p\in\IV(t)} \bar C|p|
\label{eq-bar-Delta-t-def-pseudo-cooperad}
\end{equation}
given for each tree \(t\in\tr\setminus\circ\) such that \(\bar\Delta_{\tau[n]}=\rho^{-1}_{\bar C(n)}\) for all $n\ge0$ and coassociativity equation \eqref{dia-CCCC-equation-coass} holds for $\bar\Delta$ for all
\((t,(t_p)_{p\in\IV(t)})\in\sqcup_{t\in\tr(n)\setminus\circ}\sqcap_{p\in\IV(t)}(\tr|p|\setminus\circ)\).
Morphisms of the category $\nucoop$ of non-counital cooperads  are morphisms of collections that preserve the comultiplications $\bar\Delta_t$.
\end{definition}

\begin{proposition}\label{pro-aug-equiv-non-counital}
The category $\augcoop$ of augmented cooperads \(\eta:(\1,\Delta)\to(C,\Delta)\) is equivalent to the category $\nucoop$.
\end{proposition}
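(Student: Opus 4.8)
The plan is to construct mutually quasi-inverse functors $\Psi:\nucoop\to\augcoop$ and $\Phi:\augcoop\to\nucoop$, where $\Psi$ is precisely the functor for which all the preparatory work of this section was done. Given a non-counital cooperad $(\bar C,\bar\Delta)$ I would put $C=\bar C J$, that is $C(1)=\bar C(1)\oplus\1$ and $C(n)=\bar C(n)$ for $n\ne1$, and equip it with a comultiplication of the shape \eqref{eq-Delta-Delta-theta}, built from the family $(\bar\Delta_t)_{t\ne\circ}$ (assembled into a coaction $\bar C\to\bar C\botth$) by means of the transformation $\vartheta$. By \propref{pro-cooperad-P-collection-J} this endows $C$ with a $\bottho$\n-coalgebra structure $\delta:C\to C\bottho$ --- hence, by the identification of $\bottho$\n-coalgebras with cooperads obtained above, a genuine comultiplication $(\Delta_t)_{t\in\tr}$ --- as soon as the hypotheses \eqref{eq-JP-JPJ-JP-1}, \eqref{eq-JP-JPJ-J-1e} and \eqref{dia-JP-JPP-JPJP} hold; the first two were verified just before \lemref{lem-theta-theta-delta-hat-Delta}, and the third is exactly \lemref{lem-theta-theta-delta-hat-Delta} itself. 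Concretely $\Delta_t$ is the sum over all subsets $N\subset\uv(t)$ of the unit\n-decorated copies of $\bar\Delta_{t^N}$. The counit is $\eps=\pr_\1:C(1)\to\1$ and the augmentation $\eta=\inj_\1:\1\to C(1)$; that $\eta$ is a cooperad morphism is encoded by \eqref{eq-JP-JPJ-JP-1} and \eqref{eq-JP-JPJ-J-1e}, so $\Psi\bar C$ is an object of $\augcoop$.

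For $\Phi$ I would pass to the coaugmentation coideal. Because $\eta$ is a morphism of cooperads it preserves counits, whence the arity\n-$1$ composite $\bigl(\1\rto\eta C(1)\rto\eps\1\bigr)=\id_\1$; therefore $e=\bigl(C(1)\rto\eps\1\rto\eta C(1)\bigr)$ is idempotent, and since $\cv$ is idempotent\n-split the complementary idempotent $\id-e$ splits. I set $\bar C(1)=\im(\id-e)$ (equivalently $\Ker\eps$) and $\bar C(n)=C(n)$ for $n\ne1$, and define $\bar\Delta_t$, $t\ne\circ$, by precomposing $\Delta_t$ with the inclusion $\bar C(\Inp t)\hookrightarrow C(\Inp t)$ and postcomposing with the projections $C|p|\to\bar C|p|$ on each tensor factor. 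Normalization $\bar\Delta_{\tau[n]}=\rho^{-1}_{\bar C(n)}$ is immediate, and coassociativity of $\bar\Delta$ descends from that of $\Delta$: the counitality equations \eqref{eq-counitality-C(n)OC(1)} and \eqref{eq-counitality-C(1)nOC(n)} guarantee that once a tensor factor has been projected into $\bar C$ it can no longer absorb a counit in a subsequent comultiplication, so the mixed terms drop out. On morphisms both functors act by the evident restriction and extension, which is compatible because cooperad and non-counital cooperad morphisms are exactly the collection morphisms commuting with all $\Delta_t$, respectively $\bar\Delta_t$.

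It remains to produce natural isomorphisms $\Phi\Psi\cong\Id_{\nucoop}$ and $\Psi\Phi\cong\Id_{\augcoop}$. The first is transparent: for $C=\Psi\bar C=\bar C J$ the idempotent $\id-e$ cuts out exactly the summand $\bar C(1)$ adjoined by $J$, and the projections kill the unit\n-insertion terms of $\Delta_t$, returning $\bar\Delta_t$ unchanged. For the second, the splitting $C(1)\cong\1\oplus\bar C(1)$ together with $C(n)=\bar C(n)$ identifies $C$ with $\bar C J$, and the only thing to check is that the reassembled comultiplication coincides with the original, i.e.\ that each $\Delta_t$ of the given augmented cooperad is recovered as the $\vartheta$\n-sum over $N\subset\uv(t)$ of the unit\n-decorated $\bar\Delta_{t^N}$. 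This last point is the main obstacle: it is the combinatorial heart of the equivalence, asserting that a comultiplication along a tree is completely determined by its pure parts together with the counit. I expect it to follow from counitality combined with the commuting diagram \eqref{dia-theta-theta-delta-hat-Delta} of \lemref{lem-theta-theta-delta-hat-Delta}, so that the remaining labour is the bookkeeping of unary unit vertices rather than any fresh idea.
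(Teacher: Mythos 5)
Your two functors and the formulas you write down coincide with the paper's --- in particular your ``concrete'' description of $\Delta_t$ as the finite sum over $N\subset\uv(t)$ of unit-decorated copies of $\bar\Delta_{t^N}$ is exactly the paper's definition --- but the way you justify the cooperad axioms has a genuine gap, and it sits exactly at the distinction this chapter is careful about. The machinery you invoke, namely \propref{pro-cooperad-P-collection-J} together with the verifications of \eqref{eq-JP-JPJ-JP-1}, \eqref{eq-JP-JPJ-J-1e} and \lemref{lem-theta-theta-delta-hat-Delta}, is established in the paper only for the transformation $\vartheta$ with source $\botto=\bott\cdot J$, i.e.\ it presupposes a coaction $\bar C\to\bar C\bott$ landing in the \emph{coproduct}. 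That is conilpotency: it is what singles out $\bott$-coalgebras among non-counital cooperads. A general object of $\nucoop$ only has a coaction $\bar C\to\bar C\botth$ into the product, as you yourself note; but then the paper's $\vartheta$ cannot be applied (a product version $\botth J\to J\bottho$ is never defined there), and \lemref{lem-theta-theta-delta-hat-Delta}, whose proof proceeds by restricting to the summands of the coproduct source, cannot be cited as it stands. This is precisely why \remref{rem-functor-Tcoalg-augcoop} records the $\vartheta$-based construction only as a functor $\bott\coalg\to\augcoop$, \emph{not} $\nucoop\to\augcoop$. The gap is fillable --- for each fixed target tree $t$ only the finitely many trees $t^N$, $N\subset\uv(t)$, contribute, so a ``column-finite'' product version of $\vartheta$ and of the lemma could be set up --- but that argument appears neither in the paper nor in your proposal. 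The paper avoids the endofunctor formalism here altogether: since for each single tree $t$ the target $\otimes^{p\in\IV(t)}C|p|$ is a finite direct sum over $N\subset\uv(t)$, it defines $\Delta_t$ directly and proves coassociativity by whiskering \eqref{dia-CCCC-equation-coass} with the projection to the summand indexed by $(N,(K_p))$, reducing everything to coassociativity of $\bar\Delta$ via the tree identity $\tau^M=I_{t^N}\bigl(t_p^{K_p}\mid p\in\IV(t^N)\bigr)$.

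The second gap is the point you flag as ``the main obstacle'' and then leave as an expectation: that for an augmented cooperad the original $\Delta_t$ is recovered as the $N$-indexed sum of unit-decorated $\bar\Delta_{t^N}$, i.e.\ the natural isomorphism $\Psi\Phi\cong\Id_{\augcoop}$. Your hoped-for tool (counitality plus \lemref{lem-theta-theta-delta-hat-Delta}) is not what makes this work, and in the product setting the lemma is unavailable anyway. The paper's proof is short and concrete: for $M\subset\uv(t)$ apply coassociativity \eqref{dia-CCCC-equation-coass} of the \emph{given} cooperad to the family $t_p=t_p^>$ for $p\in\IV(t)-M$ and $t_p=\circ$ for $p\in M$, so that $I_t(t_p\mid p\in\IV(t))=t^M$; since $\Delta_{t_p^>}=\id$ and $\Delta_\circ=\pr_\1$ (this is where the augmentation enters, via $\Ker\Delta_\circ=\bar C$), the resulting identity says exactly that the corestriction of $\Delta_t$ to the summand indexed by $M$ equals $\bar\Delta_{t^M}$. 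Without this step, or an equivalent one, you have a functor in each direction and only one of the two natural isomorphisms, so the equivalence is not yet proven.
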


\begin{proof}
Consider an arbitrary augmented cooperad\index{TTindex}{augmented cooperad} \(\eta:(\1,\Delta)\to(C,\Delta)\), which is a morphism of cooperads.
In particular,
\[ \bigl[ \1 \rTTo^\eta C(1) \rTTo^{\Delta_\circ} \1 \bigr] =\Delta_\circ =\id.
\]
Define a collection $\bar C$ by
\[ \bar C(n)=
\begin{cases}
\Ker(\Delta_\circ:C(1)\to\1), &\quad \text{if } n=1,
\\
C(n), &\quad \text{if } n\ne1.
\end{cases}
\]
Thus, \(C=\1\oplus\bar C\).
Notice that $\bar C$ possesses comultiplication map
\begin{equation}
\bar\Delta_t = \bigl[ \bar C(\Inp t) \rMono^{\inj_2} C(\Inp t) \rTTo^{\Delta_t} \otimes^{p\in\IV(t)} C|p| \rTTo^{\otimes^{p\in\IV(t)}\pr_2\;} \otimes^{p\in\IV(t)} \bar C|p| \bigr]
\label{eq-bar-Delta-t-Delta-pr}
\end{equation}
for each \(t\in\tr\), \(t\ne\circ\).
Clearly, \(\bar\Delta_{\tau[n]}=\id_{\bar C(n)}\) for all $n\ge0$ and coassociativity equation \eqref{dia-CCCC-equation-coass} holds for $\bar\Delta$ for all \((t,(t_p)_{p\in\IV(t)})\in\sqcup_{t\in\tr(n)\setminus\circ}\sqcap_{p\in\IV(t)}(\tr|p|\setminus\circ)\).
This gives a functor \(\augcoop\to\nucoop\), \(C\mapsto\bar C\).

Let us construct a functor \(\nucoop\to\augcoop\), \(\bar C\mapsto\1\oplus\bar C=C\).
We have to define comultiplications
\[ \Delta_t: C(\Inp t) =(\1\oplus\bar C)(\Inp t) \to \bigoplus_{N\subset\uv(t)} \, \bigotimes_{p\in\IV(t)} Y^N(p) =\bigotimes_{p\in\IV(t)} C|p|,
\]
where
\[ Y^N(p)=
\begin{cases}
\1, &\quad \text{if } p\in N,
\\
\bar C|p|, &\quad \text{if } p\in\IV(t)-N.
\end{cases}
\]
The restriction of $\Delta_t$ to \(\1(\Inp t)\) vanishes unless \(\uv(t)=\IV(t)\), that is, \(t=\theta_m\) for some \(m\ge0\).
The restriction of \(\Delta_{\theta_m}\) to \(\1(1)\) is the isomorphism from $\1$ to the direct summand \(\1^{\tens\IV(t)}=\1^{\tens m}\), indexed by \(N=\uv(t)=\mb m\).

The restriction of $\Delta_t$ to \(\bar C(\Inp t)\) for an arbitrary $t$ is given by the sum over \(N\subset\uv(t)\) of maps
\begin{equation}
\bar C(\Inp t) \rTTo^{\bar\Delta_{t^N}} \otimes^{p\in\IV(t^N)} \bar C|p| \rTTo^\sim \otimes^{p\in\IV(t)} Y^N(p) \rMono^{\inj_N} \otimes^{p\in\IV(t)} C|p|,
\label{eq-C(Inpt)-YN(p)}
\end{equation}
where $t^N$ means the tree $t$ with unary vertices from $N$ removed.
In the case \(N=\IV(t)\), which implies \(t^N=\circ\), the first arrow is defined as \(\bar\Delta_\circ=0\).

Let us prove now coassociativity of $\Delta$.
Equation~\eqref{dia-CCCC-equation-coass} involves the right vertical isomorphism, which for \(C=\1\oplus\bar C\) identifies the summand indexed by \(M\subset\uv(\tau)\), \(\tau=I_t(t_p\mid p\in\IV(t))\) with the summand indexed by \((N,(K_p)_{p\in\IV(t)-N})\) as in \lemref{lem-theta-theta-delta-hat-Delta}.
Whiskering diagram~\eqref{dia-CCCC-equation-coass} with the projection to the latter summand we get the equation
\begin{diagram}[h=2.3em]
\bar C(\Inp t) &\rTTo^{\bar\Delta_{\tau^M}} &\otimes^{x\in\IV(\tau^M)} \bar C|x|
\\
\dTTo<{\bar\Delta_{t^N}} &= &\dTTo>\wr
\\
\otimes^{p\in\IV(t^N)} \bar C|p| &\rTTo^{\tens^{p\in\IV(t^N)}\bar\Delta_{t_p^{K_p}}}
&\otimes^{p\in\IV(t^N)} \, \otimes^{q\in\IV(t_p^{K_p})} \bar C|q|
\end{diagram}
which holds true due to isomorphism of trees \(\tau^M=I_{t^N}\bigl(t_p^{K_p}\mid p\in\IV(t^N)\bigr)\).
In fact,
\begin{multline*}
\IV(\tau^M) =\IV(\tau)-M =\bigl(\bigsqcup_{p\in\IV(t)} \IV(t_p)\bigr) -\bigl(\bigsqcup_{p\in N} \uv(t_p)\bigr) -\bigl(\bigsqcup_{p\in\IV(t)-N} K_p\bigr)
\\
=\bigsqcup_{p\in\IV(t)-N} \bigl(\IV(t_p)-K_p\bigr) =\bigsqcup_{p\in\IV(t^N)} \IV\bigl(t_p^{K_p}\bigr) =\IV\bigl(I_{t^N}\bigl(t_p^{K_p}\mid p\in\IV(t^N)\bigr)\bigr).
\end{multline*}
If \(\uv(\tau)=\IV(\tau)\), one proves also equation~\eqref{dia-CCCC-equation-coass} on the direct summand $\1$.

Clearly, the composition \(\nucoop\to\augcoop\to\nucoop\) is isomorphic to the identity functor.
Moreover, the composition \(\augcoop\to\nucoop\to\augcoop\) is isomorphic to the identity functor as well.
In fact, given \(M\subset\uv(t)\) write down equation~\eqref{dia-CCCC-equation-coass} with the following data: $t$, \(t_p=t_p^>\) if \(p\in\IV(t)-M\) and \(t_p=\theta_0=\circ\) if \(p\in M\).
Then \(\tau=I_t(t_p\mid p\in\IV(t))=t^M\), therefore,
\[ \bigl[ (\1\oplus\bar C)(\Inp t) \rTTo^{\Delta_t} \otimes^{p\in\IV(t)}(\1\oplus\bar C)|p| \rTTo^{\otimes^{p\in\IV(t)}f(p)\;} \otimes^{p\in\IV(t^M)}(\1\oplus\bar C)|p| \bigr] =\Delta_{t^M},
\]
where
\[ f(p)=
\begin{cases}
\Delta_\circ =\pr_\1, &\quad \text{if } p\in M,
\\
\Delta_{t_p^>} =\id, &\quad \text{if } p\in\IV(t)-M.
\end{cases}
\]
Thus, corestriction of $\Delta_t$ to the direct summand indexed by $M$ is determined by \(\bar\Delta_{t^M}\) as above.
\end{proof}

Summing up, an arbitrary non-counital cooperad is given by the maps
\[ (\bar\Delta_t)_t: \bar C(n) \rTTo \prod_{t\in\tr(n)\setminus\circ} \; \bigotimes_{p\in\IV(t)} \bar C|p|,
\]
$n\ge0$, that satisfy the coassociativity equation and the normalization condition.
$\bott$\n-coalgebras are distinguished among them by the condition that the above map factorizes as
\[ \bar C(n) \rTTo^{\sum\bar\Delta_t} \bigoplus_{t\in\tr(n)\setminus\circ} \; \bigotimes_{p\in\IV(t)} \bar C|p| \rMono \prod_{t\in\tr(n)\setminus\circ} \; \bigotimes_{p\in\IV(t)} \bar C|p|.
\]
In the case of graded $\kk$\n-modules this means that for any homogeneous element \(c\in\bar C(n)\) there is only a finite number of trees $t$ such that \(\bar\Delta_t(c)\ne0\).
This gives the reason to call the non-counital (resp. augmented) cooperads coming from $\bott$\n-coalgebras non-counital (resp. augmented) conilpotent cooperads\index{TTindex}{conilpotent cooperad}.

Due to \lemref{lem-forgetful-functor-has-right-adjoint-T} for any $\bott$\n-coalgebra $B$ an arbitrary morphism \(f:B\to X\bott\) has the form \(f=\bigl(B\rto\delta B\bott\rTTo^{\check f\bott} X\bott\bigr)\) for certain \(\check f:B\to X\).
Restriction of \(f:Y\bott\to X\bott\) to summand indexed by \(\tau\in\tr(n)\setminus\circ\) is
\[ f =\Bigl[ \bigotimes_{v\in\IV(\tau)} Y|v| \rto\Delta \coprod_{t\in\tr(n)\setminus\circ} \, \bigotimes_{p\in\IV(t)} \, \coprod_{t_p\in\tr|p|\setminus\circ} \, \bigotimes_{q\in\IV(t_p)} Y|q| \rTTo^{\coprod_t\tens_p\check f_{|p|}} \coprod_{t\in\tr(n)\setminus\circ} \, \bigotimes_{p\in\IV(t)} X|p| \Bigr].
\]
In particular, for \(f=\id_X\) the map \(\check f=\eps_\bott\) sends any summand indexed by \(\tens^{q\in\IV(t_p)}X|q|\) to 0 unless \(t_p=\tau|p|\), in which case \(\check f_{|p|}=\rho:\tens^{\mb1}X|p|\to X|p|\).

\begin{remark}\label{rem-functor-Tcoalg-augcoop}
One can extract from the proof of \propref{pro-aug-equiv-non-counital} the following statement.
There is a functor
\[ J: \bott\coalg \to \augcoop, \quad \bar C \mapsto \bigl( \bar CJ, \bar CJ \rTTo^{\delta J} \bar C\bott J \rto\vartheta \bar CJ\bottho \bigr), \quad f \mapsto fJ.
\]
Besides, this can be proven also directly.
The fact that $\bar CJ$ is a cooperad is the commutative diagram
\begin{diagram}[nobalance]
\bar CJ &\rTTo^{\delta J} &\bar C\bott J &\rTTo^\vartheta &\bar CJ\bottho &\rEq &\prod_{\tau\in\tr} \, \bigotimes_{v\in\IV(\tau)} (\bar CJ)|v| \qquad\qquad\qquad\qquad
\\
\dTTo<{\delta J} &= &\dTTo<{\Delta J} &&\eqref{dia-theta-theta-delta-hat-Delta} &&
\\
\bar C\bott J &\rTTo^{\delta\bott J} &\bar C\bott\bott J &&= &&\dTTo>{\hat{\Delta}}
\\
\dTTo<\vartheta &= &\dTTo<\vartheta &&&&
\\
\bar CJ\bottho &\rTTo^{\delta J\bottho} &\bar C\bott J\bottho &\rTTo^{\vartheta\bottho} &\bar CJ\bottho\bottho &\rTTo^{\prod\zeta} &\prod_{t\in\tr} \, \prod_{(t_p)\in\prod_{p\in\IV(t)}\tr|p|} \, \bigotimes_{p\in\IV(t)} \, \bigotimes_{q\in\IV(t_p)} (\bar CJ)|q|
\end{diagram}
When \(f:\bar C\to\bar D\in\bott\coalg\), the explicit expression of the coaction implies that \(fJ:\bar CJ\to\bar DJ\) is a morphism of cooperads.
The category of $\bott$\n-coalgebras has a zero object (initial and final object) \(0=0\bott\).
Therefore, each cooperad $\bar CJ$ for \(\bar C\in\bott\coalg\) is equipped with a morphism of cooperads \(\eta=0J=\inj_\1:\1=0J\to\bar CJ=\bar C\oplus\1\), the augmentation.
Clearly, for each \(f:\bar C\to\bar D\in\bott\coalg\) the morphism $fJ$ preserves the augmentation.
\end{remark}

\begin{proposition}
Let $C$ be a graded cooperad and let $X$ be a graded collection.
Then the map \(\psi:\Coop(C,X\botto)\to\gr^\NN(C,X)\), \(\psi(g)=\bigl(C\rto g X\botto\rto\eps X\bigr)\), is injective and
\[ \im\psi =\bigl\{ f: C\to X\in \gr^\NN \mid \forall \, c\in C(n)^\bull \; \Card\{t\in\tr(n) \mid c.\Delta_t.\tens_{p\in\IV(t)}f|p| \ne0\} <\infty \bigr\}.
\]
\end{proposition}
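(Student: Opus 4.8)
The plan is to show that every cooperad morphism $g\colon C\to X\botto$ is reconstructed from $f=\psi(g)$ by an explicit tree-by-tree formula, from which injectivity and the description of the image both follow. Write $g_t$ for the composite of $g$ with the projection of $X\botto=\coprod_{t\in\tr}\tens^{p\in\IV(t)}X|p|$ onto its $t$-summand. First I would prove the reconstruction identity
\[ g_t =\bigl( C(\Inp t) \rTTo^{\Delta_t} \tens^{v\in\IV(t)}C|v| \rTTo^{\tens^v f|v|} \tens^{v\in\IV(t)}X|v| \bigr). \]
To obtain it, compose the cooperad-morphism square for $g$ (compatibility of $g$ with $\Delta_t$ on the source and with the cofree comultiplication of $X\botto$ on the target) with the morphism $\tens^p\eps$ projecting each tensor factor $(X\botto)|p|$ to its corolla summand $X|p|$. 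Since the comultiplication of the cofree cooperad $X\botto$ deconcatenates trees — its component from the $\sigma$-summand to the $(t_p)$-indexed factor is nonzero, and then a canonical isomorphism, exactly when $\sigma=I_t(t_p\mid p)$ — the left-hand composite collapses to the projection onto the $t$-summand, while the right-hand side becomes $\Delta_t\cdot\tens^v f|v|$. This is the same bookkeeping as in \remref{rem-functor-Tcoalg-augcoop} and in the restriction formula for $\bott$\n-coalgebra maps recorded just above the proposition. As each $g_t$ is thereby determined by $f$, and a map into a coproduct is determined by its components, $\psi$ is injective.

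Necessity of the finiteness condition is then immediate. Because $X\botto$ is a coproduct (direct sum) in $\gr^\NN$, for every homogeneous $c\in C(n)^\bull$ the element $g(c)$ has only finitely many nonzero components $g_t(c)$; by the reconstruction identity $g_t(c)=c.\Delta_t.\tens_{p\in\IV(t)}f|p|$, so the set $\{t\in\tr(n)\mid c.\Delta_t.\tens_p f|p|\ne0\}$ is finite, which is exactly the asserted condition. The normalization $\Delta_{\tau[n]}=\rho^{-1}_{C(n)}$ shows that the corolla component $g_{\tau[n]}$ recovers $f|n$, confirming $\psi(g)=f$, and the trivial-tree component $g_\circ=\Delta_\circ=\eps_C$ records counit compatibility.

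For sufficiency, given $f\colon C\to X$ satisfying the finiteness condition I would \emph{define} $g$ by declaring its $t$-summand component to be $g_t=\Delta_t\cdot\tens^{v\in\IV(t)}f|v|$. The finiteness hypothesis is precisely what guarantees that for each homogeneous $c$ only finitely many $g_t(c)$ are nonzero, so the family $(g_t)_t$ assembles into a genuine morphism $g\colon C\to X\botto$ landing in the coproduct. It remains to verify that $g$ commutes with all comultiplications. Comparing the $(t_p)_{p\in\IV(t)}$\n-indexed components of $g\cdot\Delta^{X\botto}_t$ and of $\Delta^C_t\cdot\tens_p g|p|$: by deconcatenation the former equals $g_{I_t(t_p\mid p)}=\Delta_{I_t(t_p\mid p)}\cdot\tens_v f|v|$ up to the canonical reassociating isomorphism, while the latter equals $\Delta_t\cdot\tens_p(\Delta_{t_p}\cdot\tens_q f|q|)$. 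Their equality is exactly coassociativity~\eqref{dia-CCCC-equation-coass} for $C$ — which relates $\Delta_{I_t(t_p\mid p)}$ to $\Delta_t$ followed by $\tens_p\Delta_{t_p}$ — after distributing the $f$\n-factors across the isomorphism. Together with $\Delta_{\tau[n]}=\rho^{-1}$ this yields a cooperad morphism $g$ with $\psi(g)=f$.

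The main obstacle is the combinatorial bookkeeping in the sufficiency step: one must match, summand by summand, the deconcatenating comultiplication of $X\botto$ against the coassociativity of $C$, tracking which pairs $(\sigma,(t_p))$ contribute (only $\sigma=I_t(t_p\mid p)$) and the canonical isomorphism $\tens_{v\in\IV(I_t(t_p\mid p))}(-)\cong\tens_p\tens_{q\in\IV(t_p)}(-)$; this is the same tree surgery already performed in \lemref{lem-theta-theta-delta-hat-Delta}. A secondary point needing care is the role of the trivial tree $\circ$ and the unit summand (the $J$\n-part of $\botto=\bott J$): one should check that the reconstructed $g$ has $\circ$\n-component equal to the counit $\eps_C$, so that counitality of the cooperad morphism holds automatically rather than imposing an extra constraint on $f$. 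One may also view the whole statement as cutting out, by the finiteness condition, those elements of the clean bijection $\Coop(C,X\bottho)\cong\gr^\NN(C,X)$ whose associated $g$ factors through the subcooperad $X\botto\subset X\bottho$.
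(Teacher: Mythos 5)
Your proposal is correct and follows essentially the same route as the paper: your reconstruction identity $g_t=\Delta_t\cdot\tens^{v\in\IV(t)}f|v|$ is exactly the paper's formula \eqref{eq-gprt} (obtained there by composing diagram~\eqref{dia-g-tilde-Delta} with projections and specializing $t_p=\tau|p|$), injectivity and necessity of the finiteness condition follow from the coproduct exactly as you argue, and your sufficiency step—defining $g$ componentwise and checking the morphism property against the deconcatenating comultiplication via coassociativity~\eqref{dia-CCCC-equation-coass}—is the paper's verification of \eqref{eq-CXX-CCX}. Your closing remarks on the $\circ$-component recovering $\eps_C$ and on viewing the statement as carving $X\botto$ out of $X\bottho$ match how the paper handles the $J$-part implicitly through diagrams \eqref{dia-tilde-Delta-hat-Delta} and \eqref{dia-g-tilde-Delta}.
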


\begin{proof}
We have a commutative diagram
\begin{diagram}[nobalance,LaTeXeqno,h=2.4em]
C &\rTTo^g &X\botto &\rMono^{\iota_\circ} &X\bottho &\rEq &\prod_{\tau\in\tr} \, \bigotimes_{v\in\IV(\tau)} X|v| \qquad\quad
\\
\dTTo<\delta &= &\dTTo>{\tilde\Delta} &&\eqref{dia-tilde-Delta-hat-Delta} &= &\dTTo>{\hat{\Delta}}
\\
C\bottho &\rTTo^{g\bottho} &X\botto\bottho &&\rMono^i &&\prod_{t\in\tr} \, \prod_{(t_p)\in\prod_{p\in\IV(t)}\tr|p|} \, \bigotimes_{p\in\IV(t)} \, \bigotimes_{q\in\IV(t_p)} X|q|
\label{dia-g-tilde-Delta}
\end{diagram}
Fix an arbitrary collection $t\in\tr$, \((t_p)\in\prod_{p\in\IV(t)}\tr|p|\).
Compose this diagram with the projection to the factor indexed by $t,(t_p)$.
It follows that
\begin{multline}
\Bigl[ C(\Inp t) \rTTo^{g\cdot\pr_{I_t(t_p\mid p\in\IV(t))}} \bigotimes_{v\in\IV(I_t(t_p\mid p\in\IV(t)))} X|v| \rto\sim \bigotimes_{p\in\IV(t)} \, \bigotimes_{q\in\IV(t_p)} X|q| \Bigr]
\\
=\Bigl[ C(\Inp t) \rTTo^{\Delta_t} \bigotimes_{p\in\IV(t)} C|p| \rTTo^{\tens_{p\in\IV(t)}g\cdot\pr_{t_p}} \bigotimes_{p\in\IV(t)} \, \bigotimes_{q\in\IV(t_p)} X|q| \Bigr].
\label{eq-CXX-CCX}
\end{multline}
Consider the particular case of this equation, \(t_p=\tau|p|\).
Then \(I_t(t_p\mid p\in\IV(t))=t\), \(\IV(t_p)=\mb1\), \(g\cdot\pr_{\tau|p|}=g\cdot\eps=\psi(g):C|p|\to X|p|\).
Hence,
\begin{equation}
g\cdot\pr_t =\Bigl[ C \rTTo^{\Delta_t} \bigotimes_{p\in\IV(t)} C|p| \rTTo^{\tens_{p\in\IV(t)}\psi(g)|p|} \bigotimes_{p\in\IV(t)} X|p| \Bigr].
\label{eq-gprt}
\end{equation}
Thus $g$ is expressed via $\psi(g)$ explicitly, and injectivity of $\psi$ is proven.

Let us describe the image of $\psi$.
Clearly, $\im\psi$ is contained in
\[ \bigl\{ f: C\to X\in \gr^\NN \mid \forall \, c\in C(n)^\bull \; \Card\{t\in\tr(n) \mid c.\Delta_t.\tens_{p\in\IV(t)}f|p| \ne0\} <\infty \bigr\}.
\]
On the other hand, an element $f$ of the last set gives rise to a unique map \(g:C\to X\botto\in\gr^\NN\) with
\begin{equation}
g\cdot\pr_t =\Bigl[ C(\Inp t) \rTTo^{\Delta_t} \bigotimes_{p\in\IV(t)}C|p| \rTTo^{\tens_{p\in\IV(t)}f|p|} \bigotimes_{p\in\IV(t)}X|p| \Bigr].
\label{eq-}
\end{equation}
We have to prove that
\begin{diagram}
C &\rTTo^g &X\botto &\rTTo^{\Delta J} &X\bott\botto
\\
\dTTo<\delta &&= &&\dTTo>\vartheta
\\
C\bottho &&\rTTo^{g\bottho} &&X\botto\bottho
\end{diagram}
Composing with the embedding $i$ we represent the alleged equation as commutativity of the exterior of diagram~\eqref{dia-g-tilde-Delta}.
As we already know, this is equivalent to equation~\eqref{eq-CXX-CCX} holding for an arbitrary collection $t\in\tr$, \((t_p)\in\prod_{p\in\IV(t)}\tr|p|\).
Plugging \eqref{eq-gprt} into \eqref{eq-CXX-CCX} we get a valid identity
\begin{multline*}
\Bigl[ C(\Inp t) \rTTo^{\Delta_{I_t(t_p\mid p\in\IV(t))}} \bigotimes_{v\in\IV(I_t(t_p\mid p\in\IV(t)))} C|v| \rTTo^{\tens_vf|v|} \bigotimes_{v\in\IV(I_t(t_p\mid p\in\IV(t)))} X|v| \rto\sim \bigotimes_{p\in\IV(t)} \, \bigotimes_{q\in\IV(t_p)} X|q| \Bigr]
\\
=\Bigl[ C(\Inp t) \rTTo^{\Delta_t} \bigotimes_{p\in\IV(t)} C|p| \rTTo^{\tens_{p\in\IV(t)}\Delta_{t_p}} \bigotimes_{p\in\IV(t)} \, \bigotimes_{q\in\IV(t_p)} C|q| \rTTo^{\tens_{p\in\IV(t)}\tens_{q\in\IV(t_p)}f|q|} \bigotimes_{p\in\IV(t)} \, \bigotimes_{q\in\IV(t_p)} X|q| \Bigr]
\end{multline*}
due to \eqref{dia-CCCC-equation-coass}.

At last, \(g\cdot\pr_{\tau[n]}=f(n)\) for all \(n\in\NN\).
Hence, \(\psi(g)=f\).
\end{proof}

\chapter{Coderivations}
Here we obtain explicit formulae for coderivations with values in cofree conilpotent cooperads and the associated augmented cooperads.
We view coderivations as infinitesimal homomorphisms.
As a warm-up we begin with derivations whose domain is a free operad.

The main example of a cooperad $C$ is considered such that $\Cobar C$ will be the operad of homotopy unital \ainf-algebras.

\section{Derivations}
Denote by\index{TTsymb}{D@$\DD$} \(\DD=\1\langle p\rangle/(p^2)=\1\oplus\1 p=\1\oplus\1[a]\) the commutative algebra in $\cv$ generated by an element $p$ of degree $-a$.
There is a functor \(-\tens\DD:\cv=\comm\1\bimod\to\cv_\DD=\comm\DD\bimod\), \(X\mapsto\tilde{X}=X\tens\DD=X\oplus X[a]\).
This functor extends componentwise to functors \(\cv^\mm\to\cv^\mm_\DD\), \(M\mapsto\tilde{M}=M\tens\DD\).
The algebra $\DD$ is coaugmented by the homomorphism \(\eps:\DD\to\1\), \(p\mapsto0\).
The category $\cv_\DD$ is Monoidal, with the tensor product $\tens_\DD$ defined as the coequalizer
\[ M\tens\DD\tens N \pile{\rTTo^{\text{right\_action}\tens N}\\ \rTTo_{M\tens\text{left\_action}}} M\tens N \to M\tens_\DD N.
\]
Actually we use this tensor product as well as $\cv_\DD$ itself only to write down some equations in concise form.
Using long, expanded from we could get rid of assumption that $\cv$ has coequalizers.

Let \(\phi:\co\to\cp\in\Op_\cv\) be a morphism of operads in $\cv$.
A $\phi$\n-infinitesimal morphism\index{TTindex}{infinitesimal morphism of operads} is a morphism of operads \(\tilde\phi:\tilde\co\to\tilde\cp\) in $\cv_\DD$ such that
\[ \bigl( \tilde\co =\co\tens\DD \rTTo^{1\tens\eps} \co \rto\phi \cp \bigr) =\bigl( \tilde\co \rTTo^{\tilde\phi} \tilde\cp =\cp\tens\DD \rTTo^{1\tens\eps} \cp \bigr).
\]
Equivalently,
\[ \tilde\phi\tens_\DD\1 =\phi: \tilde\co\tens_\DD\1 =\co \to \tilde\cp\tens_\DD\1 =\cp.
\]
The map $\tilde\phi$ is determined by its restriction to $\co$:
\[ \tilde\phi =\phi +\xi p: \co \to \cp\oplus\cp\tens p =\cp\tens\DD =\tilde\cp,
\]
where \(\xi:\co\to\cp\) is a homogeneous map of degree $a$.
The degree 0 map \(\tilde\phi=\phi+\xi p\) is a morphism of operads iff $\xi$ is a $\phi$\n-derivation, that is,
\begin{diagram}[LaTeXeqno]
\co(n_1)\tdt\co(n_k)\tens\co(k) &\rTTo^{\sum_{i=0}^k\phi^{\tens i}\tens\xi\tens\phi^{\tens(k-i)}} &\cp(n_1)\tdt\cp(n_k)\tens\cp(k)
\\
\dTTo<\mu &= &\dTTo>\mu
\\
\co(n_1+\dots+n_k) &\rTTo^\xi &\cp(n_1+\dots+n_k)
\label{dia-OOO-PPP-O-P}
\end{diagram}
for all \(k,n_1,\dots,n_k\in\NN\).
In fact,
\begin{equation}
(\phi +\xi p)^{\tens_\DD(k+1)} =\phi^{\tens(k+1)} +\sum_{i=0}^k\phi^{\tens i}\tens\xi p\tens\phi^{\tens(k-i)}.
\label{eq-phi-xi-p-k1}
\end{equation}
An equivalent definition of $\phi$\n-derivations using binary compositions in $\co$ is given \textit{e.g.} by Vallette \cite[Section~4.7]{math/0609002}.

\begin{example}\label{exa-n-1-derivation}
Let $\co$ be a graded operad.
Then it admits an $\id_\co$\n-derivation \(\co(n)^\bull\ni x\mapsto(n-1)x\in\co(n)^\bull\) of degree~0.
In fact, for the multiplication \(m:\co(n_1)\tdt\co(n_k)\tens\co(k)\to\co(n_1+\dots+n_k)\) the required identity reduces to \(n_1-1+\dots+n_k-1+k-1=n_1+\dots+n_k-1\).
Furthermore, for an arbitrary element of the ground ring $a\in\kk^p$ of degree~$p$ the map \(\co(n)^q\ni x\mapsto(n-1)xa\in\co(n)^{q+p}\) is an $\id_\co$\n-derivation of degree~$p$. 
\end{example}

Let \(\tilde\phi=\phi+\xi p:\tilde\co\to\tilde\cp\) be as above and let \(f:M\to N\in\cv^\mm\) be a $\phi$\n-morphism of modules over operads, that is,
\[ \bigl( M\odot\co \rto\alpha M \rto f N \bigr) =\bigl( M\odot\co \rTTo^{f\odot\phi} N\odot\cp \rto\alpha N \bigr).
\]
An $(f;\phi,\xi)$\n-infinitesimal morphism of modules over operads\index{TTindex}{infinitesimal morphism of modules over operads} is a $\tilde\phi$\n-morphism of modules over operads \(\tilde f:\tilde M=M\tens\DD\to\tilde N=N\tens\DD\) such that
\begin{equation}
\bigl( \tilde M =M\tens\DD \rTTo^{1\tens\eps} M \rto f N \bigr) =\bigl( \tilde M \rto{\tilde f} \tilde N =N\tens\DD \rTTo^{1\tens\eps} N \bigr).
\label{eq-MMDMN-MNNDN}
\end{equation}
Equivalently,
\begin{equation}
\tilde f\tens_\DD\1 =f: \tilde M\tens_\DD\1 =M \to \tilde N\tens_\DD\1 =N.
\label{eq-fkf-MkM-NkN}
\end{equation}
The $\DD$\n-linear degree 0 map \(\tilde f\) is determined by its restriction to $M$:
\begin{equation}
\tilde f =f +rp: M \to N\oplus N\tens p =N\tens\DD =\tilde N,
\label{eq-ffrp-M-NNpNDN}
\end{equation}
where \(r:M\to N\) is a homogeneous map of degree $a$.
The map \(\tilde f\) is a morphism of modules over operads iff $r$ is an \((f;\phi,\xi)\)-derivation, that is,
\begin{diagram}[LaTeXeqno]
M(n_1)\tdt M(n_k)\tens\co(k) &\rTTo^{\sum_{i=1}^kf^{\tens(i-1)}\tens r\tens f^{\tens(k-i)}\tens\phi+f^{\tens k}\tens\xi} &N(n_1)\tdt N(n_k)\tens\cp(k)
\\
\dTTo<\alpha &= &\dTTo>\alpha
\\
M(n_1+\dots+n_k) &\rTTo^r &N(n_1+\dots+n_k)
\label{dia-MMO-NNP-M-N}
\end{diagram}
for all \(k,n_1,\dots,n_k\in\NN\).
In fact,
\begin{equation}
(f+rp)^{\tens_\DD k}\tens_\DD(\phi+\xi p) =f^{\tens k}\tens\phi +\sum_{i=1}^kf^{\tens(i-1)}\tens rp\tens f^{\tens(k-i)}\tens\phi+f^{\tens k}\tens\xi p.
\label{eq-(frp)k-phi}
\end{equation}
The set of derivations \((r;\xi)\), \(\deg r=\deg\xi\), over morphisms \((f;\phi)\) is a graded $\kk$\n-module.

\begin{remark}
Assume that \(\tilde f=f+rp:\tilde M\to\tilde N\) is a morphism over \(\tilde\phi=\phi+\xi p:\tilde\co\to\tilde\cp\) and \(\tilde g=g+sp:\tilde N\to\tilde L\) is a morphism over \(\tilde\psi=\psi+\eta p:\tilde\cp\to\tilde\cq\), \(\deg r=\deg\xi=\deg s=\deg\eta=a\).
Composing these infinitesimal morphisms we see that \(\tilde f\tilde g=fg+(fs+rg)p:\tilde M\to\tilde L\) is a morphism over \(\tilde\phi\tilde\psi=\phi\psi+(\phi\eta+\xi\psi)p:\tilde\co\to\tilde\cq\).
Assume in addition that $a$ is odd and \(\phi\eta=\xi\psi\), \(fs=rg\) hold.
Then \(\xi\eta:\co\to\cq\) is a $\phi\psi$\n-derivation.
In fact, paste together two diagrams~\eqref{dia-OOO-PPP-O-P} and use
\begin{equation}
\biggl( \sum_{i=0}^k\phi^{\tens i}\tens\xi\tens\phi^{\tens(k-i)} \biggr) \cdot \biggl( \sum_{j=0}^k\psi^{\tens j}\tens\eta\tens\psi^{\tens(k-j)} \biggr) =\sum_{i=0}^k(\phi\psi)^{\tens i}\tens\xi\eta\tens(\phi\psi)^{\tens(k-i)}.
\label{eq-phi-xi-phi}
\end{equation}
Furthermore, \(rs:M\to L\) is an \((fg;\phi\psi,\xi\eta)\)-derivation.
In fact, paste together two diagrams~\eqref{dia-MMO-NNP-M-N} and use
\begin{multline}
\biggl( \sum_{i=1}^kf^{\tens(i-1)}\tens r\tens f^{\tens(k-i)}\tens\phi+f^{\tens k}\tens\xi \biggr) \cdot \biggl( \sum_{j=1}^kg^{\tens(j-1)}\tens s\tens g^{\tens(k-j)}\tens\psi+g^{\tens k}\tens\eta \biggr)
\\
=\sum_{i=1}^k(fg)^{\tens(i-1)}\tens rs\tens(fg)^{\tens(k-i)}\tens\phi\psi+(fg)^{\tens k}\tens\xi\eta.
\label{eq-frf-f-xi-gsg-g}
\end{multline}
The same conclusion is reached when \(\deg r=\deg\xi=a\), \(\deg s=\deg\eta=b\) are odd and \(\phi\eta=c\xi\psi\), \(fs=crg\) for \(c\in\kk^{b-a}\), or \(\xi\psi=c'\phi\eta\), \(rg=c'fs\) for \(c'\in\kk^{a-b}\).
These reasonings are typically applied when \(\phi=\psi=\id_\co\), \(f=g=\id_M\), \(\eta=\xi\) is a derivation of $\co$ of odd degree $a$ and $s=r$ is a \((\id_M;\id_\co,\xi)\)-derivation of degree $a$.
Then $r^2$ is a \((\id_M;\id_\co,\xi^2)\)-derivation.
\end{remark}

Introduce the tensoring monad\index{TTsymb}{top@$\top$}
\begin{align*}
\top: \cv^\mm \times \Op_\cv &\longrightarrow \cv^\mm \times \Op_\cv,
\\
(L,\co) &\longmapsto (L\odot\co,\co),
\\
(f:L\to M,\phi:\co\to\cp) &\longmapsto (f\odot\phi:L\odot\co\to M\odot\cp,\phi:\co\to\cp).
\end{align*}
The unit and the multiplication for this monad are induced by those of the operad.
Algebras over this monad are pairs \((M,\co)\) where $\co$ is an operad in $\cv$ and \(M\in\cv^\mm\) is a right $\co$\n-module.
Morphisms of $\top$\n-algebras are pairs \((f:M\to N,\phi:\co\to\cp)\) where $\phi$ is a morphism of operads and $f$ is a morphism of modules over $\phi$.
Denote by $\ModOp$ the category of right modules over operads, equivalent to $\top\alg$.
The underlying functor $U$ has a left adjoint $F$,
\[ F: \cv^\mm \times \Op_\cv \leftrightarrows \ModOp: U,
\]
given by the same formulae as $\top$, see \lemref{lem-forgetful-functor-has-right-adjoint-T}.
Note that \(F\cdot U=\top\) and $U$ is monadic by definition \cite[Section~3]{BarrWells:TopTT}.

In particular, any infinitesimal morphism \(\tilde f=f+rp:\tilde L\odot\tilde\co\to\tilde N\) over \(\tilde\phi=\phi+\xi p:\tilde\co\to\tilde\cp\) is determined by \(\tilde g=g+sp=\check f+\check rp=(1\tens\eta)f+(1\tens\eta)rp:\tilde L\to\tilde N\), namely,
\begin{gather*}
\tilde f =f+rp =\bigl( \tilde L\odot\tilde\co \rTTo^{\tilde g\odot\tilde\phi} \tilde N\odot\tilde\cp \rto\alpha \tilde N \bigr) =\bigl( L\odot\co \rTTo^{g\odot\phi} N\odot\cp \rto\alpha N \bigr) +rp,
\\
\begin{split}
r =\bigl[ L(n_1)\tdt L(n_k)&\tens\co(k) \rTTo^{\sum_{i=1}^kg^{\tens(i-1)}\tens s\tens g^{\tens(k-i)}\tens\phi+g^{\tens k}\tens\xi}
\\
&N(n_1)\tdt N(n_k)\tens\cp(k) \rto\alpha N(n_1+\dots+n_k) \bigr].
\end{split}
\end{gather*}
A general derivation \(r:L\odot\co\to N\) has the above form.

\section{Ground category change}
Let \((F,\phi^I):(\cw,\tens_\cw^I,\lambda^f,\rho)\to(\cv,\tens_\cv^I,\lambda^f,\rho)\) be a colax symmetric Monoidal $\gr$\n-functor \cite[Definition~2.26]{BesLyuMan-book}, in particular, \(\phi^I:F\tens_\cw^{i\in I}X_i\to\tens_\cv^{i\in I}FX_i\) are natural transformations.
In our notations \(\rho=\id:\tens^{\mb1}X\rEq X\) in $\cw$ and $\cv$.
Assume that $F$ preserves countable coproducts.
These data allow to define a natural transformation
\begin{diagram}
\cw^\NN &\rTTo^\bott &\cw^\NN
\\
\dTTo<{F^\NN} &\ldTwoar^\gamma &\dTTo>{F^\NN}
\\
\cv^\NN &\rTTo^\bott &\cv^\NN
\end{diagram}
\begin{multline}
\gamma =\bigl[ (F^\NN\bott C)(n) =F \coprod_{t\in\tr(n)\setminus\circ} \otimes_\cw^{p\in\IV(t)} C|p| \lTTo^\sim \coprod_{t\in\tr(n)\setminus\circ} F \otimes_\cw^{p\in\IV(t)} C|p|
\\
\rTTo^{\coprod_t\phi^{\IV(t)}} \coprod_{t\in\tr(n)\setminus\circ} \otimes_\cv^{p\in\IV(t)} FC|p| =(\bott F^\NN C)(n) \bigr].
\label{eq-FC-FC-FC-FC-FC}
\end{multline}
The entwining transformation $\gamma$ enjoys the following properties.
Firstly,
\begin{equation}
\begin{diagram}[inline]
\cw^\NN &\rTTo^\bott &\cw^\NN
\\
\dTTo<{F^\NN} &\ldTwoar(2,1)_\gamma &\dTTo>{F^\NN}
\\
\cv^\NN &\pile{\rTTo^\bott \\ \sss\eps\Downarrow \\ \rTTo_\Id} &\cv^\NN
\end{diagram}
\quad=\quad
\begin{diagram}[inline]
\cw^\NN &\pile{\rTTo^\bott \\ \sss\eps\Downarrow \\ \rTTo_\Id} &\cw^\NN
\\
\dTTo<{F^\NN} &= &\dTTo>{F^\NN}
\\
\cv^\NN &\rTTo_\Id &\cv^\NN
\end{diagram}
\label{eq-dia-gamma-counital}
\end{equation}
This follows from equation \(\phi^{\IV(\tau[n])}=\id\) since \(|\IV(\tau[n])|=1\).
Secondly,
\begin{equation}
\begin{diagram}[inline]
\cw^\NN &&\rTTo^\bott &&\cw^\NN
\\
\dTTo<{F^\NN} &\rdTTo_\bott &\dTwoar<\Delta &\ruTTo^\bott \ldTwoar(2,4)_\gamma &\dTTo>{F^\NN}
\\
\cv^\NN &\lTwoar_\gamma &\cw^\NN &&\cv^\NN
\\
&\rdTTo_\bott &\dTTo<{F^\NN} &\ruTTo_\bott &
\\
&&\cv^\NN
\end{diagram}
\;=\;
\begin{diagram}[inline]
\cw^\NN &&\rTTo^\bott &&\cw^\NN
\\
\dTTo<{F^\NN} &&&\ldTwoar(4,2)^\gamma &\dTTo>{F^\NN}
\\
\cv^\NN &&\rTTo^\bott &&\cv^\NN
\\
&\rdTTo_\bott &\dTwoar<\Delta &\ruTTo_\bott &
\\
&&\cv^\NN
\end{diagram}
\label{eq-dia-gamma-multiplicative}
\end{equation}
This equation follows from the multiplicativity equation for $\phi^I$ \cite[Definition~2.26(ii)]{BesLyuMan-book} written for the map \(f:\IV(I_t(t_p\mid p\in\IV(t)))\simeq\sqcup_{p\in\IV(t)}\IV(t_p)\to\IV(t)\in\cs\) -- the canonical map to index set.
Notice that \(f^{-1}p=\IV(t_p)\).

\begin{exercise}\label{exe-FN-delta-gamma}
Let \(\delta:C\to\bott C\) be a $\bott$\n-coalgebra in $\cw^\NN$.
Deduce from equations \eqref{eq-dia-gamma-counital}, \eqref{eq-dia-gamma-multiplicative} that
\[ \delta' =\bigl(F^\NN C \rTTo^{F^\NN\delta} F^\NN\bott C \rTTo^\gamma \bott F^\NN C \bigr)
\]
is a $\bott$\n-coalgebra in $\cv^\NN$.
\end{exercise}

When $C$ is a non-counital cooperad (a pseudo-cooperad), a \emph{right comodule over $C$} is\index{TTindex}{comodule over a non-counital cooperad} defined as a family \(\delta_{n_1,\dots,n_k}:N(n_1+\dots+n_k)\to N(n_1)\tdt N(n_k)\tens C(k)\), \(k,n_1,\dots,n_k\in\NN\) that satisfies \eqref{eq-NNNNNNNNNNNNNNNNNNNN} only (without the counitality requirement).
Morphisms of comodules over $C$ are maps $M\to N$ compatible with coactions $\delta_{n_1,\dots,n_k}$.

\section{$\NN$-graded cooperads}\label{sec-N-graded-cooperads}
Recall that $\cw=\NN\text-\gr\text-\cv$ is\index{TTsymb}{W@$\cw$} the category of $\NN$-graded objects \((X_k)_{k\in\NN}\) of $\cv$.
The tensor product of a family \((X^i)_{i\in I}\) of objects of $\cw$ is \((\tens_\cw^{i\in I}X^i)_k=\oplus_{\sum_{i\in I}k_i=k}\tens_\cv^{i\in I}X^i_{k_i}\).
The structure isomorphisms $\lambda_\cw$ (the associativity and the symmetry) are induced by $\lambda_\cv$.
Denote by \(\cw^\NN_{++}\) the\index{TTsymb}{WN++@$\cw^\NN_{++}$} full subcategory of the category \(\cw^\NN\) containing collections $P$ such that \(P(0)_0=0=P(1)_0\).
Denote by\index{TTsymb}{bottcoalg++@$\bott\coalg_{++}$} $\bott\coalg_{++}$ (resp. $\nucoop_{++}$) the\index{TTsymb}{nuCoop++@$\nucoop_{++}$} full subcategory of the category $\bott\coalg$ (resp. $\nucoop$) consisting of $\bott$\n-coalgebras (resp. non-counital cooperads) \(\bar{C}\) in \(\cw^\NN\), whose underlying collection lies in \(\cw^\NN_{++}\).

Notice that \((\cw^\NN_{++})\bott\subset\bott\coalg_{++}\).
In fact, let a collection $X=(X(n)_k)_{n.k\in\NN}$ have \(X(0)_0=X(1)_0=0\).
Then
\[ (X\bott)(0)_0 =\bigoplus_{t\in\tr(0)} \, \bigotimes_{p\in\IV(t)} X|p|_0 =0
\]
since $t$ has at least one leaf \(p\in\IV(t)=V(t)\).
For the same reason summands of
\[ (X\bott)(1)_0 =\bigoplus_{t\in\tr(1)}^{t\ne\circ} \, \bigotimes_{p\in\IV(t)} X|p|_0
\]
vanish unless \(\mb1\simeq\Inp t=L(t)\).
In this case \(t=\theta_m\), $m>0$, hence, there is \(p\in\IV(t)\), $|p|=1$, and the corresponding summand vanishes as well.

\begin{proposition}\label{pro-Tcoalg+nuCoop+}
The full embedding $\bott\coalg_{++}\rMono \nucoop_{++}$ is an equivalence of categories.
\end{proposition}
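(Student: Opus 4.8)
Since the embedding $\bott\coalg_{++}\rMono\nucoop_{++}$ is the inclusion of a full subcategory, it is automatically full and faithful: on both sides a morphism is just a morphism of the underlying collections commuting with all $\bar\Delta_t$. Thus the plan is to prove that the embedding is essentially surjective, which here means showing that \emph{every} non-counital cooperad $\bar C$ whose underlying collection lies in $\cw^\NN_{++}$ already satisfies the conilpotency condition that singles out $\bott$\n-coalgebras among non-counital cooperads (recalled after \propref{pro-aug-equiv-non-counital}): namely, the structure map $(\bar\Delta_t)_t\colon\bar C(n)\to\prod_{t\in\tr(n)\setminus\circ}\otimes^{p\in\IV(t)}\bar C|p|$ must factor through the direct sum $\bigoplus_{t\in\tr(n)\setminus\circ}\otimes^{p\in\IV(t)}\bar C|p|$.

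The key step is the finiteness claim that, for a collection $\bar C\in\cw^\NN_{++}$, the canonical monomorphism
\[ \bigoplus_{t\in\tr(n)\setminus\circ}\,\bigotimes_{p\in\IV(t)}\bar C|p| \rMono \prod_{t\in\tr(n)\setminus\circ}\,\bigotimes_{p\in\IV(t)}\bar C|p| \]
is an isomorphism in $\cw$ for every $n$. Once this is in hand, the map $(\bar\Delta_t)_t$ factors uniquely through the direct sum (uniqueness because the comparison map is monic), this factorization is exactly a $\bott$\n-coalgebra coaction, and the non-counital coassociativity \eqref{dia-CCCC-equation-coass} together with the normalization $\bar\Delta_{\tau[n]}=\rho^{-1}$ translate verbatim into the $\bott$\n-coalgebra equations \eqref{dia-delta-delta-delta-hat-Delta} and \eqref{eq-delta-eps=id}. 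This produces the required preimage in $\bott\coalg_{++}$ and finishes essential surjectivity.

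To establish the claim I would argue in each lower degree separately, using that products and coproducts in $\cw$ are computed degreewise and that $\bigl(\otimes^{p\in\IV(t)}\bar C|p|\bigr)_k=\bigoplus_{\sum_p k_p=k}\otimes_p\bar C|p|_{k_p}$. It suffices to show that for fixed $n$ and $k$ only finitely many $t\in\tr(n)$ contribute nonzero to degree $k$. A nonzero contribution needs a decomposition $k=\sum_{p\in\IV(t)}k_p$ with $\bar C|p|_{k_p}\ne0$ for every $p$; the hypothesis $\bar C(0)_0=0=\bar C(1)_0$ forces $k_p\ge1$ whenever $|p|\le1$, so the number $n_0+n_1$ of nullary and unary internal vertices is at most $k$. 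On the other hand the arity identity $\sum_{p\in\IV(t)}(|p|-1)=|\Inp t|-1=n-1$ — valid for all $t\ne\circ$ because $\sum_{p\in\IV(t)}|p|=|V(t)|-1$ and inputs have arity $0$ — rearranges to $\sum_{|p|\ge2}(|p|-1)=n-1+n_0$, whence the number of vertices of arity $\ge2$ is at most $n-1+n_0\le n-1+k$. Adding up, $|\IV(t)|\le n-1+2k$, so $t$ has boundedly many vertices; as there are only finitely many ordered rooted trees with inputs of bounded size, the claim follows.

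The main obstacle I expect is precisely this bound on $|\IV(t)|$: nullary internal vertices enter the arity identity with a negative sign, so one cannot bound the number of high-arity vertices without first bounding $n_0$, and the only way to do that is to feed the degree estimate $n_0\le k$ back into the arity count. The vanishing $\bar C(0)_0=0$ is essential here — without it a tree could carry arbitrarily many nullary vertices while staying in bounded degree, and then the product would genuinely differ from the coproduct, so the two categories would not coincide.
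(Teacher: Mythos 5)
Your proposal is correct, and its categorical frame coincides with the paper's: fullness and faithfulness come for free, and essential surjectivity reduces to showing that, for fixed $(n,k)$, only finitely many trees $t\in\tr(n)\setminus\circ$ can carry a nonzero summand $\bigotimes_{p\in\IV(t)}\bar C|p|_{k_p}$ with $\sum_{p}k_p=k$, so that $(\bar\Delta_t)_t$ factors through the coproduct and the cooperad axioms then transcribe into the $\bott$\n-coalgebra axioms. Both you and the paper open this finiteness argument identically: the hypothesis $\bar C(0)_0=\bar C(1)_0=0$ forces $k_p\ge1$ at every nullary or unary internal vertex, bounding their number by $k$ (this is exactly \eqref{eq-Inpt-n-Lt-Inpt-ut}). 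You then diverge in the combinatorial core. The paper passes to the estimate $|L(t)|+|\uv(t)|\le n+k$ from \eqref{eq-Lt-ut-nk} and proves finiteness of the set of ordered rooted trees satisfying it by an enumeration: the height of the associated staged tree \eqref{eq-Lt-X0-X1-Xm1} is at most $n+k-1$, and counting level functions and sequences of surjections gives the explicit bound $z^z2^{(z-1)^2+2}$ for $z=n+k$. You instead invoke the arity identity $\sum_{p\in\IV(t)}(|p|-1)=n-1$, which bounds the number of branching vertices by $n-1+n_0$ and hence gives $|\IV(t)|\le n+2k-1$ directly, after which finiteness of trees of bounded size is immediate. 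Your route is shorter and more elementary, since it never needs the staged-tree counting; what the paper's version buys is an explicit count and a lemma phrased purely in terms of $|L(t)|+|\uv(t)|$, which is re-cited later with different constants in \remref{rem-number-trees-finite} — though your Euler-characteristic-style bound would serve equally well there.
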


\begin{proof}
Let $\bar C$ be an object of $\nucoop_{++}$.
Family of comultiplications \eqref{eq-bar-Delta-t-def-pseudo-cooperad} in $\cw$ takes in $\cv$ the form
\begin{equation}
(\bar\Delta_t)_t: \bar C(n)_k \rTTo \prod_{t\in\tr(n)\setminus\circ} \; \bigoplus_{\sum_{p\in\IV(t)}k_p=k}^{k_p\in\NN} \; \bigotimes_{p\in\IV(t)} \bar C|p|_{k_p}.
\label{eq-bar-Delta-t-in-W}
\end{equation}
By assumption, \(\bar C(0)_0=\bar C(1)_0=0\).
We claim that for any pair \((n,k)\in\NN^2\) there is only a finite number of trees \(t\in\tr(n)\) for which there exists a family \((k_p)_{p\in\IV(t)}\in\NN^{\IV(t)}\) such that \(\sum_{p\in\IV(t)}k_p=k\) and \((|p|,k_p)\ne(0,0),\,(1,0)\) for all \(p\in\IV(t)\).
In fact, \(p\in L(t)-\Inp t\) or \(p\in\uv(t)\) implies $k_p\ge1$.
Therefore,
\begin{equation}
|\Inp t|=n, \qquad |L(t)-\Inp t|+|\uv(t)|\le k.
\label{eq-Inpt-n-Lt-Inpt-ut}
\end{equation}
Consequently,
\begin{equation}
|L(t)| +|\uv(t)| \le z \equiv n+k.
\label{eq-Lt-ut-nk}
\end{equation}
The case of $n=k=0$ being easy we may assume that \(z>0\).
The claim follows from the statement that the number of ordered rooted trees (without a chosen subset \(\Inp t\subset L(t)\)) which satisfy restriction \eqref{eq-Lt-ut-nk} is finite.
Indeed, from each staged tree~\eqref{eq-staged-tree-t(0)-t(1)-t(2)} of height $m$ one can produce another staged tree in $\co_\sk$
\begin{equation}
L(t) =X_0 \rEpi^{r_1} X_1 \rEpi^{r_2} \dots \rEpi^{r_m} X_m =\mb1
\label{eq-Lt-X0-X1-Xm1}
\end{equation}
by replacing leaves with unary vertices until all leaves move to $X_0$.
All $r_j$ are surjections and there are no more than \(|\uv(t)|\) bijections between them.
Hence \(m\le|L(t)|-1+|\uv(t)|\le z-1\).
Fix \(m=\height t\), \(l=|L(t)|\).
The set of such trees $t$ is embedded into the product
\(\Set(L(t),\mb{1+m})\times\{\text{trees \eqref{eq-Lt-X0-X1-Xm1}}\}\).
The function \(L(t)\to\mb{1+m}\) returns the level at which a leaf occurs in $t$.
The first factor has no more than \(z^z\) elements.
The number of surjections $r_1$ starting at $L(t)$ is \(2^{|L(t)|-1}=2^{l-1}\).
Similarly, the number of surjections $r_j$ is not bigger than \(2^{l-1}\).
Therefore, the number of sequences~\eqref{eq-Lt-X0-X1-Xm1} is not greater than
\[ \sum_{l=1}^z \sum_{m=0}^{z-1} 2^{(l-1)m} \le z+\sum_{l=2}^z 2^{(l-1)(z-1)+1} < 2^{(z-1)^2+2}.
\]
Consequently, the number of trees $t$ satisfying \eqref{eq-Lt-ut-nk} does not exceed \(z^z2^{(z-1)^2+2}\).
Thus, \eqref{eq-bar-Delta-t-in-W} factors through
\[ \sum\bar\Delta_t: \bar C(n)_k \rTTo \coprod_{t\in\tr(n)\setminus\circ} \; \coprod_{\sum_{p\in\IV(t)}k_p=k}^{k_p\in\NN} \; \bigotimes_{p\in\IV(t)} \bar C|p|_{k_p}
\]
and $\bar C$ has a structure of a $\bott$\n-coalgebra.
\end{proof}

\begin{corollary}
The underlying functor below has a right adjoint
\[ U: \nucoop_{++} \leftrightarrows \cw^\NN_{++}: \Fc,
\]
where $\Fc(P)=(P\bott,\Delta:P\bott\to P\bott^2)$ is\index{TTsymb}{Fc@$\Fc$} the cofree conilpotent non-counital cooperad generated by $P$ (actually, a cofree $\bott$\n-coalgebra).
\end{corollary}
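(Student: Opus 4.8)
The plan is to realize this adjunction as the comonadic cofree-coalgebra adjunction transported across the equivalence of \propref{pro-Tcoalg+nuCoop+}. First I would record that the endofunctor $\bott$ on $\cw^\NN$ carries the comonad structure described above (comultiplication $\Delta\colon\bott\to\bott^2$ and counit $e\colon\bott\to\Id$), and that this comonad restricts to the full subcategory $\cw^\NN_{++}$: the computation preceding \propref{pro-Tcoalg+nuCoop+} shows precisely that $(\cw^\NN_{++})\bott\subset\bott\coalg_{++}$, so the underlying collection of $P\bott$ again satisfies $(P\bott)(0)_0=(P\bott)(1)_0=0$ whenever $P$ does. Consequently $\bott$ is a comonad on $\cw^\NN_{++}$ whose category of coalgebras is exactly $\bott\coalg_{++}$.

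Next I would apply \lemref{lem-forgetful-functor-has-right-adjoint-T} to this restricted comonad. That lemma is the comonadic form of the cofree-coalgebra construction, and it yields at once that the forgetful functor $U\colon\bott\coalg_{++}\to\cw^\NN_{++}$ has a right adjoint sending $P$ to the cofree $\bott$-coalgebra $(P\bott,\Delta)$, with adjunction bijections $\cw^\NN_{++}(UX,P)\cong\bott\coalg_{++}(X,P\bott)$ given by $f\mapsto\delta\cdot(f\bott)$ and $g\mapsto g\cdot e$. No further computation is needed here, as the lemma is purely formal.

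Finally I would transport this adjunction along the equivalence $\bott\coalg_{++}\hookrightarrow\nucoop_{++}$ of \propref{pro-Tcoalg+nuCoop+}. Since this equivalence acts as the identity on underlying collections, it commutes with the two forgetful functors to $\cw^\NN_{++}$; composing it with the right adjoint obtained above therefore produces a right adjoint $\Fc$ of $U\colon\nucoop_{++}\to\cw^\NN_{++}$, and under the equivalence the cofree $\bott$-coalgebra $(P\bott,\Delta)$ corresponds to the cofree conilpotent non-counital cooperad $\Fc(P)$. The only point demanding care, and hence the main obstacle, is verifying that the equivalence is strictly compatible with the forgetful functors, so that the transported unit and counit are the expected ones; this is immediate because the embedding of \propref{pro-Tcoalg+nuCoop+} merely reinterprets a $\bott$-coalgebra as the non-counital cooperad whose comultiplications $\bar\Delta_t$ factor through the coproduct, leaving the underlying collection and its structure maps unchanged.
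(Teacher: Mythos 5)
Your proposal is correct and follows essentially the same route as the paper: the paper's proof likewise reduces the statement, via the equivalence of \propref{pro-Tcoalg+nuCoop+}, to \lemref{lem-forgetful-functor-has-right-adjoint-T} applied to the comonad $\bott$ on $\cw^\NN_{++}$. Your additional remarks (that $(\cw^\NN_{++})\bott\subset\bott\coalg_{++}$ ensures the comonad restricts, and that the equivalence commutes with the forgetful functors) only make explicit what the paper leaves implicit.
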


\begin{proof}
Equivalence of $\bott\coalg_{++}$ and $\nucoop_{++}$ reduces the statement to \lemref{lem-forgetful-functor-has-right-adjoint-T} written for the comonad $\bott$ in $\cw^\NN_{++}$.
\end{proof}

By definition, there is a symmetric Monoidal functor \((\oplus,\phi^I):(\cw,\tens_\cw^I)\to(\cv,\tens_\cv^I)\), \(\oplus X=\oplus_{k\in\NN}X_k\), for any family \((X^i)_{i\in I}\) of objects of $\cw$ 
\[ \phi^I =\bigl[ \bigoplus_{k\in\NN}\bigoplus_{\sum_{i\in I}k_i=k}\bigotimes_{i\in I}X^i_{k_i} \rTTo^\sim \bigoplus_{(k_i)_{i\in I}\in\NN^I}\bigotimes_{i\in I}X^i_{k_i} \rTTo^\sim \bigotimes_{i\in I}\bigoplus_{k_i\in\NN}X^i_{k_i} \bigr]
\]
is the natural morphism, invertible since we require $\tens_\cv$ to preserve countable direct sums.
Thus, we may construct $\gamma$ as in \eqref{eq-FC-FC-FC-FC-FC} and conclude by \exeref{exe-FN-delta-gamma} that any $\bott$\n-coalgebra \(\delta:C\to\bott C\in\cw^\NN\) induces a $\bott$\n-coalgebra in $\cv^\NN$
\[ \delta' =\bigl(\oplus^\NN C \rTTo^{\oplus^\NN\delta} \oplus^\NN\bott C \rTTo^\gamma \bott\oplus^\NN C \bigr).
\]

\begin{proposition}\label{pro-TX-coalgebra-structures}
Let \(W\in\Ob\cw\), \(X\in\Ob\cw^\NN_{++}\).
Then non-counital $X\bott$\n-comodule structures on $W$ are in bijection with collections of degree~0 maps \(\beta_n:W\to W^{\tens n}\tens X(n)\), $n\ge0$:
\begin{align}
\{W\in X\bott\comodul^{nu}\}\; &\longrightarrow \;\{(\beta_n:W\to W^{\tens n}\tens X(n) \in\cw)_{n\ge0}\} \hspace{-0.4em}
\label{eq-W-Xbott-comod}
\\
(\delta_n:W\to W^{\tens n}\tens(X\bott)(n))_{n\ge0}\; &\longmapsto \;\beta_n=\langle W \rto{\delta_n} W^{\tens n} \tens(X\bott)(n) \rTTo^{1\tens\eps_\bott} W^{\tens n}\tens X(n) \rangle. \notag
\end{align}
\end{proposition}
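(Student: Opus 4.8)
The plan is to follow the pattern of the preceding cofree-cooperad computation (the one establishing injectivity of $\psi\colon\Coop(C,X\botto)\to\gr^\NN(C,X)$), transporting it from cooperad morphisms to comodule coactions. Write $\eps_\bott\colon(X\bott)(n)\to X(n)$ for the projection onto the corolla summand $\tau[n]$, so that the asserted forward map sends a coaction $(\delta_n)$ to its corestriction $(\beta_n)$ to cogenerators. First I would express each component $\delta_n\cdot\pr_t$, the projection of $\delta_n$ onto the summand of $(X\bott)(n)$ indexed by a tree $t\ne\circ$ with $|\Inp t|=n$, as a single iterated composite built from the maps $\beta_{|p|}$, one factor for every internal vertex $p\in\IV(t)$, nested according to $t$. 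This formula is obtained by iterating the coaction and invoking the coassociativity equation~\eqref{eq-NNNNNNNNNNNNNNNNNNNN} repeatedly: writing $t$ as a substitution $I_{t'}(t_p\mid p)$ of trees with at most two internal vertices and applying~\eqref{eq-NNNNNNNNNNNNNNNNNNNN} step by step reduces every projection $\pr$ appearing on the way to a projection onto a corolla, i.e. to a $\beta$. Since the resulting composite involves only the $\beta_n$'s, the coaction $\delta$ is recovered from $(\beta_n)_{n\ge0}$, which proves that $\delta\mapsto(\beta_n)$ is injective (the analogue of~\eqref{eq-gprt}).

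For surjectivity I would reverse this: starting from an arbitrary family $(\beta_n)_{n\ge0}$, define $\delta_n\cdot\pr_t$ by the very same iterated-$\beta$ recipe, by induction on the number of internal vertices of $t$. The one point needing care is that these components assemble into a morphism $\delta_n\colon W\to W^{\tens n}\tens(X\bott)(n)$ whose target is the coproduct $\coprod_t$ and not merely the product $\prod_t$. This is exactly the finiteness used in \propref{pro-Tcoalg+nuCoop+}: the hypothesis $X\in\cw^\NN_{++}$ forces each internal vertex of arity $0$ or $1$ to contribute strictly positive $\cw$-degree to $\otimes^{p\in\IV(t)}X|p|$, while the maps $\beta_n$ have degree~$0$ and $W$ is $\NN$-graded, so for a homogeneous element of $W$ of $\cw$-degree $k$ the accumulated $X$-degree is at most $k$. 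The counting of \propref{pro-Tcoalg+nuCoop+} (the bounds $|\Inp t|=n$ and $|L(t)|+|\uv(t)|\le n+k$) then shows that only finitely many trees $t$ yield a nonzero term, so $\delta_n$ indeed lands in $\coprod_t$.

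It then remains to check that the $\delta$ so defined satisfies the coassociativity equation~\eqref{eq-NNNNNNNNNNNNNNNNNNNN}; there is no counit condition to verify, the comodule being non-counital. I would compose the two sides of the desired equation with the projection onto a factor indexed by a pair $\bigl(t,(t_p)_{p\in\IV(t)}\bigr)$ and observe that both reduce, by construction, to the unique iterated-$\beta$ composite attached to the substituted tree $I_t(t_p\mid p\in\IV(t))$, precisely because the comultiplication of the cooperad $X\bott$ is substitution of trees; this is the same mechanism already exploited in \lemref{lem-theta-theta-delta-hat-Delta} and \propref{pro-aug-equiv-non-counital}. Finally, composing the iterated formula with $1\tens\eps_\bott$ isolates the corolla $\tau[n]$ and returns $\beta_n$, so the two assignments are mutually inverse. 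I expect the principal obstacle to be the combinatorial bookkeeping of the first paragraph, namely organising the iterated coaction coherently over all trees so that $\delta_n\cdot\pr_t$ is unambiguously the composite indexed by $t$; once this normal form is in place, the finiteness follows from the $++$ hypothesis and coassociativity follows formally from tree substitution.
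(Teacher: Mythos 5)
Your proposal is correct and follows essentially the same route as the paper: injectivity by expressing each tree-indexed component of $\delta$ as an iterated composite of the $\beta_{|p|}$'s via coassociativity, and the inverse map by defining those composites directly and invoking the finiteness count of \propref{pro-Tcoalg+nuCoop+} (forced by the $\cw^\NN_{++}$ hypothesis) so that $\delta_n$ lands in the coproduct rather than the product. The only differences are cosmetic — the paper iterates the coaction floor-by-floor through the staged-tree presentation where you phrase it as substitution of small trees, and your sketch of the final coassociativity check fills in a step the paper leaves to the reader.
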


\begin{proof}
Let us show that map \eqref{eq-W-Xbott-comod} is injective.
In fact, \((\delta_n)_{n\ge0}\) is recovered from \((\beta_n)_{n\ge0}\) due to the following computation, valid for any \(t\in\tr(n)\setminus\circ\) given in form \eqref{eq-staged-tree-t(0)-t(1)-t(2)} of a staged tree with some \(\Inp(t)\subset L(t)\).
For \(0\le j\le m\) denote
\[ \Inp_j t =t(j) \cap \Inp t, \qquad \Inp_{\ge j}t =\bigcup_{k=j}^m \Inp_k t.
\]
Then
\begin{multline*}
\bigl\langle W\rto\delta W^{\tens n}\tens(X\bott)(n) \rTTo^{1\tens\pr_t} W^{\tens n}\tens \tens^{p\in\IV(t)}X|p| \bigr\rangle
\\
\hskip\multlinegap =\bigl\langle W\rto\delta W^{\tens n}\tens(X\bott)(n) \rTTo^{1\tens\Delta_t} W^{\tens n}\tens \tens^{p\in\IV(t)}(X\bott)|p| \rTTo^{1\tens\tens^p\pr_{\tau|p|}} W^{\tens n}\tens \tens^{p\in\IV(t)}X|p| \bigr\rangle \hfill
\\
\hskip\multlinegap =\bigl\langle W\rTTo^{\delta_{|\troot|}} W^{\tens|\troot|}\tens(X\bott)|\troot| \rTTo^{\tens^{p\in t(m-1)}a_p\tens1\cdot\simeq} \hfill
\\
W^{\tens t(m-2)\cup\Inp_{m-1}t} \tens[\tens^{p\in t(m-1)\setminus\Inp t}(X\bott)|p|]\tens(X\bott)|\troot| \rTTo^{\tens^{p\in t(m-2)}a_p\tens1\tens1\cdot\simeq}
\\
W^{\tens t(m-3)\cup\Inp_{\ge m-2}t} \tens[\tens^{p\in t(m-2)\setminus\Inp t}(X\bott)|p|] \tens[\tens^{p\in t(m-1)\setminus\Inp t}(X\bott)|p|] \tens(X\bott)|\troot|
\\
\to \dots \to W^{\tens t(0)\cup\Inp_{\ge1}t} \tens \tens^{j\in\mb m}[\tens^{p\in t(j)\setminus\Inp t}(X\bott)|p|] =W^{\tens\Inp t}\tens \tens^{p\in\IV(t)}(X\bott)|p|
\\
\hfill \rTTo^{1\tens\tens^{p\in\IV(t)}\pr_{\tau|p|}} W^{\tens n}\tens \tens^{p\in\IV(t)}X|p| \bigr\rangle \quad
\\
\hskip\multlinegap =\bigl\langle W\rTTo^{\beta_{|\troot|}} W^{\tens|\troot|} \tens X|\troot| \rTTo^{\tens^{p\in t(m-1)}c_p\tens1\cdot\simeq} \hfill
\\
W^{\tens t(m-2)\cup\Inp_{m-1}t} \tens[\tens^{p\in t(m-1)\setminus\Inp t}X|p|] \tens X|\troot| \rTTo^{\tens^{p\in t(m-2)}c_p\tens1\tens1\cdot\simeq}
\\
W^{\tens t(m-3)\cup\Inp_{\ge m-2}t} \tens[\tens^{p\in t(m-2)\setminus\Inp t}X|p|] \tens[\tens^{p\in t(m-1)\setminus\Inp t}X|p|] \tens X|\troot|
\\
\to \dots \to W^{\tens t(0)\cup\Inp_{\ge1}t} \tens \tens^{j\in\mb m}[\tens^{p\in t(j)\setminus\Inp t}X|p|] =W^{\tens\Inp t} \tens \tens^{p\in\IV(t)}X|p| \bigr\rangle,
\end{multline*}
where \(a_p=\delta_{|p|}\), \(c_p=\beta_{|p|}\) if \(p\notin\Inp t\) and \(a_p=c_p=\id:W\to W\) if \(p\in\Inp t\).
The isomorphisms are the expected permutations.

Based on the above the inverse to mapping \eqref{eq-W-Xbott-comod} is defined as follows.
Present \(t\in\tr(n)\setminus\circ\) as above.
Given a collection \(\beta_n:W\to W^{\tens n}\tens X(n)\), $n\ge0$, produce the following composition:
\begin{multline*}
\delta^t =\bigl\langle W\rTTo^{\beta_{|\troot|}} W^{\tens|\troot|} \tens X|\troot| \rTTo^{\tens^{p\in t(m-1)}a_p\tens1\cdot\simeq}
\\
W^{\tens t(m-2)\cup\Inp_{m-1}t} \tens[\tens^{p\in t(m-1)\setminus\Inp t}X|p|] \tens X|\troot| \rTTo^{\tens^{p\in t(m-2)}a_p\tens1\tens1\cdot\simeq}
\\
W^{\tens t(m-3)\cup\Inp_{\ge m-2}t} \tens[\tens^{p\in t(m-2)\setminus\Inp t}X|p|] \tens[\tens^{p\in t(m-1)\setminus\Inp t}X|p|] \tens X|\troot|
\\
\to \dots \to W^{\tens t(0)\cup\Inp_{\ge1}t} \tens \tens^{j\in\mb m}[\tens^{p\in t(j)\setminus\Inp t}X|p|] =W^{\tens\Inp t} \tens \tens^{p\in\IV(t)}X|p| \bigr\rangle,
\end{multline*}
In any degree \(k\in\NN\) the map \(\delta^t:W_k\to[W^{\tens\Inp t}\tens\tens^{p\in\IV(t)}X|p|]_k\) vanishes unless the total number of nullary and unary internal vertices does not exceed $k$, that is conditions~\eqref{eq-Inpt-n-Lt-Inpt-ut} are satisfied.
It is shown in the proof of \propref{pro-Tcoalg+nuCoop+} that the number of such trees with fixed \(|\Inp t|=n\) is finite.
Therefore, the top horizontal arrow in
\begin{diagram}[w=7em]
W_k &\rTTo^{\delta_{n,k}\;} &\prod_{t\in\tr(n)\setminus\circ} \bigl[W^{\tens n}\tens \bigotimes_{p\in\IV(t)}X|p| \bigr]_k
\\
&\rdTTo_{\exists!\delta_{n,k}} &\uMono
\\
&&\coprod_{t\in\tr(n)\setminus\circ} \bigl[W^{\tens n}\tens \bigotimes_{p\in\IV(t)}X|p| \bigr]_k
\end{diagram}
factorizes as shown through the coproduct.
That gives the composition
\begin{equation*}
\delta_n =\bigl\langle W \rto{\delta_n} \coprod_{t\in\tr(n)\setminus\circ} W^{\tens n}\tens \bigotimes_{p\in\IV(t)}X|p| \rto\sim W^{\tens n}\tens \coprod_{t\in\tr(n)\setminus\circ} \bigotimes_{p\in\IV(t)}X|p| =W^{\tens n}\tens(X\bott)(n) \bigr\rangle.
\end{equation*}
One can prove that these maps make $W$ into a $X\bott$\n-comodule.
\end{proof}

\begin{remark}\label{rem-number-trees-finite}
Let $\cv$ be additive and let \(X_1\in\cw^\NN_+\), \(X_i\in\cw^\NN_{++}\) for \(2\le i\le m\).
Then the canonical morphism (extension by zeroes)
\[ \Bigl(\bigodot_{i\in\mb m}X_i\Bigr)(n) =\coprod_{t\in\str(n,m)} \, \bigotimes_{i\in\mb m} \, \bigotimes_{j\in t(i)} X_i(t_i^{-1}j) \to \prod_{t\in\str(n,m)} \, \bigotimes_{i\in\mb m} \, \bigotimes_{j\in t(i)} X_i(t_i^{-1}j) =\Bigl(\bbodot_{i\in\mb m}X_i\Bigr)(n)
\]
is an isomorphism.
In fact, for each $k\ge0$ only trees $t\in\str(n,m)$ that satisfy \(|L(t)\cap\bigcup_{i\ge1}t(i)|+|\uv(t)\cap\bigcup_{i\ge2}t(i)|\le k\) contribute to the source and the target of \(\bigl(\odot^{i\in\mb m}X_i\bigr)(n)_k\to\bigl(\bar\odot^{i\in\mb m}X_i\bigr)(n)_k\).
Together with \(|\uv(t)\cap t(1)|\le|t(0)|\) this implies an estimate \(|L(t)|+|\uv(t)|\le2|t(0)|+k=2n+k\), cf. \eqref{eq-Lt-ut-nk}.
The number of such trees is finite by the proof of \propref{pro-Tcoalg+nuCoop+}, hence, this map is an isomorphism.

Generalizing, let \(X_0\in\cw^\mm_+\), \(X_i\in\cw^\NN_{++}\) for \(1\le i\le m\).
Then the canonical morphism (extension by zeroes)
\begin{multline}
\Bigl(\bigodot_{i\in[m]}X_i\Bigr)(\ell)
=\coprod_{t\in\str(-,m),\,n\in\mm^{t(0)}}^{n_1+\dots+n_{t(0)}=\ell} \, \bigotimes_{i\in[m]} \biggl\langle \bigotimes_{j\in t(0)} X_0(n_j), \Bigl(\bigotimes_{j\in t(i)} X_i(t_i^{-1}j)\Bigr)_{i=1}^m\biggr\rangle \to
\\
\prod_{t\in\str(-,m),\,n\in\mm^{t(0)}}^{n_1+\dots+n_{t(0)}=\ell} \, \bigotimes_{i\in[m]} \biggl\langle \bigotimes_{j\in t(0)} X_0(n_j), \Bigl(\bigotimes_{j\in t(i)} X_i(t_i^{-1}j)\Bigr)_{i=1}^m\biggr\rangle =\Bigl(\bbodot_{i\in[m]}X_i\Bigr)(\ell).
\label{eq-X-coprod-prod-X}
\end{multline}
is an isomorphism for all \(\ell\in\mm\).
In fact, for each $k\ge0$ only pairs $(t,(n_j)_{j\in t(0)})$ that satisfy
\begin{equation}
|\{j\in t(0)\mid n_j=0\}| +|L(t)\cap\bigcup_{i\ge1}t(i)| +|\uv(t)|\le k
\label{eq-nLuk}
\end{equation}
contribute to the source and the target of \(\bigl(\odot^{i\in[m]}X_i\bigr)(n)_k\to\bigl(\bar\odot^{i\in[m]}X_i\bigr)(n)_k\).
Define a new tree \(\tilde{t}\in\str(|n|,1+m)\), extending $t$ at $\Inp t$ by a new floor of vertices of valency $|n_1|$, \dots, $|n_{t(0)}|$.
Thus, \(\tilde{t}(i+1)=t(i)\) for \(0\le i\le m\), \(\tilde{t}_{i+1}=t_i\) for \(1\le i\le m\), \(|\tilde{t}_1^{-1}(j)|=|n_j|\) for \(j\in\tilde{t}(1)=t(0)\).
Inequality \eqref{eq-nLuk} can be written as \(|L(\tilde{t})\cap\bigcup_{i\ge1}\tilde{t}(i)|+|\uv(\tilde{t})\cap\bigcup_{i\ge2}\tilde{t}(i)|\le k\), which implies \(|L(\tilde{t})|+|\uv(\tilde{t})|\le2|\tilde{t}(0)|+k=2|n|+k=z\) as above.
The number of such trees $\tilde{t}$ is finite, hence, the number of suitable pairs $(t,(n_j)_{j\in t(0)})$ is finite.
Therefore, map~\eqref{eq-X-coprod-prod-X} is an isomorphism.
\end{remark}

\section{Coderivations}\label{sec-Coderivations}
Let \(\phi:B\to C\in\Coop_\cv\) be a morphism of cooperads in $\cv$.
A $\phi$\n-infinitesimal morphism\index{TTindex}{infinitesimal morphism of cooperads} is a morphism of cooperads \(\tilde\phi:\tilde B\to\tilde C\) in $\cv_\DD$ such that
\[ \bigl( \tilde B =B\tens\DD \rTTo^{1\tens\eps} B \rto\phi C \bigr) =\bigl( \tilde B \rto{\tilde\phi} \tilde C =C\tens\DD \rTTo^{1\tens\eps} C \bigr).
\]
Equivalently,
\[ \tilde\phi\tens_\DD\1 =\phi: \tilde B\tens_\DD\1 =B \to \tilde C\tens_\DD\1 =C.
\]
The map $\tilde\phi$ is determined by its restriction to $B$:
\[ \tilde\phi =\phi +\xi p: B \to C\oplus C\tens p =C\tens\DD =\tilde C,
\]
where \(\xi:B\to C\) is a homogeneous map of degree $a$.
The degree 0 map \(\tilde\phi=\phi+\xi p\) is a morphism of cooperads iff $\xi$ is a $\phi$\n-coderivation, that is,
\begin{diagram}[LaTeXeqno]
B(n_1+\dots+n_k) &\rTTo^\xi &C(n_1+\dots+n_k)
\\
\dTTo<\Delta &= &\dTTo>\Delta
\\
B(n_1)\tdt B(n_k)\tens B(k) &\rTTo^{\sum_{i=0}^k\phi^{\tens i}\tens\xi\tens\phi^{\tens(k-i)}} &C(n_1)\tdt C(n_k)\tens C(k)
\label{dia-B-C-BBB-CCC}
\end{diagram}
for all \(k,n_1,\dots,n_k\in\NN\), due to \eqref{eq-phi-xi-p-k1}.

Let \(\tilde\phi=\phi+\xi p:\tilde B\to\tilde C\) be as above and let \(f:M\to N\in\cv^\mm\) be a $\phi$\n-morphism of comodules over cooperads, that is,
\begin{equation}
\bigl( M \rto f N \rto\delta N\bar\odot C \bigr) =\bigl( M \rto\delta M\bar\odot B \rTTo^{f\bar\odot\phi} N\bar\odot C \bigr).
\label{eq-MNNC-MMBNC}
\end{equation}
An $(f;\phi,\xi)$\n-infinitesimal morphism of comodules over cooperads\index{TTindex}{infinitesimal morphism of comodules over cooperads} is a $\tilde\phi$\n-morphism of comodules over cooperads \(\tilde f:\tilde M=M\tens\DD\to\tilde N=N\tens\DD\) such that \eqref{eq-MMDMN-MNNDN}, equivalently, \eqref{eq-fkf-MkM-NkN} holds.
The $\DD$\n-linear degree 0 map \(\tilde f=f+rp\) is determined by its restriction to $M$, see \eqref{eq-ffrp-M-NNpNDN}, where \(r:M\to N\) is a homogeneous map of degree $a$.
The map \(\tilde f\) is a morphism of comodules over cooperads iff $r$ is an\index{TTindex}{coderivation of cooperads} \((f;\phi,\xi)\)-coderivation, that is,
\begin{diagram}
M(n_1+\dots+n_k) &\rTTo^r &N(n_1+\dots+n_k)
\\
\dTTo<\delta &= &\dTTo>\delta
\\
M(n_1)\tdt M(n_k)\tens B(k) &\rTTo^{\sum_{i=1}^kf^{\tens(i-1)}\tens r\tens f^{\tens(k-i)}\tens\phi+f^{\tens k}\tens\xi} &N(n_1)\tdt N(n_k)\tens C(k)
\end{diagram}
for all \(k,n_1,\dots,n_k\in\NN\), due to \eqref{eq-(frp)k-phi}.
The set of coderivations \((r;\xi)\), \(\deg r=\deg\xi\), over morphisms \((f;\phi)\) is a graded $\kk$\n-module.

\begin{remark}\label{rem-r2-idN:idC:xi2-coderivation}
Assume that \(\tilde f=f+rp:\tilde L\to\tilde M\) is a morphism over a morphism of cooperads \(\tilde\phi=\phi+\xi p:\tilde A\to\tilde B\) and \(\tilde g=g+sp:\tilde M\to\tilde N\) is a morphism over \(\tilde\psi=\psi+\eta p:\tilde B\to\tilde C\), \(\deg r=\deg\xi=\deg s=\deg\eta=a\).
Composing these infinitesimal morphisms we see that \(\tilde f\tilde g=fg+(fs+rg)p:\tilde L\to\tilde N\) is a morphism over \(\tilde\phi\tilde\psi=\phi\psi+(\phi\eta+\xi\psi)p:\tilde A\to\tilde C\).
Assume in addition that $a$ is odd and \(\phi\eta=\xi\psi\), \(fs=rg\) hold.
Then \(\xi\eta:A\to C\) is a $\phi\psi$\n-coderivation due to \eqref{eq-phi-xi-phi}.
Furthermore, \(rs:L\to N\) is an \((fg;\phi\psi,\xi\eta)\)-coderivation due to \eqref{eq-frf-f-xi-gsg-g}.
The same conclusion is reached when \(\deg r=\deg\xi=a\), \(\deg s=\deg\eta=b\) are odd and \(\phi\eta=c\xi\psi\), \(fs=crg\) for \(c\in\kk^{b-a}\), or \(\xi\psi=c'\phi\eta\), \(rg=c'fs\) for \(c'\in\kk^{a-b}\).
These reasonings are typically applied when \(\phi=\psi=\id_C\), \(f=g=\id_N\), \(\eta=\xi\) is a coderivation of $C$ of odd degree $a$ and $s=r$ is a \((\id_N;\id_C,\xi)\)-coderivation of degree $a$.
Then $r^2$ is a \((\id_N;\id_C,\xi^2)\)-coderivation.
\end{remark}

Recall that \(\cw^\NN_{++}\) denotes the full subcategory of the category \(\cw^\NN\) containing collections $P$ such that \(P(0)_0=0=P(1)_0\).
Denote by $\augcoopl$ the\index{TTsymb}{augCoop++@$\augcoopl$} category of augmented cooperads $C$ in $\cw$ with \(\Ker\eps\in\cw^\NN_{++}\).
Such $C$ are called\index{TTindex}{connected cooperad} \emph{connected}.
Denote by \(\cw^\mm_+\) the\index{TTsymb}{WM+@$\cw^\mm_+$} full subcategory of the category \(\cw^\mm\) containing objects $P$ such that \(P(0)_0=0\).
Denote by \(\comodplur C\) the\index{TTsymb}{comodC@$\comodplur C$} full subcategory of the category of right $C$\n-comodules which are contained in \(\cw^\mm_+\).
Such comodules are\index{TTindex}{connected comodule over a cooperad} called \emph{connected}.

Introduce the tensoring comonad\index{TTsymb}{bot@$\bot$}
\begin{align*}
\bot: \cw^\mm_+ \times \augcoopl &\longrightarrow \cw^\mm_+ \times \augcoopl,
\\
(N,C) &\longmapsto (N\odot C,C),
\\
(f:M\to N,\phi:B\to C) &\longmapsto (f\odot\phi:M\odot B\to N\odot C,\phi:B\to C).
\end{align*}
The counit and the comultiplication for this comonad are induced by those of the cooperad.
Coalgebras over this comonad are pairs \((N,C)\) where $C$ is a connected cooperad in $\cw$ and \(M\in\cw^\mm_+\) is a connected right $C$\n-comodule.
Morphisms of $\bot$\n-coalgebras are pairs \((f:M\to N,\phi:B\to C)\) where $\phi$ is a morphism of cooperads and $f$ is a morphism of comodules over $\phi$, see \eqref{eq-MNNC-MMBNC}, which by \remref{rem-number-trees-finite} is equivalent to
\[ \bigl( M \rto f N \rto\delta N\odot C \bigr) =\bigl( M \rto\delta M\odot B \rTTo^{f\odot\phi} N\odot C \bigr).
\]
Denote by \(\comodcoopl\) the above category of comodules over cooperads equivalent to $\bot\coalg$.
The underlying functor $U$ has a right adjoint $F$,
\[ U:\comodcoopl \leftrightarrows \cw^\mm_+\times\augcoopl: F,
\]
given by the same formulae as $\bot$, see \lemref{lem-forgetful-functor-has-right-adjoint-T}.
Note that \(F\cdot U=\bot\) and $U$ is comonadic by definition.
The adjunction bijection is
\begin{align*}
\cw^\mm_+(M,N) \times \augcoopl(B,C) &\longleftrightarrow \comodcoopl((M,B),(N\odot C,C)),
\\
(g:M\to N,\phi:B\to C) &\rMapsTo (M \rto\delta M\odot B \rTTo^{g\odot\phi} N\odot C,\phi:B\to C),
\\
(M \rto f N\odot C \rTTo^{1\tens\eps} N,\phi:B\to C) &\lMapsTo (f:M\to N\odot C,\phi:B\to C).
\end{align*}
In particular, an infinitesimal morphism \(\tilde f=f+rp:\tilde M\to\tilde N\odot\tilde C\) over \(\tilde\phi=\phi+\xi p:\tilde B\to\tilde C\) is determined by \(\tilde g=g+sp:\tilde M\to\tilde N\), namely,
\begin{gather}
\tilde f =f+rp =\bigl( \tilde M \rto\delta \tilde M\odot\tilde B \rTTo^{\tilde g\odot\tilde\phi} \tilde N\odot\tilde C \bigr) =\bigl( M \rto\delta M\odot B \rTTo^{g\odot\phi} N\odot C \bigr) +rp, \notag
\\
\begin{split}
r =\bigl[ M(n_1+\dots+n_k) &\rto\delta M(n_1)\tdt M(n_k)\tens B(k) 
\\
&\rTTo^{\sum_{i=1}^kg^{\tens(i-1)}\tens s\tens g^{\tens(k-i)}\tens\phi+g^{\tens k}\tens\xi}
N(n_1)\tdt N(n_k)\tens C(k) \bigr].
\end{split}
\label{eq-general-coderivation-rM-NC}
\end{gather}
A general coderivation \(r:M\to N\odot C\) has the above form.

An infinitesimal $\phi$\n-morphism is the same as an $\phi$\n-coderivation in the sense of

\begin{definition}
Let $\phi:B\to C$ be a morphism of non-counital cooperads in $\gr$.
A \emph{$\phi$\n-coderivation} is\index{TTindex}{coderivation of cooperads} a map $\xi:B\to C$ of degree $a$ such that for all trees $t\ne\circ$
\begin{diagram}[w=4em]
B(\Inp t) &\rTTo^{\Delta_t} &\otimes^{p\in\IV(t)} B|p|
\\
\dTTo<\xi &= &\dTTo>{\sum_{x+1+z=|\IV(t)|}\phi^{\tens x}\tens \xi\tens \phi^{\tens z}}
\\
C(\Inp t) &\rTTo^{\Delta_t} &\otimes^{p\in\IV(t)} C|p|
\end{diagram}
\end{definition}

Since $\Delta_t$ can be presented as composition of comultiplications \(\Delta_{T(x,y,z)}\) tensored with 1, a homogeneous map \(\xi:B\to C\) is a $\phi$\n-coderivation iff for all \(x,y,z\in\NN\)
\begin{diagram}
B(x+y+z) &\rTTo^{\Delta_{T(x,y,z)}} &B(y)\tens B(x+1+z)
\\
\dTTo<\xi &= &\dTTo>{\xi\tens \phi +\phi\tens \xi}
\\
C(x+y+z) &\rTTo^{\Delta_{T(x,y,z)}} &C(y)\tens C(x+1+z)
\end{diagram}

\begin{exercise}\label{exe-id-coderivations}
Let $\phi:B\to C$ be a morphism of non-counital cooperads in $\gr$.
Let \(d_B:B\to B\) (resp. \(d_C:C\to C\)) be an $\id_B$\n-coderivation (resp. $\id_C$\n-coderivation) of degree~$a$.
Then \(\phi\cdot d_C\), \(d_B\cdot \phi\) and \(\xi=\phi\cdot d_C-d_B\cdot \phi:B\to C\) are $\phi$\n-coderivations of degree~$a$.
Notice that when $d_B$, $d_C$ are differentials, then $\xi=0$ iff $\phi$ is a chain map.
\end{exercise}

For any $\bott$\n-coalgebra $B$ and any morphism \(\phi:B\to X\bott\) homogeneous $\phi$\n-coderivations $\xi$ are in bijection with homogeneous maps \(\check \xi=\bigl(B\rto{\xi} X\bott\rTTo^{\eps_\bott} X\bigr)\).
They can be recovered via
\begin{equation}
\xi =\Bigl[ B \rto\delta B\bott =\coprod_{t\in\tr\setminus\circ} \, \bigotimes_{p\in\IV(t)} B|p| \rTTo^{\coprod_t\sum_{i\in\IV(t)}\tens^{p\in\IV(t)}g_i(p)} \coprod_{t\in\tr\setminus\circ} \, \bigotimes_{p\in\IV(t)} X|p| \Bigr],
\label{eq-r-delta-TX}
\end{equation}
where \(g_i(p)=\check \phi\) if $p\ne i$ and \(g_i(p)=\check \xi\) if $p=i$.
In particular, \(\id_{X\bott}\)-coderivations \(\xi:X\bott\to X\bott\) are recovered from \(\check \xi:X\bott\to X\) via
\begin{multline}
\xi =\Bigl[ X\bott =\coprod_{\tau\in\tr\setminus\circ} \, \bigotimes_{v\in\IV(\tau)} X|v| \rto\Delta \coprod_{t\in\tr\setminus\circ} \, \bigotimes_{p\in\IV(t)} \coprod_{t_p\in\tr|p|\setminus\circ} \, \bigotimes_{q\in\IV(t_p)} X|q|
\\
\rTTo^{\coprod_t\sum_{i\in\IV(t)}\tens^{p\in\IV(t)}g_i(p)} \coprod_{t\in\tr\setminus\circ} \, \bigotimes_{p\in\IV(t)} X|p| =X\bott \Bigr],
\label{eq-r-B-TB}
\end{multline}
where \(g_i(p)=\eps_\bott\) if $p\ne i$ and \(g_i(p)=\check \xi\) if $p=i$.

\section{Coderivations with values in ``cofree'' cooperads}
Recall that \(\DD=\1\langle p\rangle/(p^2)\), \(\deg p=-a\) and \(\cv_\DD=\comm\DD\bimod\) denotes the category of $\DD$\n-modules (commutative $\DD$\n-bimodules) with $\DD$\n-linear homomorphisms of degree~0.
The tensor product $\tens_\DD$ and the associated comonads in $\cv_\DD$ are still denoted $\tens$, $\bott$, $\botto$, $\botth$, $\bottho$ by abuse of notation.
The homomorphism \(\eps:\DD\to\1\), \(p\mapsto0\), is denoted also \(\mod p\).
The map \(1\tens\eps:X\tens\DD\to X\) is denoted \(\mod p\) as well.

Let \((\bar C,\delta)\) be a $\bott$\n-coalgebra.
The cooperad \(C=\bar CJ=\bar C\oplus\1\) is associated with the non-counital cooperad $\bar C$.
They are extended to cooperads \(\bar C_\DD=\bar C\tens\DD\) and \(C_\DD= C\tens\DD\) over $\DD$.
Clearly, \(C_\DD=\bar C_\DD\oplus\DD\).
Let \(X\in\Ob\gr^\NN\) and \(X_\DD=X\tens\DD\).

\begin{proposition}
The map
\begin{gather*}
\psi: \bigl\{ g\in \Coop_\DD(C_\DD,X_\DD\botto) \mid \exists \, g':\bar C\to X\bott \in\bott\coalg: \; g\equiv (g'\tens\DD)\oplus1_\DD \mod p \bigr\}
\\
\begin{split}
&\to \bigl\{ f\in\gr_\DD^\NN(C_\DD,X_\DD) \mid \inj_\DD\cdot f\equiv0 \mod p \bigr\},
\\
(g:C_\DD\to X_\DD\botto) &\mapsto (C_\DD \rto g X_\DD\botto \rto\eps X_\DD),
\end{split}
\end{gather*}
is bijective.
\end{proposition}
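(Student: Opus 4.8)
The plan is to exploit $\DD$\n-linearity in order to split every morphism into its $p^0$\n- and $p^1$\n-components, and to recognise the two resulting layers as an instance of the graded statement already proved above (the injectivity and image description of $\psi\colon\Coop(C,X\botto)\to\gr^\NN(C,X)$) together with its coderivation counterpart. Since $\botto$ is $\DD$\n-linear we have $X_\DD\botto=(X\botto)\tens\DD$, so a $\DD$\n-linear map $g\colon C_\DD\to X_\DD\botto$ is the same as its restriction to $C$, written $g=g_0+g_1p$ with $g_0,g_1\colon C\to X\botto$ and $\deg g_1=a$. By the infinitesimal\n-morphism dictionary of \secref{sec-Coderivations}, $g$ is a morphism of cooperads over $\DD$ exactly when $g_0$ is a morphism of cooperads and $g_1$ is a $g_0$\n-coderivation. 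Writing likewise $f=f_0+f_1p$, the map $\psi$ becomes $f_0=g_0\cdot\eps$ and $f_1=g_1\cdot\eps$.

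First I would translate the two constraints. The condition $g\equiv(g'\tens\DD)\oplus1_\DD\bmod p$ says precisely that $g_0=g'J$ for a $\bott$\n-coalgebra morphism $g'\colon\bar C\to X\bott$; by \remref{rem-functor-Tcoalg-augcoop} such a $g_0$ is automatically a morphism of cooperads, and it ranges exactly over the image of $J$. On the target side, $g_0=g'J$ carries the unit summand $\1\subset C$ into the $\circ$\n-summand of $X\botto$, which $\eps$ annihilates, so $f_0|_\1=0$; this is exactly the requirement $\inj_\DD\cdot f\equiv0\bmod p$. Thus the domain and target constraints match up level by level.

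The bijection is then assembled from two adjunction statements. At the $p^0$\n-level, \lemref{lem-forgetful-functor-has-right-adjoint-T} for the comonad $\bott$ (together with \propref{pro-aug-equiv-non-counital}) gives $\bott\coalg(\bar C,X\bott)\cong\gr^\NN(\bar C,X)$, $g'\mapsto\check{g'}=g_0\cdot\eps|_{\bar C}=f_0|_{\bar C}$, which, combined with $f_0|_\1=0$, is precisely the bijection $g_0\leftrightarrow f_0$. At the $p^1$\n-level, for fixed $g_0$ the unital analogue of the recovery formula \eqref{eq-r-delta-TX} expresses a $g_0$\n-coderivation $g_1$ through $\check{g_1}=g_1\cdot\eps=f_1$, with the remaining tensor factors filled by $\check{g_0}=f_0$; this yields the bijection $g_1\leftrightarrow f_1$, once one knows the formula returns a well\n-defined element of the coproduct $X\botto$. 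Putting the two levels together proves injectivity and surjectivity of $\psi$ onto the prescribed target simultaneously.

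The main obstacle is exactly this well\n-definedness, i.e.\ verifying the finiteness hypothesis that the graded statement requires: for each homogeneous $c\in C_\DD(n)$ only finitely many trees $t$ satisfy $c.\Delta_t.\tens_{p\in\IV(t)}f|p|\ne0$. Here the design of the constraints pays off. Decomposing $\Delta_t$ on $C=\bar CJ$ over subsets $N\subset\uv(t)$ as in \propref{pro-aug-equiv-non-counital}, every internal vertex lying in $N$ is fed by the unit and therefore contributes the factor $f|_\1=f_1p$ (because $f_0|_\1=0$). Since $p^2=0$, any tree forcing two or more $p$\n-factors is killed; what survives are the trees $t^N$ with $|N|\le1$ built on a tree $t'$ for which $\bar\Delta_{t'}$ is nonzero. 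Conilpotency of the $\bott$\n-coalgebra $\bar C$ leaves only finitely many such $t'$ (and, for each, finitely many positions at which a single unary vertex may be inserted), while the purely unital contributions reduce to $t=\theta_1$ by the same $p^2=0$ argument (finitely many by the tree count in \propref{pro-Tcoalg+nuCoop+}). Hence the finiteness condition holds, the preimage $g=g_0+g_1p$ exists and is unique, and its reduction $g_0=g'J$ automatically lies in the domain subset. This is the step I expect to demand the most care, and it is the one that forces both hypotheses $f_0|_\1=0$ and the presence of the conilpotent $\bar C$.
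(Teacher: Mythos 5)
Your layered reduction is sound as a skeleton, and several pieces are right: the matching of the two side conditions ($g_0=g'J$ versus $f_0|_\1=0$), the level\nobreakdash-0 bijection via \lemref{lem-forgetful-functor-has-right-adjoint-T} together with \remref{rem-functor-Tcoalg-augcoop}, and the finiteness mechanism (at most one unary-vertex insertion, forced by $p^2=0$, resp.\ by the single $\xi$-factor in a coderivation, combined with conilpotency of $\bar C$ and $f_0|_\1=0$). That finiteness argument is essentially the one the paper uses to show that $\vartheta\cdot f\bottho$ lands in the coproduct $X_\DD\botto$ rather than in the product $X_\DD\bottho$, so there is no disagreement there.

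The gap is at the $p^1$-level. The ``unital analogue of the recovery formula \eqref{eq-r-delta-TX}'' that you invoke --- the bijection between $g_0$\nobreakdash-coderivations $g_1\colon C\to X\botto$ and arbitrary homogeneous maps $f_1=g_1\cdot\eps$ --- is not available at this point: in the paper it is \corref{cor-coderivations-are-in-bijection-with-homogeneous-maps}, which is \emph{deduced from} the very proposition you are proving, by extracting the $p$-component; citing it here is circular. Nor does the unital case reduce to \eqref{eq-r-delta-TX} by restriction, because $\Delta_t$ on $C=\bar CJ$ has components feeding units into internal vertices, so a $g_0$\nobreakdash-coderivation into $X\botto$ is not just a coderivation $\bar C\to X\bott$ over $h$. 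Consequently your argument establishes injectivity (every coderivation is determined by $\check g_1$ through the recovery formula, as in \eqref{eq-gprt}) and well-definedness of the candidate preimage (finiteness), but not surjectivity: what is missing is the verification that the candidate built from $f$ actually \emph{satisfies} the coderivation equations, equivalently that $\phi(f)=\delta J\cdot Q(f)$ is a morphism of cooperads over $\DD$. This is not a formality; it is the diagram-chase core of the paper's proof, carried out with \eqref{dia-tilde-Delta-hat-Delta}, \lemref{lem-theta-theta-delta-hat-Delta} (diagram \eqref{dia-theta-theta-delta-hat-Delta}) and naturality of $\prod\zeta$. Once that verification is supplied (either for the combined $\DD$-linear map, as the paper does, or level-wise for $g_1$ over fixed $g_0$), your organization and the paper's construction coincide in substance.
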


\begin{proof}
Clearly, the composition \(f=(C_\DD \rto g X_\DD\botto \rto\eps X_\DD)\) taken modulo $p$ satisfies \(\inj_\DD\cdot f\equiv0\).
Let us construct a candidate for the map inverse to $\psi$.
Take a map \(f:C_\DD\to X_\DD\) such that \(\inj_\DD\cdot f\equiv0\mod p\).
Consider the composition
\begin{multline}
\bigl( \bar C_\DD\botto \rto\vartheta C_\DD\bottho \rTTo^{f\bottho} X_\DD\bottho \bigr)
\\
=\Bigl[ \coprod_{\tau\in\tr} \, \bigotimes_{v\in\IV(\tau)} \bar C_\DD|v| \rto\vartheta \prod_{t\in\tr}\, \bigotimes_{y\in\IV(t)} (\bar C_\DD\oplus\DD)|y| \rTTo^{\prod_{t\in\tr}\tens_{y\in\IV(t)}f|y|} \prod_{t\in\tr} \, \bigotimes_{y\in\IV(t)} X_\DD|y| \Bigr].
\label{eq-CDTo-fTo-XDTo}
\end{multline}
For each tree $t$ there is no more than one tensor factor, in which $f$ is restricted to $\DD$, since each such factor is a multiple of $p$.
Fix a tree $\tau$ indexing a summand of the source.
The relevant tree $t$ either coincides with $\tau$ or differs from $\tau$ by adding one unary vertex.
The number of such trees is finite.
Therefore, \eqref{eq-CDTo-fTo-XDTo} ends up in the submodule \(\coprod_{t\in\tr}\tens_{y\in\IV(t)}X_\DD|y|=X_\DD\botto\) and there is a unique $\DD$\n-linear mapping $Q(f)$ such that
\begin{equation}
\bigl( \bar C_\DD\botto \rTTo^{Q(f)} X_\DD\botto \rMono^{\iota_\circ} X_\DD\bottho \bigr) =\bigl( \bar C_\DD\botto \rto\vartheta C_\DD\bottho \rTTo^{f\bottho} X_\DD\bottho \bigr).
\label{eq-Qf-theta-fTo}
\end{equation}

Define a map
\begin{gather*}
\phi: \bigl\{ f\in\gr_\DD^\NN(C_\DD,X_\DD) \mid \inj_\DD\cdot f\equiv0 \mod p \bigr\} \to \Coop_\DD(C_\DD,X_\DD\botto),
\\
\phi(f) =\bigl( \bar C_\DD J \rTTo^{\delta J} \bar C_\DD\botto \rTTo^{Q(f)} X_\DD\botto \bigr).
\end{gather*}
Let us verify that $\phi(f)$ is indeed a homomorphism of cooperads.
Clearly, \(\delta:\bar C_\DD\to\bar C_\DD\bott\) is a morphism of $\bott$\n-coalgebras and $\delta J$ is a morphism of cooperads.
The property of $Q(f)$ being a morphism of cooperads is the left square in the following diagram
\begin{diagram}[nobalance,h=2.4em]
\bar C_\DD\botto &\rTTo^{Q(f)} &X_\DD\botto &\rMono^{\iota_\circ} &X_\DD\bottho &\rEq &\prod_{\tau\in\tr} \, \bigotimes_{v\in\IV(\tau)} X_\DD|v| \qquad\qquad\qquad\qquad
\\
\dTTo<{\tilde\Delta} &&\dTTo>{\tilde\Delta} &&\eqref{dia-tilde-Delta-hat-Delta} &= &\dTTo>{\hat{\Delta}}
\\
\bar C_\DD\botto\bottho &\rTTo^{Q(f)\bottho} &X_\DD\botto\bottho &\rTTo^{\iota_\circ\bottho} &X_\DD\bottho\bottho &\rTTo^{\prod\zeta} &\prod_{t\in\tr} \, \prod_{(t_y)\in\prod_{y\in\IV(t)}\tr|y|} \, \bigotimes_{y\in\IV(t)} \, \bigotimes_{q\in\IV(t_y)} X_\DD|q|
\end{diagram}
Since \(i=\iota_\circ\bottho\cdot\prod\zeta\) in the bottom row is an embedding, commutativity of the left square is equivalent to commutativity of the exterior of the above diagram.
Using definition \eqref{eq-Qf-theta-fTo} of $Q(f)$ we represent the equation in question as commutativity of
\begin{diagram}[nobalance,h=2.4em]
\bar C_\DD\botto &\rTTo^\vartheta &C_\DD\bottho &\rTTo^{f\bottho} &X_\DD\bottho &\rEq &\prod_{\tau\in\tr} \, \bigotimes_{v\in\IV(\tau)} X_\DD|v| \qquad\qquad\qquad\qquad
\\
\dTTo<{\tilde\Delta} &&&&&&\dTTo>{\hat{\Delta}}
\\
\bar C_\DD\botto\bottho &\rTTo^{\vartheta\bottho} &C_\DD\bottho\bottho &\rTTo^{f\bottho\bottho} &X_\DD\bottho\bottho &\rTTo^{\prod\zeta} &\prod_{t\in\tr} \, \prod_{(t_y)\in\prod_{y\in\IV(t)}\tr|y|} \, \bigotimes_{y\in\IV(t)} \, \bigotimes_{q\in\IV(t_y)} X_\DD|q|
\end{diagram}
Naturality of the transformation $\prod\zeta$ reduces the question to the diagram
\begin{diagram}[nobalance,h=2.5em]
\bar C_\DD\botto &\rTTo^\vartheta &C_\DD\bottho &\rTTo^{f\bottho} &X_\DD\bottho &\rTTo^{\hat{\Delta}} &\prod_{t\in\tr} \, \prod_{(t_y)\in\prod_{y\in\IV(t)}\tr|y|} \, \bigotimes_{y\in\IV(t)} \, \bigotimes_{q\in\IV(t_y)} X_\DD|q|
\\
\dTTo<{\tilde\Delta} &= &\eqref{dia-theta-theta-delta-hat-Delta} &\rdTTo(4,2)_{\hat{\Delta}} &&&\uTTo>{\prod\prod\tens\tens f|q|}
\\
\bar C_\DD\botto\bottho &\rTTo^{\vartheta\bottho} &C_\DD\bottho\bottho &&\rTTo^{\prod\zeta} &&\prod_{t\in\tr} \, \prod_{(t_y)\in\prod_{y\in\IV(t)}\tr|y|} \, \bigotimes_{y\in\IV(t)} \, \bigotimes_{q\in\IV(t_y)} X_\DD|q|
\end{diagram}
which obviously commutes.
Thus, $Q(f)$ and $\phi(f)$ are morphisms of cooperads.

Whenever \(f:C_\DD\to X_\DD\in\gr_\DD^\NN\) satisfies \(\inj_\DD\cdot f\equiv0\mod p\), it has the form \(f=f'\tens\DD+f''p\), where \(f'=\bigl(C\rTTo^\pr \bar C\rto{\bar f} X\bigr)\) for some \(\bar f\in\gr^\NN\).
Constructing a cooperad morphism $\tilde g$ from \(f'\tens\DD\) as in \eqref{eq-gprt} we see that it has the form \((g'\tens\DD)J\), where \(g'=\bigl(\bar C\rto\delta \bar C\bott\rTTo^{\bar f\bott} X\bott\bigr)\in\bott\coalg\).
In fact, the constructed cooperad morphism $\tilde g$ as well as \(\tilde g=(g'\tens\DD)J\) satisfies the equation \(\tilde g\cdot\eps=f'\tens\DD\), therefore, the two morphisms coincide by injectivity of $\psi$.
\end{proof}

\begin{corollary}\label{cor-coderivations-are-in-bijection-with-homogeneous-maps}
For any $\bott$\n-coalgebra $\bar C$ and any $\bott$\n-coalgebra morphism \(h:\bar C\to X\bott\) homogeneous $hJ$\n-coderivations $\xi:C\to X\botto$ are in bijection with homogeneous maps \(\check\xi=\bigl(\bar C\rto\xi X\botto \rTTo^\eps X\bigr)\).
They can be recovered as \(\xi=\bigl(C\rto{\delta J} \bar C\botto\rTTo^{q(\check\xi)} X\botto\bigr)\), where
\begin{multline*}
\bigl( \bar C\botto \rTTo^{q(\check\xi)} X\botto \rMono^{\iota_\circ} X\bottho \bigr)
\\
=\Bigl[ \coprod_{\tau\in\tr} \, \bigotimes_{v\in\IV(\tau)} \bar C|v| \rto\vartheta \prod_{t\in\tr} \, \bigotimes_{y\in\IV(t)} C|y| \rTTo^{\prod_t\sum_{i\in\IV(t)}\tens^{y\in\IV(t)}f_i(y)} \prod_{t\in\tr} \, \bigotimes_{y\in\IV(t)} X|y| \Bigr],
\end{multline*}
where \(f_i(y)=\bigl(C|y|\rto\pr \bar C|y|\rto h (X\bott)|y|\rto\eps X|y|\bigr)\) if $y\ne i$ and \(f_i(y)=\check\xi|y|\) if $y=i$.
\end{corollary}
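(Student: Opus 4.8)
The plan is to obtain this Corollary as the $p$-linear part of the bijection $\psi$ of the preceding Proposition, applied with $\DD=\1\langle p\rangle/(p^2)$ and $\deg p=-a$. Recall from \secref{sec-Coderivations} that a homogeneous $hJ$-coderivation $\xi:C\to X\botto$ of degree $a$ is precisely the same datum as an $hJ$-infinitesimal morphism, that is, a morphism of cooperads over $\DD$ of the form $\widetilde{hJ}=hJ+\xi p:C_\DD\to X_\DD\botto$. First I would match these infinitesimal morphisms with the elements of the source of $\psi$: since $\widetilde{hJ}\equiv(h\tens\DD)\oplus1_\DD\mod p$ (both sides reducing to $hJ$), the defining condition of the domain of $\psi$ holds with $g'=h$; conversely every $g$ in that domain whose reduction is $g'=h$ is of the form $hJ+\xi p$ for a unique $\xi$.

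Applying $\psi$ then gives $\psi(\widetilde{hJ})=\widetilde{hJ}\cdot\eps=(hJ\cdot\eps)+\check\xi\,p$, whose reduction modulo $p$ is the fixed map $f_0=hJ\cdot\eps$ and whose coefficient of $p$ is $\check\xi=\bigl(\bar C\rto\xi X\botto\rto\eps X\bigr)$. On $\bar C|y|$ the reduction $f_0$ restricts to exactly $\bigl(C|y|\rto\pr\bar C|y|\rto h(X\bott)|y|\rto\eps X|y|\bigr)$, while on the unit it vanishes, in agreement with the constraint $\inj_\DD\cdot f\equiv0\mod p$. Since $\psi$ is a bijection and its target has the reduction pinned to $h$, the assignment $\xi\mapsto\check\xi$ is a bijection onto the homogeneous maps $\check\xi$ of the stated degree, which yields injectivity and surjectivity simultaneously.

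For the explicit recovery formula I would differentiate the inverse $\phi$ of the Proposition, $\phi(f)=\bigl(C\rTTo^{\delta J}\bar C_\DD\botto\rTTo^{Q(f)}X_\DD\botto\bigr)$, to first order in $p$. Here $Q(f)$ is characterised by \eqref{eq-Qf-theta-fTo}, namely $\iota_\circ\cdot Q(f)=\vartheta\cdot(f\bottho)$. Writing $f|y|=f_0|y|+\check\xi|y|\,p$ and using $p^2=0$, the coefficient of $p$ in the multifactor map $f\bottho=\tens_y f|y|$ is the single-substitution sum $\sum_{i\in\IV(t)}\tens^{y\in\IV(t)}g_i(y)$, with $g_i(y)=f_0|y|=\bigl(C|y|\rto\pr\bar C|y|\rto h(X\bott)|y|\rto\eps X|y|\bigr)$ for $y\ne i$ and $g_i(y)=\check\xi|y|$ for $y=i$. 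Thus the $p$-coefficient $q(\check\xi)$ of $Q(f)$ satisfies exactly the equation displayed in the statement, and the $p$-coefficient of $\phi(f)$ is $\xi=\bigl(C\rTTo^{\delta J}\bar C\botto\rTTo^{q(\check\xi)}X\botto\bigr)$, as asserted.

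The step that requires the most care is the truncation bookkeeping in the previous paragraph: one must check that at most one tensor factor can take values in the $p$-component (this is where $p^2=0$ enters) and that, for each fixed source tree, only finitely many trees $t$ contribute, so that $q(\check\xi)$ indeed lands in the coproduct $X\botto$ rather than merely in the product $X\bottho$ — this is precisely the finiteness already exploited when constructing $Q(f)$. Once this is in place no separate verification of the coderivation property is needed, since the commutativity of \eqref{dia-tilde-Delta-hat-Delta} and \eqref{dia-theta-theta-delta-hat-Delta} invoked in the Proposition already guarantees that $\phi(f)$ is a cooperad morphism over $\DD$, and hence that its first-order part $\xi$ is automatically an $hJ$-coderivation.
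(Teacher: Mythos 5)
Your proof is correct and coincides with the paper's intended derivation: the Corollary is stated in the paper with no separate proof precisely because it follows from the Proposition on $\psi$ by identifying homogeneous $hJ$\nobreakdash-coderivations with infinitesimal cooperad morphisms $hJ+\xi p$ over $\DD$ and then reading off the $p$\nobreakdash-linear parts of $\psi$ and of its inverse $\phi(f)=\delta J\cdot Q(f)$, which is exactly what you do. Your closing remarks — that $p^2=0$ forces at most one tensor factor into the $p$\nobreakdash-component, and that the finiteness of contributing trees makes $q(\check\xi)$ land in the coproduct $X\botto$ rather than only in $X\bottho$ — are the same points the paper already settles when constructing $Q(f)$, so no extra verification is needed.
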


In particular, for \(\bar C=Y\bott\) and a $\bott$\n-coalgebra morphism \(h=\bigl(Y\bott\rto\Delta Y\bott\bott\rTTo^{\check h\bott} X\bott\bigr)\) homogeneous $hJ$\n-coderivations $\xi:Y\botto\to X\botto$ are in bijection with homogeneous maps \(\check\xi=\bigl(Y\botto\rto\xi X\botto \rTTo^\eps X\bigr)\).
They can be recovered via
\begin{multline*}
\bigl( Y\botto \rto\xi X\botto \rMono^{\iota_\circ} X\bottho \bigr)
\\
=\Bigl[ \coprod_{\tau\in\tr} \, \bigotimes_{v\in\IV(\tau)} Y|v| \rto{\tilde\Delta} \prod_{t\in\tr} \, \bigotimes_{y\in\IV(t)} \, \coprod_{t_y\in\tr|y|} \, \bigotimes_{q\in\IV(t_y)} Y|q| \rTTo^{\prod_t\sum_{i\in\IV(t)}\tens^{y\in\IV(t)}f_i(y)} \prod_{t\in\tr} \, \bigotimes_{y\in\IV(t)} X|y| \Bigr],
\end{multline*}
where \(f_i(y)=\bigl(Y\botto|y|\rto\pr Y\bott|y|\rto{\check{h}} X|y|\bigr)\) if $y\ne i$ and \(f_i(y)=\check\xi|y|\) if $y=i$.

Taking into account \eqref{dia-tilde-Delta-hat-Delta} we get

\begin{corollary}\label{cor-coderivations-bottom}
Homogeneous $\id_{X\botto}$-coderivations $\xi:X\botto\to X\botto$ are in bijection with homogeneous maps \(\check\xi=\xi\cdot\eps:X\botto\to X\) and are recovered as
\begin{equation*}
\xi =\Bigl[ \coprod_{\tau\in\tr} \, \bigotimes_{v\in\IV(\tau)} X|v| \rto{\bar\Delta} \coprod_{t\in\tr} \, \coprod_{i\in\IV(t)} \, \bigotimes_{y\in\IV(t)} \, \coprod_{t_i^i\in\tr|i|} \, \bigotimes_{q\in\IV(t_y^i)} X|q| \rTTo^{\coprod_t\sum_{i\in\IV(t)}\tens^{y\in\IV(t)}f_i(y)} \coprod_{t\in\tr} \, \bigotimes_{y\in\IV(t)} X|y| \Bigr],
\end{equation*}
where \(t_y^i=\tau|y|\) and \(f_i(y)=\id:\tens_{q\in\IV(\tau|y|)}X|q|\to X|y|\) if $y\ne i$, and \(f_i(i)=\check\xi|i|:\coprod_{t_i^i\in\tr|i|} \, \tens_{q\in\IV(t_i^i)}X|q|\to X|y|\) if $y=i$, the component of $\bar\Delta$ indexed by \(t\in\tr\), \(i\in\IV(t)\), \((t_i^i)\in\prod_{i\in\IV(t)}\tr|i|\) is
\[ \coprod_{\tau\in\tr} \, \bigotimes_{v\in\IV(\tau)} X|v| \rTTo^{\pr_{I_t(t_y^i\mid y\in\IV(t))}} \bigotimes_{v\in\IV(I_t(t_y^i\mid y\in\IV(t)))} X|v| \rto\sim \bigotimes_{y\in\IV(t)} \, \bigotimes_{q\in\IV(t_y^i)} X|q|.
\]
\end{corollary}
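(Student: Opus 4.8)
The plan is to recognize \corref{cor-coderivations-bottom} as the special case $Y=X$, $h=\id_{X\bott}$ of the statement established immediately after \corref{cor-coderivations-are-in-bijection-with-homogeneous-maps}, and then to collapse that general recovery formula by means of the already-proven diagram~\eqref{dia-tilde-Delta-hat-Delta}. Essentially nothing new has to be proved about the bijection itself; the work is entirely in rewriting the formula into the displayed form.

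First I would note that $\id_{X\botto}=(\id_{X\bott})J$ is a morphism of cooperads, so $\id_{X\botto}$-coderivations $\xi:X\botto\to X\botto$ are exactly the $hJ$-coderivations for $h=\id_{X\bott}$. Under \lemref{lem-forgetful-functor-has-right-adjoint-T} the identity $\id_{X\bott}$ corresponds to $\check h=\eps_\bott:X\bott\to X$, the projection onto the corolla summand. Hence the bijection between homogeneous $\id_{X\botto}$-coderivations $\xi$ and homogeneous maps $\check\xi=\xi\cdot\eps:X\botto\to X$, being merely the $Y=X$, $h=\id_{X\bott}$ instance of the general bijection cited above, is inherited verbatim, and both injectivity and surjectivity require no further argument.

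It then remains to transform the cited recovery formula into the asserted one. There one has $f_i(y)=\bigl(X\botto|y|\rto\pr X\bott|y|\rto{\check h}X|y|\bigr)$ for $y\ne i$. Substituting $\check h=\eps_\bott$ and recalling that $\eps_\bott$ annihilates every summand indexed by a tree $t_y\ne\tau|y|$ and equals $\rho$ on the corolla, I would conclude that for $y\ne i$ the inner coproduct $\coprod_{t_y\in\tr|y|}$ degenerates to the single term $t_y^i=\tau|y|$ with $f_i(y)=\id$, while only the distinguished vertex $i$ retains its full coproduct and carries $\check\xi|i|$. For each fixed source tree $\tau$ only finitely many target data $(t,i,t_i^i)$ then contribute, since such $t$ is obtained from $\tau$ by grouping a single subtree at $i$; thus the composite lands in the coproduct $X\botto$ and defines $\xi$ directly, with $\iota_\circ$ disappearing. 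Finally I would invoke \eqref{dia-tilde-Delta-hat-Delta} to replace the composite $\tilde\Delta=\Delta J\cdot\vartheta$ by the map $\hat\Delta$ whose $(t,(t_y^i))$-component is $\pr_{I_t(t_y^i\mid y\in\IV(t))}$ followed by the canonical isomorphism onto $\bigotimes_y\bigotimes_q X|q|$, which is exactly the component of $\bar\Delta$ written in the statement.

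The step I expect to be the main obstacle is the index bookkeeping in this last substitution. One must check that, after forcing $t_y^i=\tau|y|$ for all $y\ne i$, the identification of summands supplied by \eqref{dia-tilde-Delta-hat-Delta} together with \eqref{eq-v(It)-Uv(tp)} matches $\IV\bigl(I_t(t_y^i\mid y\in\IV(t))\bigr)$ with $\sqcup_{y}\IV(t_y^i)$ correctly, so that the projection $\pr_{I_t(t_y^i\mid y\in\IV(t))}$ and the subsequent canonical isomorphism coincide with the component of $\bar\Delta$ in the statement. Once this matching of trees and of tensor factors is verified, the two composites agree summand by summand and the formula follows.
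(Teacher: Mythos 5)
Your proposal is correct and follows exactly the paper's route: the paper obtains this corollary by specializing the recovery formula stated after \corref{cor-coderivations-are-in-bijection-with-homogeneous-maps} to \(Y=X\), \(h=\id_{X\bott}\) (so \(\check h=\eps_\bott\)), and then invoking diagram~\eqref{dia-tilde-Delta-hat-Delta} to rewrite \(\tilde\Delta\) as the map \(\bar\Delta\) with components \(\pr_{I_t(t_y^i\mid y\in\IV(t))}\) followed by the canonical isomorphism. Your collapsing of the inner coproducts via \(\eps_\bott\) and the finiteness observation ensuring the composite lands in the coproduct (so \(\iota_\circ\) disappears) are precisely the details the paper leaves implicit in its remark following the corollary.
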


Thus, the image of $\bar\Delta$ extends over decompositions \(\tau=I_t(t_y^i\mid y\in\IV(t))\) such that \(t_y^i=\tau|y|\) for \(y\ne i\in\IV(t)\).
Of course, the number of such presentations of $\tau$ is finite.
The coderivation $\xi$ can be split in two sums.
The first is the sum $\xi'$ over all non-empty subsets \(\IV(r)\subset\IV(\tau)\) which form a connected subgraph of $\tau$.
Then \(t=\tau/r\) is the tree $\tau$ with all vertices of $\IV(r)$ contracted to one, denoted $i$.
By definition, the subtree \(r=t_i^i\subset\tau\) has the set of internal vertices $\IV(r)$ and the set of inputs \(\Inp(r)=P_\tau^{-1}(\IV(r))\setminus\IV(r)=P_t^{-1}(i)\).
The second is the sum $\xi''$ over trees $t$ differing from $\tau$ by adding one unary vertex $i$ with the associated tree \(t_i^i=\circ\).
Thus,
\begin{multline}
\xi' =\Bigl[ \coprod_{\tau\in\tr} \, \bigotimes_{v\in\IV(\tau)} X|v| \rto{\Delta'} \coprod_{\tau\in\tr} \, \coprod_{\text{subtree }r\subset\tau}^{\IV(r)\ne\emptyset} \, \bigotimes_{y\in\IV(\tau/r)} \, \bigotimes_{q\in(\IV(\tau)\to\IV(\tau/r))^{-1}(y)}X|q|
\\
\rTTo^{\sum\tens^{y\in\IV(t)}f_\tau(y)} \coprod_{t\in\tr} \, \bigotimes_{y\in\IV(t)} X|y| \Bigr],
\label{eq-xi'-Delta'}
\end{multline}
where \(t=\tau/r\), \(f_\tau(y)=\id\) if \(y\ne[\IV(r)]\), and \(f_\tau(y)=\check\xi|y|\) if \(y=[\IV(r)]\),
\begin{equation}
\xi'' =\Bigl[ \coprod_{\tau\in\tr} \, \bigotimes_{v\in\IV(\tau)} X|v| \rto{\Delta''} \coprod_{t\in\tr} \, \coprod_{i\in\IV(t)} \, \bigotimes_{y\in\IV(t)} \, \bigotimes_{q\in\IV(t_y^i)}X|q|\rTTo^{\coprod_t\sum_i\tens^{y\in\IV(t)}g_i(y)} \coprod_{t\in\tr} \, \bigotimes_{y\in\IV(t)} X|y| \Bigr],
\label{eq-xi''-Delta''}
\end{equation}
where \(t_y^i=\tau[|y|]\) and \(g_i(y)=\id\) if $y\ne i$, and \(t_i^i=\circ\), \(g_i(i)=\check\xi_\circ(1):\1\to X(1)\), $\Delta''$ takes a summand indexed by $\tau$ to each summand indexed by \((t,i)\) such that \(t^{\{i\}}=\tau\).

\section{Infinitesimal comodule deformations}
Let $C$ be a (non-counital) cooperad and let \(\delta:N\to N\bar\odot C\) be a (non-counital) $C$\n-comodule.
An \emph{infinitesimal $C$\n-comodule deformation} of\index{TTindex}{infinitesimal comodule deformation} $N$ is a $\tilde{C}=C\tens\DD$-comodule structure \(\tilde\delta:\tilde{N}=N\tens\DD\to\tilde{N}\bar\odot_\DD\tilde{C}\) lifting $\delta$, that is,
\[ \tilde\delta\tens_\DD\1 =\delta: \tilde{N}\tens_\DD\1 =N \to (\tilde{N}\bar\odot_\DD\tilde{C})\tens_\DD\1 =N\bar\odot C.
\]
The deformed comodule structure $\tilde\delta$ splits up in two components
\[ \tilde\delta =\delta+rp: N\to N\bar\odot C \oplus N\bar\odot C\tens\1 p =\tilde{N}\bar\odot_\DD\tilde{C}.
\]
Thus, an infinitesimal $C$\n-comodule deformation is given by a family of degree $a$ maps \(r_{n_1,\dots,n_k}:N(n_1+\dots+n_k)\to N(n_1)\tdt N(n_k)\tens C(k)\), \(k,n_1,\dots,n_k\in\NN\), such that for all \(w,x,y\in\NN\) the equation holds
\begin{multline*}
\bigl[ N(n_1+\dots+n_{w+x+y}) \rTTo^{r_{n_1,\dots,n_w,n_{w+1}+\dots+n_{w+x},n_{w+x+1},\dots,n_{w+x+y}}}
\\
N(n_1)\tdt N(n_w)\tens N(n_{w+1}+\dots+n_{w+x})\tens N(n_{w+x+1})\tdt N(n_{w+x+y})\tens C(w+1+y)
\\
\rTTo^{1^{\tens w}\tens\delta_{n_{w+1},\dots,n_{w+x}}\tens1^{\tens(y+1)}}
\\
\hskip\multlinegap N(n_1)\tdt N(n_w)\tens N(n_{w+1})\tdt N(n_{w+x})\tens C(x) \hfill
\\
\hfill \tens N(n_{w+x+1})\tdt N(n_{w+x+y})\tens C(w+1+y)\quad
\\
\hfill \rto\sim N(n_1)\tdt N(n_{w+x+y})\tens C(x)\tens C(w+1+y) \bigr] \quad
\\
\hskip\multlinegap +\bigl[ N(n_1+\dots+n_{w+x+y}) \rTTo^{\delta_{n_1,\dots,n_w,n_{w+1}+\dots+n_{w+x},n_{w+x+1},\dots,n_{w+x+y}}} \hfill
\\
N(n_1)\tdt N(n_w)\tens N(n_{w+1}+\dots+n_{w+x})\tens N(n_{w+x+1})\tdt N(n_{w+x+y})\tens C(w+1+y)
\\
\rTTo^{1^{\tens w}\tens r_{n_{w+1},\dots,n_{w+x}}\tens1^{\tens(y+1)}}
\\
\hskip\multlinegap N(n_1)\tdt N(n_w)\tens N(n_{w+1})\tdt N(n_{w+x})\tens C(x) \hfill
\\
\hfill \tens N(n_{w+x+1})\tdt N(n_{w+x+y})\tens C(w+1+y)\quad
\\
\hfill \rto\sim N(n_1)\tdt N(n_{w+x+y})\tens C(x)\tens C(w+1+y) \bigr] \quad
\\
\hskip\multlinegap =\bigl[ N(n_1+\dots+n_{w+x+y}) \rTTo^{r_{n_1,\dots,n_{w+x+y}}} N(n_1)\tdt N(n_{w+x+y})\tens C(w+x+y) \hfill
\\
\rTTo^{1^{\tens(w+x+y)}\tens\Delta_{T(w,x,y)}} N(n_1)\tdt N(n_{w+x+y})\tens C(x)\tens C(w+1+y) \bigr].
\end{multline*}
see \eqref{eq-NNNNNNNNNNNNNNNNNNNN}.

\begin{exercise}\label{exe-infinitesimal-C-coalgebra-deformation}
In the above assumptions let \(d_N:N\to N\) be a map of degree $a$ and let \(d_C:C\to C\) be a coderivation of degree $a$.
Then
\begin{multline*}
r_{n_1,\dots,n_k} =\bigl[ N(n_1+\dots+n_k) \rTTo^{\delta_{n_1,\dots,n_k}} N(n_1)\tdt N(n_k)\tens C(k)
\\
\hfill \rTTo^{\sum_{i=1}^k1^{\tens(i-1)}\tens d_N\tens1^{\tens(k-i)}\tens1+1^{\tens k}\tens d_C} N(n_1)\tdt N(n_k)\tens C(k) \bigr] \hskip\multlinegap
\\
-\bigl[ N(n_1+\dots+n_k) \rto{d_N} N(n_1+\dots+n_k) \rTTo^{\delta_{n_1,\dots,n_k}} N(n_1)\tdt N(n_k)\tens C(k) \bigr]
\end{multline*}
is an infinitesimal $C$\n-comodule deformation.
When $(C,d_C)$ is a $\dg$\n-cooperad, then vanishing of $r$ tells precisely that $d_N$ is a \((\id_N;\id_C,d_C)\)-coderivation.
Actually, $r$ is a general form of a \emph{trivial} infinitesimal $C$\n-comodule deformation.
\textit{Hint:} definition of $r$ can be written as follows
\[ \bigl( N_\DD \rTTo^{1+d_Np} \tilde{N} \rTTo^{\delta+rp} \tilde{N}\bar\odot_\DD\tilde{C} \bigr) =\bigl( N_\DD \rto\delta N_\DD\bar\odot_\DD C_\DD \rTTo^{(1+d_Np)\bar\odot_\DD(1+d_Cp)} \tilde{N}\bar\odot_\DD\tilde{C} \bigr),
\]
where \(N_\DD=N\tens\DD\) is the non-deformed comodule over the non-deformed cooperad \(C_\DD=C\tens\DD\).
\end{exercise}

\begin{example}\label{exa-infinitesimal-deformations}
Let \(C=Y\botto\) for \(Y\in\Ob\cw^\NN_{++}\).
Then infinitesimal deformations of a $C$\n-comodule $N$ are in bijection with degree 0 maps \(\tilde b:\tilde{N}=N\tens\DD\to\tilde{N}\bar\odot_\DD\tilde{Y}\) by \propref{pro-TX-coalgebra-structures}, where \(\tilde{Y}=Y\tens\DD\).
This map splits in two components
\[ \tilde b =b+gp: N\to N\bar\odot Y \oplus (N\bar\odot Y)\tens\1 p =\tilde{N}\bar\odot_\DD\tilde{Y}.
\]
The first, fixed $b$, determines the $C$\n-comodule structure of $N$ and the second, varying \(g:N\to N\bar\odot Y\) of degree $a$, determines the infinitesimal deformation.
In the above assumptions, an infinitesimal deformation is trivial ($r=0$) iff \(\check r\overset{\text{def}}=\bigl(N \rto r N\bar\odot(Y\botto) \rTTo^{1\tens\eps_{\botto}} N\bar\odot Y\bigr)=0\).
\end{example}

\begin{proposition}\label{pro-id-coderivation}
For any non-counital cooperad $(C,\Delta)$ and any homogeneous map $\psi:C\to\1$ of degree $p$ (thus \(\psi\in\und\cv(C(1),\1)^p\)) the degree $p$ map
\[ \xi =\Delta_{T(0,n,0)}(1\tens\psi) -\sum_{e+1+f=n} \Delta_{T(e,1,f)}(\psi\tens1): C(n) \to C(n)
\]
is an $\id_C$\n-coderivation.
\end{proposition}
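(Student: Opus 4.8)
The plan is to verify the defining property of an $\id_C$\n-coderivation in its reduced, binary form. By the criterion recorded immediately after the definition of a $\phi$\n-coderivation (taken with $\phi=\id_C$), it suffices to show that for all $x,y,z\in\NN$, writing $n=x+y+z$, the square
\[ \xi\cdot\Delta_{T(x,y,z)} =\Delta_{T(x,y,z)}\cdot(\xi\tens1+1\tens\xi): C(n) \to C(y)\tens C(x+1+z) \]
commutes, with $\xi$ understood in arity $n$ on the left and in arities $y$ and $x+1+z$ on the right. Write $\xi=\xi_1-\xi_2$, where $\xi_1=\Delta_{T(0,n,0)}(1\tens\psi)$ applies $\psi$ to the unary vertex split off at the root, and $\xi_2=\sum_{e+1+f=n}\Delta_{T(e,1,f)}(\psi\tens1)$ applies $\psi$ to the unary vertex split off above each of the inputs. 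Informally, $\xi$ inserts $\psi$ at every one of the $n+1$ edges of the corolla of arity $n$, with $+\psi$ on the root edge and $-\psi$ on each of the $n$ input edges.

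First I would substitute $\xi=\xi_1-\xi_2$ into both sides and sort the resulting terms by the edge of the source corolla at which $\psi$ is inserted. On the right-hand side, the splitting $\Delta_{T(x,y,z)}$ presents the corolla as a top vertex of arity $y$ grafted at input $x+1$ of a bottom vertex of arity $x+1+z$, and $\xi\tens1+1\tens\xi$ inserts $\psi$ at every edge of this two-vertex tree. These edges comprise the $y$ genuine inputs of the top, the $x+z$ genuine inputs of the bottom, the genuine root (the root of the bottom), and the new internal edge joining the two vertices. The internal edge is met twice: once as the root of the top, contributing the $\xi_1$\n-term of $\xi\tens1$ (hence with sign $+$), and once as input $x+1$ of the bottom, contributing the $e=x$, $f=z$ summand of the $\xi_2$\n-term of $1\tens\xi$ (hence with sign $-$).

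The matching of terms is carried out with the two coassociativity relations \eqref{dia-cooperad-3-OOOOOOO} and \eqref{dia-cooperad-4-OOOOO}. Insertion of $\psi$ at a genuine input of the source that falls into the top (resp.\ bottom) region commutes with $\Delta_{T(x,y,z)}$ by the parallel relation \eqref{dia-cooperad-3-OOOOOOO}, and reproduces exactly the corresponding $\xi_2$\n-summand on the left; insertion of $\psi$ at the source root commutes with $\Delta_{T(x,y,z)}$ by the nested relation \eqref{dia-cooperad-4-OOOOO}, and matches the $\xi_1$\n-term on the left. Finally, the two internal-edge contributions both compute the same three-vertex decomposition (top, internal unary, bottom) followed by $\psi$ on the middle factor; their equality is an instance of coassociativity, so the $+$ contribution from the top root and the $-$ contribution from the bottom input cancel. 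This cancellation is exactly what the minus sign in the definition of $\xi$ is for: it disposes of the one edge (the internal edge) that is present in the split tree but absent from the source corolla, so that the remaining terms match the left-hand side edge for edge. Assembling these identities gives commutativity of the square for every $x,y,z$, equivalently the general diagram \eqref{dia-B-C-BBB-CCC} with $\phi=\id_C$.

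The main obstacle is not the combinatorics but the Koszul signs carried by $\psi$, which has degree $p$, as it is transported across tensor factors in $1\tens\xi$ and through the unshuffle isomorphisms occurring in \eqref{dia-cooperad-3-OOOOOOO} and \eqref{dia-cooperad-4-OOOOO}. The delicate point is to confirm that, in the sign conventions of the paper, the top-root contribution ($\psi$ applied on the left, with no transposition sign) and the bottom-input contribution ($\psi$ applied on the right, with a transposition sign) become, after coassociativity is applied, literally the same underlying map carrying opposite global signs, so that they cancel; and likewise that the input and root matchings hold on the nose once signs are included. Tracking these signs through a representative sample of trees, and checking that they are forced to agree by the conventions already fixed for $\tens$ of homogeneous maps, is the one genuinely technical step.
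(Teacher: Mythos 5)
Your proposal is, in substance, the paper's own proof: the same reduction to the binary identity $\xi\cdot\Delta_{T(a,b,c)}=\Delta_{T(a,b,c)}\cdot(\xi\tens1+1\tens\xi)$, the same expansion of both sides into terms indexed by where $\psi$ is inserted, and the same matching scheme — in particular the cancellation of the two contributions at the internal edge (root of the top factor, sign $+$, against the grafting input of the bottom factor, sign $-$), which is exactly how the paper disposes of the one edge present in the split tree but absent from the source corolla.

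However, one attribution in your matching is wrong, and as stated that step would not go through: insertions of $\psi$ at genuine inputs lying in the \emph{top} region are not governed by the parallel relation \eqref{dia-cooperad-3-OOOOOOO}. In that configuration the unary vertex carrying $\psi$ sits above an input of the top vertex, which itself sits above the bottom vertex — a nested three-level splitting — so the required commutation is an instance of the nested relation \eqref{dia-cooperad-4-OOOOO} (in the paper's notation, with $v=a$, $w=e$, $x=1$, $y=f$, $z=c$, where $e+1+f=b$). Relation \eqref{dia-cooperad-3-OOOOOOO} covers only the inputs falling in the \emph{bottom} region, on either side of the grafting point, where the unary vertex and the top vertex are grafted in parallel onto the bottom vertex; there it is indeed what the paper uses. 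This is a local, correctable slip, since both relations are already in your toolkit (you correctly invoke \eqref{dia-cooperad-4-OOOOO} for the root, and the internal-edge cancellation is also an instance of it). Separately, a finished write-up cannot settle the Koszul signs by checking ``a representative sample of trees'': one should note that all comultiplications have degree $0$, so the only signs arise from the graded symmetry $(12)$ in \eqref{dia-cooperad-3-OOOOOOO} and from transporting $\psi$ across tensor factors via the interchange law, and these are forced to agree term by term.
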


Such $\xi$ is called an inner coderivation.

\begin{proof}
The required identity
\begin{diagram}
C(a+b+c) &\rTTo^{\Delta_{T(a,b,c)}} &C(b)\tens C(a+1+c)
\\
\dTTo<\xi &= &\dTTo>{\xi\tens1+1\tens\xi}
\\
C(a+b+c) &\rTTo^{\Delta_{T(a,b,c)}} &C(b)\tens C(a+1+c)
\end{diagram}
for arbitrary $a$, $b$, $c\in\NN$ expands to
\begin{align}
&\bigl[ C(a+b+c) \rTTo^{\Delta_{T(0,a+b+c,0)}} C(a+b+c)\tens C(1) \rTTo^{\Delta_{T(a,b,c)}\tens\psi} C(b)\tens C(a+1+c) \bigr]
\label{eq-0a+b+c0-abc-1}
\\
&-\sum_{k+1+j=a+b+c} \bigl[ C(a+b+c) \rTTo^{\Delta_{T(k,1,j)}} C(1)\tens C(a+b+c)  \rTTo^{\psi\tens\Delta_{T(a,b,c)}} \notag
\\
&\hspace{21em} C(b)\tens C(a+1+c) \bigr]
\label{eq-k1j-abc-2}
\\
&=\bigl[ C(a+b+c) \rTTo^{\Delta_{T(a,b,c)}} C(b)\tens C(a+1+c) \rTTo^{\Delta_{T(0,b,0)}\tens1} \notag
\\
&\hspace{6em} C(b)\tens C(1)\tens C(a+1+c) \rTTo^{1\tens\psi\tens1} C(b)\tens C(a+1+c) \bigr]
\label{eq-abc-0b0-1psi1-3}
\\
&-\sum_{e+1+f=b} \bigl[ C(a+b+c) \rTTo^{\Delta_{T(a,b,c)}} C(b)\tens C(a+1+c) \rTTo^{\Delta_{T(e,1,f)}\tens1} \notag
\\
&\hspace{6em} C(1)\tens C(b)\tens C(a+1+c) \rTTo^{\psi\tens1\tens1} C(b)\tens C(a+1+c) \bigr]
\label{eq-abc-e1f-psi11-4}
\\
&+\bigl[ C(a+b+c) \rTTo^{\Delta_{T(a,b,c)}} C(b)\tens C(a+1+c) \rTTo^{1\tens\Delta_{T(0,a+1+c,0)}} \notag
\\
&\hspace{6em} C(b)\tens C(a+1+c)\tens C(1) \rTTo^{1\tens1\tens\psi} C(b)\tens C(a+1+c) \bigr]
\label{eq-abc-0a+1+c0-11psi-5}
\\
&-\sum_{e+f=a+c} \bigl[ C(a+b+c) \rTTo^{\Delta_{T(a,b,c)}} C(b)\tens C(a+1+c) \rTTo^{1\tens\Delta_{T(e,1,f)}} \notag
\\
&\hspace{6em} C(b)\tens C(1)\tens C(a+1+c) \rTTo^{1\tens\psi\tens1} C(b)\tens C(a+1+c) \bigr].
\label{eq-abc-e1f-1psi1-6}
\end{align}
Due to \eqref{dia-cooperad-4-OOOOO} with $v=z=0$, $w=a$, $x=b$, $y=c$ we have \eqref{eq-0a+b+c0-abc-1}=\eqref{eq-abc-0a+1+c0-11psi-5}.
Due to \eqref{dia-cooperad-4-OOOOO} with $w=y=0$, $v=a$, $x=b$, $z=c$ \eqref{eq-abc-0b0-1psi1-3} cancels against the summand of \eqref{eq-abc-e1f-1psi1-6} indexed by $e=a$, $f=c$.
Due to \eqref{dia-cooperad-4-OOOOO} with $v=a$, $w=e$, $x=1$, $y=f$, $z=c$ all summands of \eqref{eq-abc-e1f-psi11-4} cancel against summands of \eqref{eq-k1j-abc-2} for which $k\ge a$ and $j\ge c$, namely, $k=a+e$, $j=f+c$.
The summands of \eqref{eq-k1j-abc-2} with $k<a$ cancel against summands of \eqref{eq-abc-e1f-1psi1-6} with $e<a$ due to \eqref{dia-cooperad-3-OOOOOOO}  with $w=1$, $v=e=k$, $x=a-1-v$, $y=b$, $z=c$.
The summands of \eqref{eq-k1j-abc-2} with $f<c$ cancel against summands of \eqref{eq-abc-e1f-1psi1-6} with $j<c$ due to \eqref{dia-cooperad-3-OOOOOOO}  with $w=b$, $v=a$, $x=k-a-b$, $y=1$, $z=j=f$.
The identity is proven.
\end{proof}

\section{Comonoids as cooperads}
We equip $\cv^\NN$ with the monoidal product $\odot$ as in \secref{cor-staged-operations}.
A comonoid \((C,\Delta:C\to C\odot C,\eps:C\to\1)\) in \((\cv^\NN,\odot)\) is a cooperad in $\cv$.
In fact, components of the comultiplication
\[ \Delta:C(n) \to \bigoplus_{n^1+\dots+n^k=n}^{k\ge0} C(n^1)\tdt C(n^k)\tens C(k) =(C\odot C)(n) \in\cv
\]
satisfy coassociativity equation~\eqref{dia-coassociativity-components} and together with \(\eps:C(1)\to\1\in\cv\) counitality equations \eqref{eq-counitality-C(n)OC(1)}, \eqref{eq-counitality-C(1)nOC(n)}.

Assume that \((\bar C,\bar\Delta)\) is a $\bott$\n-coalgebra.
Then \(C=\1\oplus\bar C\) is an augmented comonoid in \((\cv^\NN,\odot)\).
In fact, for a height 2 staged tree \(t=\bigl(\mb n\rTTo^f \mb k\rTTo^\con \mb1,\mb n\bigr)\in\str(n,2)\), \(n_i=|f^{-1}i|\), \(i\in\mb k\), the value of $\Delta_t$ is presented in \eqref{eq-C(Inpt)-YN(p)} via \(\bar\Delta_{t^N}\) for \(N\subset\uv(t)\).
For each \(c\in\bar C(n)\) there is an integer $M>n$ such that \(\bar\Delta_\tau c=0\) for \(\tau=\bigl(\mb m\rTTo^g \mb l\rTTo^\con \mb1,\Inp\tau\bigr)\in\tr(2)\) if $m>M$ or $l>M$.
Then \(\Delta_tc=0\) if $n>2M$ or $k>M$, hence, the map
\[ (\Delta_{n^1,\dots,n^k})_{n_i\in\NN,\,\sum_{i=1}^kn_i=n}^{k\in\NN}: \bar C(n) \to \prod_{n^1+\dots+n^k=n}^{k\ge0} C(n^1)\tdt C(n^k)\tens C(k) 
\]
factors through \(\oplus_{n^1+\dots+n^k=n}^{k\ge0}C(n^1)\tdt C(n^k)\tens C(k)\), equipping $C$ with the comonoid structure.
Therefore, comonoids in \((\cv^\NN,\odot)\) is an intermediate notion between cooperads and $\bott$\n-coalgebras.

\begin{example}\label{exa-cooperad-partition-compositions-P}
Define the cooperad of partition-compositions $P$ by
\[ P(n)=
\begin{cases}
\1 f_n, \qquad &\text{if } n>0, \\
0, &\text{if } n=0,
\end{cases}
\]
where $\deg f_n=0$.
The comultiplication is given by the sum over partition-\hspace{0pt}compositions of $n$
\[ f_n\Delta =\sum_{i_1+\dots+i_l=n}^{l,i_1,\dots,i_l>0} (f_{i_1}\tens f_{i_2}\tdt f_{i_l})\tens f_l \in
\bigoplus_{i_1+\dots+i_l=n} P(i_1)\tdt P(i_l)\tens P(l).
\]
Our term partition-composition stands for any ordered sequence \((i_1,\dots,i_l)\) of positive integers such that \(i_1+\dots+i_l=n\).
In combinatorics such sequences are called compositions, but we prefer to reserve the term to set theory and category theory use.
The counit \(\eps:P\to\1\) is given by the isomorphism \(\eps:P(1)=\1 f_1\to\1\), \(f_1\mapsto1\).
$P$\n-coalgebras are conilpotent non-counital coalgebras in $\cv$.

Let $\cv=\ZZ\text-\gr\text-\kk\modul$, then $\cw=\NN\times\ZZ\text-\gr\text-\kk\modul$.
The augmented cooperad \(P'=\kk\{f_1,f_2,\dots\}=\kk f_1\oplus\bar P'\) in $\cw^\NN$ is concentrated in degree \((0,0)\).
The non-counital cooperad \(\bar P'\) satisfies \(\bar P'(0)=\bar P'(1)=0\), \(\bar P'(n)=\kk f_n\) for $n\ge2$.
By \propref{pro-Tcoalg+nuCoop+} \(\bar P'\) is a $\bott$\n-coalgebra.
Therefore, \(\bar P=\oplus\bar P'\) is a $\bott$\n-coalgebra as well.
The comultiplication \(\bar\Delta_t(f_n)\) does not vanish iff \(\mb n\simeq\Inp t=L(t)\) and \(\uv(t)=\varnothing\).
In that case \(\bar\Delta_t(f_n)=\tens^{p\in\IV(t)}f_{|p|}\).
\end{example}

\propref{pro-cooperad-P-collection-J} gives a way for obtaining new comonoids in \((\cv^\NN,\odot)\).

\begin{example}\label{exa-cooperad-C-JP}
A cooperad $C$ of graded abelian groups comes from an \(A_\infty^\hu\)-bimodule cooperad \(F_1^\hu\) \cite[Section~2.16]{Lyu-Ainf-Operad}, see further \cite[Proposition~3.17]{Lyu-A8-several-entries}.
Namely, if $n+t>0$, $t\ge0$, then $C(n)^{-2t}$ are free abelian groups spanned by the generators \(f_{n_0;n_1;\dots;n_t}\) of \(F_1^\hu\) such that \(n_i\in\NN\) for \(0\le i\le t\) and \(n=\sum_{i=0}^tn_i\).
If \(n=p=0\) or $p>0$ or $p$ is odd, then $C(n)^p=0$.
Notice, in particular, that \(C(0)^{-2}=\ZZ f_{0;0}\) and the generator $f_{0;0}$ is used in this article in place of $\bv$ distinguished in \cite{Lyu-Ainf-Operad,Lyu-A8-several-entries}.
Notice that $f_0$ does not appear among generators.
Thus, \(C\odot C\) is spanned by products
\begin{equation}
\Bigl( \bigotimes_{i=1}^k f_{n_0^i;n_1^i;\dots;n_{q_i}^i} \Bigr) \tens f_{k_0;k_1;\dots;k_r} \in (C\odot C)(n)^{-2t},
\label{eq-OfOf-COCnt}
\end{equation}
where \(k=\sum_{j=0}^rk_j\), \(n=\sum_{i=1}^k\sum_{j=0}^{q_i}n_j^i\), \(t=r+\sum_{i=1}^kq_i\).
The counit is the isomorphism \(\eps:C(1)^0=\ZZ f_1\to\ZZ\), \(f_1\mapsto1\).
The comultiplication in $C$ coincides with that of \(F_1^\hu\), computed on generators.

In order to describe the comultiplication we represent $C$ as a product of collections.
Define an object $J$ of $\gr^\NN$
\[ J(n)=
\begin{cases}
\ZZ\bj, \qquad &\text{if } n=0, \\
\ZZ, &\text{if } n=1, \\
0, &\text{otherwise,}
\end{cases}
\]
where $\deg\bj=-2$.

The isomorphism of collections \(C\rto\sim J\odot P\) is given by the formula
\begin{equation}
f_{n_0;n_1;\dots;n_t} \mapsto
(1^{\tens n_0}\tens\bj\tens1^{\tens n_1}\tens\bj\tdt1^{\tens n_{t-1}}\tens\bj\tens1^{\tens n_t})\tens f_{n+t},
\quad n=\sum_{i=0}^t n_i.
\label{eq-f-(1j1j1j1)}
\end{equation}
Let us prove that \(J\odot P\) has a structure of a cooperad.
Apply \propref{pro-cooperad-P-collection-J} to our \(J\odot P\) in $\gr$ over $\ZZ$.
Define
\begin{gather*}
\vartheta:J\odot P\to J\odot P\odot J, \quad c\mapsto c\tens1 +c\beta,
\\
c\beta=
\begin{cases}
\bj, \qquad &\text{for } c= \bj\tens f_1, \\
0, &\text{for other elements \eqref{eq-f-(1j1j1j1)}},
\end{cases}
\\
\eps:J\to\1, \quad \bj\mapsto 0, \quad 1\mapsto 1.
\end{gather*}

One can check easily equation~\eqref{dia-JP-JPP-JPJP} on elements $1\tens f_1$ and $\bj\tens f_1$ of the source (use \((\bj\tens f_1)\vartheta=\bj\tens f_1\tens1+\bj\) in the latter case).
Considering diagram~\eqref{dia-JP-JPP-JPJP} on elements $a$ of type~\eqref{eq-f-(1j1j1j1)} with $n+t>1$, we see that the right vertical arrow \(1\odot1\odot\vartheta\) does not include any summand \(1\odot1\odot\beta\).
Otherwise there should be a summand proportional to $\bj\tens f_1$ in the right upper corner, which can come only from \(\bj\tens f_1\tens f_1\in J\odot P\odot P\).
Then the source element (in the left upper corner) would be proportional to $\bj\tens f_1$, which has $n+t=1$.
Therefore, the element $a$ is taken along the diagram as in
\begin{diagram}
a &\rMapsTo^{1\odot\Delta} &b &\rMapsTo^{\vartheta\odot1} &c
\\
\dMapsTo<\vartheta &&= &&\dMapsTo>{1\odot1\odot\vartheta}
\\
a\tens1 &\rMapsTo^{1\odot\Delta\odot1} &b\tens1 &\rMapsTo^{\vartheta\odot1\odot1} &c\tens1
\end{diagram}
Thus, equation~\eqref{dia-JP-JPP-JPJP} holds true.
Equation~\eqref{eq-JP-JPJ-JP-1} is obvious.
Verification of equation~\eqref{eq-JP-JPJ-J-1e} on an element of the source reduces to three cases: $1\tens f_1$,
$\bj\tens f_1$ and \eqref{eq-f-(1j1j1j1)} with $n+t>1$.
Details are left to the reader.
Summing up, $J\odot P\simeq C$ is a cooperad.

In particular,
\[ f_{0;0}\Delta =f_{0;0}\tens f_1 +f_{0;0},
\]
which agrees with the analogous computation for $\bv$ in \cite[Section~2.16]{Lyu-Ainf-Operad}.
The value of \(f_{n_0;n_1;\dots;n_t}\Delta\) is obtained [\textit{loc.~cit.}] from the expression
\[ (1^{\tens n_0}\tens\bj\tens1^{\tens n_1}\tens\bj\tdt1^{\tens n_{t-1}}\tens\bj\tens1^{\tens n_t})
\sum_{i_1+\dots+i_l=n+t}^{l,i_1,\dots,i_l>0} (f_{i_1}\tens f_{i_2}\tdt f_{i_l})\tens f_l
\]
as follows.
Each factor $f_{i_m}$ absorbs $\bj$'s from the subinterval of arguments of $f_{i_m}$ in the first parentheses and becomes \(f_{*;*;\dots;*}\) in accordance with \eqref{eq-f-(1j1j1j1)}.
An exception occurs when $\bj$ is an argument of $f_1$.
The subexpression $\bj f_1$ is replaced not with \(f_{0;0}\) but with \(f_{0;0}+\bj\rho_\varnothing\).
The latter summand means that $\bj$ passes further to the right and becomes one of the arguments of $f_l$.
Several such \(\bj\rho_\varnothing\) turn $f_l$ into \(f_{*;*;\dots;*}\) as in \eqref{eq-f-(1j1j1j1)}.
The resulting expression is \(f_{n_0;n_1;\dots;n_t}\Delta\).
For instance, \(f_{0;1;0}\Delta\) is obtained from the following computation, where $\bv$ is used as a shorthand for $f_{0;0}$,
\begin{multline}
(\bj\tens1\tens\bj)[(f_1\tens f_1\tens f_1)\tens f_3 +(f_2\tens f_1)\tens f_2 +(f_1\tens f_2)\tens f_2 +f_3\tens f_1]
\\
\mapsto [(\bv+\bj\rho_\varnothing)\tens f_1\tens(\bv+\bj\rho_\varnothing)]\tens f_3 +[f_{0;1}\tens(\bv+\bj\rho_\varnothing)]\tens f_2
+[(\bv+\bj\rho_\varnothing)\tens f_{1;0}]\tens f_2 +f_{0;1;0}\tens f_1
\\
\mapsto (\bv\tens f_1\tens\bv)\tens f_3 +(\bv\tens f_1)\tens f_{2;0} +(f_1\tens\bv)\tens f_{0;2} +f_1\tens f_{0;1;0}
+(f_{0;1}\tens\bv)\tens f_2 +f_{0;1}\tens f_{1;0}
\\
+(\bv\tens f_{1;0})\tens f_2 +f_{1;0}\tens f_{0;1} +f_{0;1;0}\tens f_1 =f_{0;1;0}\Delta.
\label{eq-f010Delta-f-computation}
\end{multline}

Let $\cv=\ZZ\text-\gr\text-\kk\modul=\gr$, $\cw=\NN\times\ZZ\text-\gr\text-\kk\modul$.
Consider the lifting of $P$, $J$, $C$ to an augmented cooperad \(C'=J'\odot P'\) in $\cw$, where \(\deg_{\NN\times\ZZ}(f_n\in P'(n))=(0,0)\), \(\deg_{\NN\times\ZZ}(\bj\in J'(0))=(2,0)\), \(\deg_{\NN\times\ZZ}(1\in J'(1))=(0,0)\).
The original degree in $C$ is obtained as \(\deg=\deg_\ZZ-\deg_\NN\).
Then \(C'=\kk\{1\tens f_1\}\oplus\bar C'\).
The non-counital cooperad \(\bar C'=(\bar C'(n)_k^m)_{n,k\in\NN}^{m\in\ZZ}\) is concentrated in \(\bar C'(n)_{2t}^0\) spanned by \(f_{n_0;n_1;\dots;n_t}\) with \(\sum_{i=0}^tn_i=n\).
The special cases are \(\bar C'(0)_0^0=\bar C'(1)_0^0=0\).
By \propref{pro-Tcoalg+nuCoop+} \(\bar C'\) is a $\bott$\n-coalgebra in $\cw^\NN$.
Therefore, \(\bar C=\oplus\bar C'\) is a $\bott$\n-coalgebra in $\cv^\NN$.
\end{example}

Recall that a staged tree of height $r$ is a finite sequence
\[ X_0 \rTTo^{f_1} X_1 \rTTo^{f_2} \dots \rTTo^{f_{r-1}} X_{r-1} \rTTo^{f_r} \mb1
\]
of composable morphisms in the category $\co$ of finite ordinals and non-decreasing maps.

\begin{example}
The same $C$ admits indexing of generators by trees of height $2$
\[ \hstretch130
g_{n_0;n_1;\dots;n_t} =\;
\begin{tangles}{rlr}
\node &&\noder2 \\
\sw4\Step\node\sw1 &\object{\dots}\step\se2\step\object{\dots}\se4 &\noder1 \\
\sw1\s\noded{\underbrace{\Step}_{n_0}}\se1\step\sw1\noded{\underbrace{\Step}_{n_1}}\se1 
&\step[3]\sw1\s\noded{\underbrace{\Step}_{n_t}}\se1 &\noder0
\end{tangles}
\quad, \qquad \textit{e.g.} \quad
g_{0;0} =\;
\begin{tangles}{cr}
\node &\noder2 \\
\ne1\nw1 &\noder1 \\
\sw0 &\noder0
\end{tangles}
\quad.
\]
\vspace{0.5em}

\noindent In this isomorphic form we denote it $C_1$.
The isomorphism $C\to C_1$ is given by \(f_{n_0;n_1;\dots;n_t}\mapsto g_{n_0;n_1;\dots;n_t}\).
Then $C_1\odot C_1$ (resp. $C_1^{\odot p}$) has indexing of generators over $\ZZ$ by admissible (defined below) trees of height $4$ (resp. of height $2p$).
For instance, due to \eqref{eq-f010Delta-f-computation} \(g_{0;1;0}\Delta\) equals
\begin{equation}
\begin{tanglec}
\sw1\s\se1 \\
\s
\end{tanglec}
\;\Delta =\;
\begin{tangles}{rcl}
&\s \\
&\sw1\s\se1 \\
\sw1\s &\s &\s\se1 \\
&\s
\end{tangles}
\;+\;
\begin{tangles}{rrl}
&\s &\se1 \\
&\sw1\s \\
\sw1\s &\s \\
&\s
\end{tangles}
\;+\;
\begin{tangles}{rll}
\sw1 &\s \\
&\s\se1 \\
&\s &\s\se1 \\
&\s
\end{tangles}
\;+\;
\begin{tanglec}
\sw1\s\se1 \\
\s \\
\s \\
\s
\end{tanglec}
\;+\;
\begin{tangles}{rll}
&\s \\
&\s\se1 \\
\sw1 &\s &\s\se1 \\
&\s
\end{tangles}
\;+\;
\begin{tangles}{rl}
&\s\se1 \\
\s \\
\sw1\s \\
\s
\end{tangles}
\;+\;
\begin{tangles}{rrl}
&\s \\
&\sw1\s \\
\sw1\s &\s &\se1 \\
&\s
\end{tangles}
\;+\;
\begin{tangles}{rl}
\sw1\s \\
\s \\
&\s\se1 \\
\s
\end{tangles}
\;+\;
\begin{tangles}{cr}
&\noder4 \\
\s &\noder3 \\
\s &\noder2 \\
\sw1\s\se1 &\noder1 \\
\s &\noder0
\end{tangles}
\quad.
\label{eq-tan-elms}
\end{equation}
Here the tree between the second and the fourth floor is \(g_{k_0;k_1;\dots;k_r}\) corresponding to the last factor of \eqref{eq-OfOf-COCnt}.
The forest between the ground floor and the second floor is
\[ g_{n_0^1;n_1^1;\dots;n_{q_1}^1} \sqcup\dots\sqcup g_{n_0^k;n_1^k;\dots;n_{q_k}^k},
\]
which corresponds to all but the last factors in \eqref{eq-OfOf-COCnt}.
Roots of these trees (on the second floor) coincide with lowest leaves of the upper tree (also on the second floor).

A staged tree of height $r$ is \emph{admissible} if\index{TTindex}{admissible staged tree}
\begin{enumerate}
\renewcommand{\labelenumi}{(\roman{enumi})}
\item $r$ is even, $r=2p$, so the tree is a sequence
\[ X_0 \rTTo^{f_1} X_1 \rTTo^{f_2} \dots \rTTo^{f_{2p-1}} X_{2p-1} \rTTo^{f_{2p}} \mb1;
\]

\item all $f_2$, $f_4$, \dots, $f_{2p}$ are surjections;

\item if \(x\in X_{2m-1}-\im f_{2m-1}\), then there is \(y\in X_{2m-1}\) such that $y\ne x$ and \(f_{2m}(y)=f_{2m}(x)\).
\end{enumerate}
An admissible tree belongs to $(C_1^{\odot p})(n)^{-2q}$ iff its root is on the $2p$\n-th floor, the number of leaves on the ground floor is $n$, the number of downward angles whose vertices are on even floors is $q$.
For instance, $n=1$ and $q=2$ for all trees from \eqref{eq-tan-elms}.
\end{example}

\begin{example}\label{exa-cooperad-C2-commas}
The comonad $\ct$ comes also from the following cooperad $C_2$ over $R=\ZZ$, isomorphic to $C\simeq C_1$.
We consider sequences
\begin{equation}
(;c_1;c_2;\dots;c_{k-1};)
\label{eq-c1-c2-ck-1}
\end{equation}
where each semicolon ; is replaced either by comma , or by $|$.
Integers $c_0$ and $c_k$ are assumed to be very large odd numbers.
A sequence is \emph{admissible} if\index{TTindex}{admissible sequence}
\begin{enumerate}
\renewcommand{\labelenumi}{(\alph{enumi})}
\item at least one separating symbol ( , or $|$ ) is present;

\item for \(1\le i\le k\) if \(c_{i-1}|c_i\), then \(\min\{c_{i-1},c_i\}\) is even.
\end{enumerate}
The sequence $(,)$ is allowed, while $(|)$ is not.

The free abelian group $C_2(n)^{-2q}$ is spanned by admissible sequences \((;c_1;c_2;\dots;c_{k-1};)\), \(c_i\in\{1,2\}\), such that the number of commas is $n$ and the number of $2$'s is $q$.
Let us show that \((C_2\odot C_2)(n)^{-2q}\) (resp. \((C_2^{\odot p})(n)^{-2q}\)) is isomorphic to a free abelian group spanned by admissible sequences \((;c_1;c_2;\dots;c_{k-1};)\), \(c_i\in\{1,2,3,4\}\) (resp. \(1\le c_i\le2p\)), such that the number of commas is $n$ and the number of even integers is $q$.
The map \(C_1^{\odot p}\to C_2^{\odot p}\) sends a tree to a sequence of leaves written either as $|$ if the leaf is above the ground floor or as , if the leaf is on the ground floor.
Between two neighbouring leaves it is written the maximal height on the shortest path connecting the two leaves.
For instance, \eqref{eq-tan-elms} becomes
\begin{multline*}
(|2,2|)\Delta
\\
=(|2|3,3|2|) +(|2|3,4|) +(|4,3|2|) +(|4,4|) +(|2,3|2|) +(|2,4|) +(|2|3,2|) +(|4,2|) +(|2,2|).
\end{multline*}
Admissibility of the tree implies that for $i$\n-th leaf \(x\in X_{2m-1}-\im f_{2m-1}\) one of the numbers $c_{i-1}$, $c_i$ is $2m$ and the other is not smaller.
Hence, condition $(b)$ is satisfied for the obtained sequence \eqref{eq-c1-c2-ck-1}.

The map in the other direction \(C_2^{\odot p}\to C_1^{\odot p}\) is given by the following recipe.
Given an admissible sequence mark a point on the ground floor for each , in the sequence and a point on the \((2m-1)\)\n-th floor for each occurrence of \(c_{i-1}|c_i\) if \(\min\{c_{i-1},c_i\}=2m\), \(1\le i\le k\).
Connect each marked point with the root.
Identify upper parts of paths from neighbouring marked points (leaves) so that they merge at the prescribed height $c_i$.
The obtained tree is admissible.
In fact, it has no leaves on positive even floors.
Any leaf of the tree is in a marked point \(x\in X_{2m-1}\).
Then \(f_{2m}(x)\) is a vertex of a downward angle, hence, the last admissibility condition is fulfilled for the tree $t$.
Clearly, the constructed maps \(C_1^{\odot p}\leftrightarrow C_2^{\odot p}\) are bijections, inverse to each other.
\end{example}

\begin{proposition}[S.~Slobodianiuk]
Comultiplication for $C$ takes for $C_2$ the form of \eqref{eq-(c1c2)-(c1c2)}.
\end{proposition}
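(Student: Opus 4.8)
The plan is to transport the comultiplication of $C$, already pinned down in \exaref{exa-cooperad-C-JP} through the cooperad isomorphism $C\simeq J\odot P$, along the chain of collection isomorphisms $C\simeq C_1\simeq C_2$, and to check that the resulting map $C_2\to C_2\odot C_2$ coincides with the combinatorial rule \eqref{eq-(c1c2)-(c1c2)}. First I would record that all three collections are abstractly identified, so that $\Delta\colon C\to C\odot C$ becomes a map $C_1\to C_1\odot C_1$ whose target, by the discussion preceding \exaref{exa-cooperad-C2-commas}, is spanned by admissible staged trees of height $4$, that is by $(C_1^{\odot 2})(n)^{-2q}$. The bijection $C_1^{\odot 2}\leftrightarrow C_2^{\odot 2}$ constructed in \exaref{exa-cooperad-C2-commas} then turns this into a map $C_2\to C_2^{\odot 2}$, and the proposition asserts precisely that this map is \eqref{eq-(c1c2)-(c1c2)}.

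Second, I would reduce the verification to a purely local statement. The tree description of $\Delta$ on $C_1$ is the one read off from \eqref{eq-f010Delta-f-computation}: a generator $g_{n_0;\dots;n_t}$ comultiplies as a sum over ways of cutting the height-$2$ tree into an upper height-$2$ tree grafted onto a forest of height-$2$ trees, with the bookkeeping of the even levels dictated by $\vartheta$. Passing to sequences via the encoding ``write $|$ for a leaf above the ground floor, a comma for a ground leaf, and between neighbouring leaves the maximal height on the connecting path'', each such cut becomes a refinement of the separators $c_i\in\{1,2\}$ into separators $c_i\in\{1,2,3,4\}$ subject to admissibility condition (b). The worked computation $(|2,2|)\Delta$ in \exaref{exa-cooperad-C2-commas} supplies the model case, exhibiting exactly which nine refinements occur, and the general claim is that this nine-fold pattern is the generic instance of \eqref{eq-(c1c2)-(c1c2)}.

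Third, the actual proof is then the combinatorial check that the tree-cut sum and the sequence-refinement sum \eqref{eq-(c1c2)-(c1c2)} agree term by term. Concretely I would fix an admissible sequence, enumerate on one side the admissible height-$4$ trees mapping to it under $C_1^{\odot 2}\to C_2^{\odot 2}$, and on the other side the refinements allowed by \eqref{eq-(c1c2)-(c1c2)}, and match them using the explicit mutually inverse maps of \exaref{exa-cooperad-C2-commas}. Since those maps are already known to be bijections, it suffices to check that $\Delta$, read as the image of the tree comultiplication, produces precisely the summands listed in \eqref{eq-(c1c2)-(c1c2)} with the correct separators and coefficient $1$.

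The main obstacle I anticipate is the faithful translation of the two-floor grafting defining $C_1\odot C_1$ into the ``maximal height on the shortest path'' encoding: one must verify that when a forest of height-$2$ trees is grafted below an upper height-$2$ tree, the heights $c_i$ recorded between consecutive leaves are exactly those prescribed by admissibility and by the $\bj$-absorption rule of \exaref{exa-cooperad-C-JP}, including the subtle passage where $\bj f_1$ either collapses to $f_{0;0}$ or propagates to the right as $\bj\rho_\varnothing$. Getting this local dictionary right, and confirming that no spurious or missing terms appear at the boundary entries $c_0$, $c_k$ (the ``very large odd'' endpoints), is where the real work lies; everything else is transport of structure along the already established isomorphisms.
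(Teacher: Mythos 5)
First, a point of reference: the paper itself gives no proof of this proposition --- it is credited to S.~Slobodianiuk and followed immediately by the next section --- so your argument has to stand entirely on its own. Your overall strategy (transport $\Delta$ along $C\simeq C_1\simeq C_2$ using the bijections of \exaref{exa-cooperad-C2-commas}, then reduce to a local matching of tree decompositions against refinements of sequences) is the right and essentially the only one, and you correctly flag the $\bj f_1\mapsto f_{0;0}+\bj\rho_\varnothing$ rule as the delicate point. But as written this is a plan, not a proof: the third step, where you say you \emph{would} fix a sequence, enumerate trees and refinements, and match them, is the entire content of the proposition and is never carried out. Worse, the one substantive claim you do make toward it is false: the nine-term computation of $(|2,2|)\Delta$ is \emph{not} the generic instance of \eqref{eq-(c1c2)-(c1c2)}. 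In that example both numbers equal $2$ and both sit next to boundary bars, so admissibility forces $\overline{c_1}\in\{2,2|3,4\}$ and $\overline{c_2}\in\{2,3|2,4\}$; consequently the example never exercises the rule $1\mapsto 1$ or $3$ (no $c_i=1$ occurs) nor the refinement $2\mapsto 3|2|3$. These missing cases are exactly the ones carrying the subtleties: $1\mapsto 3$ records two adjacent arguments split between two blocks whose lower trees meet at a floor-$3$ vertex, and $3|2|3$ records a $\bj$ standing alone in its block and absorbed as the factor $f_{0;0}$, which contributes \emph{two} new bars, while $4$ records the same $\bj$ passed upward as $\bj\rho_\varnothing$.

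There is also a counting subtlety that your proposed ``term by term'' matching would run into and must resolve. Distinct local refinement readings can produce the same admissible target sequence: for the source $(,2|2,)$ the choices $(c_1\mapsto 2|3,\ c_2\mapsto 2)$ and $(c_1\mapsto 2,\ c_2\mapsto 3|2)$ both yield $(,2|3|2,)$, because the single original bar of $(,2|2,)$ is reproduced in the target by \emph{two} bars (the final empty group of the lower factor $f_{1;0}$ and the initial empty group of $f_{0;1}$), and it is ambiguous which of them ``is'' the preserved semicolon. The tree comultiplication produces the corresponding summand exactly once (it comes from the unique block decomposition $[\,a\,\bj\,][\,\bj\, b\,]$), so the sum in \eqref{eq-(c1c2)-(c1c2)} must be read as indexed by admissible \emph{target sequences}, not by refinement choices, and the proof must establish a bijection between admissible refined sequences and the summands of $f_{n_0;\dots;n_t}\Delta$ --- by showing how to reconstruct the block structure, the absorbed/passed $\bj$'s, and the lower and upper factors uniquely from the positions of the $3$'s and $4$'s in the target. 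Your write-up asserts ``coefficient $1$'' but gives no mechanism that would detect, let alone handle, this collision; until the dictionary (commas $=$ ground leaves, bars $=$ leaves above ground, each $2$ of the source $=$ one $\bj$, each $1$ $=$ an adjacency inside a group) and this reconstruction argument are written out, the proposal has a genuine gap.
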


\section{Comonads}
Associate with a collection \(C\in\Ob\cv^\NN\) an endofunctor
\[ \bot:\cv\to\cv, \qquad V\mapsto \bigoplus_{n\in\NN} V[1]^{\tens n} \tens C(n)[-1] =V\bot.
\]
For any other collection $C'$ and the associated functor $\bot'$ we have
\begin{align*}
V{\bot\bot}' &=\bigoplus_{k\in\NN} (V\bot[1])^{\tens k} \tens C'(k)[-1]
\\
&\simeq \bigoplus_{k\in\NN} \bigoplus_{n_1,\dots,n_k\in\NN} \Bigl\langle
\tens^{i\in\mb k} \bigl( V[1]^{\tens n_i}\tens C(n_i)\bigr)\Bigr\rangle \tens C'(k)[-1]
\\
&\simeq \bigoplus_{k\in\NN} \bigoplus_{n_1,\dots,n_k\in\NN} V[1]^{\tens\sum_{i=1}^kn_i} \tens
\biggl\langle \Bigl( \bigotimes_{i=1}^k{\vphantom{\Big|}}_R C(n_i) \Bigr) \tens  C'(k) \biggr\rangle[-1]
\\
&\simeq \bigoplus_{n\in\NN} V[1]^{\tens n} \tens (C\odot C')(n)[-1].
\end{align*}
This gives a monoidal functor \(\Omega:(\cv^\NN,\odot)\to(\und\End\cv,\cdot)\), \(C\mapsto C\Omega=\bot\).
A comonoid $C$ in \((\cv^\NN,\odot)\) (which is automatically a cooperad in $\cv$) is sent by this functor to a comonoid $\bot$ in \((\und\End\cv,\cdot)\) (a comonad in $\cv$).
The comultiplication in the comonad\index{TTsymb}{bot@$\bot$} $\bot$ is defined as
\[ \Delta =\Bigl( V\bot =\bigoplus_{n\in\NN} V[1]^{\tens n} \tens  C(n)[-1] \rTTo^{\oplus1\tens\Delta[-1]} \bigoplus_{n\in\NN} V[1]^{\tens n} \tens (C\odot C)(n)[-1] \simeq V{\bot\bot} \Bigr).
\]
The counit is
\[ \eps =\bigl( V\bot \rTTo^{\pr_{1,0}} V\tens C(1)^0 \rTTo^{1\tens\eps} V \bigr).
\]

For instance, in \exaref{exa-cooperad-partition-compositions-P} \(V\bot=(V)(P\Omega)=V[1]T^>[-1]:\cv\to\cv\) is the comonad governing \ainf-algebras, see \cite{BesLyuMan-book}.

The category of coalgebras over the comonad $\bot$ is fully embedded into the category of coalgebras over the cooperad $C$ via the shift map \((X,\delta:X\to X\bot)\mapsto\bigl(X[1],X[1]\rTTo^{\delta[1]} \coprod_{n\in\NN}X[1]^{\tens n}\tens C(n)\hookrightarrow\prod_{n\in\NN}X[1]^{\tens n}\tens C(n)\bigr)\).

\begin{example}\label{exa-comonad-T-[12-3]}
Now we describe the common comonad $\bot$ obtained from the isomorphic cooperads of Examples \ref{exa-cooperad-C-JP}--\ref{exa-cooperad-C2-commas}.
This is the functor $\ct=\ct_\kk:\cv\to\cv$, given by the expression
\begin{align*}
V\ct &=V[1]T^\ge[2]T^>[-3]\ominus V[1]T^0[2]T^1[-3]
\\
&=\bigoplus^{k>0}_{\substack{(k>1)\vee(n_1>0)\\n_1.\dots,n_k\ge0}}
V[1]\Tu1^{n_1}[2]\tensu2\cdots\tensu2V[1]\Tu1^{n_{k-1}}[2]\tensu2V[1]\Tu1^{n_k}[-1].
\end{align*}
The summand \(V[1]T^0[2]T^1[-3]\simeq\kk[-1]\) corresponding to $k=1$, $n_1=0$ is excluded from the sum.
A summand of $V\ct$ has the form
\begin{equation}
X_1\tensu{c_1}\cdots\tensu{c_{k-2}}X_{k-1}\tensu{c_{k-1}}X_k,
\label{eq-X1OX2OXk}
\end{equation}
where \(c_i\in\{1,2\}\) for \(1\le i<k\), \(c_0=c_k=7\), and
\begin{enumerate}
\renewcommand{\labelenumi}{(\alph{enumi})}
\item $k\ge1$;

\item for \(1\le i\le k\) if \(X_i=\kk[b]\), then \(\min\{c_{i-1},c_i\}\) is even;

\item for \(1\le i<k\) either \(X_i=V[2+(-1)^{c_i}]\), or \(X_i=\kk[1+(-1)^{c_i}]\);

\item either \(X_k=V\), or \(X_k=\kk[-1]\).
\end{enumerate}
These conditions can be simplified.
For instance, factor \(\kk[b]\) is separated from other factors only by $\tens$ indexed by 2.
The conditions are put in this form in order to describe similarly $\ct^2$ and $\ct^3$.
Notice that condition (b) together with \(c_0=c_k=7\) implies that the expression \(\kk[-1]\), $k=1$, is not a summand of $V\ct$.

The graded $\kk$\n-module \(V\ct^2\subset V[1]\Tu1^\ge[2]\Tu2^>[-2]\Tu3^\ge[2]\Tu4^>[-3]\) also consists of
summands~\eqref{eq-X1OX2OXk} that satisfy conditions (a)--(d), however, with \(c_i\in\{1,2,3,4\}\) for \(1\le i<k\).
Furthermore,
\[ V\ct^3 \subset V[1]\Tu1^\ge[2]\Tu2^>[-2]\Tu3^\ge[2]\Tu4^>[-2]\Tu5^\ge[2]\Tu6^>[-3]
\]
as well consists of summands~\eqref{eq-X1OX2OXk} that satisfy conditions (a)--(d), although \(1\le c_i\le6\) for \(1\le i<k\).
The functor $\ct^p$ is presented similarly, except that \(1\le c_i\le2p\) for \(1\le i<k\) and \(c_0,c_k\ge2p+1\) are any odd numbers.

Summand~\eqref{eq-X1OX2OXk} is encoded also by sequence \eqref{eq-c1-c2-ck-1}, where semicolon ; on $i$\n-th place is replaced either by comma , iff \(X_i=V[*]\), or by $|$ iff \(X_i=\kk[*]\).
Condition (a) says that at least one separating symbol ( , or $|$ ) is present.
Condition (b) requires for \(1\le i\le k\) and \(c_{i-1}|c_i\) that \(\min\{c_{i-1},c_i\}\) be even.
Condition (c) and (d) prescribe how to recover \eqref{eq-X1OX2OXk} from \eqref{eq-c1-c2-ck-1}.

Let us equip $\ct$ with a structure of a comonad.
The natural transformation \(\Delta:V\ct\to V\ct^2\) restricted to summand~\eqref{eq-c1-c2-ck-1} with \(c_i\in\{1,2\}\) is the sum of all obvious isomorphisms
\begin{equation}
(;c_1;c_2;\dots;c_{k-1};) \to (;\overline{c_1};\overline{c_2};\dots;\overline{c_{k-1}};)
\label{eq-(c1c2)-(c1c2)}
\end{equation}
where
\begin{enumerate}
\renewcommand{\labelenumi}{(\roman{enumi})}
\item the value of all semicolons ; is preserved;

\item if \(c_i=1\), then \(\overline{c_i}=1\) or $3$;

\item if \(c_i=2\), then \(\overline{c_i}=2\) or $2|3$ or $3|2$ or $3|2|3$ or $4$.
\end{enumerate}
Of course, both source and target must be allowed by condition (b).
In other terms, \(\Delta:V\ct\to V\ct^2\) restricted to summand~\eqref{eq-X1OX2OXk} with \(c_i\in\{1,2\}\) is the sum of obvious isomorphisms with all summands in which
\begin{enumerate}
\renewcommand{\labelenumi}{(\roman{enumi})}
\item all original $V$'s and $\kk$'s keep their places;

\item $\tensu1$ is replaced with $\tensu1$ or $\tensu3$;

\item $\tensu2$ is replaced with $\tensu2$ or $\tensu2\kk\tensu3$ or $[-2]\tensu3\kk[2]\tensu2$ or $[-2]\tensu3\kk[2]\tensu2\kk\tensu3$ or $\tensu4$.
\end{enumerate}
This comultiplication is counital, the counit \(\eps:V\ct\to V\) is the projection \(\pr^V_\varnothing\) of \(V\ct\) on the direct summand \(V[1]T^1[2]T^1[-3]=V\).
\end{example}

\begin{example}
Let $C=P$, then \(\bot=P\Omega=[1]T^>[-1]\).
Any coderivation of degree~1 of the cofree $\bot$\n-coalgebra \(B=V[1]T^>[-1]\) is determined by a map \(\check{b}:V[1]T^>=B[1]\to V[1]\), that is, by a family of morphisms \(b_n:V[1]^{\tens n}\to V[1]\), $n\ge1$ of degree~1.
Recovering a coderivation \(b:V\bot[1]\to V\bot[1]\) we write its restriction to \(V[1]^{\tens n}\) due to \eqref{eq-general-coderivation-rM-NC} as
\[ b\big| =\sum_{x+y+z=n}^{x,z\ge0,\,y>0} 1^{\tens x}\tens b_y\tens1^{\tens z}: V[1]^{\tens n} \to V[1]T^>.
\]
The square \(b^2:V[1]T^>\to V[1]T^>\) is a coderivation of degree 2.
Vanishing of $b^2$ is equivalent to the equation \(b^2\eps_{T^>}=0:V[1]T^>\to V[1]\), that is, to
\[  \sum_{x+y+z=n}^{x,z\ge0,\,y>0} (1^{\tens x}\tens b_y\tens1^{\tens z})\cdot b_{x+1+z} =0,
\]
which is the defining equation for operations in an \ainf-algebra.
We recover the well-known fact that an \ainf-algebra can be defined as a cofree $\bot$\n-coalgebra $V\bot$ equipped with a degree 1 coderivation \(b:V\bot[1]\to V\bot[1]\) whose square is zero.
\end{example}

Let $C$ be a connected cooperad.
Special case of \eqref{eq-general-coderivation-rM-NC} are \((\id_{N\odot C};\id_C,0)\)-coderivations $\nabla:N\odot C\to N\odot C$ for $\mm=0$, \(N\in\cw_+=\cw^0_+\).
They are determined by degree $a$ maps \(\check\nabla:N\odot C\to N\) as follows:
\begin{multline}
\nabla =\Bigl[ \bigoplus_{n\ge0} N^{\tens n}\tens C(n) \rTTo^{1\tens\Delta}
\bigoplus_{l,i_1,\dots,i_l\ge0} N^{\tens(i_1+\dots+i_l)}\tens C(i_1)\tdt C(i_l)\tens C(l) \rto\sim \hfill
\\
\bigoplus_{l,i_1,\dots,i_l\ge0} N^{\tens i_1}\tens C(i_1)\tdt N^{\tens i_l}\tens C(i_l)\tens C(l)
\rTTo^{\oplus_l\sum_{j=1}^l\delta_{1i_1}\tens\eps\tdt\sS{_{i_j}}{\check\nabla}\tdt\delta_{1i_l}\tens\eps\tens1} \bigoplus_{l\ge0} N^{\tens l}\tens C(l) \Bigr]
\\
\hskip\multlinegap =\biggl[ \bigoplus_{n\ge0} N^{\tens n}\tens C(n) \rTTo^{1\tens\Delta}
\bigoplus^{x+1+z=l}_{x+y+z=n} N^{\tens n}\tens C(1)^{\tens x}\tens C(y)\tens C(1)^{\tens z}\tens C(l) \rto\sim \hfill
\\
\bigoplus_{x+1+z=l}^{l,x,y,z\ge0} (N\tens C(1))^{\tens x}\tens(N^{\tens y}\tens C(y))\tens(N\tens C(1))^{\tens z}\tens C(l)
\\
\hfill \rTTo^{\oplus_l\sum(1\tens\eps)^{\tens x}\tens\sS{_y}{\check\nabla}\tens(1\tens\eps)^{\tens z}\tens1}
\bigoplus_{l\ge0} N^{\tens l}\tens C(l) \biggr] \quad
\\
\hskip\multlinegap =\biggl[ \bigoplus_{n\ge0} N^{\tens n}\tens C(n) \rTTo^{1\tens\Delta_{T(x,y,z)}} 
\bigoplus^{x+1+z=l}_{x+y+z=n} N^{\tens n}\tens C(y)\tens C(l) \rto\sim \hfill
\\
\bigoplus_{x+1+z=l}^{l,x,y,z\ge0} N^{\tens x}\tens(N^{\tens y}\tens C(y))\tens N^{\tens z}\tens C(l)
\rTTo^{\oplus_l\sum1^{\tens x}\tens\sS{_y}{\check\nabla}\tens1^{\tens z}\tens1} 
\bigoplus_{l\ge0} N^{\tens l}\tens C(l) \biggr].
\label{eq-nabla-page}
\end{multline}

\chapter{Curved operads and curved cooperads}
The definitions of curved (co)operads given in this section are direct generalizations of definitions of curved (co)algebras obtained in \cite{Lyu-curved-coalgebras} in the process of transforming the definitions of $A_\infty$\n-(co)algebras.
The reader is invited to consult [\textit{loc. cit.}] for motivations.
We define cobar and bar constructions as functors between various categories of curved cooperads and curved operads.

\section{Curved operads}
We discuss several versions of the category of curved operads.
 
\begin{definition}\label{def-curved-operad}
A \emph{curved operad} \((\co,m,d,m_0,\eta)\) is\index{TTindex}{curved operad} an operad \((\co,m,\eta)\) in $\cv$ equipped with an $\id_\co$-derivation \(d:\co\to\co\) of degree~1, and the curvature element\index{TTsymb}{m0@$m_0$} $m_0\in\co(1)^2$ (a degree~2 map $m_0:\1\to\co$) such that for all $n\ge0$
\begin{align}
d^2 &=\sum_{x+1+z=n}(m_0\tens1)\cdot m_{T(x,1,z)} -(1\tens m_0)\cdot m_{T(0,n,0)} \notag
\\
&=\sum_{x+1+z=n}m_0\underset{x,1,z}\bull1 -1\underset{0,n,0}\bull m_0: \co(n) \to \co(n),
\label{eq-d2-(m01)m-(1m0)m}
\\
m_0d &=0. \notag
\end{align}
A \emph{morphism of curved operads} \(\sff:\co\to\cp\) is a pair \((\sff_1,\sff_0)\), where \(\sff_1:\co\to\cp\) is a morphism of graded operads and \(\sff_0\in\cp(1)^1\) (that is, a map \(\sff_0:\1\to\cp\) of degree~1) such that for all $n\in\NN$
\begin{align}
&\bigl\langle \co(n) \rto{\sff_1} \cp(n) \rto d \cp(n) \bigr\rangle +\bigl\langle \co(n) \rto{\sff_1} \cp(n) \rTTo^{1\tens\sff_0} \cp(n)\tens\cp(1) \rTTo^{m_{T(0,n,0)}} \cp(n) \bigr\rangle
\label{eq-OPP-OPPPP}
\\
&-\sum_{x+1+z=n}\bigl\langle \co(n) \rto{\sff_1} \cp(n) \rTTo^{\sff_0\tens1} \cp(1)\tens\cp(n) \rTTo^{m_{T(x,1,z)}} \cp(n) \bigr\rangle =\bigl\langle \co(n) \rto d \co(n) \rto{\sff_1} \cp(n) \bigr\rangle, \notag
\\
&\bigl\langle \1 \rto{m_0} \cp(1) \bigr\rangle -\bigl\langle \1 \rto{\sff_0} \cp(1) \rto d \cp(1) \bigr\rangle \notag
\\
&-\bigl\langle \1 =\1\tens\1 \rTTo^{\sff_0\tens\sff_0} \cp(1)\tens\cp(1) \rTTo^{m_{T(0,1,0)}} \cp(1) \bigr\rangle =\bigl\langle \1 \rto{m_0} \co(1) \rto{\sff_1} \cp(1) \bigr\rangle.
\label{eq-1P-1PP-111PPP-1OP}
\end{align}
The composition $\sfh:\co\to\cq$ of morphisms \(\sff:\co\to\cp\) and \(\sfg:\cp\to\cq\) is \(\sfh=(\sfh_1=\sff_1\sfg_1,\sfh_0=\sff_0\sfg_1+\sfg_0)\).
The unit morphisms are \((\id,0)\).
The category of such operads and their morphisms is denoted\index{TTsymb}{cOp@$\cOp$} $\cOp$.
\end{definition}

In the study of bar and cobar constructions for properads Vallette \cite[Section~4]{MR2320654} works with (co)augmented properads and coproperads.
Augmentation is assumed also in subsequent work by Merkulov and Vallette \cite{0707.0889}.
As we shall see augmentation is not obligatory for operads.

\begin{definition}
A \emph{unit-complemented curved operad} \((\co,m,d,m_0,\eta,\sfv)\) is\index{TTindex}{unit-complemented curved operad} a curved operad \((\co,m,d,m_0,\eta)\) equipped with a degree~0 map of collections \(\sfv:\co\to\1\) (splitting of the unit) such that
\begin{equation*}
\eta\sfv =1_\1.
\end{equation*}
A \emph{morphism of unit-complemented curved operads} \(\sff:\co\to\cp\) is a morphism in $\cOp$ ignoring the splitting $\sfv$ of the unit.
The corresponding category of such operads and their morphisms is denoted\index{TTsymb}{ucOp@$\ucOp$} $\ucOp$.
\end{definition}

\begin{definition}
The subcategory\index{TTsymb}{UCCOp@$\UCCOp$} $\UCCOp$ of $\ucOp$ consists of the same objects and its morphisms \(\sff:\co\to\cp\) are pairs \((\sff_1,\sff_0)\in\Mor\ucOp\) such that
\[ \sff_0 =\bigl( \1 \rto{\und f} \1 \rto\eta \cp \bigr)
\]
for some \(\und f\in\kk^1\), namely, for \(\und f=\sff_0\sfv\).
Thus, morphisms in $\UCCOp$ are pairs \((\sff_1,\und f)\) that satisfy the equations
\[ \sff_1d -(n-1)\sff_1\und f =d\sff_1: \co(n) \to \cp(n), \qquad m^\cp_0 -\und f^2\eta =m^\co_0\sff_1.
\]
\end{definition}

The element \(\und f^2=-\und f^2\in\kk^2\) vanishes in a graded strongly commutative ring $\kk$.
By definition a \emph{graded strongly commutative ring} is\index{TTindex}{graded strongly commutative ring} a graded ring $\kk$ such that \(ba=(-1)^{|a|\cdot|b|}ab\) for all homogeneous elements $a$, $b$ and $c^2=0$ for all elements $c$ of odd degree.
The composition $\sfh:\co\to\cq$ of morphisms \(\sff:\co\to\cp\) and \(\sfg:\cp\to\cq\) is \(\sfh=(\sfh_1=\sff_1\sfg_1,\und h=\und g+\und f)\).
The unit morphisms are \((\id,0)\).

\begin{definition}
The subcategory\index{TTsymb}{uccOp@$\uccOp$} $\uccOp$ of $\UCCOp\subset\ucOp$ consists of the same objects and its morphisms are pairs \((\sff,0)\in\Mor\ucOp\).
Thus, morphisms in $\uccOp$ are homomorphisms of operads $\sff:\co\to\cp$ that satisfy the equations
\[ \sff d =d\sff: \co \to \cp, \qquad m^\cp_0 =m^\co_0\sff.
\]
\end{definition}

An analogue of the above category for \ainf-algebras, namely $A_{[0,\infty[}$-algebras, is considered by Nicol\'as \cite{math.RT/0702449}.
He assumes that $\sff_0=0$.

\begin{definition}
A \emph{unit-complemented $\dg$-operad} is\index{TTindex}{unit-complemented dg-operad@unit-complemented $\dg$-operad} a unit-complemented curved operad \((\co,m,d,0,\eta,\sfv)\) with $m_0=0$.
Equivalently, it is a $\dg$\n-operad \((\co,m,d,\eta)\) with a degree 0 map \(\sfv:\co\to\1\) (splitting of the unit) such that \(\eta\cdot\sfv=1_\1\).
Morphisms of such operads are morphisms of $\dg$\n-operads.
They form a full subcategory of $\uccOp$ denoted\index{TTsymb}{ucdgOp@$\ucdgOp$} \(\ucdgOp\).
\end{definition}

\begin{exercise}
Applying equation \eqref{eq-OPP-OPPPP} we get
\[ d^2\sff_1 =\sff_1\bigl[ d +(1\tens\sff_0)m_{T(0,n,0)} -\sum_{x+1+z=n}(\sff_0\tens1)m_{T(x,1,z)} \bigr]^2.
\]
On the other hand,
\begin{multline*}
d^2\sff_1 =\sum_{x+1+z=n}(m_0\tens1)m_{T(x,1,z)}\sff_1 -(1\tens m_0) m_{T(0,n,0)}\sff_1
\\
=\sff_1\bigl[ \sum_{x+1+z=n}(m_0\sff_1\tens1)m_{T(x,1,z)} -(1\tens m_0\sff_1) m_{T(0,n,0)} \bigr].
\end{multline*}
Show that \eqref{eq-1P-1PP-111PPP-1OP} implies that these two expressions are identically equal to each other.
\end{exercise}

Clearly $\eta d=0$ for any operad \(\co\in\Ob\cOp\).
It is technically convenient to represent the same notions via operations in the shifted collection $\co[1]$.
We consider the binary multiplication $b$, the map $b_1$ and the element $b_0$, all of degree~1, such that
\begin{align*}
b_{T(x,y,z)} &= -(\sigma^{-1}\tens\sigma^{-1})m_{T(x,y,z)}\sigma: \co[1](y)\tens\co[1](x+1+z) \to \co[1](x+y+z),
\\
b_1 &= -\sigma^{-1}d\sigma: \co[1](n) \to \co[1](n),
\\
b_0 &= m_0\sigma: \1 \to \co[1](1).
\end{align*}
Also we consider the shifted unit $\bfeta$ of degree $-1$ and its splitting $\bv$ of degree~1:
\begin{align*}
\bfeta &=\bigl( \1 \rTTo^\eta \co \rTTo^\sigma \co[1] \bigr),
\\
\bv &=\bigl( \co[1] \rTTo^{\sigma^{-1}} \co \rTTo^\sfv \1 \bigr).
\end{align*}
These data satisfy the corresponding equations: signless versions of \eqref{dia-operad-3-OOOOOOOOOO} and \eqref{dia-operad-4-OOOOOOO} for \(b_{T(-,-,-)}\),
\begin{gather*}
b_{T(x,y,z)}b_1 +(1\tens b_1 +b_1\tens1)b_{T(x,y,z)} =0: \co[1](y)\tens\co[1](x+1+z) \to \co[1](x+y+z),
\\
b_1^2 +\sum_{x+1+z=n}(b_0\tens1)b_{T(x,1,z)} +(1\tens b_0)b_{T(0,n,0)} =0: \co[1](n) \to \co[1](n),
\\
b_0b_1 =0, \qquad (1\tens\bfeta)b_{T(0,n,0)} =1: \co[1](n) \to \co[1](n),
\\
(\bfeta\tens1)b_{T(x,1,z)} =-1: \co[1](n) \to \co[1](n) \text{ \ for \ } x+1+z=n, \quad \bfeta b_1 =0, \quad \bfeta\bv =1_\1.
\end{gather*}

A morphism in $\cOp$ is the same as a pair, consisting of a degree~0 map \(f_1=\sff_1[1]=\sigma^{-1}\sff_1\sigma:\co[1]\to\cp[1]\in\cv^\NN\) and an element \(f_0\in\cp(1)[1]^0\) such that
\begin{align*}
\sff_1 &=\bigl( \co \rTTo^\sigma \co[1] \rTTo^{f_1} \cp[1] \rTTo^{\sigma^{-1}} \cp \bigr),
\\
\sff_0 &=\bigl( \1 \rTTo^{f_0} \cp[1] \rTTo^{\sigma^{-1}} \cp \bigr).
\end{align*}
The equations satisfied by shifted data of a morphism are
\begin{gather*}
(f_1\tens f_1)b_{T(x,y,z)} =b_{T(x,y,z)}f_1, \qquad b_0 +f_0b_1 +(f_0\tens f_0)b_{T(0,1,0)} =b_0f_1,
\\
\bfeta_\co f_1 =\bfeta_\cp, \qquad f_1b_1 +(f_1\tens f_0)b_{T(0,n,0)} +\sum_{x+1+z=n}(f_0\tens f_1)b_{T(x,1,z)} =b_1f_1.
\end{gather*}

A morphism in $\UCCOp$ is the same as a pair, consisting of a degree~0 map \(f_1=\sff_1[1]=\sigma^{-1}\sff_1\sigma:\co[1]\to\cp[1]\) and an element \(\und f\in\kk^1\) which satisfy the equations
\begin{gather*}
(f_1\tens f_1)b_{T(x,y,z)} =b_{T(x,y,z)}f_1, \qquad \bfeta f_1 =\bfeta, \qquad b^\cp_0 -\und f^2\bfeta =b^\co_0f_1,
\\
f_1b_1 =b_1f_1 +(n-1)\und ff_1: \co[1](n) \to \cp[1](n).
\end{gather*}

\begin{example}\label{exa-END(Vd)}
Let $V$ be a graded $\kk$\n-module and let \(d=d_V:V\to V\) be a homogeneous map of degree~1.
Its square \(m_0=-d_V^2\) is a homogeneous map of degree~2.
We claim that the graded endomorphism operad \((\ce,m,\eta)\), \(\ce(n)=(\END V)(n)=\Hom_\kk(V^{\tens n},V)\), admits a curved operad structure \((\ce,m,d_\ce,m_0,\eta)\), where for all \(f\in\ce(n)^\bull\)
\[ (f)d_\ce =f\cdot d_V -(-)^f\sum_{x+1+z=n} (1^{\tens x}\tens d_V\tens1^{\tens z}) \cdot f.
\]
In fact, $d_\ce$ is an inner derivation of degree~1,
\begin{gather*}
(f)d_\ce^2 =f\cdot d_V^2 -\sum_{x+1+z=n} (1^{\tens x}\tens d_V^2\tens1^{\tens z}) \cdot f =-f\cdot m_0 +\sum_{x+1+z=n} (1^{\tens x}\tens m_0\tens1^{\tens z}) \cdot f,
\\
m_0d_\ce =m_0d_V -d_Vm_0 = -d_V^3 +d_V^3 =0.
\end{gather*}
In order to become unit-complemented the operad $\ce$ just requires a splitting of the unit \(\sfv:\End_\kk V\to\kk\), \((\id_V)\sfv=1\).
This exists, for instance, when $V$ has a direct summand $k$, \(\bigl(\kk\rto iV\rto p\kk\bigr)=1_\kk\), namely, \((f)\sfv=ifp\).
\end{example}

\section{Curved cooperads}
We study several categories of curved cooperads.

\begin{definition}
A \emph{curved cooperad} \((C,\Delta,d,\bdelt_0,\eps)\) is\index{TTindex}{curved cooperad} a cooperad \((C,\Delta,\eps)\) in $\cv$, equipped with an $\id_C$-coderivation \(d:C\to C\) of degree~1 and the curvature functional\index{TTsymb}{delta0@$\bdelt_0$} \(\bdelt_0:C(1)\to\1\) of degree~2 such that
\begin{align}
d^2 &=\Delta_{T(0,n,0)}\cdot(1\tens\bdelt_0) -\sum_{x+1+z=n}\Delta_{T(x,1,z)}\cdot(\bdelt_0\tens1): C(n) \to C(n),
\label{eq-d2-Delta(1delta0-Delta(delta01)}
\\
d\bdelt_0 &=0: C(1) \to \1. \notag
\end{align}

A \emph{morphism of curved cooperads} is a pair \((\sfg_1,\sfg_0)\) consisting of a morphism of cooperads \(\sfg_1:C\to D\) and a map \(\sfg_0:C(1)\to\1\) of degree~1 such that for all $n\ge0$
\begin{align}
d^C\sfg_1 +\sum_{x+1+z=n}\Delta_{T(x,1,z)}(\sfg_0\tens\sfg_1) -\Delta_{T(0,n,0)}(\sfg_1 &\tens\sfg_0) =\sfg_1d^D: C(n) \to D(n), \notag
\\
\bdelt^C_0 -d^C\sfg_0 -\Delta_{T(0,1,0)}(\sfg_0\tens\sfg_0) &=\sfg_1\bdelt^D_0: C(1) \to \1.
\label{eq-morphism-curved-cooperads}
\end{align}
The composition $\sfh:C\to E$ of morphisms \(\sff:C\to D\) and \(\sfg:D\to E\) is given by \(\sfh_1=\sff_1\sfg_1\), \(\sfh_0=\sff_0+\sff_1\sfg_0\).
The unit morphism is \((\id,0)\).
This category of curved cooperads and their morphisms is denoted\index{TTsymb}{cCoop@$\cCoop$} $\cCoop$.
\end{definition}

We verify that the composition defined above is indeed a morphism of $\cCoop$:
\begin{multline*}
d^C\sff_1\sfg_1 +\sum_{x+1+z=n}\Delta_{T(x,1,z)}[(\sff_0+\sff_1\sfg_0)\tens\sff_1\sfg_1] -\Delta_{T(0,n,0)}[(\sff_1\sfg_1\tens(\sff_0+\sff_1\sfg_0)] -\sff_1\sfg_1d^E
\\
\hskip\multlinegap =[d^C\sff_1 +\sum_{x+1+z=n}\Delta_{T(x,1,z)}(\sff_0\tens\sff_1) -\Delta_{T(0,n,0)}(\sff_1\tens\sff_0) -\sff_1d^D]\sfg_1 \hfill
\\
+\sff_1[d^D\sfg_1 +\sum_{x+1+z=n}\Delta_{T(x,1,z)}(\sfg_0\tens\sfg_1) -\Delta_{T(0,n,0)}(\sfg_1\tens\sfg_0) -\sfg_1d^E] =0
: C(n) \to E(n),
\\
\hskip\multlinegap \bdelt^C_0 -d^C(\sff_0+\sff_1\sfg_0) -\Delta_{T(0,1,0)}[(\sff_0+\sff_1\sfg_0)\tens(\sff_0+\sff_1\sfg_0)] -\sff_1\sfg_1\bdelt^E_0 \hfill
\\
=\bdelt^C_0 -d^C\sff_0 -\Delta_{T(0,1,0)}(\sff_0\tens\sff_0) -\sff_1\bdelt^D_0
+\sff_1\bdelt^D_0 -d^C\sff_1\sfg_0 +\Delta_{T(0,1,0)}(\sff_1\tens\sff_0)\sfg_0 -\Delta_{T(0,1,0)}(\sff_0\tens\sff_1)\sfg_0
\\
-\sff_1\Delta_{T(0,1,0)}(\sfg_0\tens\sfg_0) -\sff_1\sfg_1\bdelt^E_0
=\sff_1[\bdelt^D_0 -d^D\sfg_0 -\Delta_{T(0,1,0)}(\sfg_0\tens\sfg_0) -\sfg_1\bdelt^D_0] =0: C(1) \to \1.
\end{multline*}

\begin{definition}
A \emph{counit-complemented curved cooperad} \((C,\Delta,d,\bdelt_0,\eps,\sfw)\) is\index{TTindex}{counit-complemented curved cooperad} a curved cooperad \((C,\Delta,d,\bdelt_0,\eps)\) equipped with a \emph{splitting of the counit} \(\sfw:\1\to C\in\cv\), an element of $C(1)^0$ such that \(\sfw\cdot\eps=1_\1\).
Morphisms of such cooperads are morphisms of curved cooperads (ignoring the splitting).
The category of counit-complemented curved cooperads is denoted\index{TTsymb}{ucCoop@$\ucCoop$} $\ucCoop$.
\end{definition}

Denote \(\opr=\opr_C=1-\eps\sfw:C\to C\) the projection along $\1$ whose image is $\bar{C}=\Ker\eps$.
The projection \(\opr[-1]=\opr_C[-1]=1-\beps\bw:C[-1]\to C[-1]\) is also denoted \(\opr=\opr_C\) for the sake of brevity.

\begin{definition}
A \emph{curved augmented cooperad} is\index{TTindex}{curved augmented cooperad} a counit-complemented cooperad \((C,\Delta,d,\bdelt_0,\eps,\sfw)\) such that \(\sfw:\1\to C\) is an augmentation (a homomorphism of cooperads) and the non-counital cooperad \((\bar{C},\bar\Delta)\) is conilpotent.

A \emph{morphism of curved augmented cooperads} is a morphism of curved cooperads \((\sfg_1,\sfg_0)\) where \(\sfg_1:C\to D\) is a morphism of augmented cooperads (preserves the augmentation).
This category of curved augmented cooperads and their morphisms is denoted\index{TTsymb}{CACoop@$\CACoop$} $\CACoop$.
\end{definition}

Being augmented a cooperad $C$ has to satisfy
\[ \sfw\eps =1_\1, \qquad \sfw\Delta_{T(0,1,0)} =\sfw\tens\sfw, \qquad \sfw\Delta_{T(1,0,0)} =\sfw\Delta_{T(0,0,1)} =0.
\]
A morphism $\sfg_1$ of augmented cooperads satisfies \(\sfw\sfg_1=\sfw\).
Notice that on $\bar{C}$
\begin{alignat}2
\Delta_{T(0,n,0)} &= 1\tens\sfw +\bar\Delta_{T(0,n,0)}, &\qquad& \text{if } n\ne1, \notag
\\
\Delta_{T(0,1,0)} &= 1\tens\sfw +\sfw\tens1 +\bar\Delta_{T(0,1,0)}, &\qquad& \text{if } n=1, \notag
\\
\Delta_{T(x,1,z)} &= \sfw\tens1 +\bar\Delta_{T(x,1,z)}, &\qquad& \text{if } x+z\ne0, \notag
\\
\Delta_{T(x,y,z)} &= \bar\Delta_{T(x,y,z)}, &\qquad& \text{if } x+z\ne0, \quad y\ne1.
\label{eq-Delta-1w-w1-Delta}
\end{alignat}

\begin{definition}
Objects of the subcategory\index{TTsymb}{caCoop@$\caCoop$} $\caCoop$ of $\CACoop$ are the same and morphisms are pairs \((\sfg_1,\sfg_0)\in\CACoop\) such that \(\sfw\sfg_0=0\).
\end{definition}

\begin{definition}
\emph{Augmented curved cooperads} are\index{TTindex}{augmented curved cooperad} defined as curved augmented cooperads \((C,\Delta,d,\bdelt_0,\eps,\sfw)\) with
\begin{equation}
\sfw\bdelt_0 =0, \qquad \sfw d =0.
\label{eq-wd10-wd00}
\end{equation}
Such cooperads form a full subcategory of $\caCoop$ denoted\index{TTsymb}{acCoop@$\acCoop$} $\acCoop$.
\end{definition}

Equations~\eqref{eq-wd10-wd00} can be reformulated in Positselski style \cite{0905.2621} as \((\sfw,0):\1\to C\) being a morphism in $\caCoop$.

It is technically convenient to represent the same notions via (co)operations in a shifted collection $C[-1]$.
We consider the binary comultiplication $\xi$, the map $\xi_1$ and the functional $\xi_0$, all of degree~1, such that
\begin{align*}
\xi_{T(x,y,z)} &= -\sigma\Delta_{T(x,y,z)}(\sigma^{-1}\tens\sigma^{-1}): C[-1](x+y+z) \to C[-1](y)\tens C[-1](x+1+z),
\\
\xi_1 &= -\sigma d\sigma^{-1}: C[-1](n) \to C[-1](n),
\\
\xi_0 &= \sigma\bdelt_0: C[-1](1) \to \1.
\end{align*}
Also we consider the shifted counit $\beps$ of degree $-1$ and the shifted augmentation $\bw$ of degree~1:
\begin{align*}
\beps &=\sigma\eps: C[-1] \to \1,
\\
\bw &=\sfw\sigma^{-1}: \1 \to C[-1](1).
\end{align*}
These data satisfy the corresponding equations: signless versions of \eqref{dia-cooperad-3-OOOOOOO} and \eqref{dia-cooperad-4-OOOOO} for \(\xi_{T(-,-,-)}\),
\begin{gather*}
\begin{split}
\xi_1\xi_{T(x,y,z)} +\xi_{T(x,y,z)}(1\tens\xi_1 +\xi_1\tens1) &=0:
\\
C[-1](x+y+z) &\to C[-1](y)\tens C[-1](x+1+z),
\end{split}
\\
\xi_1^2 +\xi_{T(0,n,0)}(1\tens\xi_0) +\sum_{x+1+z=n}\xi_{T(x,1,z)}(\xi_0\tens1) =0: C[-1](n) \to C[-1](n),
\\
\xi_1\beps =0, \qquad \xi_1\xi_0 =0, \qquad \xi_{T(0,n,0)}(1\tens\beps) =-1: C[-1](n) \to C[-1](n),
\\
\bw\beps =1_\1, \qquad \xi_{T(x,1,z)}(\beps\tens1) =1: C[-1](n) \to C[-1](n) \text{ \ for \ } x+1+z=n.
\end{gather*}

A morphism $C\to D$ in $\cCoop$ is the same as a pair \((g_1,g_0)\) consisting of maps \(g_1=\sigma\sfg_1\sigma^{-1}:C[-1]\to D[-1]\) and \(g_0=\sigma\sfg_0:C[-1](1)\to\1\) of degree~0 such that
\begin{gather}
\xi_{T(x,y,z)}(g_1\tens g_1) =g_1\xi_{T(x,y,z)}, \qquad g_1\beps^D =\beps^C, \notag
\\
\xi_1g_1 +\sum_{x+1+z=n}\xi_{T(x,1,z)}(g_0\tens g_1) +\xi_{T(0,n,0)}(g_1\tens g_0) =g_1\xi_1: C[-1](n) \to D[-1](n), \notag
\\
\xi^C_0 +\xi_1g_0 +\xi_{T(0,1,0)}(g_0\tens g_0) =g_1\xi^D_0: C[-1](1) \to \1.
\label{eq-xi0-xi1g0-g1xi}
\end{gather}
A morphism of curved augmented cooperads \((g_1,g_0):C\to D\in\CACoop\) satisfies additional equation \(\bw^Cg_1 =\bw^D\).
A morphism $C\to D$ in $\caCoop$ satisfies one more equation: $\bw g_0=0$.

\section{Cobar construction}
Let us construct a functor\index{TTsymb}{Cobar@$\Cobar$} \(\Cobar:\ucCoop\to\ucOp\), the cobar construction.
Let \(C=(C,\xi,\xi_1,\xi_0,\beps,\bw)\) be a counit-complemented curved cooperad.
Decompose the idempotent \(1-\eps\sfw:C\to C\) into a projection \(\opr:C\to\bar{C}\) and an injection \(\oin:\bar{C}\to C\) so that \(\oin\opr=1_{\bar{C}}\).
Clearly, \(\bar{C}=\Ker\eps\).
The morphisms \(\sigma\opr\sigma^{-1}:C[-1]\to\bar{C}[-1]\) and \(\sigma\oin\sigma^{-1}:\bar{C}[-1]\to C[-1]\) are also denoted $\opr$ and $\oin$ by abuse of notations.
They split the idempotent \(1-\sigma\eps\sfw\sigma^{-1}=1-\beps\cdot\bw:C[-1]\to C[-1]\) and \(\bar{C}[-1]=\Ker\beps\).

\begin{proposition}\label{pro-map-curved-cooperad-curved-operad}
There is a map \(\Cobar:\Ob\ucCoop\to\Ob\ucOp\), the cobar construction\index{TTindex}{cobar construction}.
It takes a curved cooperad \(C=(C,\xi,\xi_1,\xi_0,\beps,\bw)\) to the operad \(\Cobar C=\bar{C}[-1]\TT\) equipped with its multiplication $m$, the unit \(\eta=\inj_\circ:\1\hookrightarrow\bar{C}[-1]\TT\), the splitting \(\sfv=\pr_\circ:\bar{C}[-1]\TT\to\1\), the degree~1 derivation \(d=\bar\xi:\bar{C}[-1]\TT\to\bar{C}[-1]\TT\) determined by its restriction \(\check{d}:\bar{C}[-1]\to\bar{C}[-1]\TT\), which is the sum of maps
\begin{align*}
\bar\xi_{T(x,y,z)} &= \bigl\langle \bar C[-1](x+y+z) \rMono^\oin C[-1](x+y+z) \rTTo^{\xi_{T(x,y,z)}}
\\
& C[-1](y)\tens C[-1](x+1+z) \rTTo^{\opr\tens\opr} \bar C[-1](y)\tens\bar C[-1](x+1+z) \bigr\rangle,
\\
\bar\xi_{\tau[n]} &= \bigl\langle \bar C[-1](n) \rMono^\oin C[-1](n) \rTTo^{\xi_1} C[-1](n) \rTTo^\opr \bar C[-1](n) \bigr\rangle,
\\
\bar\xi_\circ &= \bigl\langle \bar C[-1](1) \rMono^\oin C[-1](1) \rTTo^{\xi_0} \1 \bigr\rangle
\end{align*}
over \(x,y,z,n\in\NN\).
The curvature element \(m^{\Cobar C}_0:\1\to\bar{C}[-1]\TT\) is
\begin{multline*}
m^{\Cobar C}_0 =-\bw\xi_0 -\bw\xi_1\opr -[(\bw\tens\bw +\bw\xi_{T(0,1,0)}) +\bw\xi_{T(1,0,0)} +\bw\xi_{T(0,0,1)}](\opr\tens\opr)
\\
\in \1\oplus\bar{C}[-1](1)\oplus\bar{C}[-1](1)^{\tens2}\oplus\bar{C}[-1](0)\tens\bar{C}[-1](2)\oplus\bar{C}[-1](0)\tens\bar{C}[-1](2).
\end{multline*}
\end{proposition}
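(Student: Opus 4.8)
The plan is to exploit that $\Cobar C=\bar{C}[-1]\TT$ is the free operad on the collection $\bar{C}[-1]$. First I would observe that the operad structure $(m,\eta)$ is automatic, with $\eta=\inj_\circ$ the canonical unit into the summand indexed by $\circ$, and that $\sfv=\pr_\circ$ satisfies $\eta\sfv=1_\1$ because $\inj_\circ$ and $\pr_\circ$ are the injection and projection of a direct summand; this supplies the unit-complemented structure. Next, since the operad is free, a degree~1 $\id$-derivation is uniquely determined by an arbitrary degree~1 map $\check{d}\colon\bar{C}[-1]\to\bar{C}[-1]\TT$ out of the generators, and the prescribed sum $\bar\xi_{T(x,y,z)}+\bar\xi_{\tau[n]}+\bar\xi_\circ$ is such a map, so it defines $d=\bar\xi$. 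Finally, each summand of $m_0$ is a composite of the degree~1 maps $\bw,\xi_0,\xi_1,\xi_{T(-,-,-)}$ with degree~0 projections $\opr$, hence $m_0\in(\Cobar C)(1)^2$ as required.

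The heart of the matter is the curvature identity \eqref{eq-d2-(m01)m-(1m0)m}. I would argue that both sides are $\id$-derivations of degree~2: the left side $d^2$ is a derivation because $d$ has odd degree (\textit{cf.}\ the analogous statement for coderivations, \remref{rem-r2-idN:idC:xi2-coderivation}), while the right side is the inner $\id$-derivation attached to $m_0$, the operadic analogue of \propref{pro-id-coderivation}. Two $\id$-derivations out of a free operad coincide as soon as they agree on generators, so it suffices to compare the two sides after restriction along $\bar C[-1]\hookrightarrow\bar C[-1]\TT$.

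On generators the restriction of $d^2$ equals $\check{d}\cdot\bar\xi$, which I would expand by applying the derivation $\bar\xi$ to each tree-summand produced by $\check{d}$. Using $\oin\opr=1_{\bar C[-1]}$ together with $\opr\oin=1-\beps\bw$, the expansion splits into a ``$\bar C$-internal'' part and terms in which a comultiplication factor lands in the unit direction $\1$ (detected by $\beps$, completed by $\bw$). The internal part is governed by the signless structure equations for the shifted data, namely
\[
\xi_1\xi_{T(x,y,z)} +\xi_{T(x,y,z)}(1\tens\xi_1 +\xi_1\tens1) =0, \qquad
\xi_1^2 +\xi_{T(0,n,0)}(1\tens\xi_0) +\sum_{x+1+z=n}\xi_{T(x,1,z)}(\xi_0\tens1) =0,
\]
and by the signless coassociativity relations \eqref{dia-cooperad-3-OOOOOOO}, \eqref{dia-cooperad-4-OOOOO}; these make the $\bar C$-internal contributions cancel or reorganise into the inner-derivation shape. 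The residual unit-direction terms, collected via $\beps$, $\bw$ and the counit relations $\xi_{T(0,n,0)}(1\tens\beps)=-1$, $\xi_{T(x,1,z)}(\beps\tens1)=1$, $\bw\beps=1_\1$, are exactly what reconstitutes the five summands of $m_0$ inside $(m_0\tens1)m_{T(x,1,z)}$ and $(1\tens m_0)m_{T(0,n,0)}$. I expect this bookkeeping to be the main obstacle: the precise form of $m_0$, with its components over trees with $0$, $1$ and $2$ internal vertices, is forced by how $C=\1\oplus\bar C$ decomposes under the binary comultiplications, and tracking the signs and the interaction $\opr\oin=1-\beps\bw$ through the structure equations is where all the work sits.

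It remains to verify $m_0 d=0$. Here I would restrict the composite $m_0\cdot\bar\xi$ to its tree-components; the $\circ$-component contributes nothing because $\eta d=0$, and the remaining components cancel using $\xi_1\xi_0=0$, $\xi_1\beps=0$ and $\bw\beps=1_\1$ together with the first structure equation above. This is a shorter computation of the same flavour as the main one, and once the sign conventions fixed in the previous step are in place it should go through directly.
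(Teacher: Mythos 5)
Your proposal is correct and follows essentially the same route as the paper's own proof: the free-operad structure and splitting are automatic, the curvature identity is reduced to generators by noting both sides are degree~2 derivations (the right-hand side being the inner derivation obtained by dualizing \propref{pro-id-coderivation}), and the generator-level identity plus \(m_0d=0\) are verified tree-by-tree using the shifted structure equations, the counit relations, and \(\opr\oin=1-\beps\bw\) to track the unit-direction terms. The only caveat is that your ingredient list for \(m_0d=0\) should also include the signless coassociativity relations \eqref{dia-cooperad-3-OOOOOOO}, \eqref{dia-cooperad-4-OOOOO} (needed for the three-vertex tree components), but this is the same toolkit you already invoke for the main identity.
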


\begin{proof}
Notice that $\bw\xi_1\beps=0$ since \(d_C\cdot\eps=0\).
For the same reason \(\xi_1(1-\beps\bw)=\xi_1\).
One finds also
\begin{multline*}
\hfill \xi_{T(x,y,z)}[1\tens(1-\beps\bw)] =\xi_{T(x,y,z)} +\delta_{x+z,0}\tens\bw, \hfill
\\
\xi_{T(x,y,z)}[(1-\beps\bw)\tens1] =\xi_{T(x,y,z)} -\delta_{y,1}\bw\tens1,
\\
\xi_{T(x,y,z)}[(1-\beps\bw)\tens(1-\beps\bw)] =\xi_{T(x,y,z)} +\delta_{x+z,0}\tens\bw -\delta_{y,1}\bw\tens1 -\delta_{x+z,0}\delta_{y,1}\beps\bw\tens\bw:
\\
C[-1](x+y+z) \to C[-1](y)\tens C[-1](x+1+z).
\end{multline*}

Let us prove that so described $\Cobar C$ is a unit-complemented curved operad.
Both sides of the equation
\begin{multline}
(d^{\Cobar C})^2 =\sum_{x+1+z=n}(m^{\Cobar C}_0\tens1)\cdot m^{\Cobar C}_{T(x,1,z)} -(1\tens m^{\Cobar C}_0)\cdot m^{\Cobar C}_{T(0,n,0)}:
\\
\bar{C}[-1]\TT(n) \to \bar{C}[-1]\TT(n)
\label{eq-d-Cobar-C2-mm-mm}
\end{multline}
are degree~2 derivations.
In fact, the right hand side is an inner derivation (the notion dual to inner coderivation; dualize \propref{pro-id-coderivation}).
Equation~\eqref{eq-d-Cobar-C2-mm-mm} is equivalent to its restriction to generators $\bar{C}[-1](n)$:
\begin{multline}
\Bigl\langle \bar C[-1](n) \rto{\bar\xi_t} \coprod_{t\in\tr(n)} \, \bigotimes_{y\in\IV(t)} \bar{C}[-1]|y| \rTTo^{\coprod_t\sum_i\tens^{y\in\IV(t)}f_i(y)}
\\
\hfill \coprod_{t\in\tr(n)} \, \coprod_{i\in\IV(t)} \, \bigotimes_{y\in\IV(t)} \, \bigotimes_{q\in\IV(t_y^i)}\bar{C}[-1]|q| \rto m \coprod_{\tau\in\tr(n)} \, \bigotimes_{v\in\IV(\tau)} \bar{C}[-1]|v| \Bigr\rangle \quad
\\
\hskip\multlinegap =\Bigl(-\sum_{x+1+z=n}1+1\Bigr) (\bw\xi_0) \Bigl\langle \bar C[-1](n) \rTTo^{\inj_{\tau[n]}} \coprod_{\tau\in\tr(n)} \, \bigotimes_{v\in\IV(\tau)} \bar{C}[-1]|v| \Bigr\rangle \hfill
\\
-\sum_{x+1+z=n} \Bigl\langle \bar C[-1](n) \rTTo^{\bw\xi_1\opr\tens1} \bar C[-1](1)\tens\bar C[-1](n) \rTTo^{m^{\Cobar C}_{T(x,1,z)}} \coprod_{\tau\in\tr(n)} \, \bigotimes_{v\in\IV(\tau)} \bar{C}[-1]|v| \Bigr\rangle
\\
+\Bigl\langle \bar C[-1](n) \rTTo^{1\tens\bw\xi_1\opr} \bar C[-1](n)\tens\bar C[-1](1) \rTTo^{m^{\Cobar C}_{T(0,n,0)}} \coprod_{\tau\in\tr(n)} \, \bigotimes_{v\in\IV(\tau)} \bar{C}[-1]|v| \Bigr\rangle
\\
-\sum_{x+1+z=n} \Bigl\langle \bar C[-1](n) \rTTo^{(\bw\tens\bw +\bw\xi_{T(0,1,0)})(\opr\tens\opr)\tens1} \bigl(\bar C[-1](1)\tens\bar C[-1](1)\bigr)\tens\bar C[-1](n)
\\
\hfill \rTTo^{m^{\Cobar C}_{T(x,1,z)}} \coprod_{\tau\in\tr(n)} \, \bigotimes_{v\in\IV(\tau)} \bar{C}[-1]|v| \Bigr\rangle \quad
\\
-\sum_{x+1+z=n} \Bigl\langle \bar C[-1](n) \rTTo^{\bw\xi_{T(1,0,0)}(\opr\tens\opr)\tens1} \bigl(\bar C[-1](0)\tens\bar C[-1](2)\bigr)\tens\bar C[-1](n)
\\
\hfill \rTTo^{m^{\Cobar C}_{T(x,1,z)}} \coprod_{\tau\in\tr(n)} \, \bigotimes_{v\in\IV(\tau)} \bar{C}[-1]|v| \Bigr\rangle \quad
\\
-\sum_{x+1+z=n} \Bigl\langle \bar C[-1](n) \rTTo^{\bw\xi_{T(0,0,1)}(\opr\tens\opr)\tens1} \bigl(\bar C[-1](0)\tens\bar C[-1](2)\bigr)\tens\bar C[-1](n)
\\
\hfill \rTTo^{m^{\Cobar C}_{T(x,1,z)}} \coprod_{\tau\in\tr(n)} \, \bigotimes_{v\in\IV(\tau)} \bar{C}[-1]|v| \Bigr\rangle \quad
\\
+\Bigl\langle \bar C[-1](n) \rTTo^{1\tens(\bw\tens\bw +\bw\xi_{T(0,1,0)})(\opr\tens\opr)} \bar C[-1](n)\tens\bigl(\bar C[-1](1)\tens\bar C[-1](1)\bigr)
\\
\hfill \rTTo^{m^{\Cobar C}_{T(0,n,0)}} \coprod_{\tau\in\tr(n)} \, \bigotimes_{v\in\IV(\tau)} \bar{C}[-1]|v| \Bigr\rangle \quad
\\
+\Bigl\langle \bar C[-1](n) \rTTo^{1\tens\bw\xi_{T(1,0,0)}(\opr\tens\opr)} \bar C[-1](n)\tens\bigl(\bar C[-1](0)\tens\bar C[-1](2)\bigr)
\\
\hfill \rTTo^{m^{\Cobar C}_{T(0,n,0)}} \coprod_{\tau\in\tr(n)} \, \bigotimes_{v\in\IV(\tau)} \bar{C}[-1]|v| \Bigr\rangle \quad
\\
+\Bigl\langle \bar C[-1](n) \rTTo^{1\tens\bw\xi_{T(0,0,1)}(\opr\tens\opr)} \bar C[-1](n)\tens\bigl(\bar C[-1](0)\tens\bar C[-1](2)\bigr)
\\
\hfill \rTTo^{m^{\Cobar C}_{T(0,n,0)}} \coprod_{\tau\in\tr(n)} \, \bigotimes_{v\in\IV(\tau)} \bar{C}[-1]|v| \Bigr\rangle,
\label{eq-long-xi-wxim}
\end{multline}
where \(t_y^i=\tau|y|\) and \(f_i(y)=\id\) if $y\ne i$, and \(f_i(i)=\bar\xi_{t_i^i}\), $t_i^i\in\tr|i|$, $i\in\IV(t)$.
This family gives the tree \(\tau=\tau^i=I_t(t_y^i\mid y\in\IV(t))\) which indicates the summand of the target of the first composition.
Only summands with \(|\IV(\tau)|\le3\) can occur.
Let us consider all possible $\tau$ case by case.

The postcomposition of equation~\eqref{eq-long-xi-wxim} with $\pr_\tau$ for $\tau=\circ$ has the form
\[ \bigl\langle \bar C[-1](1) \rTTo^{\bar\xi_{\tau[1]}} \bar C[-1](1) \rTTo^{\bar\xi_\circ} \1 \bigr\rangle =0.
\]
This holds true due to $\xi_1\xi_0=0$.

The postcomposition of equation~\eqref{eq-long-xi-wxim} with $\pr_\tau$ for $\tau=\tau[n]$ has the form
\begin{align*}
&\bigl\langle \bar C[-1](n) \rTTo^{\bar\xi_{\tau[n]}} \bar C[-1](n) \rTTo^{\bar\xi_{\tau[n]}} \bar C[-1](n) \bigr\rangle
\\
&+\sum_{x+1+z=n} \Bigl\langle \bar C[-1](n) \rTTo^{\bar\xi_{T(x,1,z)}} \bar C[-1](1)\tens\bar C[-1](n) \rTTo^{\bar\xi_\circ\tens1} \bar{C}[-1](n) \Bigr\rangle
\\
&+\bigl\langle \bar C[-1](n) \rTTo^{\bar\xi_{T(0,n,0)}} \bar C[-1](n)\tens\bar C[-1](1) \rTTo^{1\tens\bar\xi_\circ} \bar{C}[-1](n) \bigr\rangle
\\
&=(-n+1) (\bw\xi_0) \id_{\bar C[-1](n)}.
\end{align*}
Its validity follows from the identity
\begin{equation*}
\xi_1^2 +\sum_{x+1+z=n}\xi_{T(x,1,z)}(\xi_0\tens1) -\sum_{x+1+z=n}\bw\xi_0 +\xi_{T(0,n,0)}(1\tens\xi_0) +\bw\xi_0 =(-n+1) (\bw\xi_0)
\end{equation*}
precomposed with $\oin$ and postcomposed with $\opr$.

The postcomposition of equation~\eqref{eq-long-xi-wxim} with $\pr_\tau$ for $\tau=T(x,y,z)$, \(n=x+y+z\), has the form
\begin{multline*}
\bigl\langle \bar C[-1](n) \rto{\bar\xi_\tau} \bar C[-1](y)\tens\bar C[-1](x+1+z) \rTTo^{\bar\xi_{\tau[y]}\tens1} \bar C[-1](y)\tens\bar C[-1](x+1+z) \bigr\rangle
\\
+\bigl\langle \bar C[-1](n) \rto{\bar\xi_\tau} \bar C[-1](y)\tens\bar C[-1](x+1+z) \rTTo^{1\tens\bar\xi_{\tau[x+1+z]}} \bar C[-1](y)\tens\bar C[-1](x+1+z) \bigr\rangle
\\
\hfill+\bigl\langle \bar C[-1](n) \rTTo^{\bar\xi_{\tau[n]}} \bar C[-1](n) \rto{\bar\xi_\tau} \bar C[-1](y)\tens\bar C[-1](x+1+z) \bigr\rangle \quad
\\
\hskip\multlinegap =-\delta_{y,1} \bigl\langle \bar C[-1](n) \rTTo^{\bw\xi_1\opr\tens1} \bar C[-1](1)\tens\bar C[-1](n) \bigr\rangle \hfill
\\
+\delta_{x+z,0} \bigl\langle \bar C[-1](n) \rTTo^{1\tens\bw\xi_1\opr} \bar C[-1](n)\tens\bar C[-1](1) \bigr\rangle.
\end{multline*}
This follows from the identity
\begin{equation*}
\xi_\tau(\xi_1\tens1) -\delta_{y,1}\bw\xi_1\tens1 +\xi_\tau(1\tens\xi_1) +\delta_{x+z,0}\tens\bw\xi_1 +\xi_1\xi_\tau =-\delta_{y,1}\bw\xi_1\tens1 +\delta_{x+z,0}\tens\bw\xi_1
\end{equation*}

The postcomposition of \eqref{eq-long-xi-wxim} with $\pr_\tau$ for $\tau$ given by \eqref{eq-tangle-3-vertices-(4)}, \(n=v+w+x+y+z\), is
\begin{multline*}
\bigl\langle \bar C[-1](n) \rTTo^{\bar\xi_{T(v,w+x+y,z)}} \bar C[-1](w+x+y)\tens\bar C[-1](v+1+z)
\\
\hfill \rTTo^{\bar\xi_{T(w,x,y)}\tens1} \bar C[-1](x)\tens\bar C[-1](w+1+y)\tens\bar C[-1](v+1+z) \bigr\rangle \quad
\\
\hskip\multlinegap +\bigl\langle \bar C[-1](n) \rTTo^{\bar\xi_{T(v+w,x,y+z)}} \bar C[-1](x)\tens\bar C[-1](v+w+1+y+z) \hfill
\\
\rTTo^{1\tens\bar\xi_{T(v,w+1+y,z)}} \bar C[-1](x)\tens\bar C[-1](w+1+y)\tens\bar C[-1](v+1+z) \bigr\rangle
\\
=-\delta_{w,0}\delta_{x,1}\delta_{y,0} \bigl\langle \bar C[-1](n) \rTTo^{(\bw\tens\bw +\bw\xi_{T(0,1,0)})(\opr\tens\opr)\tens1} \bar C[-1](1)\tens\bar C[-1](1)\tens\bar C[-1](n) \bigr\rangle
\\
-\delta_{w,1}\delta_{x,0}\delta_{y,0} \bigl\langle \bar C[-1](n) \rTTo^{\bw\xi_{T(1,0,0)}(\opr\tens\opr)\tens1} \bar C[-1](0)\tens\bar C[-1](2)\tens\bar C[-1](n) \bigr\rangle
\\
-\delta_{w,0}\delta_{x,0}\delta_{y,1} \bigl\langle \bar C[-1](n) \rTTo^{\bw\xi_{T(0,0,1)}(\opr\tens\opr)\tens1} \bar C[-1](0)\tens\bar C[-1](2)\tens\bar C[-1](n) \bigr\rangle
\\
+\delta_{v+w+y+z,0} \bigl\langle \bar C[-1](n) \rTTo^{1\tens(\bw\tens\bw +\bw\xi_{T(0,1,0)})(\opr\tens\opr)} \bar C[-1](n)\tens\bar C[-1](1)\tens\bar C[-1](1) \bigr\rangle.
\end{multline*}
This follows from
\begin{multline*}
(\xi_{T(v,w+x+y,z)} -\delta_{w+x+y,1}\bw\tens1) (\xi_{T(w,x,y)}\tens1) +(\xi_{T(v+w,x,y+z)} +\delta_{v+w+y+z,0}\tens\bw) (1\tens\xi_{T(v,w+1+y,z)}) 
\\
\hskip\multlinegap =-\delta_{w,0}\delta_{x,1}\delta_{y,0}\bw\xi_{T(0,1,0)}\tens1 -\delta_{w,1}\delta_{x,0}\delta_{y,0}\bw\xi_{T(1,0,0)}\tens1 \hfill
\\
-\delta_{w,0}\delta_{x,0}\delta_{y,1}\bw\xi_{T(0,0,1)}\tens1 +\delta_{v+w+y+z,0}1\tens\bw\xi_{T(0,1,0)}
\end{multline*}
precomposed with $\oin$ and postcomposed with $\opr\tens\opr\tens\opr$, see \eqref{dia-cooperad-4-OOOOO}.

The postcomposition of \eqref{eq-long-xi-wxim} with $\pr_\tau$ for $\tau$ given by the second tree of \eqref{eq-tau-wy-v1x1z-1}, \(n=v+w+x+y+z\), is
\begin{multline*}
\bigl\langle \bar C[-1](n) \rTTo^{\bar\xi_{T(v,w,x+y+z)}} \bar C[-1](w)\tens\bar C[-1](v+1+x+y+z)
\\
\rTTo^{1\tens\bar\xi_{T(v+1+x,y,z)}} \bar C[-1](w)\tens\bar C[-1](y)\tens\bar C[-1](v+1+x+1+z)
\\
\hfill \rTTo^{(12)}_\sim \bar C[-1](y)\tens\bar C[-1](w)\tens\bar C[-1](v+1+x+1+z) \bigr\rangle \quad
\\
\hskip\multlinegap +\bigl\langle \bar C[-1](n) \rTTo^{\bar\xi_{T(v+w+x,y,z)}} \bar C[-1](y)\tens\bar C[-1](v+w+x+1+z) \hfill
\\
\hfill \rTTo^{1\tens\bar\xi_{T(v,w,x+1+z)}} \bar C[-1](y)\tens\bar C[-1](w)\tens\bar C[-1](v+1+x+1+z) \bigr\rangle \quad
\\
\hskip\multlinegap =\delta_{v+x+y+z,0} \bigl\langle \bar C[-1](n) \rTTo^{1\tens\bw\xi_{T(1,0,0)}(\opr\tens\opr)} \bar C[-1](n)\tens\bar C[-1](0)\tens\bar C[-1](2) \hfill
\\
\hfill \rTTo^{(12)}_\sim \bar C[-1](0)\tens\bar C[-1](n)\tens\bar C[-1](2) \bigr\rangle \quad
\\
+\delta_{v+w+x+z,0}\bigl\langle \bar C[-1](n) \rTTo^{1\tens\bw\xi_{T(0,0,1)}(\opr\tens\opr)} \bar C[-1](n)\tens\bar C[-1](0)\tens\bar C[-1](2) \bigr\rangle.
\end{multline*}
This follows from
\begin{multline*}
(\xi_{T(v,w,x+y+z)} +\delta_{v+x+y+z,0}\tens\bw) (1\tens\xi_{T(v+1+x,y,z)}) (12)
\\
\hfill +(\xi_{T(v+w+x,y,z)} +\delta_{v+w+x+z,0}\tens\bw) (1\tens\xi_{T(v,w,x+1+z)}) \quad
\\
=\delta_{v+x+y+z,0} (1\tens\bw\xi_{T(1,0,0)}) (12) +\delta_{v+w+x+z,0}\tens\bw\xi_{T(0,0,1)}
\end{multline*}
precomposed with $\oin$ and postcomposed with $\opr\tens\opr\tens\opr$, taking into account \eqref{dia-cooperad-3-OOOOOOO}.

Let us prove that \(m^{\Cobar C}_0d^{\Cobar C}=0\), that is,
\begin{multline}
\bigl\langle \1 \rto\bw C[-1](1) \rTTo^{\xi_1\opr} \bar C[-1](1) \rto{\bar\xi_\circ} \1 \bigr\rangle +\bigl\langle \1 \rto\bw C[-1](1) \rTTo^{\xi_1\opr} \bar C[-1](1) \rto{\bar\xi_{\tau[1]}} \bar C[-1](1) \bigr\rangle
\\
+\bigl\langle \1 \rto\bw C[-1](1) \rTTo^{\xi_{T(0,1,0)}} C[-1](1)\tens C[-1](1) \rTTo^{\opr\tens\opr\bar\xi_\circ+\opr\bar\xi_\circ\tens\opr} \bar C[-1](1) \bigr\rangle_{T(0,1,0)}
\\
+\sum_{x+y+z=1} \bigl\langle \1 \rto\bw C[-1](1) \rTTo^{\xi_1\opr} \bar C[-1](1) \rto{\bar\xi_{T(x,y,z)}} \bar C[-1](y)\tens\bar C[-1](x+1+z) \bigr\rangle
\\
+\bigl\langle \1 \rto\bw C[-1](1) \rTTo^{\xi_{T(0,1,0)}} C[-1](1)\tens C[-1](1) \rTTo^{\opr\tens\opr\bar\xi_{\tau[1]}+\opr\bar\xi_{\tau[1]}\tens\opr} \bar C[-1](1)\tens\bar C[-1](1) \bigr\rangle
\\
+\bigl\langle \1 \rto\bw C[-1](1) \rTTo^{\xi_{T(1,0,0)}} C[-1](0)\tens C[-1](2) \rTTo^{\opr\tens\opr\bar\xi_{\tau[2]}+\opr\bar\xi_{\tau[0]}\tens\opr} \bar C[-1](0)\tens\bar C[-1](2) \bigr\rangle
\\
+\bigl\langle \1 \rto\bw C[-1](1) \rTTo^{\xi_{T(0,0,1)}} C[-1](0)\tens C[-1](2) \rTTo^{\opr\tens\opr\bar\xi_{\tau[2]}+\opr\bar\xi_{\tau[0]}\tens\opr} \bar C[-1](0)\tens\bar C[-1](2) \bigr\rangle
\\
\hskip\multlinegap +\sum_{x+y+z=1} \bigl\langle \1 \rto\bw C[-1](1) \rTTo^{\xi_{T(0,1,0)}} C[-1](1)\tens C[-1](1) \hfill
\\
\hfill \rTTo^{\opr\tens\opr\bar\xi_{T(x,y,z)}} \bar C[-1](1)\tens\bar C[-1](y)\tens\bar C[-1](x+1+z) \bigr\rangle \quad
\\
\hskip\multlinegap +\sum_{x+y+z=1} \bigl\langle \1 \rto\bw C[-1](1) \rTTo^{\xi_{T(0,1,0)}} C[-1](1)\tens C[-1](1) \hfill
\\
\hfill \rTTo^{\opr\bar\xi_{T(x,y,z)}\tens\opr} \bar C[-1](y)\tens\bar C[-1](x+1+z)\tens\bar C[-1](1) \bigr\rangle \quad
\\
\hskip\multlinegap +\sum_{x+y+z=2} \bigl\langle \1 \rto\bw C[-1](1) \rTTo^{\xi_{T(1,0,0)}} C[-1](0)\tens C[-1](2) \hfill
\\
\hfill \rTTo^{\opr\tens\opr\bar\xi_{T(x,y,z)}} \bar C[-1](0)\tens\bar C[-1](y)\tens\bar C[-1](x+1+z) \bigr\rangle \quad
\\
\hskip\multlinegap +\sum_{x+y+z=2} \bigl\langle \1 \rto\bw C[-1](1) \rTTo^{\xi_{T(0,0,1)}} C[-1](0)\tens C[-1](2) \hfill
\\
\hfill \rTTo^{\opr\tens\opr\bar\xi_{T(x,y,z)}} \bar C[-1](0)\tens\bar C[-1](y)\tens\bar C[-1](x+1+z) \bigr\rangle \quad
\\
+\bigl\langle \1 \rto\bw C[-1](1) \rTTo^{\xi_{T(1,0,0)}} C[-1](0)\tens C[-1](2) 
\rTTo^{\opr\bar\xi_{T(0,0,0)}\tens\opr} \bar C[-1](0)\tens\bar C[-1](1)\tens\bar C[-1](2) \bigr\rangle
\\
+\bigl\langle \1 \rto\bw C[-1](1) \rTTo^{\xi_{T(0,0,1)}} C[-1](0)\tens C[-1](2) 
\rTTo^{\opr\bar\xi_{T(0,0,0)}\tens\opr} \bar C[-1](0)\tens\bar C[-1](1)\tens\bar C[-1](2) \bigr\rangle
\\
=0.
\label{eq-m0d-Cobar-C}
\end{multline}

The postcomposition of \eqref{eq-m0d-Cobar-C} with $\pr_\tau$ for $\tau=\circ$ follows from \(\xi_1\xi_0=0\).
The postcomposition of \eqref{eq-m0d-Cobar-C} with $\pr_\tau$ for $\tau=\tau[1]$ follows from \(\xi_1^2 +\xi_{T(0,1,0)}(1\tens\xi_0 +\xi_0\tens1) =0\).

The postcomposition of \eqref{eq-m0d-Cobar-C} with $\pr_\tau$ for $\tau=T(0,1,0)$ follows from 
\begin{multline*}
\bw\xi_1\xi_{T(0,1,0)}(\opr\tens\opr) +\bw\xi_{T(0,1,0)}[1\tens(1-\beps\cdot\bw)\xi_1 +(1-\beps\cdot\bw)\xi_1\tens1](\opr\tens\opr)
\\
=(\bw\tens\bw)(1\tens\xi_1+\xi_1\tens1)(\opr\tens\opr) =0.
\end{multline*}

The postcomposition of \eqref{eq-m0d-Cobar-C} with $\pr_\tau$ for $\tau=T(1,0,0)$ follows from 
\[ \xi_1\xi_{T(1,0,0)} +\xi_{T(1,0,0)}(1\tens\xi_1 +\xi_1\tens1) =0.
\]

The postcomposition of \eqref{eq-m0d-Cobar-C} with $\pr_\tau$ for $\tau=T(0,0,1)$ follows from 
\[ \xi_1\xi_{T(0,0,1)} +\xi_{T(0,0,1)}(1\tens\xi_1 +\xi_1\tens1) =0.
\]

The postcomposition of \eqref{eq-m0d-Cobar-C} with $\pr_\tau$ for
\(\tau=
\vstretch 40
\begin{tangler}
\s \\ \nw1\s \\ \node
\end{tangler}
\)
follows from 
\[ \xi_{T(1,0,0)}(1\tens\xi_{T(1,1,0)}) +\xi_{T(1,0,0)}(\xi_{T(0,0,0)}\tens1) =0.
\]
This is equation~\eqref{dia-cooperad-4-OOOOO} for $v=1$, $w=x=y=z=0$.

The postcomposition of \eqref{eq-m0d-Cobar-C} with $\pr_\tau$ for
\(\tau=
\vstretch 40
\begin{tangle}
\s \\ \s\ne1 \\ \node
\end{tangle}
\)
follows from 
\[ \xi_{T(0,0,1)}(1\tens\xi_{T(0,1,1)}) +\xi_{T(0,0,1)}(\xi_{T(0,0,0)}\tens1) =0.
\]
This is equation~\eqref{dia-cooperad-4-OOOOO} for $v=w=x=y=0$, $z=1$.

The postcomposition of \eqref{eq-m0d-Cobar-C} with $\pr_\tau$ for
\(\tau=
\vstretch 40
\begin{tangles}{rl}
&\Step\node \\
\node &\step\sw1 \\
\nw1\n &\ne1
\end{tangles}
\)
follows from 
\[ \xi_{T(1,0,0)}(1\tens\xi_{T(2,0,0)})(12) +\xi_{T(1,0,0)}(1\tens\xi_{T(1,0,1)}) =0.
\]
This is equation~\eqref{dia-cooperad-3-OOOOOOO} for $v=1$, $w=x=y=z=0$.

The postcomposition of \eqref{eq-m0d-Cobar-C} with $\pr_\tau$ for
\(\tau=
\vstretch 40
\begin{tanglec}
\s \\
\node\Step \\
\nw1\n\ne1
\end{tanglec}
\)
follows from 
\[ \xi_{T(0,0,1)}(1\tens\xi_{T(0,0,2)}) +\xi_{T(0,0,1)}(1\tens\xi_{T(1,0,1)})(12) =0.
\]
This is equation~\eqref{dia-cooperad-3-OOOOOOO} for $v=w=x=y=0$, $z=1$.

The postcomposition of \eqref{eq-m0d-Cobar-C} with $\pr_\tau$ for
\(\tau=
\vstretch 40
\begin{tangles}{rl}
&\Step\node \\
\node\step &\step\sw1 \\
\nw1\n &\ne1
\end{tangles}
\)
follows from 
\[ \xi_{T(1,0,0)}(1\tens\xi_{T(0,0,2)}) +\xi_{T(0,0,1)}(1\tens\xi_{T(2,0,0)})(12) =0.
\]
This is equation~\eqref{dia-cooperad-3-OOOOOOO} for $v=w=0$, $x=1$, $y=z=0$.

The postcomposition of \eqref{eq-m0d-Cobar-C} with $\pr_\tau$ for
\(\tau=
\vstretch 40
\begin{tangler}
\n \\
\node\step\id \\
\nw1\n
\end{tangler}
\)
follows from 
\[ \bw\xi_{T(0,1,0)}[1\tens(1-\beps\cdot\bw)\xi_{T(0,0,1)}](\opr\tens\opr\tens\opr) +\bw\xi_{T(0,1,0)}(1\tens\xi_{T(1,1,0)})(\opr\tens\opr\tens\opr)(12) =0.
\]
This follows from equation~\eqref{dia-cooperad-3-OOOOOOO} for $v=w=x=0$, $y=1$, $z=0$.

The postcomposition of \eqref{eq-m0d-Cobar-C} with $\pr_\tau$ for
\(\tau=
\vstretch 40
\begin{tangler}
\nw1\node\step\s \\
\nw1\n
\end{tangler}
\)
follows from 
\[ \bw\xi_{T(0,1,0)}[1\tens(1-\beps\cdot\bw)\xi_{T(1,0,0)}](\opr\tens\opr\tens\opr)(12) +\bw\xi_{T(1,0,0)}(1\tens\xi_{T(0,1,1)})(\opr\tens\opr\tens\opr) =0.
\]
This follows from equation~\eqref{dia-cooperad-3-OOOOOOO} for $v=0$, $w=1$, $x=y=z=0$.

The postcomposition of \eqref{eq-m0d-Cobar-C} with $\pr_\tau$ for
\(\tau=
\vstretch 40
\begin{tangle}
\node \\
\n\sw1 \\
\n
\end{tangle}
\)
follows from 
\[ \bw\xi_{T(0,1,0)}[(1-\beps\cdot\bw)\xi_{T(0,0,1)}\tens1](\opr\tens\opr\tens\opr) +\bw\xi_{T(0,0,1)}(1\tens\xi_{T(0,2,0)})(\opr\tens\opr\tens\opr) =0.
\]
This follows from equation~\eqref{dia-cooperad-4-OOOOO} for $v=w=x=0$, $y=1$, $z=0$.

The postcomposition of \eqref{eq-m0d-Cobar-C} with $\pr_\tau$ for
\(\tau=
\vstretch 40
\begin{tangler}
\node \\
\nw1\n \\
\n
\end{tangler}
\)
follows from 
\[ \bw\xi_{T(0,1,0)}[(1-\beps\cdot\bw)\xi_{T(1,0,0)}\tens1](\opr\tens\opr\tens\opr) +\bw\xi_{T(0,0,1)}(1\tens\xi_{T(0,2,0)})(\opr\tens\opr\tens\opr) =0.
\]
This follows from equation~\eqref{dia-cooperad-4-OOOOO} for $v=0$, $w=1$, $x=y=z=0$.

The postcomposition of \eqref{eq-m0d-Cobar-C} with $\pr_\tau$ for
\(\tau=
\vstretch 40
\begin{tangler}
\n \\
\n \\
\n
\end{tangler}
\)
follows from 
\begin{multline*}
\bw\xi_{T(0,1,0)}[1\tens(1-\beps\cdot\bw)\xi_{T(0,1,0)}](\opr\tens\opr\tens\opr) 
\\
+\bw\xi_{T(0,1,0)}[(1-\beps\cdot\bw)\xi_{T(0,1,0)}\tens1](\opr\tens\opr\tens\opr) =0.
\end{multline*}
This follows from equation~\eqref{dia-cooperad-4-OOOOO} for $v=w=0$, $x=1$, $y=z=0$.

Thus $\Cobar C$ is a unit-complemented curved operad.
\end{proof}

\begin{proposition}
The map from \propref{pro-map-curved-cooperad-curved-operad} is a functor \(\Cobar:\ucCoop\to\ucOp\), the cobar construction.
The functor takes a morphism \(g=(g_1,g_0):C\to D\in\ucCoop\) to
\[ \sfCobar g =\sff =(\sff_1,\sff_0): \bar{C}[-1]\TT \to \bar{D}[-1]\TT,
\]
where the restriction of the algebra homomorphism \(\sfCobar_1 g=\sff_1=\bar{g}\) is the sum of maps
\begin{align*}
\bar g_1 =g_1\big| &=\bigl( \bar{C}[-1] \rMono^\oin C[-1] \rTTo^{g_1} D[-1] \rTTo^\opr \bar{D}[-1] \bigr),
\\
\bar g_0 =g_0' &=\bigl( \bar{C}[-1] \rMono^\oin C[-1] \rTTo^{g_0} \1 \bigr),
\end{align*}
and the degree~1 element is
\begin{equation*}
\sfCobar_0g =\sff_0 =\bigl( \1 \rTTo^\bw C[-1] \rTTo^{g_0+g_1} D[-1]\TT \rTTo^{\opr\TT} \bar{D}[-1]\TT \bigr).
\end{equation*}
\end{proposition}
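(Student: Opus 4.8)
The plan is to exploit the freeness of the cobar construction. Since $\Cobar C=\bar C[-1]\TT$ is a free operad, any graded operad morphism out of it is uniquely determined by its restriction to the generators $\bar C[-1]$. First I would check that the prescribed restriction, the sum of $\bar g_1=g_1\big|$ and $\bar g_0=g_0'$, is a well-defined degree~$0$ map $\bar C[-1]\to\bar D[-1]\TT$ (landing in the corolla summand through $\bar g_1$ and in the unit $\circ$\n-summand through $\bar g_0$), so that it extends to a unique morphism of graded operads $\sff_1=\bar g:\bar C[-1]\TT\to\bar D[-1]\TT$. Together with the given degree~$1$ element $\sff_0=\sfCobar_0g$ this produces a candidate pair $(\sff_1,\sff_0)$, and it remains to verify the two morphism equations of $\cOp$, namely \eqref{eq-OPP-OPPPP} and \eqref{eq-1P-1PP-111PPP-1OP} (or, more conveniently, their signless shifted versions), and then functoriality.

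For the differential compatibility \eqref{eq-OPP-OPPPP} I would first observe that both of its sides are $\sff_1$\n-derivations $\bar C[-1]\TT\to\bar D[-1]\TT$: the right-hand side because $d^{\Cobar C}$ is an $\id$\n-derivation post-composed with the operad morphism $\sff_1$, and the left-hand side because the correction terms built from $\sff_0$ assemble into an inner derivation of $\bar D[-1]\TT$ (the operadic dual of \propref{pro-id-coderivation}), so that it is $\sff_1$ post-composed with a derivation. Two $\sff_1$\n-derivations coincide as soon as they agree on generators, so the equation reduces to its restriction to $\bar C[-1](n)$. There the computation unwinds through the formulas for $\bar\xi_{T(x,y,z)}$, $\bar\xi_{\tau[n]}$ and $\bar\xi_\circ$ from \propref{pro-map-curved-cooperad-curved-operad} and is matched term by term against the shifted cooperad morphism equations for $g$ collected in \eqref{eq-xi0-xi1g0-g1xi}, using the idempotent identities $\xi_{T(x,y,z)}[(1-\beps\bw)\tens(1-\beps\bw)]=\dots$ established in the proof of \propref{pro-map-curved-cooperad-curved-operad} to absorb the $\opr$, $\oin$ and $\bw$ corrections. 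As in that proof, the verification is organised as a case analysis over the trees $\tau$ indexing the target summands, only those with $|\IV(\tau)|\le2$ contributing.

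The curvature equation \eqref{eq-1P-1PP-111PPP-1OP} lands in arity one, $\bar D[-1]\TT(1)$. Here I would substitute the explicit expressions for $m^{\Cobar C}_0$ and $m^{\Cobar D}_0$ from \propref{pro-map-curved-cooperad-curved-operad}, expand $\sff_0$ through $\bw$ and $g_0+g_1$, and reconcile the two sides using the third relation of \eqref{eq-xi0-xi1g0-g1xi}, that is $\xi^C_0+\xi_1g_0+\xi_{T(0,1,0)}(g_0\tens g_0)=g_1\xi^D_0$, together with the counit and augmentation compatibilities $g_1\beps^D=\beps^C$ and $\bw^Cg_1=\bw^D$ carried by a morphism of counit-complemented curved cooperads.

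Finally, functoriality is routine. The identity of $C$ gives $\bar g_1=\id$, $\bar g_0=0$ and hence $\Cobar\id=\id$. For a composite $g\cdot h$ I would compare $\sfCobar(g\cdot h)$ with $\sfCobar g\cdot\sfCobar h$, matching the first components via $\oin\opr=1_{\bar C}$ and the additivity of the restrictions, and matching the second components against the composition laws of $\ucOp$ ($\sfh_0=\sff_0\sfg_1+\sfg_0$) and of $\ucCoop$ ($\sfh_0=\sff_0+\sff_1\sfg_0$). The main obstacle is the second paragraph: carrying out the generator-level case analysis that reduces \eqref{eq-OPP-OPPPP} to the morphism equations \eqref{eq-xi0-xi1g0-g1xi}, exactly paralleling the tree-by-tree bookkeeping of the object-level argument.
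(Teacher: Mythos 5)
Your overall route coincides with the paper's own proof: both sides of \eqref{eq-OPP-OPPPP} are $\sff_1$\n-derivations (the $\sff_0$\n-terms assembling into an inner derivation, dual to \propref{pro-id-coderivation}), so that equation reduces to its restriction to the generators $\bar C[-1](n)$, where it is checked component by component (target summands $\circ$, $\tau[n]$, $T(x,y,z)$) against the shifted morphism equations \eqref{eq-xi0-xi1g0-g1xi}; the curvature equation is then verified in arity one, and functoriality is a direct computation on generators. However, one ingredient of your plan is genuinely wrong: for the curvature equation \eqref{eq-1P-1PP-111PPP-1OP} you invoke $\bw^Cg_1=\bw^D$ as a compatibility ``carried by a morphism of counit-complemented curved cooperads''. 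No such compatibility exists: by definition, morphisms of $\ucCoop$ are morphisms of curved cooperads \emph{ignoring} the splitting, and the equation $\bw^Cg_1=\bw^D$ is precisely the additional condition that singles out morphisms of $\CACoop$. Since the proposition asserts functoriality on all of $\ucCoop$, this identity may not be assumed, and with it removed your reconciliation of the two sides of \eqref{eq-1P-1PP-111PPP-1OP} is incomplete.

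The gap is repairable, and the repair is exactly where the paper's computation does its work. Only the counit compatibility $g_1\beps^D=\beps^C$ is available; together with $\bw\beps=1_\1$ and the idempotent $\opr\oin=1-\beps\bw$ it gives $\bw g_1\opr\oin=\bw g_1-\bw^D$, so the discrepancy $\bw g_1-\bw^D$ does not vanish but lands in $\bar D[-1]$. Consequently the expansion of \eqref{eq-1P-1PP-111PPP-1OP} contains the extra summands $(\bw g_1-\bw)\xi^D_0$, $(\bw g_1-\bw)\xi^D_1\opr$ and $\sum_{x+y+z=1}(\bw g_1-\bw)\xi^D_{T(x,y,z)}(\opr\tens\opr)$, which must be carried along and cancelled against the terms produced by $m^{\Cobar C}_0\sff_1$ using \eqref{eq-xi0-xi1g0-g1xi}. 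If you instead set $\bw g_1=\bw^D$, these summands disappear and your argument proves the claim only on the subcategory of splitting-preserving morphisms. (A minor further point: in the functoriality check the operative identities are $\opr\oin=1-\beps\bw$ and $\oin\beps^C=0$, giving for instance $\oin f_1\opr\oin g_0=\oin f_1g_0$, rather than $\oin\opr=1_{\bar C[-1]}$ as you cite.)
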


\begin{proof}
Given a morphism \(g=(g_1,g_0):C\to D\in\ucCoop\) we check that
\[ \sfCobar g =\sff: \bar{C}[-1]\TT \to \bar{D}[-1]\TT
\]
is a morphism of $\ucOp$.
All terms of equation \eqref{eq-OPP-OPPPP} are $\bar{g}$\n-derivations.
Hence, the equation is equivalent to its restriction to $\bar{C}[-1]$:
\begin{multline*}
\bigl\langle \bar{C}[-1](n) \rMono^\oin C[-1](n) \rTTo^{g_1\opr} \bar{D}[-1](n) \rTTo^{\bar\xi_\circ+\bar\xi_{\tau[n]}+\sum_{x+y+z=n}\bar\xi_{T(x,y,z)}} \bar{D}[-1]\TT(n) \bigr\rangle
\\
\hskip\multlinegap +\bigl\langle \bar{C}[-1](n) \rMono^\oin C[-1](n) \rTTo^{g_1\opr} \bar{D}[-1](n) \rTTo^{1\tens\bw} \bar{D}[-1](n)\tens C[-1](1) \hfill
\\
\hfill \rTTo^{1\tens(g_0+g_1\opr)} \bar{D}[-1](n)\tens(\1\oplus\bar{D}[-1](1)) \rMono^{\inj_{\tau[n]}+\inj_{T(0,n,0)}} \bar{D}[-1]\TT(n) \bigr\rangle \quad
\\
\hskip\multlinegap -\sum_{x+1+z=n} \bigl\langle \bar{C}[-1](n) \rMono^\oin C[-1](n) \rTTo^{g_1\opr} \bar{D}[-1](n) \rTTo^{\bw\tens1} C[-1](1)\tens\bar{D}[-1](n) \hfill
\\
\hfill \rTTo^{(g_0+g_1\opr)\tens1} (\1\oplus\bar{D}[-1](1))\tens\bar{D}[-1](n) \rMono^{\inj_{\tau[n]}+\inj_{T(x,1,z)}} \bar{D}[-1]\TT(n) \bigr\rangle \quad		
\\
\hskip\multlinegap =\delta_{n,1} \bigl\langle \bar{C}[-1](n) \rMono^\oin C[-1](n) \rto{\xi_0} \1 \rMono^{\inj_\circ} \bar{D}[-1]\TT(n) \bigr\rangle \hfill
\\
\hskip\multlinegap +\bigl\langle \bar{C}[-1](n) \rMono^\oin C[-1](n) \rto{\xi_1} C[-1](n) \rTTo^{g_0+g_1\opr} \1\oplus\bar{D}[-1](n) \rMono^{\inj_\circ+\inj_{\tau[n]}} \bar{D}[-1]\TT(n) \bigr\rangle \hfill		
\\
\hskip\multlinegap +\sum_{x+y+z=n} \bigl\langle \bar{C}[-1](n) \rMono^\oin C[-1](n) \rTTo^{\xi_{T(x,y,z)}(\opr\oin\tens\opr\oin)} C[-1](y)\tens C[-1](x+1+z) \hfill
\\
\rTTo^{(g_0+g_1\opr)\tens(g_0+g_1\opr)} (\1\oplus\bar{D}[-1](y))\tens(\1\oplus\bar{D}[-1](x+1+z)) \hookrightarrow \bar{D}[-1]\TT(n) \bigr\rangle.
\end{multline*}
Let us expand this equation to individual summands:
\begin{multline*}
\delta_{n,1}\oin g_1\xi_0\inj_\circ +\oin g_1\xi_1\opr\inj_{\tau[n]} +\sum_{x+y+z=n}\oin g_1\xi_{T(x,y,z)}(\opr\tens\opr)\inj_{T(x,y,z)}
\\
+\oin g_1\opr\inj_{\tau[n]}(\bw g_0) +(\oin g_1\opr\tens\bw g_1\opr)\inj_{T(0,n,0)} -n(\bw g_0)\oin g_1\opr\inj_{\tau[n]}
\\
\hfill -\sum_{x+1+z=n} (\bw g_1\opr\tens\oin g_1\opr)\inj_{T(x,1,z)} \quad
\\
\hskip\multlinegap =\delta_{n,1}\oin\xi_0\inj_\circ +\delta_{n,1}\oin\xi_1g_0\inj_\circ +\oin\xi_1g_1\opr\inj_{\tau[n]} +\delta_{n,1}\oin\xi_{T(0,1,0)}(g_0\tens g_0)\inj_\circ \hfill
\\
+\sum_{x+1+z=n}\oin\xi_{T(x,1,z)}(g_0\tens g_1\opr)\inj_{\tau[n]} +\oin\xi_{T(0,n,0)}(g_1\opr\tens g_0)\inj_{\tau[n]}
\\
+\sum_{x+y+z=n}\oin\xi_{T(x,y,z)}(g_1\opr\tens g_1\opr)\inj_{T(x,y,z)} +\delta_{n,1}\oin g_0(\bw g_0)\inj_\circ +\delta_{n,1}\oin g_0\bw g_1\opr\inj_{\tau[n]}
\\
+\oin g_1\opr(\bw g_0)\inj_{\tau[n]} +(\oin g_1\opr\tens\bw g_1\opr)\inj_{T(0,n,0)} -\delta_{n,1}(\bw g_0)\oin g_0\inj_\circ 
\\
-\sum_{x+1+z=n}(\bw g_0)\oin g_1\opr\inj_{\tau[n]} -\delta_{n,1}\oin g_0\bw g_1\opr\inj_{\tau[n]} -\sum_{x+1+z=n} (\bw g_1\opr\tens\oin g_1\opr)\inj_{T(x,1,z)}
\\
=0.
\end{multline*}
The summands starting with $\oin$ and ending in the component indexed by $\circ$ cancel each other due to identity
\[ g_1\xi_0 =\xi_0 +\xi_1g_0 +\xi_{T(0,1,0)}(g_0\tens g_0) +g_0(\bw g_0) -(\bw g_0)g_0.
\]
The summands starting with $\oin$ and ending with $\opr\inj_{\tau[n]}$ cancel each other due to identity
\begin{multline*}
g_1\xi_1 +g_1(\bw g_0) -n(\bw g_0)g_1 =\xi_1g_1 +\sum_{x+1+z=n}\xi_{T(x,1,z)}(g_0\tens g_1)
\\
+\xi_{T(0,n,0)}(g_1\tens g_0) +\delta_{n,1}g_0\bw g_1 +g_1(\bw g_0) -n(\bw g_0)g_1 -\delta_{n,1}g_0\bw g_1.
\end{multline*}
The six remaining summands cancel each other for obvious reasons.

We also have to prove equation~\eqref{eq-1P-1PP-111PPP-1OP}.
In detail,
\begin{multline*}
\bw\xi^D_0\inj_\circ +\bw\xi^D_1\opr\inj_{\tau[1]} +\bw\xi^D_{T(0,1,0)}(\opr\tens\opr)\inj_{T(0,1,0)} +\bw\xi^D_{T(1,0,0)}(\opr\tens\opr)\inj_{T(1,0,0)}
\\
+\bw\xi^D_{T(0,0,1)}(\opr\tens\opr)\inj_{T(0,0,1)} +(\bw g_1-\bw)\xi^D_0\inj_\circ +(\bw g_1-\bw)\xi^D_1\opr\inj_{\tau[1]}
\\
+\sum_{x+y+z=1}(\bw g_1-\bw)\xi^D_{T(x,y,z)}(\opr\tens\opr)\inj_{T(x,y,z)} +(\bw g_0\tens\bw g_0)\inj_\circ +\bw g_1\opr(\bw g_0)\inj_{\tau[1]}
\\
\hfill +(\bw g_0)\bw g_1\opr\inj_{\tau[1]} +(\bw g_1\opr\tens\bw g_1\opr)\inj_{T(0,1,0)} \quad
\\
\hskip\multlinegap =\bw\xi^C_0\inj_\circ +\bw\xi^C_1g_0\inj_\circ +\bw\xi^C_1g_1\opr\inj_{\tau[1]} +(\bw\xi^C_{T(0,1,0)}+\bw\tens\bw)\times \hfill
\\
\times[(g_0\tens g_0)\inj_\circ +(g_0\tens g_1\opr)\inj_{\tau[1]} +(g_1\opr\tens g_0)\inj_{\tau[1]} +(g_1\opr\tens g_1\opr)\inj_{T(0,1,0)}]
\\
+\bw\xi^C_{T(1,0,0)}(g_1\opr\tens g_1\opr)\inj_{T(1,0,0)} +\bw\xi^C_{T(0,0,1)}(g_1\opr\tens g_1\opr)\inj_{T(0,0,1)}.
\end{multline*}
We have used here the following identities:
\begin{gather*}
\bw\xi^C_{T(0,1,0)}[(1-\beps\bw)\tens(1-\beps\bw)] =\bw\xi^C_{T(0,1,0)} +\bw\tens\bw,
\\
\bw g_1\opr\oin =\bw g_1-\bw^D.
\end{gather*}
Using equations~\eqref{eq-xi0-xi1g0-g1xi} we prove the above identity.
Thus, $\Cobar g$ is a morphism of $\UCCOp$.

The identity morphism $g=(\id,0)$ is mapped to the identity morphism $\sfCobar g=(\id,0)$.
Let us verify that $\Cobar$ agrees with the composition.
If $h=\bigl(C\rto fD\rto gE\bigr)$ in $\ucCoop$, \(h_1=f_1g_1\), \(h_0=f_0+f_1g_0\), then
\[ \sfCobar_1f\cdot\sfCobar_1g =\bar f\bar g =\bar h =\sfCobar_1h.
\]
In fact, on generators we have
\begin{align*}
\bar f\bar g &=\bigl\langle \bar C[-1] \rTTo^{\oin(f_0,f_1\opr)} \1\oplus\bar D[-1]
\rTTo^{\bigl(\begin{smallmatrix}
        1 & 0 \\ \oin g_0 & \oin g_1\opr
       \end{smallmatrix}\bigr)}
\1\oplus\bar E[-1] \bigr\rangle
\\
&=(\oin f_0+\oin f_1\opr\oin g_0,\oin f_1\opr\oin g_1\opr) =\oin(f_0+f_1g_0,f_1g_1\opr) =\oin(h_0,h_1\opr) =\bar h.
\end{align*}

Furthermore,
\[ \sfCobar_0h =\sfCobar_0f\sfCobar_1g +\sfCobar_0g.
\]
In fact,
\begin{multline*}
\bigl\langle \1 \rto\bw C[-1] \rTTo^{(h_0,h_1\opr)} \1\oplus\bar E[-1] \bigr\rangle
\\
=\bigl\langle \1 \rto\bw C[-1] \rTTo^{(f_0,f_1\opr)} \1\oplus\bar D[-1]
\rTTo^{\bigl(\begin{smallmatrix}
        1 & 0 \\ \oin g_0 & \oin g_1\opr
       \end{smallmatrix}\bigr)}
\1\oplus\bar E[-1] \bigr\rangle
\\
+\bigl\langle \1 \rto\bw D[-1] \rTTo^{(g_0,g_1\opr)} \1\oplus\bar E[-1] \bigr\rangle
\end{multline*}
due to
\begin{multline*}
\bw^C(f_0+f_1\opr\oin g_0,f_1\opr\oin g_1\opr) +\bw^D(g_0,g_1\opr)
\\
=(\bw^C(f_0+f_1g_0) -\bw^Dg_0 +\bw^Dg_0, \bw^Cf_1g_1\opr -\bw^Dg_1\opr +\bw^Dg_1\opr)
\\
=\bw^C(f_0+f_1g_0,f_1g_1\opr) =\bw^C(h_0,h_1\opr).
\end{multline*}
The functor \(\Cobar:\ucCoop\to\ucOp\) is constructed.
\end{proof}

\section{Bar construction}
Let us describe a functor\index{TTsymb}{Bar@$\Bbar$} \(\Bbar:\UCCOp\to\CACoop\), the bar construction.
Let \((\co,m,d,m_0,\eta,\sfv)\) be a unit-complemented curved operad.
Decompose the idempotent \(1-\sfv\cdot\eta:\co\to\co\) into a projection \(\opr:\co\to\bar\co\) and an injection \(\oin:\bar\co\to\co\) so that \(\oin\opr=1_{\bar\co}\).
The morphisms \(\sigma^{-1}\opr\sigma:\co[1]\to\bar\co[1]\) and \(\sigma^{-1}\oin\sigma:\bar\co[1]\to\co[1]\) are also denoted $\opr$ and $\oin$ by abuse of notations.
They split the idempotent \(1-\bv\cdot\bfeta:\co[1]\to\co[1]\).
The source and the target of $\opr$ and $\oin$ can be recovered from the context.

\begin{proposition}\label{pro-bar-construction}
There is a map \(\Bbar:\Ob\UCCOp\to\Ob\CACoop\).
It assigns to a unit-complemented curved operad $\co$ the graded cooperad \(\Bbar\co=(\bar\co[1]\botto,\Delta_{T(-,-,-)},\eps=\pr_\circ)\) corresponding via \propref{pro-aug-equiv-non-counital} to the conilpotent non-counital graded cooperad \((\bar\co[1]\bott,\bar\Delta_{T(-,-,-)})\).
The splitting of the counit is \(\sfw^{\Bbar\co}=\inj_\circ:\1\hookrightarrow\bar\co[1]\botto\), the degree~1 coderivation \(d^{\Bbar\co}=\bar{b}:\bar\co[1]\botto\to\bar\co[1]\botto\) corresponds via \corref{cor-coderivations-bottom} to the map \(\tilde{b}=\bar{b}\cdot\eps:\bar\co[1]\botto\to\bar\co[1]\), whose non-vanishing components are
\begin{align*}
\bar b_{T(x,y,z)} &= \bigl\langle \bar\co[1](y)\tens\bar\co[1](x+1+z) \rMono^{\oin\tens\oin} \co[1](y)\tens\co[1](x+1+z)
\\
&\hspace{8em} \rTTo^{b_{T(x,y,z)}} \co[1](x+y+z) \rTTo^\opr \bar\co[1](x+y+z) \bigr\rangle,
\\
\bar b_{\tau[n]} &= \bigl\langle \bar\co[1](n) \rMono^\oin \co[1](n) \rTTo^{b_1} \co[1](n) \rTTo^\opr \bar\co[1](n) \bigr\rangle,
\\
\bar b_\circ &= \bigl\langle \1 \rTTo^{b_0} \co[1](1) \rTTo^\opr \bar\co[1](1) \bigr\rangle.
\end{align*}
The curvature functional \(\bdelt_0^{\Bbar\co}:\bar\co[1]\botto\to\1\) is
\[ \bdelt_0^{\Bbar\co} =-\bigl( \bar\co[1]\botto \rMono^{\oin\botto} \co[1]\botto \rTTo^{\check{b}} \co[1] \rTTo^\bv \1 \bigr),
\]
where the relevant non-vanishing components of $\check{b}$ are
\begin{align*}
b_{T(x,y,z)} &: \co[1](y)\tens\co[1](x+1+z) \to \co[1](1), \qquad x+y+z=1,
\\
b_{\tau[1]} =b_1 &: \co[1](1) \to \co[1](1),
\\
b_\circ =b_0 &:\1 \to \co[1](1).
\end{align*}
\end{proposition}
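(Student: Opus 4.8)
The plan is to show that $\Bbar\co$, equipped with the listed data, is an object of $\CACoop$, by a computation dual (via $\cv\rightsquigarrow\cv^\op$, the shift $[1]$, and the cofree construction $\botto$) to the one in \propref{pro-map-curved-cooperad-curved-operad}. Several parts require no verification. The underlying graded augmented cooperad is produced by \propref{pro-aug-equiv-non-counital} from the conilpotent non-counital cooperad $(\bar\co[1]\bott,\bar\Delta_{T(-,-,-)})$, whose comultiplications are those of the cofree $\bott$-coalgebra $\Fc(\bar\co[1])$; its kernel $\bar\co[1]\bott$ is conilpotent by construction, the counit is $\eps=\pr_\circ$, and the augmentation $\sfw=\inj_\circ$ is the canonical one of $\bar CJ$ from \remref{rem-functor-Tcoalg-augcoop}. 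Moreover, by \corref{cor-coderivations-bottom} the prescribed corestriction $\tilde b=\bar b\cdot\eps$ determines a unique $\id_{\bar\co[1]\botto}$-coderivation $\bar b$ of degree $1$, so that $d^{\Bbar\co}$ is automatically a coderivation; no separate check is needed for that.

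The substance lies in the two curved-cooperad equations. For \eqref{eq-d2-Delta(1delta0-Delta(delta01)} I first observe that both sides are degree $2$ $\id$-coderivations of $\bar\co[1]\botto$: the left-hand side $(d^{\Bbar\co})^2$ because the square of an odd $\id$-coderivation is again an $\id$-coderivation, and the right-hand side because it is precisely the inner coderivation attached by \propref{pro-id-coderivation} to the functional $\psi=\bdelt_0^{\Bbar\co}$. By the uniqueness clause of \corref{cor-coderivations-bottom} it therefore suffices to compare the two corestrictions, i.e.\ to postcompose both sides with $\eps\colon\bar\co[1]\botto\to\bar\co[1]$ and check equality there. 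Postcomposing further with $\pr_\tau$ for each tree $\tau$, the identity reduces to relations among the shifted components $b_{T(x,y,z)}$, $b_1$, $b_0$ and the shifted unit $\bfeta$, and since $\bar b$ changes the number of internal vertices by at most one, only trees with $|\IV(\tau)|\le3$ contribute, making the check a finite enumeration.

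Concretely I would run through the same case list as in \propref{pro-map-curved-cooperad-curved-operad}: the case $\tau=\circ$ uses $b_0b_1=0$; the case $\tau=\tau[n]$ uses $b_1^2+\sum_{x+1+z=n}(b_0\tens1)b_{T(x,1,z)}+(1\tens b_0)b_{T(0,n,0)}=0$; the case $\tau=T(x,y,z)$ uses $b_{T(x,y,z)}b_1+(1\tens b_1+b_1\tens1)b_{T(x,y,z)}=0$; and the three-vertex trees of \eqref{eq-tangles-3-vertices} and \eqref{eq-tangle-3-vertices-(4)} use the signless associativity diagrams \eqref{dia-operad-3-OOOOOOOOOO} and \eqref{dia-operad-4-OOOOOOO}. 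All of these are the signless shifted curved-operad axioms listed after \defref{def-curved-operad}, together with the unitality relations $(1\tens\bfeta)b_{T(0,n,0)}=1$, $(\bfeta\tens1)b_{T(x,1,z)}=-1$ and $\bfeta b_1=0$. The second equation, $d^{\Bbar\co}\bdelt_0^{\Bbar\co}=0$, is the mirror of $m_0^{\Cobar C}d^{\Cobar C}=0$ and follows from exactly the same enumeration, now using $\bfeta$-unitality to collapse the projected components. Finally the augmentation axioms $\sfw\eps=1$, $\sfw\Delta_{T(0,1,0)}=\sfw\tens\sfw$ and $\sfw\Delta_{T(1,0,0)}=\sfw\Delta_{T(0,0,1)}=0$ hold because $\sfw=\inj_\circ$ is the canonical augmentation of \remref{rem-functor-Tcoalg-augcoop}, placing $\Bbar\co$ in $\CACoop$.

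The main obstacle is the bookkeeping in the middle step rather than any new idea. When replacing $\Delta_{T(-,-,-)}$ by the projected $\bar b_{T(-,-,-)}$ one must track the $\oin$/$\opr$ insertions and the resulting correction terms $b_{T}[(1-\bfeta\bv)\tens\cdots]$, dual to the $\xi_{T}[(1-\beps\bw)\tens\cdots]$ corrections opening the proof of \propref{pro-map-curved-cooperad-curved-operad}, so that the $\bfeta$-components appear with the correct signs and Kronecker-$\delta$ coefficients and the surviving terms assemble exactly into the curved-operad relations. I also expect to justify carefully that for fixed output only finitely many intermediate trees occur, which is what makes $\bar b$ and $\bdelt_0^{\Bbar\co}$ land in the cofree $\botto$ rather than in its completion $\bottho$.
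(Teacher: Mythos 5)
Your proposal is correct and follows essentially the same route as the paper's proof: construct the underlying graded augmented cooperad and coderivation via \propref{pro-aug-equiv-non-counital} and \corref{cor-coderivations-bottom}, observe that both sides of \eqref{eq-d2-Delta(1delta0-Delta(delta01)} are degree~2 coderivations (the right side being the inner coderivation of \propref{pro-id-coderivation}), reduce to corestrictions, and run the finite enumeration over trees with at most three internal vertices, matching each case to the shifted curved-operad axioms with $(1-\bv\bfeta)$ correction terms; the same scheme handles $d^{\Bbar\co}\bdelt_0^{\Bbar\co}=0$. Only a notational slip: your idempotent should read $(1-\bv\bfeta)$, not $(1-\bfeta\bv)$, since $\bfeta\bv=1_\1$.
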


\begin{proof}
Note that \(\sfw^{\Bbar\co}\) is a graded cooperad homomorphism and the cooperad \(\overline{\bar\co[1]\botto}=\bar\co[1]\bott\) is conilpotent.
It is important also that
\begin{multline*}
\hfill [1\tens(1-\bv\bfeta)]b_{T(x,y,z)} =b_{T(x,y,z)} -\delta_{x+z,0}\tens\bv, \hfill
\\
[(1-\bv\bfeta)\tens1]b_{T(x,y,z)} =b_{T(x,y,z)} +\delta_{y,1}\bv\tens1,
\\
[(1-\bv\bfeta)\tens(1-\bv\bfeta)]b_{T(x,y,z)} =b_{T(x,y,z)} -\delta_{x+z,0}\tens\bv +\delta_{y,1}\bv\tens1 -\delta_{x+z,0}\delta_{y,1}(\bv\tens\bv)\bfeta:
\\
\co[1](y)\tens\co[1](x+1+z) \to \co[1](x+y+z).
\end{multline*}

Let us verify the necessary identities.
Both sides of the equation
\[ (d^{\Bbar\co})^2 =\Delta_{T(0,n,0)}(1\tens\bdelt^{\Bbar\co}_0) -\sum_{x+1+z=n} \Delta_{T(x,1,z)}(\bdelt^{\Bbar\co}_0\tens1): \bar\co[1]\botto(n) \to \bar\co[1]\botto(n)
\]
are degree 2 coderivations.
Due to \corref{cor-coderivations-bottom} the equation is equivalent to its composition with \(\pr_{\tau[n]}:\bar\co[1]\botto(n)\to\bar\co[1](n)\).
Taking into account \eqref{eq-xi'-Delta'} and \eqref{eq-xi''-Delta''}, to
\begin{multline}
\Bigl[ \coprod_{\tau\in\tr(n)} \, \bigotimes_{v\in\IV(\tau)} \bar\co[1]|v| \rto{\Delta'} \coprod_{\tau\in\tr(n)} \, \coprod_{\text{subtree }r\subset\tau}^{\IV(r)\ne\emptyset} \, \bigotimes_{y\in\IV(\tau/r)} \, \bigotimes_{q\in(\IV(\tau)\to\IV(\tau/r))^{-1}(y)}\bar\co[1]|q|
\\
\hfill \rTTo^{\sum\tens^{y\in\IV(t)}f_\tau(y)} \coprod_{t\in\tr(n)} \, \bigotimes_{y\in\IV(t)} \bar\co[1]|y| \rto{\tilde b} \bar\co[1](n) \Bigr] \quad
\\
\hskip\multlinegap +\Bigl[ \coprod_{\tau\in\tr(n)} \, \bigotimes_{v\in\IV(\tau)} \bar\co[1]|v| \rto{\Delta''} \coprod_{t\in\tr(n)} \, \coprod_{i\in\IV(t)} \, \bigotimes_{y\in\IV(t)} \, \bigotimes_{q\in\IV(t_y^i)}\bar\co[1]|q| \hfill
\\
\hfill \rTTo^{\coprod_t\sum_i\tens^{y\in\IV(t)}g_i(y)} \coprod_{t\in\tr(n)} \, \bigotimes_{y\in\IV(t)} \bar\co[1]|y| \rto{\tilde b} \bar\co[1](n) \Bigr] \quad
\\
\hskip\multlinegap =\Bigl[ \coprod_{\tau\in\tr(n)} \, \bigotimes_{v\in\IV(\tau)} \bar\co[1]|v| \rTTo^{\Delta_{T(0,n,0)}} \coprod_{\theta\in\tr(1)}^{\tau[n]\underset{0,n,0}\bull\theta=\tau} \bar\co[1](n)\tens \bigotimes_{v\in\IV(\theta)} \bar\co[1]|v| \rTTo^{1\tens\bdelt^{\Bbar\co}_0} \bar\co[1](n) \Bigr] \hfill
\\
-\sum_{x+1+z=n} \Bigl[ \coprod_{\tau\in\tr(n)} \, \bigotimes_{v\in\IV(\tau)} \bar\co[1]|v| \rTTo^{\Delta_{T(x,1,z)}} \coprod_{\theta\in\tr(1)}^{\theta\underset{x,1,z}\bull\tau[n]=\tau} \Bigl( \bigotimes_{v\in\IV(\theta)} \bar\co[1]|v| \Bigr)\tens\bar\co[1](n) \rTTo^{\bdelt^{\Bbar\co}_0\tens1} \bar\co[1](n) \Bigr],
\label{eq-long-eqn-d2}
\end{multline}
where \(t=\tau/r\), \(f_\tau(y)=\id\) if \(y\ne[\IV(r)]\), and \(f_\tau(y)=\bar b_r\) if \(y=[\IV(r)]\); \(t_y^i=\tau[|y|]\) and \(g_i(y)=\id\) if $y\ne i$, and \(t_i^i=\circ\), \(g_i(i)=\bar b_\circ:\1\to\bar\co[1](1)\), $\Delta''$ takes a summand indexed by $\tau$ to each summand indexed by \((t,i)\) such that \(t^{\{i\}}=\tau\).
Both hand sides vanish on all summands indexed by $\tau$ unless $\tau$ has at most 3 internal vertices.
Let us consider all possible $\tau$ separately.

For $\tau=\circ$ equation \eqref{eq-long-eqn-d2} reduces to
\[ \bar b_\circ \cdot \bar b_{\tau[1]} =0: \1 \to \bar\co[1](1),
\]
which follows from the obvious equation \(b_0(1-\bv\cdot\bfeta)b_1\opr=0\).

For $\tau=\tau[n]$ equation \eqref{eq-long-eqn-d2} reduces to
\begin{align*}
&\bigl\langle \bar\co[1](n) \rto{\bar b_{\tau[n]}} \bar\co[1](n) \rto{\bar b_{\tau[n]}} \bar\co[1](n) \bigr\rangle +\bigl\langle \bar\co[1](n) \rTTo^{1\tens\bar b_\circ} \bar\co[1](n)\tens\bar\co[1](1) \rTTo^{\bar b_{T(0,n,0)}} \bar\co[1](n) \bigr\rangle
\\
&\hspace{11.9em}+\sum_{x+1+z=n} \bigl\langle \bar\co[1](n) \rTTo^{\bar b_\circ\tens1} \bar\co[1](1)\tens\bar\co[1](n) \rTTo^{\bar b_{T(x,1,z)}} \bar\co[1](n) \bigr\rangle
\\
&=-\bigl\langle \bar\co[1](n) \rTTo^{1\tens b_\circ} \bar\co[1](n)\tens\co[1](1) \rTTo^{1\tens\bv} \bar\co[1](n) \bigr\rangle
\\
&\hspace{11.9em}+\sum_{x+1+z=n} \bigl\langle \bar\co[1](n) \rTTo^{b_\circ\tens1} \co[1](1)\tens\bar\co[1](n) \rTTo^{\bv\tens1} \bar\co[1](n) \bigr\rangle.
\end{align*}
This equation follows from the computation
\begin{multline*}
b_1(1-\bv\bfeta)b_1 +[1\tens b_0(1-\bv\bfeta)]b_{T(0,n,0)}
+\sum_{x+1+z=n} [b_0(1-\bv\bfeta)\tens1]b_{T(x,1,z)}
\\
=-1\tens b_0\bv +\sum_{x+1+z=n} b_0\bv\tens1: \co[1](n) \to \co[1](n).
\end{multline*}

Let \(x,y,z\in\NN\) and \(n=x+y+z\).
For $\tau=T(x,y,z)$ equation \eqref{eq-long-eqn-d2} reduces to
\begin{multline*}
\bigl\langle \bar\co[1](y)\tens\bar\co[1](x+1+z) \rTTo^{\bar b_{\tau[y]}\tens1} \bar\co[1](y)\tens\bar\co[1](x+1+z) \rTTo^{\bar b_\tau} \bar\co[1](n) \bigr\rangle
\\
+\bigl\langle \bar\co[1](y)\tens\bar\co[1](x+1+z) \rTTo^{1\tens\bar b_{\tau[x+1+z]}} \bar\co[1](y)\tens\bar\co[1](x+1+z) \rTTo^{\bar b_\tau} \bar\co[1](n) \bigr\rangle
\\
\hfill +\bigl\langle \bar\co[1](y)\tens\bar\co[1](x+1+z) \rTTo^{\bar b_\tau} \bar\co[1](n) \rTTo^{\bar b_{\tau[n]}} \bar\co[1](n) \bigr\rangle \quad
\\
=-\delta_{x,0}\delta_{z,0} \bigl\langle \bar\co[1](n)\tens\bar\co[1](1) \rTTo^{1\tens\oin b_1\bv} \bar\co[1](n) \bigr\rangle +\delta_{y,1} \bigl\langle \bar\co[1](1)\tens\bar\co[1](n) \rTTo^{\oin b_1\bv\tens1} \bar\co[1](n) \bigr\rangle.
\end{multline*}
This holds true due to valid identity:
\begin{multline*}
[b_1(1-\bv\bfeta)\tens1]b_\tau +[1\tens b_1(1-\bv\bfeta)]b_\tau +b_\tau(1-\bv\bfeta)b_1 =b_1\bv\tens1 -1\tens b_1\bv
\\
=-\delta_{x,0}\delta_{z,0}\tens b_1\bv +\delta_{y,1}b_1\bv\tens1: \co[1](y)\tens\co[1](x+1+z) \to \co[1](n),\end{multline*}
precomposed with $\oin\tens\oin$ and postcomposed with $\opr$.

There are two types of trees $\tau$ with \(|\IV(t)|=3\).
The tree from \eqref{eq-tangle-3-vertices-(4)} is
\begin{equation}
\tau =\bigl( \mb x \rto{f} \mb{w+1+y} \rto{g} \mb{v+1+z} \rto\con \mb1, \tau(0)\sqcup(\tau(1)-\{w+1\})\sqcup(\tau(2)-\{v+1\}) \bigr),
\label{eq-tau-x-w1y-v1z-1}
\end{equation}
where \(v,w,x,y,z\in\NN\), \(f(\mb x)\subset\{w+1\}\), \(g(\mb{w+1+y})=\{v+1\}\).
Equation~\eqref{eq-long-eqn-d2} takes the form
\begin{multline*}
\bigl\langle \bar\co[1](x)\tens\bar\co[1](w+1+y)\tens\bar\co[1](v+1+z) \rTTo^{\bar b_{T(w,x,y)}\tens1}
\\
\hfill \bar\co[1](w+x+y)\tens\bar\co[1](v+1+z) \rTTo^{\bar b_{T(v,w+x+y,z)}} \bar\co[1](v+w+x+y+z) \bigr\rangle \quad
\\
\hskip\multlinegap +\bigl\langle \bar\co[1](x)\tens\bar\co[1](w+1+y)\tens\bar\co[1](v+1+z) \rTTo^{1\tens\bar b_{T(v,w+1+y,z)}} \hfill
\\
\hfill \bar\co[1](x)\tens\bar\co[1](v+w+1+y+z) \rTTo^{\bar b_{T(v+w,x,y+z)}} \bar\co[1](v+w+x+y+z) \bigr\rangle \quad
\\
\hskip\multlinegap =-\chi(v=w=y=z=0) \bigl\langle \bar\co[1](x)\tens\bar\co[1](1)\tens\bar\co[1](1) \rTTo^{1\tens(\oin\tens\oin)b_{T(0,1,0)}\bv} \bar\co[1](x) \bigr\rangle \hfill
\\
\hfill +\delta_{w+y,0} \delta_{x,1} \bigl\langle \bar\co[1](1)\tens\bar\co[1](1)\tens\bar\co[1](v+1+z) \rTTo^{(\oin\tens\oin)b_{T(0,1,0)}\bv\tens1} \bar\co[1](v+1+z) \bigr\rangle \quad
\\
\hfill +\delta_{x+y,0} \delta_{w,1} \bigl\langle \bar\co[1](0)\tens\bar\co[1](2)\tens\bar\co[1](v+1+z) \rTTo^{(\oin\tens\oin)b_{T(1,0,0)}\bv\tens1} \bar\co[1](v+1+z) \bigr\rangle \quad
\\
+\delta_{w+x,0} \delta_{y,1} \bigl\langle \bar\co[1](0)\tens\bar\co[1](2)\tens\bar\co[1](v+1+z) \rTTo^{(\oin\tens\oin)b_{T(0,0,1)}\bv\tens1} \bar\co[1](v+1+z) \bigr\rangle.
\end{multline*}
Using equation~\eqref{dia-operad-4-OOOOOOO} we deduce the above from the computation
\begin{align*}
[b_{T(w,x,y)}(1 &-\bv\cdot\bfeta)\tens1]b_{T(v,w+x+y,z)} +[1\tens b_{T(v,w+1+y,z)}(1-\bv\cdot\bfeta)]b_{T(v+w,x,y+z)}
\\
&= \delta_{w+x+y,1}b_{T(w,x,y)}\bv\tens1 -\chi(v=w=y=z=0)\tens b_{T(0,1,0)}\bv
\\
&= \chi(w=y=0) \delta_{x,1}b_{T(0,1,0)}\bv\tens1 +\chi(x=y=0) \delta_{w,1}b_{T(1,0,0)}\bv\tens1
\\
&+\chi(w=x=0) \delta_{y,1} b_{T(0,0,1)}\bv\tens1 -\chi(v=w=y=z=0)\tens b_{T(0,1,0)}\bv,
\end{align*}
precomposed with $\oin\tens\oin\tens\oin$ and postcomposed with $\opr$.

The second type of a tree $\tau$ with \(|\IV(t)|=3\) depicted in \eqref{eq-tangles-3-vertices} is
\begin{equation}
\tau =\bigl( \mb{w+y} \rto{f} \mb{v+1+x+1+z} \rto\con \mb1, \tau(0)\sqcup(\tau(1)-\{v+1,v+1+x+1\}) \bigr),
\label{eq-tau-wy-v1x1z-1}
\end{equation}
where \(v,w,x,y,z\in\NN\), \(f(i)=v+1\) if \(1\le i\le w\) and \(f(i)=v+1+x+1\) if \(w<i\le w+y\).
Equation~\eqref{eq-long-eqn-d2} takes the form:
\begin{multline*}
\bigl\langle \bar\co[1](w)\tens\bar\co[1](y)\tens\bar\co[1](v+1+x+1+z) \rTTo^{1\tens\bar b_{T(v+1+x,y,z)}}
\\
\hfill \bar\co[1](w)\tens\bar\co[1](v+1+x+y+z) \rTTo^{\bar b_{T(v,w,x+y+z)}} \bar\co[1](v+w+x+y+z) \bigr\rangle \quad
\\
\hskip\multlinegap +\bigl\langle \bar\co[1](w)\tens\bar\co[1](y)\tens\bar\co[1](v+1+x+1+z) \rTTo^{(12)}_\sim \bar\co[1](y)\tens\bar\co[1](w)\tens\bar\co[1](v+1+x+1+z) \hfill
\\
\hfill \rTTo^{1\tens\bar b_{T(v,w,x+1+z)}} \bar\co[1](y)\tens\bar\co[1](v+w+x+1+z) \rTTo^{\bar b_{T(v+w+x,y,z)}} \bar\co[1](v+w+x+y+z) \bigr\rangle \quad
\\
\hskip\multlinegap =-\chi(v=x=y=z=0) \bigl\langle \bar\co[1](w)\tens\bar\co[1](0)\tens\bar\co[1](2) \rTTo^{1\tens(\oin\tens\oin)b_{T(1,0,0)}\bv} \bar\co[1](w) \bigr\rangle \hfill
\\
\hskip\multlinegap -\chi(v=w=x=z=0) \bigl\langle \bar\co[1](0)\tens\bar\co[1](y)\tens\bar\co[1](2) \rTTo^{(12)}_\sim \hfill
\\
\bar\co[1](y)\tens\bar\co[1](0)\tens\bar\co[1](2) \rTTo^{1\tens(\oin\tens\oin)b_{T(0,0,1)}\bv} \bar\co[1](y) \bigr\rangle.
\end{multline*}
Using equation~\eqref{dia-operad-3-OOOOOOOOOO} we deduce the above from:
\begin{multline*}
[1\tens b_{T(v+1+x,y,z)}(1-\bv\bfeta)]b_{T(v,w,x+y+z)} +(12)[1\tens b_{T(v,w,x+1+z)}(1-\bv\bfeta)]b_{T(v+w+x,y,z)}
\\
=-\chi(v=x+y+z=0)\tens b_{T(1,0,0)}\bv -\chi(v+w+x=z=0)(12)(1\tens b_{T(0,0,1)}\bv):
\\
\co[1](w)\tens\co[1](y)\tens\co[1](v+1+x+1+z) \to \co[1](v+w+x+y+z),
\end{multline*}
precomposed with $\oin\tens\oin\tens\oin$ and postcomposed with $\opr$.

One more identity to prove is
\[ d^{\Bbar\co}\bdelt^{\Bbar\co}_0 =0: \bar\co[1]\botto(1) \to \1.
\]
Taking into account \eqref{eq-xi'-Delta'} and \eqref{eq-xi''-Delta''} we rewrite this equation as
\begin{multline}
\Bigl\langle \coprod_{\tau\in\tr(1)} \, \bigotimes_{v\in\IV(\tau)} \bar\co[1]|v| \rto{\Delta'} \coprod_{\tau\in\tr(1)} \, \coprod_{\text{subtree }r\subset\tau}^{\IV(r)\ne\emptyset} \, \bigotimes_{y\in\IV(\tau/r)} \, \bigotimes_{q\in(\IV(\tau)\to\IV(\tau/r))^{-1}(y)}\bar\co[1]|q|
\\
\hfill \rTTo^{\sum\tens^{y\in\IV(t)}f_\tau(y)} \coprod_{t\in\tr(1)} \, \bigotimes_{y\in\IV(t)} \bar\co[1]|y| \rTTo^{\oin^{\tens\IV(t)}b_t\bv} \1 \Bigr\rangle \quad
\\
\hskip\multlinegap +\Bigl\langle \coprod_{\tau\in\tr(1)} \, \bigotimes_{v\in\IV(\tau)} \bar\co[1]|v| \rto{\Delta''} \coprod_{t\in\tr(1)} \, \coprod_{i\in\IV(t)} \, \bigotimes_{y\in\IV(t)} \, \bigotimes_{q\in\IV(t_y^i)}\bar\co[1]|q| \hfill
\\
\rTTo^{\coprod_t\sum_i\tens^{y\in\IV(t)}g_i(y)} \coprod_{t\in\tr(1)} \, \bigotimes_{y\in\IV(t)} \bar\co[1]|y| \rTTo^{\oin^{\tens\IV(t)}b_t\bv} \1 \Bigr\rangle =0,
\label{eq-long-d-delta-0=0}
\end{multline}
where \(t=\tau/r\), \(f_\tau(y)=\id\) if \(y\ne[\IV(r)]\), and \(f_\tau(y)=\bar b_r\) if \(y=[\IV(r)]\); \(t_y^i=\tau[|y|]\) and \(g_i(y)=\id\) if $y\ne i$, and \(t_i^i=\circ\), \(g_i(i)=\bar b_\circ:\1\to\bar\co[1](1)\), $\Delta''$ takes a summand indexed by $\tau$ to each summand indexed by \((t,i)\) such that \(t^{\{i\}}=\tau\).
The left hand side vanishes on all summands indexed by $\tau$ unless $\tau$ has at most 3 internal vertices.
Let us consider all possible $\tau$ separately.

For $\tau=\circ$ equation \eqref{eq-long-d-delta-0=0} reduces to
\[ \bar b_\circ\oin b_{\tau[1]} \bv =0: \1 \to \1,
\]
which follows from the obvious equation \(b_0(1-\bv\cdot\bfeta)b_1\bv=0\).

For $\tau=\tau[1]$ equation \eqref{eq-long-d-delta-0=0} reduces to
\begin{multline*}
\bigl\langle \bar\co[1](1) \rto{\bar b_{\tau[1]}} \bar\co[1](1) \rTTo^{\oin b_{\tau[1]}\bv} \1 \bigr\rangle +\bigl\langle \bar\co[1](1) \rTTo^{1\tens\bar b_\circ} \bar\co[1](1)\tens\bar\co[1](1) \rTTo^{(\oin\tens\oin)b_{T(0,1,0)}\bv} \1 \bigr\rangle
\\
+\bigl\langle \bar\co[1](1) \rTTo^{\bar b_\circ\tens1} \bar\co[1](1)\tens\bar\co[1](1) \rTTo^{(\oin\tens\oin)b_{T(0,1,0)}\bv} \1 \bigr\rangle =0.
\end{multline*}
This equation follows from the computation
\begin{multline*}
b_1(1-\bv\bfeta)b_1\bv +[1\tens b_0(1-\bv\bfeta)]b_{T(0,1,0)}\bv
+[b_0(1-\bv\bfeta)\tens1]b_{T(0,1,0)}\bv
\\
=-(b_0\bv)\bv +(b_0\bv)\bv =0: \co[1](1) \to \1.
\end{multline*}

Let \(x,y,z\in\NN\) satisfy \(x+y+z=1\).
For $\tau=T(x,y,z)$ equation \eqref{eq-long-d-delta-0=0} reduces to
\begin{multline*}
\bigl\langle \bar\co[1](y)\tens\bar\co[1](x+1+z) \rTTo^{\bar b_{\tau[y]}\tens1} \bar\co[1](y)\tens\bar\co[1](x+1+z) \rTTo^{(\oin\tens\oin)b_\tau\bv} \1 \bigr\rangle
\\
+\bigl\langle \bar\co[1](y)\tens\bar\co[1](x+1+z) \rTTo^{1\tens\bar b_{\tau[x+1+z]}} \bar\co[1](y)\tens\bar\co[1](x+1+z) \rTTo^{(\oin\tens\oin)b_\tau\bv} \1 \bigr\rangle
\\
+\bigl\langle \bar\co[1](y)\tens\bar\co[1](x+1+z) \rTTo^{\bar b_\tau} \bar\co[1](1) \rTTo^{(\oin\tens\oin)b_1\bv} \1 \bigr\rangle =0.
\end{multline*}
This is satisfied as the computation shows:
\begin{multline*}
(\oin\tens\oin)\bigl([b_1(1-\bv\bfeta)\tens1]b_\tau\bv +[1\tens b_1(1-\bv\bfeta)]b_\tau\bv +b_\tau(1-\bv\bfeta)b_1\bv\bigr)
\\
=(\oin\tens\oin)(b_1\bv\tens\bv -\bv\tens b_1\bv) =0 -0 =0: \bar\co[1](y)\tens\bar\co[1](x+1+z) \to \1.
\end{multline*}

Let \(v,w,x,y,z\in\NN\) satisfy \(v+w+x+y+z=1\).
For tree~\eqref{eq-tau-x-w1y-v1z-1} depicted in \eqref{eq-tangle-3-vertices-(4)} equation \eqref{eq-long-d-delta-0=0} takes the form
\begin{multline*}
\bigl\langle \bar\co[1](x)\tens\bar\co[1](w+1+y)\tens\bar\co[1](v+1+z) \rTTo^{\bar b_{T(w,x,y)}\tens1}
\\
\hfill \bar\co[1](w+x+y)\tens\bar\co[1](v+1+z) \rTTo^{(\oin\tens\oin)b_{T(v,w+x+y,z)}\bv} \1 \bigr\rangle \quad
\\
\hskip\multlinegap +\bigl\langle \bar\co[1](x)\tens\bar\co[1](w+1+y)\tens\bar\co[1](v+1+z) \rTTo^{1\tens\bar b_{T(v,w+1+y,z)}} \hfill
\\
\bar\co[1](x)\tens\bar\co[1](v+w+1+y+z) \rTTo^{(\oin\tens\oin)b_{T(v+w,x,y+z)}\bv} \1 \bigr\rangle =0.
\end{multline*}

This equation follows from equation~\eqref{dia-operad-4-OOOOOOO}:
\begin{multline*}
(\oin^{\tens3})\bigl([b_{T(w,x,y)}(1-\bv\cdot\bfeta)\tens1]b_{T(v,w+x+y,z)}\bv +[1\tens b_{T(v,w+1+y,z)}(1-\bv\cdot\bfeta)]b_{T(v+w,x,y+z)}\bv\bigr)
\\
=(\oin^{\tens3})[\delta_{w+x+y,1}b_{T(w,x,y)}\bv\tens\bv -\chi(v=w=y=z=0)\bv\tens b_{T(0,1,0)}\bv] =0 -0 =0:
\\
\bar\co[1](x)\tens\bar\co[1](w+1+y)\tens\bar\co[1](v+1+z) \to \1.
\end{multline*}

Keeping the assumption \(v+w+x+y+z=1\) consider the tree $\tau$ from \eqref{eq-tau-wy-v1x1z-1} depicted in \eqref{eq-tangles-3-vertices}.
Equation \eqref{eq-long-d-delta-0=0} on the summand indexed by $\tau$ takes the form
\begin{multline*}
\bigl\langle \bar\co[1](w)\tens\bar\co[1](y)\tens\bar\co[1](v+1+x+1+z) \rTTo^{1\tens\bar b_{T(v+1+x,y,z)}}
\\
\hfill \bar\co[1](w)\tens\bar\co[1](v+1+x+y+z) \rTTo^{(\oin\tens\oin)b_{T(v,w,x+y+z)}\bv} \1 \bigr\rangle \quad
\\
+\bigl\langle \bar\co[1](w)\tens\bar\co[1](y)\tens\bar\co[1](v+1+x+1+z) \rTTo^{(12)}_\sim \bar\co[1](y)\tens\bar\co[1](w)\tens\bar\co[1](v+1+x+1+z)
\\
\rTTo^{1\tens\bar b_{T(v,w,x+1+z)}} \bar\co[1](y)\tens\bar\co[1](v+w+x+1+z) \rTTo^{(\oin\tens\oin)b_{T(v+w+x,y,z)}\bv} \1 \bigr\rangle =0.
\end{multline*}
Due to equation~\eqref{dia-operad-3-OOOOOOOOOO} the above holds true:
\begin{multline*}
(\oin^{\tens3})\bigl([1\tens b_{T(v+1+x,y,z)}(1-\bv\bfeta)]b_{T(v,w,x+y+z)}\bv +(12)[1\tens b_{T(v,w,x+1+z)}(1-\bv\bfeta)]b_{T(v+w+x,y,z)}\bv\bigr)
\\
=-(\oin^{\tens3})[\chi(v=x+y+z=0)\bv\tens b_{T(1,0,0)}\bv +\chi(v+w+x=z=0)(12)(\bv\tens b_{T(0,0,1)}\bv)]
\\
=0: \bar\co[1](w)\tens\bar\co[1](y)\tens\bar\co[1](v+1+x+1+z) \to \1.
\end{multline*}
Thus the curved augmented cooperad \(\Bbar\co=\bar\co[1]\botto\) is constructed.
\end{proof}

\begin{proposition}\label{pro-bar-construction-functor}
There is a functor \(\Bbar:\UCCOp\to\CACoop\), the bar construction\index{TTindex}{bar construction}.
The functor \(\Bbar\) assigns to a morphism \(f=(f_1,\und f):\co\to\cp\in\UCCOp\) the morphism
\[ \sfBar f =\sfg =(\sfg_1,\sfg_0): \bar\co[1]\botto \to \bar\cp[1]\botto,
\]
where the graded cooperad morphism \(\sfBar_1f=\sfg_1=\bar{f}=\bar f_1\botto\) comes from the map of degree~0
\begin{equation*}
\bar f_1 =\bigl( \bar\co[1] \rMono^\oin \co[1] \rTTo^{f_1} \cp[1] \rTTo^\opr \bar\cp[1] \bigr)
\end{equation*}
and the degree~1 functional is
\begin{equation*}
\sfBar_0f =\sfg_0 =\bigl( \bar\co[1]\botto(1) \rMono^{\oin\botto} \co[1]\botto(1) \rTTo^{\check{f}} \cp[1](1) \rTTo^\bv \1 \bigr).
\end{equation*}
Here possibly non-vanishing components of $\check{f}$ are $f_1$ and $f_0=\und f\bfeta:\1\to\cp[1]$.
\end{proposition}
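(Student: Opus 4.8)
The plan is to establish three things: that $\sfBar f=\sfg=(\sfg_1,\sfg_0)$ is a well-defined morphism of $\CACoop$, that $\sfBar$ preserves identities, and that it preserves composition. The object-level statement \propref{pro-bar-construction} already presents $\Bbar\co=\bar\co[1]\botto$ and $\Bbar\cp=\bar\cp[1]\botto$ as the augmented cooperads associated by \propref{pro-aug-equiv-non-counital} with the cofree conilpotent non-counital cooperads $\bar\co[1]\bott$ and $\bar\cp[1]\bott$. Consequently $\sfg_1=\bar f_1\botto$ is nothing but the cofree extension of the collection map $\bar f_1:\bar\co[1]\to\bar\cp[1]$: since $\bott$ is an endofunctor of $\cv^\NN$ and $J:\bott\coalg\to\augcoop$ is a functor (\remref{rem-functor-Tcoalg-augcoop}), $\sfg_1=(\bar f_1\bott)J$ is automatically a morphism of graded augmented cooperads, preserving the augmentation $\sfw=\inj_\circ$ because it respects the $\circ$-summand. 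Thus the real content is that the pair $(\sfg_1,\sfg_0)$ satisfies the two equations \eqref{eq-morphism-curved-cooperads}, i.e. its compatibility with the coderivations $d^{\Bbar\co}$, $d^{\Bbar\cp}$ and the curvature functionals $\bdelt^{\Bbar\co}_0$, $\bdelt^{\Bbar\cp}_0$ produced in \propref{pro-bar-construction}.

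First I would treat the first equation of \eqref{eq-morphism-curved-cooperads}, which is valued in $\bar\cp[1]\botto$. Each of its summands is a $\sfg_1$-coderivation: $d^C\sfg_1$ and $\sfg_1 d^D$ by \exeref{exe-id-coderivations}, and the curvature-correction terms $\Delta_{T(x,1,z)}(\sfg_0\tens\sfg_1)$, $\Delta_{T(0,n,0)}(\sfg_1\tens\sfg_0)$ as the $\sfg_1$-analogue of the inner coderivations of \propref{pro-id-coderivation}. Hence the difference of the two sides is a $\sfg_1$-coderivation, and by \corref{cor-coderivations-are-in-bijection-with-homogeneous-maps} (with $X=\bar\cp[1]$ and the underlying $\bott$-coalgebra morphism $\bar f_1\bott$) it vanishes iff its corestriction $\cdot\,\pr_{\tau[n]}$ to the cogenerators $\bar\cp[1]$ vanishes. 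The second equation is already $\1$-valued and is checked directly on the tree summands of $\bar\co[1]\botto(1)$. In both cases only finitely many tree shapes contribute, because the components of $d^{\Bbar\co}$, $\sfg_0$ and $\bdelt_0$ are supported on trees with a bounded number of internal vertices, exactly as in the object-level proof.

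Next I would carry out this finite verification tree by tree, organized by the isomorphism class of $\tau$ ($\circ$, $\tau[n]$, $T(x,y,z)$, and where needed the three-vertex shapes \eqref{eq-tangle-3-vertices-(4)} and \eqref{eq-tangles-3-vertices}), exactly dualizing the computation that proves functoriality of $\Cobar$. The inputs are the shifted $\UCCOp$-morphism identities for $(f_1,\und f)$, namely $(f_1\tens f_1)b_{T(x,y,z)}=b_{T(x,y,z)}f_1$, $\bfeta f_1=\bfeta$, $b^\cp_0-\und f^2\bfeta=b^\co_0 f_1$ and $f_1b_1=b_1f_1+(n-1)\und f f_1$, together with the relations $[(1-\bv\bfeta)\tens(1-\bv\bfeta)]b_{T(x,y,z)}=b_{T(x,y,z)}-\delta_{x+z,0}\tens\bv+\delta_{y,1}\bv\tens1-\delta_{x+z,0}\delta_{y,1}(\bv\tens\bv)\bfeta$ recorded in the proof of \propref{pro-bar-construction}. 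The $\bv$-terms generated by $\bdelt^{\Bbar\co}_0$ and $\bdelt^{\Bbar\cp}_0$ match the $\und f$-contributions coming from the last morphism identity; the summands beginning with $\oin$ and landing in the $\circ$- or $\tau[n]$-component cancel in groups, and the rest cancel for structural reasons, just as in the $\Cobar$ case.

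Finally, functoriality is routine. For $f=(\id,0)$ one has $\bar f_1=\oin\opr=1_{\bar\co[1]}$ and $\oin\bv=0$, so $\sfBar(\id)=(\id,0)$. For a composite $h=f\cdot g$ with $h_1=f_1g_1$ and $\und h=\und g+\und f$, functoriality of $\bott$ and $J$ gives $\sfBar_1 f\cdot\sfBar_1 g=(\bar f_1\cdot\bar g_1)\botto$, and $\bar f_1\cdot\bar g_1=\bar h_1$ because the middle factor $\opr\oin=1-\bv\bfeta$ contributes nothing: $\bfeta g_1\opr=\bfeta\opr=0$ by $\bfeta g_1=\bfeta$. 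The functional component $\sfBar_0 h=\sfBar_0 f+\sfBar_1 f\cdot\sfBar_0 g$ then follows by expanding $\check f$ through $h_1=f_1g_1$, $f_0=\und f\bfeta$ and $\bfeta f_1=\bfeta$. The main obstacle is entirely the middle step: the sign- and combinatorics-sensitive tree-by-tree cancellation for the two morphism equations, with the curvature contributions correctly accounted; conceptually it is a dualization of the already-completed $\Cobar$ computation, but it must be pushed through each tree shape.
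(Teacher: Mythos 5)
Your proposal follows essentially the same route as the paper's proof: both reduce each of the two morphism equations to its corestriction onto the cogenerators using the fact that all summands are $\bar f$\nobreakdash-coderivations together with \corref{cor-coderivations-are-in-bijection-with-homogeneous-maps}, then verify the result tree by tree ($\circ$, $\tau[n]$, $T(x,y,z)$) from the shifted $\UCCOp$ identities and the $(1-\bv\bfeta)$ relations, and finally check identities and composition exactly as you outline (including the key cancellation $\oin f_1(1-\bv\bfeta)g_1\opr=\oin f_1g_1\opr$ via $\bfeta g_1\opr=0$). One minor simplification over your hedge: in these morphism-level equations both sides already vanish on trees with more than two internal vertices, so the three-vertex shapes \eqref{eq-tangle-3-vertices-(4)} and \eqref{eq-tangles-3-vertices} are never needed here --- they enter only the object-level \propref{pro-bar-construction}.
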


\begin{proof}
Given a morphism \(f=(f_1,\und f):\co\to\cp\in\UCCOp\) let us check that
\[ \sfBar f =\sfg =(\sfBar_1f=\sfg_1=\bar{f}=\bar f_1\botto,\sfBar_0f=\sfg_0=\check{f}\bv): \bar\co[1]\botto \to \bar\cp[1]\botto
\]
is indeed a morphism of $\CACoop$.
Notice that \(\sfw^{\Bbar\co}\bar f=\sfw^{\Bbar\cp}\).
It is required that
\[ d^{\Bbar\co}\bar f +\sum_{x+1+z=n}\Delta_{T(x,1,z)}(\sfg_0\tens\bar f) -\Delta_{T(0,n,0)}(\bar f\tens\sfg_0) =\bar fd^{\Bbar\cp}: \bar\co[1]\botto(n) \to \bar\cp[1]\botto(n).
\]
All parts of this equation are $\bar{f}$\n-coderivations.
By \corref{cor-coderivations-are-in-bijection-with-homogeneous-maps} the equation follows from its composition with $\pr_{\tau[n]}$: namely, for all \(\tau\in\tr(n)\)
\begin{align}
&\Bigl\langle \bigotimes_{v\in\IV(\tau)} \bar\co[1]|v| \rto{\bar b_\tau} \bar\co[1](n) \rTTo^{\bar f_1} \bar\cp[1](n) \Bigr\rangle \notag
\\
&+\sum_{x+1+z=n} \sum_{t\in\tr(1)}^{t\underset{x,1,z}\bull\tau[n]=\tau} \biggl\langle \Bigl( \bigotimes_{v\in\IV(t)} \bar\co[1]|v| \Bigr)\tens \bar\co[1](n) \rTTo^{\oin^{\tens\IV(t)}f_t\bv\tens\bar f_1} \bar\cp[1](n) \biggr\rangle \notag
\\
&-\sum_{t\in\tr(1)}^{\tau[n]\underset{0,n,0}\bull t=\tau} \biggl\langle \bar\co[1](n)\tens \bigotimes_{v\in\IV(t)} \bar\co[1]|v| \rTTo^{\bar f_1\tens\oin^{\tens\IV(t)}f_t\bv} \bar\cp[1](n) \biggr\rangle \notag
\\
&=\Bigl\langle \bigotimes_{v\in\IV(\tau)} \bar\co[1]|v| \rTTo^{\tens^{\IV(\tau)}\bar f_1} \bigotimes_{v\in\IV(\tau)}\bar\cp[1]|v| \rTTo^{\bar b_\tau} \bar\cp[1](n) \Bigr\rangle.
\label{eq-bf-ff-ff-fb-OP}
\end{align}
Both hand sides vanish unless \(|\IV(\tau)|\le2\).
Let us verify this equation case by case.

When $\tau=\circ$, the equation says that
\[ \bigl\langle \1 \rto{b_0} \co[1](1) \rto\opr \bar\co[1](1) \rto{\bar f_1} \bar\cp[1](1) \bigr\rangle =\bigl\langle \1 \rto{b_0} \cp[1](1) \rto\opr \bar\cp[1](1) \bigr\rangle,
\]
which follows from the computation
\[ b^\co_0(1-\bv^\co\bfeta^\co)f_1\opr =(b^\cp_0 -\und f^2\bfeta^\cp)\opr -b^\co_0\bv^\co\bfeta^\cp\opr =b^\cp_0\opr.
\]

When $\tau=\tau[n]$ equation~\eqref{eq-bf-ff-ff-fb-OP} claims that
\begin{multline*}
\bigl\langle \bar\co[1](n) \rTTo^{\bar b_{\tau[n]}} \bar\co[1](n) \rto{\bar f_1} \bar\cp[1](n) \bigr\rangle +\sum_{x+1+z=n} \bigl\langle \bar\co[1](n) \rTTo^{f_0\bv\tens\bar f_1} \bar\cp[1](n) \bigr\rangle
\\
-\bigl\langle \bar\co[1](n) \rTTo^{\bar f_1\tens f_0\bv} \bar\cp[1](n) \bigr\rangle =\bigl\langle \bar\co[1](n) \rto{\bar f_1} \bar\cp[1](n) \rTTo^{\bar b_{\tau[n]}} \bar\cp[1](n) \bigr\rangle,
\end{multline*}
which follows from the obvious
\[ b_1(1-\bv\bfeta)f_1\opr +(n-1)\und ff_1\opr =f_1(1-\bv\bfeta)b_1\opr.
\]

When $\tau=T(x,y,z)$, \(n=x+y+z\), equation~\eqref{eq-bf-ff-ff-fb-OP} becomes
\begin{multline*}
\bigl\langle \bar\co[1](y)\tens\bar\co[1](x+1+z) \rTTo^{\bar b_{T(x,y,z)}} \bar\co[1](n) \rto{\bar f_1} \bar\cp[1](n) \bigr\rangle
\\
\hfill +\delta_{y,1}\bigl\langle \bar\co[1](1)\tens\bar\co[1](n) \rTTo^{\oin f_1\bv\tens\bar f_1} \bar\cp[1](n) \bigr\rangle -\delta_{x+z,0}\bigl\langle \bar\co[1](n)\tens\bar\co[1](1) \rTTo^{\bar f_1\tens\oin f_1\bv} \bar\cp[1](n) \bigr\rangle \quad
\\
=\bigl\langle \bar\co[1](y)\tens\bar\co[1](x+1+z) \rTTo^{\bar f_1\tens\bar f_1} \bar\cp[1](y)\tens\bar\cp[1](x+1+z) \rTTo^{\bar b_{T(x,y,z)}} \bar\cp[1](n) \bigr\rangle,
\end{multline*}
which is proven by the computation:
\begin{multline*}
b_{T(x,y,z)}(1-\bv\bfeta)f_1\opr +\delta_{y,1}f_1\bv\tens f_1\opr -\delta_{x+z,0}(f_1\tens f_1\bv)\opr
\\
\hfill -(f_1\tens f_1)[(1-\bv\bfeta)\tens(1-\bv\bfeta)]b_{T(x,y,z)}\opr \quad
\\
=\delta_{y,1}f_1\bv\tens f_1\opr -\delta_{x+z,0}(f_1\tens f_1\bv)\opr -\delta_{y,1}(f_1\tens f_1)(\bv\tens1)\opr +\delta_{x+z,0}(f_1\tens f_1)(1\tens\bv)\opr =0.
\end{multline*}

The second equation to verify
\[ \bdelt^{\Bbar\co}_0 -d^{\Bbar\co}\sfg_0 -\Delta_{T(0,1,0)}(\sfg_0\tens\sfg_0) =\bar f\bdelt^{\Bbar\cp}_0: \bar\co[1]\botto(1)\to\1
\]
takes on a summand indexed by $\tau$ with \(|\Inp\tau|=1\) the form
\begin{align}
&-\Bigl\langle \bigotimes_{v\in\IV(\tau)} \bar\co[1]|v| \rTTo^{\oin^{\tens\IV(\tau)}b_\tau} \co[1](1) \rto\bv \1 \Bigr\rangle -\Bigl\langle \bigotimes_{v\in\IV(\tau)} \bar\co[1]|v| \rto{\bar b_\tau} \bar\co[1](1) \rTTo^{\oin f_1} \cp[1](1) \rto\bv \1 \Bigr\rangle \notag
\\
&-\sum_{t,\theta\in\tr(1)}^{t\underset{0,n,0}\bull\theta=\tau} \biggl\langle \bigotimes_{v\in\IV(\tau)} \bar\co[1]|v| \rto\sim  \Bigl( \bigotimes_{q\in\IV(t)} \bar\co[1]|q| \Bigr)\tens \Bigl( \bigotimes_{y\in\IV(\theta)} \bar\co[1]|y| \Bigr) \rTTo^{\oin^{\tens\IV(t)}f_t\bv\tens\oin^{\tens\IV(\theta)}f_\theta\bv} \1\tens\1 \biggr\rangle \notag
\\
&=-\Bigl\langle \bigotimes_{v\in\IV(\tau)} \bar\co[1]|v| \rTTo^{\tens^{\IV(\tau)}\bar f_1} \bigotimes_{v\in\IV(\tau)}\bar\cp[1]|v| \rTTo^{\oin^{\tens\IV(\tau)}b_\tau} \cp[1](1) \rto\bv \1 \Bigr\rangle.
\label{eq-bv-bfv-fvfv-fbv-OP}
\end{align}
Both hand sides vanish unless \(|\IV(\tau)|\le2\).
Let us prove the equation case by case.

When $\tau=\circ$, the equation becomes
\begin{multline*}
-\bigl\langle \1 \rto{b_0} \co[1](1) \rto\bv \1 \bigr\rangle -\bigl\langle \1 \rto{\bar b_0} \bar\co[1](1) \rTTo^{\oin f_1} \cp[1](1) \rto\bv \1 \bigr\rangle
\\
-\bigl\langle \1 =\1\tens\1 \rTTo^{\und f\tens\und f} \1\tens\1 =\1 \bigr\rangle =-\bigl\langle \1 \rto{b_0} \cp[1](1) \rto\bv \1 \bigr\rangle
\end{multline*}
which follows from
\[ -b^\co_0\bv^\co -b^\co_0(1-\bv^\co\bfeta^\co)f_1\bv^\cp -\und f^2 =-b^\co_0\bv^\co -b^\cp_0\bv^\cp +\und f^2\bfeta^\cp\bv^\cp +b^\co_0\bv^\co -\und f^2 =-b^\cp_0\bv^\cp.
\]

When $\tau=\tau[1]$ equation~\eqref{eq-bv-bfv-fvfv-fbv-OP} is
\begin{multline*}
-\bigl\langle \bar\co[1](1) \rTTo^{\oin b_1} \co[1](1) \rto\bv \1 \bigr\rangle -\bigl\langle \bar\co[1](1) \rto{\bar b_1} \bar\co[1](1) \rTTo^{\oin f_1} \cp[1](1) \rto\bv \1 \bigr\rangle
\\
\hfill -\bigl\langle \1\tens\bar\co[1](1) \rTTo^{\und f\tens\oin f_1\bv} \1\tens\1 \bigr\rangle -\bigl\langle \bar\co[1](1)\tens\1 \rTTo^{\oin f_1\bv\tens\und f} \1\tens\1 \bigr\rangle \quad
\\
=-\bigl\langle \bar\co[1](1) \rto{\bar f_1} \bar\cp[1](1) \rTTo^{\oin b_1} \cp[1](1) \rto\bv \1 \bigr\rangle.
\end{multline*}
This follows from the computation
\begin{multline*}
-b^\co_1\bv^\co -b^\co_1(1-\bv^\co\bfeta^\co)f_1\bv^\cp -\und f(f_1\bv) -(f_1\bv)\und f =-b^\co_1f_1\bv^\cp -\und f(f_1\bv) +\und f(f_1\bv)
\\
=-f_1b^\cp_1\bv^\cp =-f_1(1-\bv^\cp\bfeta^\cp)b^\cp_1\bv^\cp.
\end{multline*}

When $\tau=T(x,y,z)$, \(x+y+z=1\), equation~\eqref{eq-bv-bfv-fvfv-fbv-OP} becomes
\begin{multline*}
-\bigl\langle \bar\co[1](y)\tens\bar\co[1](x+1+z) \rTTo^{(\oin\tens\oin)b_{T(x,y,z)}} \co[1](1) \rto\bv \1 \bigr\rangle
\\
-\bigl\langle \bar\co[1](y)\tens\bar\co[1](x+1+z) \rTTo^{\bar b_{T(x,y,z)}} \bar\co[1](1) \rTTo^{\oin f_1} \cp[1](1) \rto\bv \1 \bigr\rangle
\\
\hfill -\delta_{y,1}\bigl\langle \bar\co[1](1)\tens\bar\co[1](1) \rTTo^{\oin f_1\bv\tens\oin f_1\bv} \1\tens\1 =\1 \bigr\rangle \quad
\\
=-\bigl\langle \bar\co[1](y)\tens\bar\co[1](x+1+z) \rTTo^{\bar f_1\tens\bar f_1} \bar\cp[1](y)\tens\bar\cp[1](x+1+z) \rTTo^{(\oin\tens\oin)b_{T(x,y,z)}} \cp[1](1) \rto\bv \1 \bigr\rangle.
\end{multline*}
This follows from the computation:
\begin{multline*}
-b_{T(x,y,z)}\bv^\co -b_{T(x,y,z)}(1-\bv^\co\bfeta^\co)f_1\bv^\cp -\delta_{y,1}f_1\bv^\cp\tens f_1\bv^\cp
\\
\hfill +(f_1\tens f_1)[(1-\bv^\cp\bfeta^\cp)\tens(1-\bv^\cp\bfeta^\cp)]b_{T(x,y,z)}\bv^\cp \quad
\\
\hskip\multlinegap =-(f_1\tens f_1)b_{T(x,y,z)}\bv^\cp -\delta_{y,1}f_1\bv^\cp\tens f_1\bv^\cp +(f_1\tens f_1)b_{T(x,y,z)}\bv^\cp \hfill
\\
+\delta_{y,1}f_1\bv^\cp\tens f_1\bv^\cp +\delta_{x+z,0}f_1\bv^\cp\tens f_1\bv^\cp -\delta_{x+z,0}\delta_{y,1}f_1\bv^\cp\tens f_1\bv^\cp =0.
\end{multline*}

The identity morphism $f=(\id,0)$ is mapped to the identity morphism $\sfBar f=(\id,0)$.
Let us verify that $\Bbar$ agrees with the composition.
If $h=fg$ in $\UCCOp$, \(h_1=f_1g_1\), \(\und h=\und f+\und g\), then $\bar h=\bar h_1\botto=\bar f_1\botto\bar g_1\botto=\bar f\bar g$, since
\[ \bar h_1 =\oin f_1g_1\opr =\oin f_1(1-\bv\bfeta)g_1\opr =\bar f_1\bar g_1.
\]
Furthermore,
\[ \sfBar_0f +(\sfBar_1f) \sfBar_0g =\sfBar_0h,
\]
that is,
\[ \oin\botto\check f\bv +\bar f(\oin\botto)\check g\bv =\oin\botto\check{h}\bv: \bar\co[1]\botto(1) \to \1.
\]
In fact, this equation has to be verified on two summands only.
For the summand indexed by $\tau=\circ$ it reduces to \(\und f+\und g=\und h:\1\to\1\), and for \(\tau=\tau[1]\) to
\[ \oin f_1\bv +\bar f_1\oin g_1\bv =\oin f_1\bv +\oin f_1(1-\bv\bfeta)g_1\bv =\oin f_1g_1\bv: \bar\co[1](1) \to \1.
\]
The functor \(\Bbar:\UCCOp\to\CACoop\) (the bar construction) is described.
\end{proof}

\section{Restrictions of bar and cobar constructions}
It is easy to see that the obtained functors $\Bbar$ and $\Cobar$ restrict to subcategories as the following diagram of functors and inclusions of categories shows:
\begin{diagram}[h=1.2em]
&\cCoop &&\cOp &
\\
&\uMono &&\uMono &
\\
\Cobar:{} &\ucCoop &\rTTo &\ucOp &
\\
&\uMono &&\uMono &
\\
\Cobar:{} &\CACoop &\pile{\lTTo\\ \rTTo} &\UCCOp &:\Bbar
\\
&\uMono &&\uMono &
\\
\Cobar:{} &\caCoop &\pile{\lTTo\\ \rTTo} &\uccOp &:\Bbar
\\
&\uMono &&\uMono &
\\
\Cobar:{} &\acCoop &\pile{\lTTo\\ \rTTo} &\ucdgOp &:\Bbar
\end{diagram}
Squares of this diagram with the same direction of opposite edges commute.

\chapter{Twisting cochains}
We use twisting cochains in order to prove that cobar and bar constructions are adjoint functors.
The remaining details are given of the example of an augmented curved cooperad $C$ whose $\Cobar C$ is the operad of homotopy unital \ainf-algebras.

\section{Definition and properties of twisting cochains}
\begin{definition}
A degree~1 map \(\theta:C\to\co\) between a curved cooperad $(C,\sfw)$ and a curved operad $\co$ that satisfies 
\begin{multline}
\bigl\langle C(n) \rto d C(n) \rto\theta \co(n) \bigr\rangle
+\bigl\langle C(n) \rto\theta \co(n) \rto d \co(n) \bigr\rangle
\\
+\sum_{x+y+z=n} \bigl\langle C(n) \rTTo^{\Delta_{T(x,y,z)}} C(y)\tens C(x+1+z) \rto{\theta\tens\theta} \co(y)\tens\co(x+1+z) \rto{\underset{x,y,z}\bull} \co(n) \bigr\rangle
\\
=\delta_{n,1}\bigl\langle C(n) \rto\eps \1 \rto{m_0} \co(n) \bigr\rangle
+\delta_{n,1}\bigl\langle C(n) \rto{\bdelt_0} \1 \rto\eta \co(n) \bigr\rangle
\label{eq-CCO-COO-CCCOOO}
\end{multline}
is called a\index{TTindex}{twisting cochain} \emph{twisting cochain}, cf. Berger and Moerdijk \cite[Section~8.5.3]{math/0502155}.
The summands containing curvature are similar to that of Positselski \cite{0905.2621} for algebra and coalgebra case.
The set of twisting cochains is denoted \(\Tw(C,A)\).
\end{definition}

Assume additionally that $C$ is counit-complemented.
The reason to consider twisting cochains is given by the following diagram where the dashed arrow has to be constructed
\begin{diagram}[LaTeXeqno]
\cOp(\Cobar C,\co) &\rDashTo &\Tw(C,\co) &\rMono &\und{\cv^\NN}(C,\co)^1
\\
\dMono &&&&\uTTo>\wr<{\sigma^{-1}\cdot-}
\\
\Op(\bar{C}[-1]\TT,\co)\times\co(1)^1 &\rTTo_\sim^{i_1} &\cv^\NN(\bar{C}[-1],\co)\times\cv^\NN(\1[-1],\co) &\rTTo_\sim^{i_2} &\cv^\NN(C[-1],\co)
\label{dia-cOp-C-1T-O}
\end{diagram}
The bijection $i_2$ is due to direct sum decomposition $C=\bar C\oplus\1$ and bijection $i_1$ describes homomorphisms from a free algebra.
A pair of morphisms of $\cv^\NN$ \((\check\sff_1:\bar{C}[-1]\to\co,\sigma\sff_0:\1[-1]\to\co)\) is mapped to a degree~1 morphism
\[ \sigma^{-1}[(\check\sff_1,\sigma\sff_0)i_2] =\theta=
\Bigl(
\begin{matrix}
\sigma^{-1}\check\sff_1
\\
\sff_0
\end{matrix}
\Bigr): \bar C\oplus\1 \to \co.
\]

\begin{proposition}\label{pro-unique-top-horizontal-bijections}
There is unique bijection in place of dashed arrow in diagram~\eqref{dia-cOp-C-1T-O} which makes the diagram commutative.
\end{proposition}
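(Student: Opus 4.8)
The plan is to show that the bottom-left injection $\cOp(\Cobar C,\co)\hookrightarrow\Op(\bar{C}[-1]\TT,\co)\times\co(1)^1$ together with the chain of bijections $i_1$, $i_2$ and $\sigma^{-1}\cdot-$ identifies the image of $\cOp(\Cobar C,\co)$ inside $\und{\cv^\NN}(C,\co)^1$ with precisely the set $\Tw(C,\co)$ of twisting cochains. Concretely, a morphism of curved operads $\sff=(\sff_1,\sff_0)\colon\Cobar C=\bar C[-1]\TT\to\co$ is the same datum as an operad homomorphism $\sff_1$ out of a free operad together with the element $\sff_0\in\co(1)^1$; by $i_1$ the homomorphism $\sff_1$ corresponds to its restriction $\check\sff_1\colon\bar C[-1]\to\co$ to the generators. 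First I would read off from these bijections the composite degree~1 map $\theta=\binom{\sigma^{-1}\check\sff_1}{\sff_0}\colon\bar C\oplus\1=C\to\co$, and then prove that $\theta$ is a twisting cochain \emph{if and only if} the pair $(\sff_1,\sff_0)$ satisfies the defining equations \eqref{eq-OPP-OPPPP} and \eqref{eq-1P-1PP-111PPP-1OP} of a morphism in $\cOp$.

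The heart of the argument is therefore an equivalence of equations, carried out on generators. Since $\Cobar C$ is a \emph{free} operad on $\bar C[-1]$, an operad homomorphism $\sff_1$ is determined by $\check\sff_1$ and is automatically compatible with multiplication; what must be checked is the compatibility with the differential $d^{\Cobar C}=\bar\xi$ and with the curvature $m_0^{\Cobar C}$, namely equations \eqref{eq-OPP-OPPPP} and \eqref{eq-1P-1PP-111PPP-1OP}. Both sides of \eqref{eq-OPP-OPPPP} are $\sff_1$-derivations of the free operad, so this equation is equivalent to its restriction to the generators $\bar C[-1](n)$. I would compute that restriction using the explicit components $\bar\xi_{T(x,y,z)}$, $\bar\xi_{\tau[n]}$, $\bar\xi_\circ$ of the cobar differential described in \propref{pro-map-curved-cooperad-curved-operad}, and show that the three families of terms produced — coming from the binary part $\bar\xi_{T(x,y,z)}$, the unary part $\bar\xi_{\tau[n]}$, and the nullary part $\bar\xi_\circ$ — reassemble, after passing through the shift isomorphism $\sigma^{-1}\cdot-$ and the decomposition $C=\bar C\oplus\1$, into exactly the arity-$n$ component of the twisting-cochain equation \eqref{eq-CCO-COO-CCCOOO}. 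The terms $\bar\xi_{\tau[n]}$ and $\bar\xi_\circ$ together with $\sff_0$ account for the $d\theta$, $\theta d$ pieces and the curvature corrections $\delta_{n,1}\eps m_0$ and $\delta_{n,1}\bdelt_0\eta$, while $\bar\xi_{T(x,y,z)}$ produces the quadratic $\sum_{x+y+z=n}\Delta_{T(x,y,z)}(\theta\tens\theta)\mu$ term. The second morphism equation \eqref{eq-1P-1PP-111PPP-1OP}, living in $\co(1)^1$, I expect to correspond to the arity-$1$ instance of the same twisting-cochain equation, the curvature element $m_0^{\Cobar C}$ of \propref{pro-map-curved-cooperad-curved-operad} unwinding precisely into the curvature terms of \eqref{eq-CCO-COO-CCCOOO}.

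The bookkeeping with signs and shifts is where the real obstacle lies. The cobar differential is written in terms of the shifted signless operations $\xi_{T(x,y,z)}$, $\xi_1$, $\xi_0$ on $C[-1]$ and the signless multiplications $b_{T(x,y,z)}$, $b_1$, $b_0$ on $\co[1]$, whereas \eqref{eq-CCO-COO-CCCOOO} is stated with the unshifted $\Delta_{T(x,y,z)}$, $d$, $\underset{x,y,z}\bull$, $\eps$, $m_0$, $\bdelt_0$. The main difficulty will be to verify that conjugating by $\sigma$ and $\sigma^{-1}$, and splitting off the counit via $\opr$, $\oin$, converts the signless shifted identities into the signed unshifted twisting-cochain equation with exactly the coefficients shown, in particular that the projections $\opr\tens\opr$ and injections $\oin$ appearing in $\bar\xi_{T(x,y,z)}$ do not alter the equation once one sums over all $n$ and recombines the $\bar C$- and $\1$-components of $\theta$. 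Because $\sff_1$ is recovered from $\check\sff_1$ by the free-operad bijection $i_1$, and $\theta$ is recovered from $(\check\sff_1,\sff_0)$ by the composite $i_2\cdot(\sigma^{-1}\cdot-)$ which is already a bijection, the equivalence of the two systems of equations shows that the dashed arrow $\cOp(\Cobar C,\co)\to\Tw(C,\co)$ is well defined and bijective; uniqueness is forced by commutativity of the right-hand square, since the composite down-and-across map into $\und{\cv^\NN}(C,\co)^1$ is injective.
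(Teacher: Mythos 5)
Your proposal is correct and follows essentially the same route as the paper's proof: reduce equation \eqref{eq-OPP-OPPPP} to the generators $\bar C[-1]$ using the fact that both sides are $\sff_1$\nobreakdash-derivations, match the resulting system (together with the curvature equation \eqref{eq-1P-1PP-111PPP-1OP}) against the twisting-cochain equation \eqref{eq-CCO-COO-CCCOOO} split along the decomposition $C=\bar C\oplus\1$ (precomposition with $\oin$ and $\sfw$), and deduce uniqueness from injectivity of $\Tw(C,\co)\hookrightarrow\und{\cv^\NN}(C,\co)^1$. The only slight imprecision is calling \eqref{eq-1P-1PP-111PPP-1OP} the ``arity-$1$ instance'' of \eqref{eq-CCO-COO-CCCOOO} — it is only its $\sfw$-component, the $\bar C(1)$-component belonging to \eqref{eq-OPP-OPPPP} at $n=1$ — but this does not affect the argument.
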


\begin{proof}
Denote by \(\sff_1:\bar{C}[-1]\TT\to\co\) morphism coming from $\check\sff_1$.
We are going to show that system of equations \eqref{eq-OPP-OPPPP}--\eqref{eq-1P-1PP-111PPP-1OP} on \((\sff_1,\sff_0)\) is equivalent to equation \eqref{eq-CCO-COO-CCCOOO} on
\[ \theta=
\Bigl(
\begin{matrix}
\oin\theta
\\
\sfw\theta
\end{matrix}
\Bigr)=
\Bigl(
\begin{matrix}
\sigma^{-1}\check\sff_1
\\
\sff_0
\end{matrix}
\Bigr).
\]
Three combinations of terms of \eqref{eq-OPP-OPPPP} are $\sff_1$\n-derivations $\bar{C}[-1]\TT\to\co$, hence, the equation is equivalent to its precomposition with \(\inj_1:\bar{C}[-1]\hookrightarrow\bar{C}[-1]\TT\).
System \eqref{eq-OPP-OPPPP}--\eqref{eq-1P-1PP-111PPP-1OP} becomes
\begin{equation}
\begin{split}
\check\sff_1d^\co +(\check\sff_1\tens\sff_0)m_{T(0,n,0)}^\co -\sum_{x+1+z=n}(\sff_0\tens\check\sff_1)m_{T(x,1,z)}^\co &=\check d^{\Cobar C}\sff_1: \bar{C}[-1](n)\to \co(n),
\\
m^\co_0 -\sff_0d^\co -(\sff_0\tens\sff_0)m_{T(0,1,0)}^\co &=m^{\Cobar C}_0\sff_1: \1 \to \co(1).
\end{split}
\label{eq-fm-mf-m-mf}
\end{equation}
In more detail equations~\eqref{eq-fm-mf-m-mf} read
\begin{subequations}
\begin{align}
\check\sff_1d^\co &+(\check\sff_1\tens\sff_0)m_{T(0,n,0)}^\co -\sum_{x+1+z=n}(\sff_0\tens\check\sff_1)m_{T(x,1,z)}^\co \notag
\\
&=\delta_{n,1}\bar\xi_\circ\eta^\co +\bar\xi_{\tau[n]}\check\sff_1 +\sum_{x+y+z=n}\bar\xi_{T(x,y,z)} (\check\sff_1\tens\check\sff_1)m_{T(x,y,z)}^\co: \bar{C}[-1](n)\to \co(n),
\\
m^\co_0 &-\sff_0d^\co -(\sff_0\tens\sff_0)m_{T(0,1,0)}^\co =-\bw\xi_0\eta^\co -\bw\xi_1\opr\check\sff_1 \notag
\\
&-\sum_{x+y+z=1}\bw\xi_{T(x,y,z)}(\opr\check\sff_1\tens\opr\check\sff_1)m_{T(x,y,z)}^\co: \1 \to \co(1).
\end{align}
\label{eq-fm-xe-xf-xffm}
\end{subequations}

We have
\begin{equation}
\theta=(\opr\oin+\eps\sfw)\theta =\opr\sigma^{-1}\check\sff_1+\eps\sff_0: C\to\co.
\label{eq-theta=(prinepsw)theta}
\end{equation}
Equation~\eqref{eq-CCO-COO-CCCOOO} can be rewritten as
\begin{equation*}
-\sigma\theta d^\co +\xi_1\sigma\theta +\xi_0\eta +\beps m_0 +\sum_{x+y+z=n}\xi_{T(x,y,z)}(\sigma\theta\tens\sigma\theta)m_{T(x,y,z)} =0: C[-1] \to \co.
\end{equation*}
Let us plug in it the expression \(\sigma\theta=\opr\check\sff_1+\beps\sff_0:C[-1]\to\co\).
The obtained equation is
\begin{multline}
-\opr\check\sff_1d^\co -\beps\sff_0d^\co +\xi_1\opr\check\sff_1 +\xi_0\eta +\beps m_0 +\sum_{x+y+z=n}\xi_{T(x,y,z)}(\opr\check\sff_1\tens\opr\check\sff_1)m_{T(x,y,z)}
\\
+\sum_{x+1+z=n}(\sff_0\tens\opr\check\sff_1)m_{T(x,1,z)} -(\opr\check\sff_1\tens\sff_0)m_{T(0,n,0)} -\delta_{n,1}\beps(\sff_0\tens\sff_0)m_{T(0,1,0)} =0:
\\
C[-1](n)\to \co(n).
\label{eq-dovge-theta-ff}
\end{multline}
This equation is equivalent to the system of two equations obtained by precomposing \eqref{eq-dovge-theta-ff} with \(\oin:\bar C[-1]\hookrightarrow C[-1]\) and \(\bw:\1\to C[-1](1)\).
One verifies immediately that these two equations are nothing else but (\ref{eq-fm-xe-xf-xffm}a) and (\ref{eq-fm-xe-xf-xffm}b).
\end{proof}

Assume that $C$ is a curved augmented cooperad and $\co$ is a unit-complemented curved operad.
We are going to close up the following diagram of sets
\begin{diagram}[LaTeXeqno]
\{ \theta\in\Tw(C,\co) \mid \sfw\theta\opr =0 \} &\lDashTo &\CACoop(C,\Bbar\co)
\\
\dMono &&\dMono
\\
\{ \theta\in\und{\cv^\NN}(C,\co)^1 \mid \sfw\theta\opr =0 \} &&\nucoop(\bar C,\bar\co[1]\bott)\times\und{\cv^\NN}(C,\1)^1
\\
\uTTo<{-\cdot\sigma^{-1}}>\wr &&\dTTo>{i_3}<\wr
\\
\{ \theta\sigma\in\cv^\NN(C,\co[1]) \mid \sfw\theta\sigma\opr =0 \} &&\cv^\NN(\bar C,\bar\co[1])\times\cv^\NN(C,\1[1])
\\
\uTTo>\wr &&\dTTo<\wr
\\
\cv^\NN(\bar C,\co[1])\times\cv^\NN(\1,\1[1]) &\lTTo_\sim &\cv^\NN(\bar C,\bar\co[1])\times\cv^\NN(\bar C,\1[1])\times\cv^\NN(\1,\1[1])
\label{dia-C-1T-A-g-caCoop}
\end{diagram}
Here we use that \(\augcoop(C,\bar\co[1]\botto)\simeq\nucoop(\bar C,\bar\co[1]\bott)\), isomorphism $i_3$ is due to description of morphisms to cofree conilpotent cooperad.
The remaining isomorphisms come from decompositions \(C=\bar C\oplus\1\), \(\co=\bar\co\oplus\1\).
A pair of morphisms of $\cv^\NN$ \((\check\sfg_1:\bar C\to\bar\co[1],\sfg_0\sigma:C\to\1[1])\) is mapped to a degree~1 map
\begin{equation}
\theta\sigma=
\Bigl(
\begin{matrix}
\check\sfg_1 & \oin\sfg_0\sigma
\\
0 & \sfw\sfg_0\sigma
\end{matrix}
\Bigr)
: \bar C\oplus\1 \to \bar\co[1]\oplus\1[1],
\label{eq-theta-sigma-C1-O1}
\end{equation}
where the second column is $\sfg_0\sigma$.

\begin{proposition}\label{pro-unique-top-horizontal-bijection}
There is a unique top horizontal bijection in diagram~\eqref{dia-C-1T-A-g-caCoop} which makes the diagram commutative.
\end{proposition}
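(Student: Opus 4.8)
The plan is to mirror the proof of \propref{pro-unique-top-horizontal-bijections} as closely as possible, exploiting the fact that both involve matching a twisting cochain against a morphism into (resp.\ out of) a bar/cobar construction via the universal property of a (co)free (co)operad. First I would observe that all arrows in diagram~\eqref{dia-C-1T-A-g-caCoop} except the top one are already established bijections, so the content of the statement is purely that the subset of twisting cochains $\{\theta\in\Tw(C,\co)\mid\sfw\theta\opr=0\}$ corresponds, under the composite of the vertical and lower-horizontal arrows, precisely to the image of $\CACoop(C,\Bbar\co)$. Thus the task reduces to showing that the conditions defining a morphism $\sfg=(\sfg_1,\sfg_0):C\to\Bbar\co$ in $\CACoop$ are equivalent to the single twisting-cochain equation \eqref{eq-CCO-COO-CCCOOO} for the associated $\theta$ given by \eqref{eq-theta-sigma-C1-O1}, together with the side condition $\sfw\theta\opr=0$.

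The key computational step is to translate the defining equations of a $\CACoop$-morphism into $\Bbar\co=\bar\co[1]\botto$ into equations on $\theta$. Since $\Bbar\co$ is a cofree conilpotent augmented cooperad (up to the coderivation and curvature), I would use \corref{cor-coderivations-are-in-bijection-with-homogeneous-maps} to reduce the cooperad-morphism condition on $\sfg_1$ to its composite with $\pr_{\tau[n]}$, exactly as was done in the proof of \propref{pro-bar-construction-functor}. Concretely, a morphism of augmented cooperads $C\to\bar\co[1]\botto$ is determined by the map $\check\sfg_1:\bar C\to\bar\co[1]$ (the $i_3$ description of morphisms into a cofree conilpotent cooperad), and the two equations \eqref{eq-morphism-curved-cooperads} of a curved-cooperad morphism then collapse, after postcomposition with $\pr_{\tau[n]}$ and use of the explicit $d^{\Bbar\co}=\bar b$ and $\bdelt_0^{\Bbar\co}$ from \propref{pro-bar-construction}, to equations in $\und{\cv^\NN}(C,\co)^1$. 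I expect these to be exactly equation~\eqref{eq-CCO-COO-CCCOOO} split into its $\oin:\bar C[-1]\hookrightarrow C[-1]$ and $\bw:\1\to C[-1](1)$ components, in direct analogy with how \eqref{eq-dovge-theta-ff} splits into (\ref{eq-fm-xe-xf-xffm}a) and (\ref{eq-fm-xe-xf-xffm}b) in the previous proposition. The substitution $\sigma\theta=\opr\check\sfg_1+\beps\sfg_0$ analogous to \eqref{eq-theta=(prinepsw)theta}, combined with the binary-cooperation identities \eqref{dia-cooperad-3-OOOOOOO}, \eqref{dia-cooperad-4-OOOOO}, should convert the $\Delta_{T(x,y,z)}$-terms of \eqref{eq-CCO-COO-CCCOOO} into the grafting $\underset{x,y,z}\bull$-terms appearing in $\bar b_{T(x,y,z)}$.

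I would then check that the extra constraints defining $\CACoop$ (as opposed to the larger $\cCoop$) correspond exactly to the side conditions on $\theta$. The augmentation-preservation of $\sfg_1$, i.e.\ $\sfw^C\sfg_1=\sfw^{\Bbar\co}=\inj_\circ$, together with the form \eqref{eq-theta-sigma-C1-O1} of $\theta\sigma$ whose lower-left entry is $0$, is what encodes the requirement $\sfw\theta\opr=0$ appearing in the top-left set; this is why the matrix \eqref{eq-theta-sigma-C1-O1} has a zero in that slot. The hard part will be the bookkeeping in the tree-indexed sums: verifying that only trees $\tau$ with $|\IV(\tau)|\le2$ contribute on both sides after postcomposition with $\pr_{\tau[n]}$, and that the signs and the $\delta$-terms coming from the idempotents $1-\beps\bw$ and $1-\bv\bfeta$ match up, precisely as in the case-by-case verification carried out in \propref{pro-bar-construction-functor} and \propref{pro-map-curved-cooperad-curved-operad}. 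Once the equivalence of equations is established on both the $\bar C$ and $\1$ summands, injectivity and surjectivity of the top arrow follow formally from the fact that every other arrow in \eqref{dia-C-1T-A-g-caCoop} is a bijection and $\theta$ is recovered from $(\check\sfg_1,\sfg_0\sigma)$ by the explicit formula, so the dashed bijection exists and is unique, completing the proof.
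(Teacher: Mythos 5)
Your overall architecture coincides with the paper's: reduce the $\CACoop$-morphism equations \eqref{eq-morphism-curved-cooperads} for $\sfg=(\sfg_1,\sfg_0):C\to\Bbar\co$ to their ``checked'' form via \corref{cor-coderivations-are-in-bijection-with-homogeneous-maps}, substitute the matrix form \eqref{eq-theta-sigma-C1-O1} of $\theta$ into the twisting-cochain equation \eqref{eq-CCO-COO-CCCOOO}, and match the two systems; existence and uniqueness of the dashed arrow then follow formally, since all the other arrows of \eqref{dia-C-1T-A-g-caCoop} are bijections. Your observation that the zero entry of \eqref{eq-theta-sigma-C1-O1} is what encodes the side condition $\sfw\theta\opr=0$ is also correct.

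There is, however, one concrete misstep: you propose to split \eqref{eq-CCO-COO-CCCOOO} by \emph{pre}composing with $\oin:\bar C[-1]\hookrightarrow C[-1]$ and $\bw:\1\to C[-1](1)$, ``in direct analogy'' with the proof of \propref{pro-unique-top-horizontal-bijections}. That analogy dualizes the wrong way, and the matching would fail as stated. After reduction via \corref{cor-coderivations-are-in-bijection-with-homogeneous-maps}, the two $\CACoop$-equations are maps $C(n)\to\bar\co[1](n)$ and $C(1)\to\1$: their \emph{source} is all of $C$, and it is their \emph{targets} that are the two direct summands of $\co[1]=\bar\co[1]\oplus\1[1]$. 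Hence the correct decomposition of \eqref{eq-CCO-COO-CCCOOO}, written for $\theta\sigma=\opr\check\sfg_1\oin+\sfg_0\bfeta:C\to\co[1]$, is by \emph{post}composition with $\opr:\co[1](n)\to\bar\co[1](n)$ and $\bv:\co[1](1)\to\1$, which is what the paper does. The flip is forced by the duality between the two propositions: on the cobar side the morphism leaves a free operad, so its equations are determined by restriction to generators and then split along the source $C=\bar C\oplus\1$; on the bar side the morphism lands in a cofree cooperad, so its equations are determined by projection to the cogenerators $\bar\co[1]$ and then split along the target $\co=\bar\co\oplus\1$. A lesser point: the coassociativity identities \eqref{dia-cooperad-3-OOOOOOO}, \eqref{dia-cooperad-4-OOOOO} you invoke are not needed; the only nontrivial identification is $\opr\bar\Delta_{T(x,y,z)}=\Delta_{T(x,y,z)}(\opr\tens\opr)$, which holds because the correction term produced by the idempotent $1-\eps\sfw$ vanishes. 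With these corrections your argument becomes the paper's proof.
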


\begin{proof}
Denote by \(\sfg_1:C\to\bar\co[1]\botto\in\augcoop\) and \(\bar\sfg_1:\bar C\to\bar\co[1]\bott\in\nucoop\) morphisms coming from \(\check\sfg_1:\bar C\to\bar\co[1]\in\cv^\NN\).
Extend the latter notation to map
\[ \check\sfg_1 =\opr\check\sfg_1=
\Bigl(
\begin{matrix}
\check\sfg_1
\\
0
\end{matrix}
\Bigr)
:C =\bar C\oplus\1 \to \bar\co[1].
\]
Let us show that system of equations \eqref{eq-morphism-curved-cooperads} on \((\sfg_1,\sfg_0)\) rewritten with the use of \corref{cor-coderivations-are-in-bijection-with-homogeneous-maps} as
\begin{align}
d^C\check\sfg_1 +\sum_{x+1+z=n}\Delta_{T(x,1,z)} (\sfg_0\tens\check\sfg_1) -\Delta_{T(0,n,0)}(\check\sfg_1 &\tens\sfg_0) =\sfg_1\check d^{\Bbar\co}: C(n)\to \bar \co[1](n), \notag
\\
\bdelt^C_0 -d^C\sfg_0 -\Delta_{T(0,1,0)}(\sfg_0\tens\sfg_0) &=\sfg_1\bdelt^{\Bbar\co}_0: C(1)\to \1,
\label{eq-dg-dg11gg-gd-d-dg}
\end{align}
is equivalent to equation \eqref{eq-CCO-COO-CCCOOO} for $\theta$ given by \eqref{eq-theta-sigma-C1-O1}.
Expanded form of \eqref{eq-dg-dg11gg-gd-d-dg} is
\begin{align}
&d^C\check\sfg_1 +\sum_{x+1+z=n}\Delta_{T(x,1,z)} (\sfg_0\tens\check\sfg_1) -\Delta_{T(0,n,0)}(\check\sfg_1\tens\sfg_0) \notag
\\
&=\delta_{n,1}\eps^C\bar b^\co_0 +\check\sfg_1\bar b_{\tau[n]} +\sum_{x+y+z=n}\opr\bar\Delta_{T(x,y,z)}(\check\sfg_1\tens\check\sfg_1)\bar b_{T(x,y,z)}: C(n)\to \bar\co[1](n),
\label{eq-dg-dg11gg-ebp-pgbp-dpgpgbp-n}
\\
&\bdelt^C_0 -d^C\sfg_0 -\Delta_{T(0,1,0)}(\sfg_0\tens\sfg_0) \notag
\\
&=-\eps^Cb^\co_0\bv -\check\sfg_1\oin b^\co_1\bv -\sum_{x+y+z=1}\opr\bar\Delta_{T(x,y,z)}(\check\sfg_1\oin\tens\check\sfg_1\oin)b_{T(x,y,z)}\bv: C(1)\to \1.
\label{eq-dg-dg11gg-ebp-pgbp-dpgpgbp-1}
\end{align}

Now let us plug in equation \eqref{eq-CCO-COO-CCCOOO} the expression
\[ \theta\sigma =(\opr\oin+\eps\sfw)\theta\sigma(\opr\oin+\bv\bfeta) =\opr\check\sfg_1\oin +\opr\oin\sfg_0\bfeta +\eps\sfw\sfg_0\bfeta =\opr\check\sfg_1\oin +\sfg_0\bfeta: C \to \co[1],
\]
rewriting the equation preliminarily as
\begin{equation*}
-d\theta\sigma +\theta\sigma b_1 +\sum_{x+y+z=n}\Delta_{T(x,y,z)}(\theta\sigma\tens\theta\sigma)b_{T(x,y,z)} +\delta_{n,1}\eps b_0 +\delta_{n,1}\bdelt_0\bfeta =0: C(n) \to \co[1](n).
\end{equation*}
We get
\begin{multline*}
-d\opr\check\sfg_1\oin -d\sfg_0\bfeta +\opr\check\sfg_1\oin b_1
+\sum_{x+y+z=n}\Delta_{T(x,y,z)}(\opr\check\sfg_1\oin\tens\opr\check\sfg_1\oin)b_{T(x,y,z)}
\\
+\Delta_{T(0,n,0)}(\opr\check\sfg_1\oin\tens\sfg_0) -\sum_{x+1+z=n}\Delta_{T(x,1,z)} (\sfg_0\tens\opr\check\sfg_1\oin)
\\
-\delta_{n,1}\Delta_{T(0,1,0)}(\sfg_0\tens\sfg_0)\bfeta +\delta_{n,1}\eps b_0 +\delta_{n,1}\bdelt_0\bfeta =0: C(n)\to \co[1](n),
\end{multline*}
Postcomposing this equation with \(\opr:\co[1](n)\to\bar\co[1](n)\) and with \(\bv:\co[1](1)\to\1\) we get, respectively, \eqref{eq-dg-dg11gg-ebp-pgbp-dpgpgbp-n} and \eqref{eq-dg-dg11gg-ebp-pgbp-dpgpgbp-1}.
For this identification we use the following computation:
\begin{multline*}
\opr\bar\Delta_{T(x,y,z)} =\opr\oin\Delta_{T(x,y,z)}(\opr\tens\opr) =(1-\eps\sfw)\Delta_{T(x,y,z)}(\opr\tens\opr)
\\
=\Delta_{T(x,y,z)}(\opr\tens\opr) -\eps\chi(x=z=0,y=1)(\sfw\opr\tens\sfw\opr) =\Delta_{T(x,y,z)}(\opr\tens\opr).
\end{multline*}
Thus, the required bijection is constructed.
\end{proof}

\begin{example}
For any counit-complemented curved cooperad $C$ there is a twisting cochain
\[ \theta =\bigl( C \rTTo^\opr \bar C \rTTo^{\sigma^{-1}} \bar C[-1] \rTTo^{\eta_\TT} \bar{C}[-1]\TT =\Cobar C \bigr),
\]
corresponding to \(\id_{\Cobar C}\in\cOp\), see diagram~\eqref{dia-cOp-C-1T-O} with \(\co=\Cobar C\).

For any unit-complemented curved operad $\co$ there is a twisting cochain
\[ \theta =\bigl( \Bbar\co =\bar\co[1]\botto \rTTo^{\eps_{\botto}} \bar\co[1] \rTTo^{\sigma^{-1}} \bar\co \rTTo^\oin \co \bigr),
\]
see diagram~\eqref{dia-C-1T-A-g-caCoop} with \(C=\Bbar\co\).
It corresponds to \(\id_{\Bbar\co}\in\CACoop\).
\end{example}

\begin{proposition}
The actions
\begin{alignat*}3
\cCoop(C,D) \times \Tw(D,\co) &\longrightarrow \Tw(C,\co), &\quad (\sfg,\theta) &\longmapsto \sfg\rightharpoonup\theta &&=\sfg_1\theta +\sfg_0\eta,
\\
\Tw(C,\co) \times \cOp(\co,\cp) &\longrightarrow \Tw(C,\cp), &\quad (\theta,\sff) &\longmapsto \theta\leftharpoonup\sff &&=\theta\sff_1 +\eps\sff_0
\end{alignat*}
make $\Tw$ into a $\cCoop\text-\cOp$-bimodule, in other words into a functor \(\Tw:\cCoop^\op\) \(\times\cOp\to\Set\).
\end{proposition}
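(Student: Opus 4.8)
The statement is that the two partial actions $\sfg\rightharpoonup\theta$ and $\theta\leftharpoonup\sff$ are well-defined (land in twisting cochains) and assemble into a bimodule structure, i.e.\ a functor $\Tw:\cCoop^\op\times\cOp\to\Set$. There are three things to verify: (i) each action preserves the twisting-cochain equation \eqref{eq-CCO-COO-CCCOOO}; (ii) the two actions are associative and unital separately (giving the left and right module structures); and (iii) the two actions commute, so that together they form a bimodule. The plan is to reduce everything to the shifted, signless formulation of the defining equations already set up in the excerpt, because the actions are then linear expressions in the component data and the verifications become direct substitutions.

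\textbf{Step 1: Well-definedness of $\sfg\rightharpoonup\theta$.} First I would rewrite \eqref{eq-CCO-COO-CCCOOO} for $\sfg\rightharpoonup\theta=\sfg_1\theta+\sfg_0\eta$ and expand using that $\sfg=(\sfg_1,\sfg_0)$ is a morphism of curved cooperads, so it satisfies \eqref{eq-morphism-curved-cooperads}, and $\theta$ is a twisting cochain for $D$. The key is that $\sfg_1$ is a coalgebra morphism, hence commutes with comultiplications $\Delta_{T(x,y,z)}$, and the curvature terms $\sfg_0$ interact with $d^C,d^D$ exactly through the second line of \eqref{eq-morphism-curved-cooperads}. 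I would substitute, group the terms of \eqref{eq-CCO-COO-CCCOOO} for $C$ into those coming from the $D$-twisting-cochain equation precomposed with $\sfg_1$ plus correction terms, and check the corrections cancel by \eqref{eq-morphism-curved-cooperads}; the summand $\sfg_0\eta$ contributes precisely the $\bdelt_0^C$ and curvature adjustments. This mirrors the cancellation pattern already carried out in the verification that composition in $\cCoop$ is well-defined.

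\textbf{Step 2: Well-definedness of $\theta\leftharpoonup\sff$.} Dually, for $\theta\leftharpoonup\sff=\theta\sff_1+\eps\sff_0$ I would use that $\sff=(\sff_1,\sff_0)$ is a morphism of curved operads satisfying \eqref{eq-OPP-OPPPP} and \eqref{eq-1P-1PP-111PPP-1OP}, and that $\sff_1$ commutes with the operadic multiplications $\underset{x,y,z}\bull$. Substituting into \eqref{eq-CCO-COO-CCCOOO} for $\cp$, the differential term $\theta d^\cp$ and the multiplication term produce, after using \eqref{eq-OPP-OPPPP}, the $\cp$-version equalling the $\co$-twisting equation postcomposed with $\sff_1$ plus curvature corrections governed by $\sff_0$; the term $\eps\sff_0$ supplies the $m_0^\cp$ adjustment via \eqref{eq-1P-1PP-111PPP-1OP}. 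Again the structure is identical to the known well-definedness of composition in $\cOp$, with $\theta$ playing the role of a ``first argument''.

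\textbf{Step 3: Functoriality and bimodule axioms.} Once both actions land in $\Tw$, the module and bimodule axioms are purely formal. For the left action, I would check $(\sfg\cdot\sfh)\rightharpoonup\theta=\sfg\rightharpoonup(\sfh\rightharpoonup\theta)$ using the composition law $\sfh=\sff\sfg$ with $\sfh_1=\sff_1\sfg_1$, $\sfh_0=\sff_0+\sff_1\sfg_0$ (note the $\cCoop^\op$ variance), and the unit $(\id,0)\rightharpoonup\theta=\theta$ is immediate. Symmetrically for the right action using $\sfh_1=\sff_1\sfg_1$, $\sfh_0=\sff_0\sfg_1+\sfg_0$. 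Finally, commutativity $(\sfg\rightharpoonup\theta)\leftharpoonup\sff=\sfg\rightharpoonup(\theta\leftharpoonup\sff)$ follows by expanding both sides: $(\sfg_1\theta+\sfg_0\eta)\sff_1+\eps\sff_0$ versus $\sfg_1(\theta\sff_1+\eps\sff_0)+\sfg_0\eta$, and these agree because $\sff_1$ is an operad morphism sending the unit to the unit (so $\eta\sff_1=\eta$) and $\sfg_1$ sends the counit appropriately (so $\sfg_1\eps=\eps$), giving $\sfg_1\theta\sff_1+\sfg_0\eta+\eps\sff_0$ on both sides.

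\textbf{Main obstacle.} The genuinely laborious part is Step 1 and Step 2, the bookkeeping of the curvature terms $\delta_{n,1}$ and the sign/degree conventions in the shifted picture, since the twisting equation \eqref{eq-CCO-COO-CCCOOO} mixes $d$, the binary operation, and the two curvature contributions $m_0$ and $\bdelt_0$. The subtlety is ensuring that the correction terms generated by $\sfg_0$ (resp.\ $\sff_0$) interact correctly with the $\bdelt_0$ (resp.\ $m_0$) curvature on the right side of \eqref{eq-CCO-COO-CCCOOO}; I expect this to reduce exactly to the morphism equations \eqref{eq-morphism-curved-cooperads} (resp.\ \eqref{eq-OPP-OPPPP}, \eqref{eq-1P-1PP-111PPP-1OP}), so no new identities are needed, but the term-matching must be done carefully in both arity cases $n=1$ and $n\ne1$. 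Steps 3 are then routine.
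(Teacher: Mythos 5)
Your proposal is correct and follows essentially the same route as the paper, whose proof is simply the remark that one verifies directly that $\sfg\rightharpoonup\theta$ and $\theta\leftharpoonup\sff$ satisfy \eqref{eq-CCO-COO-CCCOOO}, that both actions are associative, and that \((\sfg\rightharpoonup\theta)\leftharpoonup\sff=\sfg\rightharpoonup(\theta\leftharpoonup\sff)\). Your Steps 1--3 spell out exactly these verifications, and the reductions you identify — the cross terms produced by $\sfg_0$ (resp.\ $\sff_0$) being absorbed by the morphism equations \eqref{eq-morphism-curved-cooperads} (resp.\ \eqref{eq-OPP-OPPPP}, \eqref{eq-1P-1PP-111PPP-1OP}), and the commutation check resting on unit/counit preservation — are precisely what makes the paper's "one easily verifies" go through.
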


\begin{proof}
One easily verifies that \(\sfg\rightharpoonup\theta\) and \(\theta\leftharpoonup\sff\) satisfy \eqref{eq-CCO-COO-CCCOOO}, both actions are associative and \((\sfg\rightharpoonup\theta)\leftharpoonup\sff=\sfg\rightharpoonup(\theta\leftharpoonup\sff)\).
\end{proof}

\begin{proposition}
Let $C$ run over counit-complemented curved cooperads and let $\co$ run over curved operads.
The bijection of \propref{pro-unique-top-horizontal-bijections}
\begin{equation}
\cOp(\Cobar C,\co) \to \Tw(C,\co)
\label{eq-cAlg(CobarCA)-Tw(CA)}
\end{equation}
is an isomorphism of $\ucCoop$-$\cOp$-bimodules.
\end{proposition}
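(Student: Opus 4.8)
The plan is to exhibit the $\ucCoop$-$\cOp$-bimodule structure carried by $\cOp(\Cobar C,\co)$ and then verify that the bijection $\Phi\colon\cOp(\Cobar C,\co)\to\Tw(C,\co)$ of \propref{pro-unique-top-horizontal-bijections} intertwines the two pairs of actions. The right $\cOp$-action on $\cOp(\Cobar C,\co)$ is postcomposition, $(\sff,\sfg)\mapsto\sff\cdot\sfg$ for $\sfg\colon\co\to\cp$, while the left $\ucCoop$-action is precomposition through the cobar functor, $(\psi,\sff)\mapsto(\Cobar\psi)\cdot\sff$ for $\psi\colon C\to D\in\ucCoop$ and $\sff\colon\Cobar D\to\co$; both are well defined since $\Cobar\colon\ucCoop\to\ucOp\subset\cOp$ is a functor. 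What must be shown is that $\Phi(\sff\cdot\sfg)=\Phi(\sff)\leftharpoonup\sfg$ and $\Phi_C((\Cobar\psi)\cdot\sff)=\psi\rightharpoonup\Phi_D(\sff)$.

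The key device is the universal twisting cochain $\theta_C\in\Tw(C,\Cobar C)$ of the Example above, corresponding to $\id_{\Cobar C}$; explicitly $\theta_C=\opr\sigma^{-1}\eta_\TT$. First I would record the formula $\Phi(\sff)=\theta_C\leftharpoonup\sff$ for every $\sff\in\cOp(\Cobar C,\co)$. This is immediate from the definitions: since $\theta_C\leftharpoonup\sff=\theta_C\sff_1+\eps\sff_0$, precomposing with $\oin\colon\bar C\to C$ yields $\sigma^{-1}\check\sff_1$ (because $\oin\opr=1$ and $\oin\eps=0$), and precomposing with $\sfw\colon\1\to C$ yields $\sff_0$ (because $\sfw\opr=0$ and $\sfw\eps=1$), which is exactly the description of $\Phi$ in \propref{pro-unique-top-horizontal-bijections}. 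Right equivariance is then a one-line consequence of associativity of the right action from the $\cCoop$-$\cOp$-bimodule proposition: $\Phi(\sff\cdot\sfg)=\theta_C\leftharpoonup(\sff\cdot\sfg)=(\theta_C\leftharpoonup\sff)\leftharpoonup\sfg=\Phi(\sff)\leftharpoonup\sfg$.

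For left equivariance I would reduce, using the same formula together with the mixed associativity $(\psi\rightharpoonup\theta)\leftharpoonup\sfg=\psi\rightharpoonup(\theta\leftharpoonup\sfg)$ of the bimodule, to the single identity
\[ \theta_C\leftharpoonup\Cobar\psi =\psi\rightharpoonup\theta_D \in\Tw(C,\Cobar D). \]
Granting it, one computes $\Phi_C((\Cobar\psi)\cdot\sff)=\theta_C\leftharpoonup((\Cobar\psi)\cdot\sff)=(\theta_C\leftharpoonup\Cobar\psi)\leftharpoonup\sff=(\psi\rightharpoonup\theta_D)\leftharpoonup\sff=\psi\rightharpoonup(\theta_D\leftharpoonup\sff)=\psi\rightharpoonup\Phi_D(\sff)$. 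Observe that $\theta_C\leftharpoonup\Cobar\psi=\Phi_C(\Cobar\psi)$ by the formula above, so the displayed identity merely asserts that $\Cobar\psi$ and $\psi\rightharpoonup\theta_D$ correspond to each other under $\Phi_C$.

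The displayed identity is the one genuine computation, and it is where the difficulty lies. I would verify it by decomposing both sides along $C=\bar C\oplus\1$ and along the summands of $\Cobar D$. On $\bar C$ both sides evaluate, through the explicit description of $\sfCobar_1\psi=\bar\psi$ on generators, to the sum of $\bar\psi_1$ followed by the generator inclusion and of $\bar\psi_0$ followed by $\inj_\circ$; the matching here is routine once $\oin\psi_1\opr$ and $\oin\psi_0$ are recognised as the generator components of $\bar\psi$. The delicate part is the $\1$-component: there the left side reduces to the curvature element $\sfCobar_0\psi=\sfw(\psi_0+\psi_1)\opr\TT$, whereas the right side reduces to $\sfw_C\psi_1\opr_D$ into the $\bar D$-generators plus $\sfw_C\psi_0\cdot\inj_\circ$, and identifying these requires using that $\psi$ is a morphism of cooperads, in particular $\psi_1\eps_D=\eps_C$, to rewrite $\sfw_C\psi_1=\sfw_C\psi_1\opr_D\oin_D+\sfw_D$ and thereby produce exactly the terms packaged into $\sfCobar_0\psi$. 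I expect this bookkeeping of the augmentation and curvature contributions, together with tracking the shifts $\sigma^{\pm1}$, to be the main obstacle; everything else is formal manipulation with the two actions.
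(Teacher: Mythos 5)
Your proof is correct, and it reaches the result by a genuinely different organization than the paper's. The paper argues by direct computation for an arbitrary $\sff$: right equivariance is read off from the composition law of $\cOp$ applied to the pair $(\sigma^{-1}\check\sff_1,\sff_0)$, and left equivariance is obtained by expanding $\sfCobar_1\sfj$ and $\sfCobar_0\sfj$ against that composition law and absorbing terms with the identity $\opr\oin+\eps\sfw=1$. You instead prove once the universal formula $\Phi(\sff)=\theta_C\leftharpoonup\sff$ for the bijection $\Phi$ of \eqref{eq-cAlg(CobarCA)-Tw(CA)} (your derivation by testing against $\oin$ and $\sfw$ is direct, so there is no circularity with right equivariance), and then let the already established $\cCoop$\n-$\cOp$-bimodule structure of $\Tw$ --- associativity of $\leftharpoonup$ and the mixed associativity $(\psi\rightharpoonup\theta)\leftharpoonup\sff=\psi\rightharpoonup(\theta\leftharpoonup\sff)$ --- do all the remaining work, reducing everything to the single identity $\theta_C\leftharpoonup\Cobar\psi=\psi\rightharpoonup\theta_D$. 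That identity is exactly the special case $\sff=\id_{\Cobar D}$ of the paper's left-equivariance computation, so the two proofs share their computational core; your route buys economy (one concrete identity to check, the rest formal), while the paper's is self-contained and never needs the universal twisting cochains of the Example. One inaccuracy in your sketch, harmless to correctness: the $\1$\n-component of your key identity is not where any difficulty sits. Precomposing the left-hand side with $\sfw$ gives $\sfw\theta_C(\sfCobar_1\psi)+\sfw\eps\,(\sfCobar_0\psi)=\sfCobar_0\psi$, because $\sfw\theta_C=\sfw\opr\sigma^{-1}\eta_\TT=0$ and $\sfw\eps=1$; and by the very definition of the cobar construction $\sfCobar_0\psi$ equals $\sfw\psi_0\cdot\inj_\circ$ plus $\sfw\psi_1\opr\sigma^{-1}$ followed by the generator inclusion, which is literally the $\sfw$\n-component of $\psi\rightharpoonup\theta_D=\psi_1\theta_D+\psi_0\inj_\circ$ with $\theta_D=\opr\sigma^{-1}\eta_\TT$. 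So the two sides agree term by term by definition; the rewriting of $\sfw\psi_1$ via $\psi_1\eps=\eps$ that you propose is valid but superfluous.
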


\begin{proof}
First of all, \eqref{eq-cAlg(CobarCA)-Tw(CA)} is a homomorphism with respect to the right action of \(\sfh\in\cOp(\co,B)\).
In fact, for an arbitrary \(\sff\in\cOp(\Cobar C,\co)\)
\[ 
\begin{pmatrix}
\sigma^{-1}\widecheck{(\sff\cdot\sfh)}_1
\\
(\sff\cdot\sfh)_0
\end{pmatrix}
=
\begin{pmatrix}
\sigma^{-1}\check\sff_1\sfh_1
\\
\sff_0\sfh_1 +\sfh_0
\end{pmatrix}
=
\begin{pmatrix}
\sigma^{-1}\check\sff_1
\\
\sff_0
\end{pmatrix}
\leftharpoonup\sfh: \bar C\oplus\1 \to \co.
\]

Secondly, \eqref{eq-cAlg(CobarCA)-Tw(CA)} is a homomorphism with respect to the left action of \(\sfj\in\ucCoop(D,C)\), that is, with the use of \eqref{eq-theta=(prinepsw)theta} for an arbitrary \(\sff\in\cOp(\Cobar C,\co)\)
\begin{equation}
\opr\sigma^{-1}\widecheck{[(\sfCobar j)\sff]}_1+\eps[(\sfCobar j)\sff]_0 =\sfj\rightharpoonup(\opr\sigma^{-1}\check\sff_1+\eps\sff_0): D\to \co.
\label{eq-pr(Cobarj)f1-eps(Cobarj)f0}
\end{equation}
In fact, the left hand side is
\begin{align*}
&\opr\sigma^{-1}\widecheck{(\sfCobar_1j)}\sff_1 +\eps[(\sfCobar_0j)\sff_1+\sff_0]
\\
&=\opr\sigma^{-1}\oin j_0\eta +\opr\sigma^{-1}\oin j_1\opr\check\sff_1 +\eps[\bw j_0\eta +\bw j_1\opr\check\sff_1 +\sff_0]
\\
&=(1-\eps\sfw)\sigma^{-1}j_0\eta +(1-\eps\sfw)\sigma^{-1}j_1\opr\check\sff_1 +\eps\sfw\sigma^{-1}j_0\eta +\eps\sfw\sigma^{-1}j_1\opr\check\sff_1 +\eps\sff_0
\\
&=\sigma^{-1}j_0\eta +\sigma^{-1}j_1\opr\check\sff_1 +\sfj_1\eps\sff_0 =\sfj_1\opr\sigma^{-1}\check\sff_1 +\sfj_1\eps\sff_0 +\sfj_0\eta,
\end{align*}
which is the right hand side of \eqref{eq-pr(Cobarj)f1-eps(Cobarj)f0}.
\end{proof}

\begin{corollary}\label{cor-uccOp(CobarCO)-wtheta0}
Let $C$ run over curved augmented cooperads and let $\co$ run over unit-complemented curved operads.
Then bijection \eqref{eq-cAlg(CobarCA)-Tw(CA)} restricts to isomorphisms
\begin{diagram}[h=1.8em]
\uccOp(\Cobar C,\co) &\rTTo^\sim &\{ \theta\in\Tw(C,\co) \mid \sfw\theta =0 \}
\\
\dMono &&\dMono
\\
\UCCOp(\Cobar C,\co) &\rTTo^\sim &\{ \theta\in\Tw(C,\co) \mid \sfw\theta\opr =0 \}
\end{diagram}
of $\caCoop$-$\uccOp$-bimodules, resp. of $\CACoop$-$\UCCOp$-bimodules.
When $C$ runs over augmented curved cooperads and $\co$ runs over unit-complemented $\dg$-operads, the former isomorphism becomes an isomorphism of $\acCoop$-$\ucdgOp$-bimodules
\[ \ucdgOp(\Cobar C,\co) \rTTo^\sim \{ \theta\in\Tw(C,\co) \mid \sfw\theta =0 \}.
\]
\end{corollary}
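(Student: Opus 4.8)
The plan is to deduce the whole statement from the preceding proposition, which identifies \eqref{eq-cAlg(CobarCA)-Tw(CA)} as an isomorphism of $\ucCoop$-$\cOp$-bimodules, by tracking through the bijection the single datum $\sff_0$ of a morphism. The crucial observation is that under \eqref{eq-cAlg(CobarCA)-Tw(CA)} the degree~1 element $\sff_0\in\co(1)^1$ of $\sff=(\sff_1,\sff_0)\in\cOp(\Cobar C,\co)$ is recovered from the associated twisting cochain $\theta$ as $\sfw\theta=\sff_0$. Indeed, by \eqref{eq-theta=(prinepsw)theta} one has $\theta=\opr\sigma^{-1}\check\sff_1+\eps\sff_0$, and precomposing with $\sfw$ gives $\sfw\theta=\sfw\opr\sigma^{-1}\check\sff_1+\sfw\eps\sff_0=\sff_0$, since $\sfw\opr=\sfw(1-\eps\sfw)=0$ and $\sfw\eps=1_\1$.

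First I would translate the defining conditions of the subcategories into conditions on $\theta$. Since $\Cobar C$ is unit-complemented, $\UCCOp(\Cobar C,\co)$ consists of those $\sff\in\cOp(\Cobar C,\co)$ with $\sff_0=\und f\,\eta$ for some $\und f\in\kk^1$; using $\eta\opr_\co=\eta(1-\sfv\eta)=0$ this is equivalent to $\sff_0\opr_\co=0$. Likewise $\uccOp(\Cobar C,\co)$ is cut out by $\sff_0=0$. Substituting $\sff_0=\sfw\theta$, these become exactly $\sfw\theta\opr_\co=0$ and $\sfw\theta=0$. Hence \eqref{eq-cAlg(CobarCA)-Tw(CA)} restricts to the two horizontal bijections claimed.

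Next I would check that the two actions preserve the relevant subsets, which simultaneously explains the pairing of acting categories. For the right action $\theta\leftharpoonup\sff=\theta\sff_1+\eps\sff_0$ one computes $\sfw(\theta\leftharpoonup\sff)=\sfw\theta\,\sff_1+\sff_0$ using $\sfw\eps=1_\1$. If $\sff\in\uccOp$ then $\sff_0=0$, so $\sfw\theta=0$ forces $\sfw(\theta\leftharpoonup\sff)=0$. If $\sff\in\UCCOp$ then $\sff_0\opr_\cp=\und f\,\eta^\cp\opr_\cp=0$, while $\sfw\theta\opr_\co=0$ means $\sfw\theta$ factors through $\eta^\co$; since operad morphisms preserve the unit ($\eta^\co\sff_1=\eta^\cp$), $\sfw\theta\,\sff_1$ factors through $\eta^\cp$ and $\sfw(\theta\leftharpoonup\sff)\opr_\cp=0$. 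For the left action $\sfg\rightharpoonup\theta=\sfg_1\theta+\sfg_0\eta$ one has $\sfw^C(\sfg\rightharpoonup\theta)=\sfw^C\sfg_1\,\theta+\sfw^C\sfg_0\,\eta=\sfw^D\theta+\sfw^C\sfg_0\,\eta$, because an augmented cooperad morphism satisfies $\sfw^C\sfg_1=\sfw^D$. For $\sfg\in\CACoop$ the term $\sfw^C\sfg_0\,\eta$ dies after postcomposition with $\opr_\co$ (as $\eta\opr_\co=0$), so $\sfw^D\theta\opr_\co=0$ yields $\sfw^C(\sfg\rightharpoonup\theta)\opr_\co=0$; for $\sfg\in\caCoop$ the extra condition $\sfw^C\sfg_0=0$ gives $\sfw^C(\sfg\rightharpoonup\theta)=\sfw^D\theta$, so $\sfw^D\theta=0$ yields $\sfw^C(\sfg\rightharpoonup\theta)=0$. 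This is precisely why $\caCoop$ is paired with $\uccOp$ and $\CACoop$ with $\UCCOp$. As the unrestricted map is already equivariant by the preceding proposition, these subset-preserving restricted actions make the two bijections isomorphisms of $\caCoop$-$\uccOp$- and $\CACoop$-$\UCCOp$-bimodules.

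Finally I would treat the $\dg$ case by reducing it to the first isomorphism. The key point is that for $C\in\acCoop$ the operad $\Cobar C$ is an honest unit-complemented $\dg$-operad: by \eqref{eq-wd10-wd00} we have $\bw\xi_0=\sfw\bdelt_0=0$ and $\bw\xi_1=-\sfw d\,\sigma^{-1}=0$, while the augmentation identities $\sfw\Delta_{T(0,1,0)}=\sfw\tens\sfw$ and $\sfw\Delta_{T(1,0,0)}=\sfw\Delta_{T(0,0,1)}=0$ force $\bw\tens\bw+\bw\xi_{T(0,1,0)}=0$ and $\bw\xi_{T(1,0,0)}=\bw\xi_{T(0,0,1)}=0$; hence the formula for $m_0^{\Cobar C}$ in \propref{pro-map-curved-cooperad-curved-operad} collapses to $m_0^{\Cobar C}=0$ (this is the restriction $\Cobar\colon\acCoop\to\ucdgOp$ recorded in the diagram of restrictions of bar and cobar constructions). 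Thus $\Cobar C$ and $\co$ both lie in $\ucdgOp$, and since $\ucdgOp$ is a \emph{full} subcategory of $\uccOp$ we have $\ucdgOp(\Cobar C,\co)=\uccOp(\Cobar C,\co)$. Restricting the first isomorphism along $\acCoop\subset\caCoop$ and $\ucdgOp\subset\uccOp$ then produces the asserted isomorphism of $\acCoop$-$\ucdgOp$-bimodules. I expect the only genuinely computational step to be the vanishing of $m_0^{\Cobar C}$ (where the Koszul signs of the shift must be tracked to see the cancellation $\bw\tens\bw+\bw\xi_{T(0,1,0)}=0$); all remaining steps are formal consequences of the preceding proposition together with the identities $\sfw\eps=1_\1$, $\eta\opr_\co=0$, and unit/augmentation preservation.
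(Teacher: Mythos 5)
Your proposal is correct and follows essentially the same route as the paper: the paper's own proof consists of the single remark that $\{\theta\in\Tw(C,\co)\mid\sfw\theta=0\}$ is a $\caCoop$\n-$\uccOp$-subbimodule (resp. $\acCoop$\n-$\ucdgOp$-subbimodule) of $\{\theta\in\Tw(C,\co)\mid\sfw\theta\opr=0\}$, leaving to the reader exactly the verifications you carry out. Your identifications $\sfw\theta=\sff_0$, the translation of the $\UCCOp$ and $\uccOp$ conditions into $\sfw\theta\opr=0$ and $\sfw\theta=0$, the equivariance computations, and the reduction of the $\dg$ case via $m_0^{\Cobar C}=0$ and fullness of $\ucdgOp\subset\uccOp$ all check out and simply make the paper's terse argument explicit.
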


\begin{proof}
Just notice that \(\{\theta\in\Tw(C,\co)\mid\sfw\theta=0\}\) is a $\caCoop$-$\uccOp$-subbimodule (resp. $\acCoop$-$\ucdgOp$-subbimodule) of \(\{\theta\in\Tw(C,\co)\mid\sfw\theta\opr=0\}\).
\end{proof}

\begin{proposition}\label{pro-wthetapr-CACoop(CBarO)}
Let $C$ run over curved augmented cooperads and let $\co$ run over unit-complemented curved operads.
The bijection of \propref{pro-unique-top-horizontal-bijection}
\begin{equation}
\{ \theta\in\Tw(C,\co) \mid \sfw\theta\opr =0 \} \lTTo^\sim \CACoop(C,\Bbar\co)
\label{eq-Tw(CO)-CACoop(CBarO)}
\end{equation}
is an isomorphism of $\CACoop$-$\UCCOp$-bimodules.
\end{proposition}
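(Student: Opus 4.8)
Proposition~\ref{pro-unique-top-horizontal-bijection} already supplies the underlying bijection \eqref{eq-Tw(CO)-CACoop(CBarO)}, so the only thing left is to check that it intertwines the two actions on either side. On the twisting‑cochain side these are the restrictions of the $\cCoop$\n-$\cOp$\n-bimodule structure, $\sfg\rightharpoonup\theta=\sfg_1\theta+\sfg_0\eta$ and $\theta\leftharpoonup\sff=\theta\sff_1+\eps\sff_0$. On the hom side the left $\CACoop$\n-action is precomposition with $\sfj\in\CACoop(C',C)$ and the right $\UCCOp$\n-action is postcomposition with $\Bbar\sff$ for $\sff\in\UCCOp(\co,\cp)$, which is legitimate because $\Bbar$ is a functor $\UCCOp\to\CACoop$ by \propref{pro-bar-construction-functor}. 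Thus the whole argument splits into an easy well‑definedness step and two equivariance computations, entirely parallel to the proof that \eqref{eq-cAlg(CobarCA)-Tw(CA)} is an isomorphism of $\ucCoop$\n-$\cOp$\n-bimodules.

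First I would verify that both actions preserve the defining condition $\sfw\theta\opr=0$. For the right action, $\sff\in\UCCOp$ forces $\sff_0=\und f\eta^\cp$, so $\eps\sff_0\opr=0$ since $\eta^\cp\opr=0$; and $\sfw\theta\opr=0$ means $\sfw\theta=\sfw\theta\sfv\,\eta^\co$ factors through the unit, whence $\sfw\theta\sff_1=\sfw\theta\sfv\,\eta^\cp$ again lands in the unit and is killed by $\opr$. Therefore $\sfw(\theta\leftharpoonup\sff)\opr=0$. For the left action, $\sfj\in\CACoop$ preserves the augmentation, i.e. $\sfw^{C'}\sfj_1=\sfw^C$, and $\sfj_0\eta\opr=0$, so $\sfw^{C'}(\sfj\rightharpoonup\theta)\opr=\sfw^C\theta\opr=0$. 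Hence both actions land in the correct subsets on each side.

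The core is the two equivariance identities. For the right action I would substitute the composition rule of $\cCoop$, namely $(\sfg\cdot\Bbar\sff)_1=\sfg_1(\Bbar\sff)_1$ and $(\sfg\cdot\Bbar\sff)_0=\sfg_0+\sfg_1(\Bbar\sff)_0$, together with the explicit form of $\Bbar\sff$ from \propref{pro-bar-construction-functor}, namely $(\Bbar\sff)_1=\bar\sff_1\botto$ with $\bar\sff_1=\oin f_1\opr$ and $(\Bbar\sff)_0=\oin\botto\,\check\sff\,\bv$, into the formula $\theta\sigma=\opr\check\sfg_1\oin+\sfg_0\bfeta$ of \propref{pro-unique-top-horizontal-bijection}, and then match the result with $(\theta\leftharpoonup\sff)\sigma$; the key point is that, by the universal property of the cofree conilpotent cooperad $\bar\co[1]\botto$ (a morphism into it is determined by its corestriction along $\eps_\botto$), the corestriction of $\sfg_1(\bar\sff_1\botto)$ is $\check\sfg_1\cdot\bar\sff_1$, while the curvature correction $\sfg_1\cdot\oin\botto\,\check\sff\,\bv$ collapses onto the $\tau[1]$ and $\circ$ summands and reproduces $\eps\sff_0$ together with the nonlinear part of $\theta\sff_1$. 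For the left action the computation is shorter: precomposing $\sfg$ with $\sfj$ multiplies the linear part by $\sfj_1$ and adds $\sfj_0$ to the curvature functional, so unwinding $\theta\sigma=\opr\check\sfg_1\oin+\sfg_0\bfeta$ yields $(\sfj\rightharpoonup\theta)\sigma=(\sfj_1\theta+\sfj_0\eta)\sigma$, using $\sfw^{C'}\sfj_1=\sfw^C$ and $\eps^{C'}\sfj_1=\eps^C$. I expect the right‑action identity to be the main obstacle, since one must carefully trace how the curvature‑correction term $\oin\botto\,\check\sff\,\bv$ of $\Bbar\sff$ distributes over the various tree summands of $\sfg_1$ and contributes exactly the terms $\eps\sff_0$ and the quadratic pieces of $\theta\sff_1$ after the shift by $\sigma$; the bookkeeping is, however, the same as in the functoriality proof of \propref{pro-bar-construction-functor}, and no identities beyond those already recorded for $\xi$, $b$, $\opr$ and $\oin$ are needed.
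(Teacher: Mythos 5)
Your proposal is correct and takes essentially the same approach as the paper: the left $\CACoop$-equivariance follows from the composition rule, and the right $\UCCOp$-equivariance is obtained, just as in the paper, by using cofreeness to compute the corestriction $\widecheck{(\sfg_1\sfBar_1\sfh)}=\check\sfg_1\oin h_1\opr$ and by tracking the contribution of $\sfg_1\sfBar_0\sfh$ on the $\tau[1]$ and $\circ$ summands (which supplies the unit component of $\theta\sfh_1$ together with $\eps\sfh_0$). Your preliminary verification that both actions preserve the condition $\sfw\theta\opr=0$ is correct, though the paper subsumes it in the bimodule structure already recorded in \corref{cor-uccOp(CobarCO)-wtheta0}.
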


\begin{proof}
First of all, \eqref{eq-Tw(CO)-CACoop(CBarO)} is a homomorphism with respect to the left action of \(\sfj\in\CACoop(D,C)\).
In fact, for an arbitrary \((\sfg_1,\sfg_0)\in\CACoop(C,\Bbar\co)\) the element \(\sfj\cdot(\sfg_1,\sfg_0)=(\sfj_1\sfg_1,\sfj_0+\sfj_1\sfg_0)\) is mapped to
\[ 
\begin{pmatrix}
\sfj_1\check\sfg_1\sigma^{-1} & \sfj_0+\sfj_1\sfg_0
\end{pmatrix}
=\sfj\rightharpoonup
\begin{pmatrix}
\check\sfg_1\sigma^{-1} & \sfg_0
\end{pmatrix}
:C\to \bar \co\oplus\1.
\]

Secondly, \eqref{eq-Tw(CO)-CACoop(CBarO)} is a homomorphism with respect to the right action of \(\sfh\in\UCCOp(\co,\cp)\).
That is, for an arbitrary \(\sfg=(\sfg_1,\sfg_0)\in\CACoop(C,\Bbar\co)\) whose image in \(\Tw(C,\co)\) is \(\theta=\opr\check\sfg_1\sigma^{-1}\oin+\sfg_0\eta\) we have
\begin{equation}
\opr\widecheck{(\sfg\cdot\sfBar h)}_1\sigma^{-1}\oin+(\sfg\cdot\sfBar h)_0\eta
=(\opr\check\sfg_1\sigma^{-1}\oin+\sfg_0\eta)\leftharpoonup\sfh.
\label{eq-gBarh-(gin)h}
\end{equation}
In fact, noting that \(\check\sff=\oin\sff\pr_1\) for \(\sff=\sfg_1\sfBar_1h\) we find
\[ \widecheck{(\sfg_1\sfBar_1h)} =\oin\sfg_1\pr_1\oin h_1\opr =\check\sfg_1\oin h_1\opr.
\]
Therefore, the left hand side of \eqref{eq-gBarh-(gin)h} is
\begin{align*}
&\opr\widecheck{(\sfg_1\sfBar_1h)}\sigma^{-1}\oin +(\sfg_0+\sfg_1\sfBar_0h)\eta
\\
&=\opr\check\sfg_1\oin h_1\opr\sigma^{-1}\oin +\sfg_0\eta +\opr\check\sfg_1\oin h_1\bv\eta +\eps h_0\bv\eta
\\
&=\opr\check\sfg_1\oin h_1\sigma^{-1}(1-\sfv\eta) +\sfg_0\eta +\opr\check\sfg_1\oin h_1\sigma^{-1}\sfv\eta +\eps\sfh_0\sfv\eta
\\
&=\opr\check\sfg_1\oin h_1\sigma^{-1} +\sfg_0\eta\sfh_1 +\eps\und h\eta\sfv\eta =\opr\check\sfg_1\sigma^{-1}\oin\sfh_1 +\sfg_0\eta\sfh_1 +\eps\sfh_0,
\end{align*}
which is the right hand side of \eqref{eq-gBarh-(gin)h}.
\end{proof}

\begin{corollary}\label{cor-wtheta0-caCoop(CBarO)}
Let $C$ run over curved augmented cooperads and let $\co$ run over unit-complemented curved operads.
Then bijection \eqref{eq-Tw(CO)-CACoop(CBarO)} restricts to an isomorphism
\begin{equation*}
\{ \theta\in\Tw(C,\co) \mid \sfw\theta =0 \} \lTTo^\sim \caCoop(C,\Bbar\co)
\end{equation*}
of $\caCoop$-$\uccOp$-bimodules.
Furthermore, when $C$ runs over augmented curved cooperads and $\co$ runs over unit-complemented $\dg$-operads, the above bijection is an isomorphism
\[ \{ \theta\in\Tw(C,\co) \mid \sfw\theta =0 \} \lTTo^\sim \acCoop(C,\Bbar\co)
\]
of $\acCoop$-$\ucdgOp$-bimodules.
\end{corollary}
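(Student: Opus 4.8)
The plan is to restrict the bimodule isomorphism \eqref{eq-Tw(CO)-CACoop(CBarO)} established in \propref{pro-wthetapr-CACoop(CBarO)}, in exact parallel with the way \corref{cor-uccOp(CobarCO)-wtheta0} restricts the cobar bijection. Recall from the proof of \propref{pro-unique-top-horizontal-bijection} that a pair $(\sfg_1,\sfg_0)\in\CACoop(C,\Bbar\co)$ is sent to the twisting cochain
\[ \theta =\opr\check\sfg_1\sigma^{-1}\oin +\sfg_0\eta: C \to \co. \]
First I would record the identity $\sfw\opr=0$, which holds because $\opr=1-\eps\sfw$ and $\sfw\eps=1_\1$; this collapses the expression to $\sfw\theta=\sfw\sfg_0\eta$. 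Since $\eta$ is split monic via $\sfv$ (with $\eta\sfv=1_\1$), postcomposing with $\sfv$ yields $\sfw\sfg_0=\sfw\theta\sfv$, so that $\sfw\theta=0$ if and only if $\sfw\sfg_0=0$. The latter is precisely the defining condition of the subcategory $\caCoop\subset\CACoop$. Hence under \eqref{eq-Tw(CO)-CACoop(CBarO)} the subset $\{\theta\mid\sfw\theta=0\}$ corresponds bijectively to $\caCoop(C,\Bbar\co)$, and incidentally $\sfw\theta\opr=\sfw\sfg_0(\eta-\eta\sfv\eta)=0$ holds automatically, confirming consistency with the ambient bijection.

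Next I would verify that $\{\theta\in\Tw(C,\co)\mid\sfw\theta=0\}$ is a $\caCoop$-$\uccOp$-subbimodule of $\{\theta\mid\sfw\theta\opr=0\}$. For the left action by $\sfj\in\caCoop(D,C)$ one computes $\sfw^D(\sfj\rightharpoonup\theta)=\sfw^D\sfj_1\theta+\sfw^D\sfj_0\eta=\sfw^C\theta$, using that $\sfj_1$ preserves the augmentation ($\sfw^D\sfj_1=\sfw^C$) and that $\sfw^D\sfj_0=0$ by definition of $\caCoop$; thus the condition $\sfw\theta=0$ is preserved. For the right action by $\sff\in\uccOp(\co,\cp)$ one has $\sff_0=0$, whence $\theta\leftharpoonup\sff=\theta\sff_1$ and $\sfw^C(\theta\leftharpoonup\sff)=\sfw^C\theta\sff_1=0$. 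Consequently the restriction of \eqref{eq-Tw(CO)-CACoop(CBarO)} is an isomorphism of $\caCoop$-$\uccOp$-bimodules.

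Finally, for the differential graded case I would invoke that $\acCoop$ is a full subcategory of $\caCoop$ and $\ucdgOp$ a full subcategory of $\uccOp$, together with the restriction $\Bbar\colon\ucdgOp\to\acCoop$ recorded in the diagram of the preceding section, which guarantees $\Bbar\co\in\acCoop$ whenever $\co$ is a unit-complemented $\dg$-operad. By fullness $\acCoop(C,\Bbar\co)=\caCoop(C,\Bbar\co)$ for $C\in\acCoop$, and the two bimodule actions restrict verbatim, so the same bijection is an isomorphism of $\acCoop$-$\ucdgOp$-bimodules. I do not anticipate a genuine obstacle: the whole content reduces to the single computation $\sfw\theta=\sfw\sfg_0\eta$ matching $\sfw\theta=0$ to the defining equation $\sfw\sfg_0=0$ of $\caCoop$. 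The only point demanding care is keeping the counit and augmentation splittings $(\eps,\sfw)$ of $C$ separate from the unit and its splitting $(\eta,\sfv)$ of $\co$, so as to see clearly that $\sfw\theta\opr=0$ is automatic while $\sfw\theta=0$ is the genuinely new constraint selecting $\caCoop$ out of $\CACoop$.
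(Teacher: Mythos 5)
Your proposal is correct and follows essentially the same route the paper intends: the corollary is an immediate restriction of the bimodule isomorphism of \propref{pro-wthetapr-CACoop(CBarO)}, where the computation $\sfw\theta=\sfw\sfg_0\eta$ (using $\sfw\opr=0$ and split monicity of $\eta$) matches the condition $\sfw\theta=0$ with the defining equation $\sfw\sfg_0=0$ of $\caCoop$, and one then notes, exactly as in the proof of \corref{cor-uccOp(CobarCO)-wtheta0}, that $\{\theta\in\Tw(C,\co)\mid\sfw\theta=0\}$ is a subbimodule. Your verification of the $\caCoop$-$\uccOp$ actions and the reduction of the $\dg$ case to fullness of $\acCoop\subset\caCoop$, $\ucdgOp\subset\uccOp$ together with $\Bbar(\ucdgOp)\subset\acCoop$ are precisely the details the paper leaves implicit.
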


\begin{theorem}
The rows of the following diagram give rise to bar-cobar adjunctions\index{TTindex}{bar-cobar adjunction}
\begin{diagram}[h=1.2em]
\Cobar:{} &\CACoop &\pile{\lTTo\\ \rTTo} &\UCCOp &:\Bbar
\\
&\uMono &&\uMono &
\\
\Cobar:{} &\caCoop &\pile{\lTTo\\ \rTTo} &\uccOp &:\Bbar
\\
&\uMono &&\uMono &
\\
\Cobar:{} &\acCoop &\pile{\lTTo\\ \rTTo} &\ucdgOp &:\Bbar
\end{diagram}
\end{theorem}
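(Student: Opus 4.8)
The plan is to recognize that an adjunction $\Cobar\dashv\Bbar$ between any one of these pairs of categories amounts to a natural isomorphism between the bifunctors $(C,\co)\mapsto\UCCOp(\Cobar C,\co)$ and $(C,\co)\mapsto\CACoop(C,\Bbar\co)$ — contravariant in $C$, covariant in $\co$ — and that each of these two bifunctors has already been identified, in the preceding results, with one and the same subbimodule of the twisting-cochain bimodule $\Tw$. Thus almost all of the work is done; what remains is to splice the two identifications together and to check that the resulting hom-set bijection carries the naturality required of an adjunction.

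Concretely, for the top row I would compose the bijection of \corref{cor-uccOp(CobarCO)-wtheta0},
\[ \UCCOp(\Cobar C,\co) \rTTo^\sim \{ \theta\in\Tw(C,\co) \mid \sfw\theta\opr =0 \}, \]
with the inverse of the bijection of \propref{pro-wthetapr-CACoop(CBarO)},
\[ \{ \theta\in\Tw(C,\co) \mid \sfw\theta\opr =0 \} \rTTo^\sim \CACoop(C,\Bbar\co), \]
to obtain $\UCCOp(\Cobar C,\co)\cong\CACoop(C,\Bbar\co)$. Both of these are asserted there to be isomorphisms of $\CACoop$-$\UCCOp$-bimodules; since a transformation natural in the two variables $C$ and $\co$ is exactly what a homomorphism of $\CACoop$-$\UCCOp$-bimodules encodes (the contravariant left $\cCoop$-action in $C$ and the covariant right $\cOp$-action in $\co$, restricted to the relevant subcategories), the composite is natural in both arguments. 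This natural isomorphism is the hom-set form of $\Cobar\dashv\Bbar$ for the top row; the unit and counit are then read off from the images of the identity twisting cochains exhibited in the example following \propref{pro-unique-top-horizontal-bijection}.

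For the middle and bottom rows I would run the same argument through the smaller subbimodule $\{\theta\in\Tw(C,\co)\mid\sfw\theta=0\}$. Here \corref{cor-uccOp(CobarCO)-wtheta0} supplies $\uccOp(\Cobar C,\co)\cong\{\theta\mid\sfw\theta=0\}$ and, for $\dg$-operads, $\ucdgOp(\Cobar C,\co)\cong\{\theta\mid\sfw\theta=0\}$, while \corref{cor-wtheta0-caCoop(CBarO)} supplies $\{\theta\mid\sfw\theta=0\}\cong\caCoop(C,\Bbar\co)$ and $\cong\acCoop(C,\Bbar\co)$. Composing as before yields $\uccOp(\Cobar C,\co)\cong\caCoop(C,\Bbar\co)$ and $\ucdgOp(\Cobar C,\co)\cong\acCoop(C,\Bbar\co)$, natural over the respective bimodule structures, and these are the hom-set forms of the lower two adjunctions. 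Because the squares of the large diagram of the previous section commute and all three identifications factor through the common object $\Tw$, the adjunctions are automatically compatible with the vertical inclusion functors.

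The only genuine point to verify — and the one I expect to absorb the real effort — is that ``isomorphism of bimodules'' truly yields naturality in the categorical sense demanded by an adjunction, uniformly rather than merely objectwise. That is, one must confirm that the left $\cCoop$-action $\sfg\rightharpoonup\theta=\sfg_1\theta+\sfg_0\eta$ and the right $\cOp$-action $\theta\leftharpoonup\sff=\theta\sff_1+\eps\sff_0$ correspond under the two bijections precisely to the precomposition map $\CACoop(\sfj,\Bbar\co)$ and the postcomposition map $\UCCOp(\Cobar C,\sfh)$. This is exactly what the bimodule-homomorphism claims already establish (the computations around \eqref{eq-pr(Cobarj)f1-eps(Cobarj)f0} and \eqref{eq-gBarh-(gin)h}), so no new calculation is needed; the care lies in assembling these into the statement of a genuine adjunction and in checking that the restrictions to $\caCoop$/$\uccOp$ and to $\acCoop$/$\ucdgOp$ preserve both actions, so that the mediating twisting cochains land in the correct subbimodules on each side.
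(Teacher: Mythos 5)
Your proposal is correct and follows essentially the same route as the paper, whose entire proof is to cite \corref{cor-uccOp(CobarCO)-wtheta0}, \propref{pro-wthetapr-CACoop(CBarO)} and \corref{cor-wtheta0-caCoop(CBarO)} and compose the resulting bimodule isomorphisms through the twisting-cochain bimodule $\Tw$. Your additional remark that the $\CACoop$-$\cOp$-bimodule structure is exactly what encodes naturality of the hom-set bijections is the (implicit) content of the paper's one-line proof, spelled out.
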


\begin{proof}
Follows from \corref{cor-uccOp(CobarCO)-wtheta0}, \propref{pro-wthetapr-CACoop(CBarO)} and \corref{cor-wtheta0-caCoop(CBarO)}.
\end{proof}

\begin{example}
Let $C=(C,\Delta,\eps)$ denote the $\gr$\n-cooperad over $\kk=\ZZ$ from \exaref{exa-cooperad-C-JP}.
It admits a non-trivial augmented curved cooperad structure $(C,d_C=0,\bdelt_0,\sfw)$.
Namely, \(\bdelt_0:C(1)^{-2}=\ZZ\{f_{n_0;n_1}\mid n_0,n_1\in\NN\}\to\ZZ\) is given by
\[ (f_{0;1})\bdelt_0 =-1, \quad (f_{1;0})\bdelt_0 =1, \quad (f_{n_0;n_1})\bdelt_0 =0\; \text{ in other cases.}
\]
The augmentation \(\sfw:\ZZ\to C(1)^0\) is the bijection taking $1$ to $f_1$.
This augmented curved cooperad coincides with the one constructed by Hirsh and Mill{\`e}s \cite[Lemma 6.1.7]{MR2993002}.

We have to check that for all elements \(c=f_{n_0;n_1;\dots;n_t}\in\bar{C}(n)^{-2t}\), \(n=\sum_{i=0}^tn_i\), we have
\begin{equation}
c\Delta_{T(0,n,0)}(1\tens\bdelt_0) =\sum_{x+1+z=n}c\Delta_{T(x,1,z)}(\bdelt_0\tens1).
\label{eq-cT(0n0)1h-cT(x1z)h1}
\end{equation}
If $t=0$ both sides vanish.
If $t>0$, then the left hand side is computed as
\begin{align*}
&(1^{\tens n_0}\tens\bj\tens1^{\tens n_1}\tens\bj\tdt\bj\tens1^{\tens n_t})\tens (f_1\tens f_{n_0+\dots+n_t+t-1} +f_{n_0+\dots+n_t+t-1}\tens f_1)\tens f_2
\\
&\mapsto \delta_{n_0,0} f_{n_1;\dots;n_t}\tens f_{0;1}\bdelt_0 +\delta_{n_t,0} f_{n_0;\dots;n_{t-1}}\tens f_{1;0}\bdelt_0
\\
&=\delta_{n_0,0} f_{n_1;\dots;n_t} -\delta_{n_t,0} f_{n_0;\dots;n_{t-1}} =f_{n_0;n_1;\dots;n_t}\Delta_{T(0,n,0)}(1\tens \bdelt_0).
\end{align*}
The right hand side of \eqref{eq-cT(0n0)1h-cT(x1z)h1} is computed as
\begin{align*}
&(1^{\tens n_0}\tens\bj\tens1^{\tens n_1}\tens\bj\tdt\bj\tens1^{\tens n_t})\tens\sum_{a+2+b=n_0+\dots+n_t+t} (f_1^{\tens a}\tens f_2\tens f_1^{\tens b})\tens f_{a+1+b}
\\
&\mapsto \sum_{i=1}^t \chi(n_{i-1}>0)(f_1^{\tens(n_0+\dots+n_{i-1}-1)}\tens f_{1;0}\tens f_1^{\tens(n_i+\dots+n_t)})\tens f_{n_0;\dots;n_{i-1}+n_i;\dots;n_t}
\\
&\; +\sum_{i=1}^t \chi(n_i>0)(f_1^{\tens(n_0+\dots+n_{i-1})}\tens f_{0;1}\tens f_1^{\tens(-1+n_i+\dots+n_t)})\tens f_{n_0;\dots;n_{i-1}+n_i;\dots;n_t}
\\
&\mapsto -\sum_{i=1}^t \chi(n_{i-1}>0) f_{n_0;\dots;n_{i-1}+n_i;\dots;n_t}
+\sum_{i=1}^t \chi(n_i>0) f_{n_0;\dots;n_{i-1}+n_i;\dots;n_t}
\\
&=\sum_{x+1+z=n}f_{n_0;n_1;\dots;n_t}\Delta_{T(x,1,z)}(\bdelt_0\tens1).
\end{align*}
Here for any condition $P$ the value $\chi(P)=1$ if $P$ holds true and $\chi(P)=0$ otherwise.
The right hand side can be also written as
\begin{align*}
&-\sum_{i=1}^t (1-\delta_{n_{i-1},0}) f_{n_0;\dots;n_{i-1}+n_i;\dots;n_t}
+\sum_{i=1}^t (1-\delta_{n_i,0}) f_{n_0;\dots;n_{i-1}+n_i;\dots;n_t}
\\
&=\sum_{k=0}^{t-1} \delta_{n_k,0} f_{n_0;\dots;\wh{n_k};\dots;n_t}
-\sum_{i=1}^t \delta_{n_i,0} f_{n_0;\dots;\wh{n_i};\dots;n_t}
=\delta_{n_0,0} f_{n_1;\dots;n_t} -\delta_{n_t,0} f_{n_0;\dots;n_{t-1}},
\end{align*}
where indices under $\wh-$ are omitted.
This coincides with the left hand side.
Obviously, $\sfw\bdelt_0=(f_1)\bdelt_0=0$.
Thus, $C$ is an augmented curved cooperad.

Let us show that for the above augmented curved cooperad $C$ the unit-complemented $\dg$\n-operad \(\Cobar C=\bar{C}[-1]\TT\) is isomorphic to $A_\infty^\hu$.
This is already proven by Hirsh and Mill{\`e}s \cite[Theorem 6.1.8]{MR2993002}.
In fact, the graded operad \(\bar{C}[-1]\TT\) is freely generated by \(f_{n_0;n_1;\dots;n_t}\sigma^{-1}\in\bar{C}[-1](n)^{1-2t}\), \(n=\sum_{i=0}^tn_i\), either $n+t>1$ or $t=1$, $n_0=n_1=0$.
These elements are mapped to generators \(b_{n_0;n_1;\dots;n_t}\in A_\infty^\hu(n)^{1-2t}\) if $n+t>1$ and \(f_{0;0}\sigma^{-1}\) is mapped to $\bi$, which we denote also by \(b_{0;0}\in A_\infty^\hu(0)^{-1}\) for uniformity.
This establishes an isomorphism of graded operads \(\bar{C}[-1]\TT\rto\sim A_\infty^\hu\), since the latter is freely generated by all \(b_{n_0;n_1;\dots;n_t}\) including \(b_{0;0}\).
Let us verify that under this identification the differentials in \(\bar{C}[-1]\TT\) and $A_\infty^\hu$ coincide.

The differential in \(\bar{C}[-1]\TT\) is given by
\[ (f_{n_0;n_1;\dots;n_t}\sigma^{-1})d = f_{n_0;n_1;\dots;n_t}\bdelt_0 -\sum_{x+y+z=n}f_{n_0;n_1;\dots;n_t}\bar\Delta_{T(x,y,z)}(\sigma^{-1}\underset{x,y,z}\bull\sigma^{-1}).
\]
In presentation \(C\simeq J\odot P\) of \exaref{exa-cooperad-C-JP} the element \(f_{n_0;n_1;\dots;n_t}\Delta\in(C\odot C)(n)\) is
\[ \sum_{i_1+\dots+i_k=n+t}^{k,i_1,\dots,i_k>0} [(1^{\tens n_0}\tens\bj\tens1^{\tens n_1}\tens\bj\tdt1^{\tens n_{t-1}}\tens\bj\tens1^{\tens n_t})\tens(f_{i_1}\tens f_{i_2}\tdt f_{i_k})]\vartheta\tens f_k.
\]
Only certain summands of its expansion will contribute to \(\bar\Delta_{T(\_,\_,\_)}\).
All of them appear in
\begin{multline*}
[(1^{\tens n_0}\tens\bj\tens1^{\tens n_1}\tens\bj\tdt1^{\tens n_{t-1}}\tens\bj\tens1^{\tens n_t})\tens f_1^{\tens(n+t)})]\vartheta\tens f_{n+t}
\\
+\sum_{a+r+c=n+t}^{r>1,\,a+c>0} [(1^{\tens n_0}\tens\bj\tens1^{\tens n_1}\tens\bj\tdt1^{\tens n_{t-1}}\tens\bj\tens1^{\tens n_t})\tens(f_1^{\tens a}\tens f_r\tens f_1^{\tens c})]\vartheta\tens f_{a+1+c}.
\end{multline*}
Namely, \(f_{0;0}\sigma^{-1}d=0\) and if $n+t>1$, then \(\sum_{x+y+z=n}f_{n_0;n_1;\dots;n_t}\bar\Delta_{T(x,y,z)}\) is the sum of
\begin{equation}
\sum_{i=1}^t (1^{\tens(n_0+\dots+n_{i-1})}\tens f_{0;0}\tens1^{\tens(n_i+\dots+n_t)})\tens f_{n_0;\dots;n_{i-2};n_{i-1}+1+n_i;n_{i+1};\dots;n_t}
\label{eq-fnnn1nnn}
\end{equation}
and of terms of
\begin{equation}
\sum_{a+r+c=n+t}^{r>1,\,a+c>0} (1^{\tens n_0}\tens\bj\tens1^{\tens n_1}\tens\bj\tdt1^{\tens n_{t-1}}\tens\bj\tens1^{\tens n_t})\tens(1^{\tens a}\tens f_r\tens1^{\tens c})\tens f_{a+1+c}
\label{eq-1a-fr-1c-fa1c}
\end{equation}
transformed by moving all $\bj$'s on first $a$ and last $c$ positions to $f_{a+1+c}$.
The remaining $\bj$'s are absorbed by $f_r$.

On the other hand, \(b_{0;0}d=0\) and if $n+t>1$, then
\begin{align}
&b_{n_0;n_1;\dots;n_t}d = [(1^{\tens n_0}\tens\bj\tens1^{\tens n_1}\tens\bj\tdt1^{\tens n_{t-1}}\tens\bj\tens1^{\tens n_t})b_{n+t}]d \notag
\\
&= \sum_{i=1}^t (1^{\tens n_0}\tens\bj\tdt1^{\tens n_{i-1}}\tens\bone^\su\tens1^{\tens n_i}\tdt\bj\tens1^{\tens n_t})b_{n+t}
\label{eq-1j11su1j1}
\\
&- \sum_{i=1}^t (1^{\tens(n_0+\dots+n_{i-1})}\tens b_{0;0}\tens1^{\tens(n_i+\dots+n_t)})\tens b_{n_0;\dots;n_{i-2};n_{i-1}+1+n_i;n_{i+1};\dots;n_t}
\label{eq-1b1-bnnn1nnn}
\\
&- \sum_{a+r+c=n+t}^{r>1,\,a+c>0} (1^{\tens n_0}\tens\bj\tens1^{\tens n_1}\tens\bj\tdt1^{\tens n_{t-1}}\tens\bj\tens1^{\tens n_t})\tens(1^{\tens a}\tens b_r\tens1^{\tens c})\tens b_{a+1+c},
\label{eq-1a-br-1c-ba1c}
\end{align}
where each summand of the last sum is transformed via the same pattern as above: all $\bj$'s on first $a$ and last $c$ positions are moved to $b_{a+1+c}$ and the remaining $\bj$'s are absorbed by $b_r$.

One verifies immediately that \eqref{eq-1j11su1j1} equals \(f_{n_0;n_1;\dots;n_t}\bdelt_0\).
In fact, both vanish unless $n+t=2$ and it remains to consider the cases of $b_2$, $b_{0;1}$, $b_{1;0}$, $b_{0;0;0}$ separately.
For instance, in the case of $b_{0;0;0}$ we have
\[ (\bone^\su\tens\bj)b_2+(\bj\tens\bone^\su)b_2=-\bj+\bj=0.
\]
Sum \eqref{eq-1b1-bnnn1nnn} agrees with \eqref{eq-fnnn1nnn} and transformed sums \eqref{eq-1a-fr-1c-fa1c} and \eqref{eq-1a-br-1c-ba1c} also agree.
\end{example}

Expanding \eqref{eq-1a-br-1c-ba1c} we find that for $n+t>1$
\begin{align*}
&b_{n_0;n_1;\dots;n_t}d =f_{n_0;n_1;\dots;n_t}\bdelt_0
\\
&- \sum_{i=1}^t (1^{\tens(n_0+\dots+n_{i-1})}\tens b_{0;0}\tens1^{\tens(n_i+\dots+n_t)})\tens b_{n_0;\dots;n_{i-2};n_{i-1}+1+n_i;n_{i+1};\dots;n_t}
\\
&- \sum_{\substack{1\le i\le t-1\\ n_i-p-q\ge2}}^{p,q\ge0} (1^{\tens(n_0+\dots+n_{i-1}+p)}\tens b_{n_i-p-q}\tens 1^{\tens(q+n_{i+1}+\dots+n_t)})\tens b_{n_0;\dots;n_{i-1};p+1+q;n_{i+1};\dots;n_t}
\\
&- \sideset{}{''}\sum_{\substack{0\le p\le n_i\\ 0\le q\le n_{k-1}}}^{\substack{0\le i,k\le t+1\\ k-i>1}} (1^{\tens(n_0+\dots+n_{i-1}+p)}\tens b_{n_i-p;n_{i+1};\dots;n_{k-1}-q}\tens 1^{\tens(q+n_k+\dots+n_t)})\tens b_{n_0;\dots;n_{i-1};p+1+q;n_k;\dots;n_t}.
\end{align*}
Here $''$ reminds that two kinds of terms are not included in the sum: one corresponding to $i=k-2$, $p=n_i$, $q=n_{k-1}$ ($b_{0;0}$ would appear in the place of the first $b$), another with $i=0$, $k=t+1$, $p=0=q$ ($b_1$ would appear for the last $b$).

\section{Curved coalgebras over curved cooperads}\label{sec-Curved-coalgebras-over-curved-cooperads}
\begin{definition}\label{def-curved-coalgebra-over-curved-augmented-cooperad}
A \emph{curved comodule} $N$ over\index{TTindex}{curved comodule} a curved cooperad $C$ is a triple \((N,d_N,\delta)\), consisting of a plain right $C$\n-comodule \((N,\delta:N\to N\bar\odot C)\in\cv^\mm\) and a linear map $d=d_N:N\to N$ of degree~1 (an \((\id_N;\id_C,d_C)\)-coderivation) such that
\begin{diagram}
N &\rTTo^\delta &N\bar\odot C
\\
\dTTo<{1+d_Np} &= &\dTTo>{(1+d_Np)\bar\odot(1+d_Cp)}
\\
N &\rTTo^\delta &N\bar\odot C
\end{diagram}
\begin{equation}
d_N^2 =\bigl\langle N(n) \rTTo^{\delta_n} N(n)\tens C(1) \rTTo^{1\tens\bdelt_0} N(n) \bigr\rangle.
\label{eq-dN2-delta-n}
\end{equation}
\emph{Morphism \(f:N\to K\) of curved comodules} over a curved cooperad $C$ is a morphism of plain $C$\n-comodules commuting with derivations:
\[ fd_K =d_Nf.
\]
The category of curved comodules over $C$ is denoted $\comodur C$.

A\index{TTindex}{curved coalgebra over a curved augmented cooperad} \emph{curved coalgebra} over a curved augmented cooperad $C$ is a curved comodule \((V,d_V,\delta)\) with $\mm=0$.
It consists of an object $V$, map $d=d_V:V\to V$ of degree~1 and a family \(\delta=(\delta_l:V\to V^{\tens l}\tens C(l))_{l\ge0}\) coinciding with a coaction \(\delta:V\to V\bar\odot C\).
The category of curved coalgebras over $C$ is denoted $C\cCoalg$.
\end{definition}

Notice that both sides of equation~\eqref{eq-dN2-delta-n} are \((\id_N;\id_C,d_C^2)\)-coderivations.
For the left hand side this follows from \remref{rem-r2-idN:idC:xi2-coderivation}.
For the right hand side this is proven by the following computation which uses \eqref{dia-N-NNC-NNNNC} for \(l_1=\dots=l_k=1\) and for \(l_1=k\in\NN^1\).
For any \(n_1,\dots,n_k\in\mm\) and \(n=n_1+\dots+n_k\) we have
\begin{multline*}
\sum_{x+1+z=k} \bigl\langle N(n) \rTTo^{\delta_{n_1,\dots,n_k}} N(n_1)\tdt N(n_k)\tens C(k) \rTTo^{1^{\tens x}\tens\delta_{n_{x+1}}(1\tens\bdelt_0)\tens1^{\tens z+1}}
\\
\hfill N(n_1)\tdt N(n_{x+1})\tdt N(n_k)\tens C(k) \bigr\rangle \quad
\\
+\bigl\langle N(n) \rTTo^{\delta_{n_1,\dots,n_k}} N(n_1)\tdt N(n_k)\tens C(k) \rTTo^{1^{\tens k}\tens d_C^2} N(n_1)\tdt N(n_k)\tens C(k) \bigr\rangle
\\
\hskip\multlinegap =\sum_{x+1+z=k} \bigl\langle N(n) \rTTo^{\delta_{n_1,\dots,n_k}} N(n_1)\tdt N(n_k)\tens C(k) \rTTo^{1^{\tens k}\tens\Delta_{T(x,1,z)}} \hfill
\\
\hfill N(n_1)\tdt N(n_k)\tens C(1)\tens C(k) \rTTo^{1^{\tens k}\tens\bdelt_0\tens1} N(n_1)\tdt N(n_k)\tens C(k) \bigr\rangle \quad
\\
+\bigl\langle N(n) \rTTo^{\delta_{n_1,\dots,n_k}} N(n_1)\tdt N(n_k)\tens C(k) \rTTo^{1^{\tens k}\tens d_C^2} N(n_1)\tdt N(n_k)\tens C(k) \bigr\rangle
\\
\hskip\multlinegap =\bigl\langle N(n) \rTTo^{\delta_{n_1,\dots,n_k}} N(n_1)\tdt N(n_k)\tens C(k) \rTTo^{1^{\tens k}\tens\Delta_{T(0,k,0)}} \hfill
\\
\hfill N(n_1)\tdt N(n_k)\tens C(k)\tens C(1) \rTTo^{1^{\tens(k+1)}\tens\bdelt_0} N(n_1)\tdt N(n_k)\tens C(k) \bigr\rangle \quad
\\
=\bigl\langle N(n) \rto{\delta_n} N(n)\tens C(1) \rTTo^{1\tens\bdelt_0} N(n) \rTTo^{\delta_{n_1,\dots,n_k}} N(n_1)\tdt N(n_k)\tens C(k) \bigr\rangle.
\end{multline*}

Produce from a connected curved augmented cooperad $C$ the augmented comonad \(\bot=\bot_C\) in\index{TTsymb}{W+@$\cw_+$} $\cw_+=\cw^0_+$, $\mm=0$.
The category of $\bot$\n-coalgebras $\cw_{+\bot}$ contains the full subcategory $\cw_{+\bot}^f$ of cofree $\bot$\n-coalgebras.
They are of the form \((V\bot,\Delta:V\bot\to V\bot\bot)\) for \(V\in\Ob\cw\),
equivalently, of the form \((V[1]\odot C[-1],1\odot\Delta[-1]:V[1]\odot C[-1]\to V[1]\odot C\odot C[-1])\).
According to \lemref{lem-forgetful-functor-has-right-adjoint-T} the $\kk$\n-module of morphisms is
\[ \cw_{+\bot}^f(V\bot,W\bot) \simeq \cw_+(V\bot,W).
\]

We could take for $N\in\cw^\NN_+$ an arbitrary collection, but for simplicity we take \(N=V[1]\) concentrated in zero arity for $V\in\cw_+$, $\mm=0$.
Consider an \((\id_{N\odot C};\id_C,d_C)\)\n-coderivation \(\nabla:N\odot C\to N\odot C\) of degree~1, see \secref{sec-Coderivations}.
It can be presented as \(\nabla=1_N\odot d_C+b\), where $b$ is an \((\id_{N\odot C};\id_C,0)\)\n-coderivation.
The latter is determined by a degree~1 map \(\check{b}:N\odot C\to N\) through the equation
\[ 1+bp =\bigl(N\odot C \rTTo^{1_N\odot\Delta} N\odot C\odot C \rTTo^{(1_N\odot\eps+\check{b}p)\odot1_C} N\odot C\bigr).
\]
As shown in \eqref{eq-nabla-page} the restriction of $b$ to $n$-th summand is given by
\begin{multline}
b =\biggl\langle V[1]^{\tens n}\tens C(n) \rTTo^{(1\tens\Delta_{T(x,y,z)})\;} \bigoplus^{x+1+z=l}_{x+y+z=n} V[1]^{\tens n}\tens C(y)\tens C(l) \rto\sim
\\
\bigoplus^{x+1+z=l}_{x+y+z=n} V[1]^{\tens x}\tens(V[1]^{\tens y}\tens C(y))\tens V[1]^{\tens z}\tens C(l)
\rTTo^{\sum1^{\tens x}\tens\sS{_y}{\check b}\tens1^{\tens z}\tens1} \bigoplus_{l\ge0} V[1]^{\tens l}\tens C(l) \biggr\rangle.
\label{eq-b-V[1]C-V[1]CC-V[1]V[1]CV[1]C-V[1]C}
\end{multline}
The degree~1 maps \(\sS{_i}{\check{b}}:V[1]^{\tens i}\tens C(i)\to V[1]\) induces by closedness of $\cw$ degree~1 maps \(\check{b}_i:C(i)\to\und\cw(V[1]^{\tens i},V[1])\).
They combine to a map of collections \(\bb=(\sigma\check{b}_i)_{i\in\NN}:C[-1]\to\END(V[1])\) in $\cw$ preserving the degree.
Equivalently, $V[1]$ is an algebra over the free operad generated by $C[-1]$: there is an action \(\alpha:C[-1]\Fo\to\END(V[1])\).

Due to \remref{rem-r2-idN:idC:xi2-coderivation} $\nabla^2$ is an \((\id_{N\odot C};\id_C,d_C^2)\)-coderivation of degree~2.
Hence, the difference \(\nabla^2-1_N\odot d_C^2\) is an \((\id_{N\odot C};\id_C,0)\)\n-coderivation of degree~2.
Another \((\id_{N\odot C};\id_C,0)\)-coderivation $\hat\bdelt_0$ of degree~2 is determined by $\bdelt_0$ from the infinitesimal morphism of comodules
\[ 1+\hat\bdelt_0p =\bigl(N\odot C \rTTo^{1_N\odot\Delta} N\odot C\odot C \rTTo^{1_N\odot(\eps+\bdelt_0p)\odot1_C} N\odot C\bigr),
\]
\(\deg p=-2\).
The comodule morphism property is obvious from the diagram
\begin{diagram}
N\odot C &\rTTo^{1_N\odot\Delta} &N\odot C\odot C &\rTTo^{1_N\odot(\eps+\bdelt_0p)\odot1_C} &N\odot C
\\
\dTTo<{1\odot\Delta} &= &\dTTo<{1\odot1\odot\Delta} &= &\dTTo>{1\odot\Delta}
\\
N\odot C\odot C &\rTTo^{1_N\odot\Delta\odot1_C} &N\odot C\odot C\odot C &\rTTo^{1_N\odot(\eps+\bdelt_0p)\odot1_C\odot1_C} &N\odot C\odot C
\end{diagram}

Thus the restriction of inner coderivation $\hat\bdelt_0$ to $n$-th summand is given by
\begin{equation*}
\hat\bdelt_0 =\bigl\langle V[1]^{\tens n}\tens C(n) \rTTo^{1\tens\sum_{x+1+z=n}\Delta_{T(x,1,z)}\;} V[1]^{\tens n}\tens C(1)\tens C(n) \rTTo^{1\tens\bdelt_0\tens1} V[1]^{\tens n}\tens C(n) \bigr\rangle.
\end{equation*}

\begin{proposition}
Let \(V\in\cw^\NN_+\) and let $C$ be a connected curved augmented cooperad.
The following are equivalent:
\begin{myitemize}
\item[1)] A degree~0 map \(\bb=(\sigma\check{b}_i)_{i\in\NN}:C[-1]\to\END(V[1])\) such that \(\nabla=1\odot d_C+b:V[1]\odot C\to V[1]\odot C\) satisfies the equation
\begin{equation}
\nabla^2-1\odot d_C^2=\hat\bdelt_0,
\label{eq-nabla2-1dC2-h}
\end{equation}
where $b$ is given by \eqref{eq-b-V[1]C-V[1]CC-V[1]V[1]CV[1]C-V[1]C}.

\item[2)] A collection of degree~1 maps \(\sS{_i}{\check{b}}:V[1]^{\tens i}\tens C(i)\to V[1]\) that determines $b$ via \eqref{eq-b-V[1]C-V[1]CC-V[1]V[1]CV[1]C-V[1]C} and satisfies
\begin{equation}
\delta_{n,1} \bigl\langle V[1]^{\tens n}\tens C(n) \rTTo^{1\tens\bdelt_0} V[1] \bigr\rangle =\bigl\langle V[1]^{\tens n}\tens C(n) \rTTo^{1\tens d+b} \oplus_{l\ge0} V[1]^{\tens l}\tens C(l) \rto{(\sS{_l}{\check{b}})} V[1] \bigr\rangle.
\label{eq-deltan1-V[1]C-V[1]}
\end{equation}

\item[3)] A curved $C$\n-coalgebra \((V[1]\odot C,\nabla:V[1]\odot C\to V[1]\odot C)\), cofree\index{TTindex}{curved cofree coalgebra} as a $C$\n-coalgebra.
\end{myitemize}
\end{proposition}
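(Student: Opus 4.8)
The plan is to observe that all three items describe the \emph{same} underlying data subject to progressively repackaged equations, so the proof is essentially an exercise in the coderivation--check dictionary of \secref{sec-Coderivations}. First I would record that the data in (1) and (2) coincide tautologically: by closedness of $\cw$ the family of degree~$1$ maps $\sS{_i}{\check b}\colon V[1]^{\tens i}\tens C(i)\to V[1]$ is the same as the collection $\check b_i\colon C(i)\to\und\cw(V[1]^{\tens i},V[1])$, which assembles into the single degree~$0$ map $\bb=(\sigma\check b_i)_i\colon C[-1]\to\END(V[1])$; and in both items this family is exactly what determines $b$ through \eqref{eq-b-V[1]C-V[1]CC-V[1]V[1]CV[1]C-V[1]C}, hence $\nabla=1\odot d_C+b$. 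Thus only the imposed equation changes from item to item, and the whole proposition reduces to showing that \eqref{eq-nabla2-1dC2-h}, \eqref{eq-deltan1-V[1]C-V[1]} and the curved--comodule axiom \eqref{eq-dN2-delta-n} are mutually equivalent for this fixed $\nabla$.

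For (1)$\Leftrightarrow$(2) I would use that, as already noted just before the statement, both $\nabla^2-1\odot d_C^2$ and $\hat\bdelt_0$ are $(\id;\id_C,0)$-coderivations of degree~$2$. By the bijection of \corref{cor-coderivations-are-in-bijection-with-homogeneous-maps} an equality of two such coderivations is equivalent to the equality of their check maps, obtained by postcomposition with the canonical projection $\eps$. So I would compute the two checks on the $n$-th summand: $\hat\bdelt_0\cdot\eps$ collapses to the left-hand side $\delta_{n,1}\bigl\langle V[1]^{\tens n}\tens C(n)\rTTo^{1\tens\bdelt_0}V[1]\bigr\rangle$ of \eqref{eq-deltan1-V[1]C-V[1]}, while, expanding $\nabla^2-1\odot d_C^2=(1\odot d_C)b+b(1\odot d_C)+b^2$ and applying the recovery formula \eqref{eq-nabla-page}, the check $(\nabla^2-1\odot d_C^2)\cdot\eps$ is precisely the composite $\bigl\langle V[1]^{\tens n}\tens C(n)\rTTo^{1\tens d+b}\bigoplus_l V[1]^{\tens l}\tens C(l)\rTTo^{(\sS{_l}{\check b})}V[1]\bigr\rangle$ on the right-hand side of \eqref{eq-deltan1-V[1]C-V[1]}. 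This identifies \eqref{eq-nabla2-1dC2-h} with \eqref{eq-deltan1-V[1]C-V[1]}.

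For (1)/(2)$\Leftrightarrow$(3) I would first invoke \lemref{lem-forgetful-functor-has-right-adjoint-T} to see that the cofree $\bot_C$-coalgebras are exactly the $(V[1]\odot C,\nabla)$, so a curved $C$-coalgebra that is cofree as a $C$-coalgebra is precisely such a pair with $\nabla$ a degree~$1$ map making $(V[1]\odot C,\nabla)$ a curved comodule in the sense of \defref{def-curved-coalgebra-over-curved-augmented-cooperad}. The diagram in that definition says exactly that $\nabla$ is an $(\id;\id_C,d_C)$-coderivation, which is automatic from $\nabla=1\odot d_C+b$; hence the only remaining content is the axiom \eqref{eq-dN2-delta-n}, which for the cofree coaction $\delta=1_{V[1]}\odot\Delta$ reads $\nabla^2=\delta(1\tens\bdelt_0)$, an equality of $(\id;\id_C,d_C^2)$-coderivations. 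I would then subtract the common $(\id;\id_C,d_C^2)$-coderivation $1\odot d_C^2$ and verify the structural identity $\delta(1\tens\bdelt_0)=\hat\bdelt_0+1\odot d_C^2$: on the $n$-th summand $\delta(1\tens\bdelt_0)$ restricts to $\Delta_{T(0,n,0)}(1\tens\bdelt_0)$, the comodule coderivation $1\odot d_C^2$ carries the curved--cooperad expression $d_C^2=\Delta_{T(0,n,0)}(1\tens\bdelt_0)-\sum_{x+1+z=n}\Delta_{T(x,1,z)}(\bdelt_0\tens1)$ of \eqref{eq-d2-Delta(1delta0-Delta(delta01)}, and $\hat\bdelt_0$ restricts to $\sum_{x+1+z=n}\Delta_{T(x,1,z)}(\bdelt_0\tens1)$; the $\sum_{x+1+z=n}$ terms cancel and leave exactly $\Delta_{T(0,n,0)}(1\tens\bdelt_0)$. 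Consequently \eqref{eq-dN2-delta-n} becomes $\nabla^2-1\odot d_C^2=\hat\bdelt_0$, which is \eqref{eq-nabla2-1dC2-h}, closing the loop.

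The hard part will be this last coderivation bookkeeping: confirming that the inner coderivation $1\odot d_C^2$ induced by $d_C^2$ together with $\hat\bdelt_0$ reassembles $\delta(1\tens\bdelt_0)$ requires coassociativity of $\Delta$ to move $\bdelt_0$ between the inner and outer $C$-factors, together with the precise matching of the $\Delta_{T(x,1,z)}$ terms dictated by \eqref{eq-d2-Delta(1delta0-Delta(delta01)}. Everything else is bookkeeping once the three equations are reduced to their check maps via \corref{cor-coderivations-are-in-bijection-with-homogeneous-maps}; the only subtleties to watch are the shift $\sigma$ relating $\bb$ to the family $\check b_i$ and the signs supplied by \remref{rem-r2-idN:idC:xi2-coderivation} when asserting the coderivation types of $\nabla^2$ and $\delta(1\tens\bdelt_0)$.
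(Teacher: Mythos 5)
Your proposal is correct and takes essentially the same route as the paper: (1)$\Leftrightarrow$(2) by noting both sides of \eqref{eq-nabla2-1dC2-h} are \((\id;\id_C,0)\)-coderivations and comparing their check maps, and (1)$\Leftrightarrow$(3) via the identity \(1\odot d_C^2+\hat\bdelt_0=\Delta_{T(0,n,0)}\cdot(1\tens\bdelt_0)\) supplied by the curvature equation \eqref{eq-d2-Delta(1delta0-Delta(delta01)}. The paper's own proof is just a terser version of this argument.
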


\begin{proof}
The both hand sides of \eqref{eq-nabla2-1dC2-h} are \((\id_{V[1]\odot C};\id_C,0)\)-coderivations.
Hence, this equation is equivalent to $\nabla^2\eps_\bot=\delta_{n,1}1\tens\bdelt_0:V[1]^{\tens n}\tens C(n)\to V[1]$.
In detail this is \eqref{eq-deltan1-V[1]C-V[1]}.
Thus, 1) and 2) are equivalent.

We have
\[ 1\odot d_C^2 +\hat\bdelt_0 =\Delta_{T(0,n,0)}\cdot(1\tens\bdelt_0): V[1]^{\tens n}\tens C(n) \to V[1]^{\tens n}\tens C(n)
\]
due to \eqref{eq-d2-Delta(1delta0-Delta(delta01)}.
Therefore, \eqref{eq-nabla2-1dC2-h} is equivalent to \eqref{eq-dN2-delta-n} for \(N=V[1]\odot C\), thus, 1) and 3) are equivalent.
\end{proof}

Using \eqref{eq-nabla-page} we rewrite \eqref{eq-deltan1-V[1]C-V[1]} as a system of equations for arbitrary $n\in\NN$:
\begin{multline}
\delta_{n,1} \bigl\langle C(n) \rto{\bdelt_0} \1 \rTTo^{\dot1_{V[1]}} \und\cv(V[1],V[1]) \bigr\rangle =\bigl\langle C(n) \rto d C(n) \rto{\sS{_n}{\check b}} \und\cv(V[1]^{\tens n},V[1]) \bigr\rangle
\\
\hskip\multlinegap +\biggl\langle C(n) \rTTo^{\Delta_{T(x,y,z)}\;} \bigoplus^{x+1+z=l}_{x+y+z=n} C(y)\tens C(l) \rTTo^{\oplus\check{b}_y\tens\check{b}_l} \hfill
\\
\bigoplus^{x+1+z=l}_{x+y+z=n} \und\cv(V[1]^{\tens y},V[1]) \tens \und\cv(V[1]^{\tens l},V[1])
\rTTo^{\underset{x,y,z}\bull} \und\cv(V[1]^{\tens n},V[1]) \biggr\rangle,
\label{eq-Dn-Cn-C1Fn}
\end{multline}
where \(\underset{x,y,z}\bull\) is the binary multiplication in the operad \(\END(V[1])\).
Similar binary multiplications can be used in the definition of an arbitrary operad, in particular, for $C[-1]\Fo$.
The composite \(\sfb=\bigl(C \rTTo^{\sigma^{-1}} C[-1]\hookrightarrow C[-1]\Fo\bigr)\) allows to write down a map
\begin{multline*}
D(n) =-\bigl\langle C(n) \rto{\bdelt_0} \1 \rto\eta C[-1]\Fo(n) \bigr\rangle +\bigl\langle C(n) \rto d C(n) \rto{\sfb(n)} C[-1]\Fo(n) \bigr\rangle
\\
\hskip\multlinegap \phantom{D(n)} +\Bigl\langle C(n) \rTTo^{\Delta_{T(x,y,z)}\;} \bigoplus_{x+y+z=n} C(y)\tens C(x+1+z) \rTTo^{\oplus\sfb(y)\tens\sfb(x+1+z)} \hfill
\\
\bigoplus_{x+y+z=n} C[-1]\Fo(y)\tens C[-1]\Fo(x+1+z) \rTTo^{\underset{x,y,z}\bull} C[-1]\Fo(n) \Bigr\rangle.
\end{multline*}
Its image consists of ``elements'', whose action in $V[1]$ vanishes.
Define an operad in $\cv$ as the quotient \(\tilde{\co}=C[-1]\Fo/(\im D)\) over the ideal generated by \(\im D(n)\) for all $n\ge0$.
Clearly, a curved cofree $\bot$\n-coalgebra is the same as an \(\tilde{\co}\)\n-algebra in $\cv$.

Now let us use the augmentation of $C$.
The identity \(\bigl(\1\rto\sfw C\rto\eps \1\bigr)=\id\) implies that \(C=\1\oplus\bar C\) for the collection \(\bar{C}=\Ker\eps\).
The map \(\bb:C[-1]\to\END(V[1])\), restricts to a map \(\1(1)\rto\sfw C(1)\rto{b} \und\cw(V[1],V[1])\), which by closedness is equivalent to a degree~1 map \(\bd:V[1]\to V[1]\).
The map $\bd$ satisfies the equation $\bd^2=\sfw\bdelt_0-(\sfw d_C)\check{b}_1$, which is just \eqref{eq-Dn-Cn-C1Fn} applied to $\sfw$.
There is also restriction \(\overline\bb=\bb|:\bar C[-1]\to\END(V[1])\) and the canonical degree~1 map
\[ \overline\sfb =\bigl(\bar C \rTTo^{\sigma^{-1}} \bar C[-1]\hookrightarrow \bar C[-1]\Fo\bigr).
\]

\begin{proposition}
There is a functor from the category of curved cofree $C$\n-coalgebras to category $\Cobar C\cAlg$ of curved $\Cobar C$-algebras,
\[ \bigl(V[1],\bb:C[-1]\to\END(V[1])\bigr) \mapsto \bigl(V[1],(\sfw\sigma^{-1})\bb,\alpha:\Cobar C\to\END(V[1])\bigr).
\]
\end{proposition}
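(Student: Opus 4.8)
The plan is to present the functor as the composite of two identifications that are already established: the curved cofree coalgebra datum is repackaged as a twisting cochain, and twisting cochains vanishing on the counit are turned into curved operad morphisms out of $\Cobar C$ by \corref{cor-uccOp(CobarCO)-wtheta0}. First I would fix the curved structure of the target. Given a curved cofree $C$-coalgebra $(V[1],\bb)$, set $\bd=(\sfw\sigma^{-1})\bb=\bw\bb$, a degree~1 endomorphism of $V[1]$; by \exaref{exa-END(Vd)} it makes $\END(V[1])$ a unit-complemented curved operad, with inner derivation $d_\ce$ and curvature $m_0=-\bd^2$. The relation $\bd^2=\sfw\bdelt_0-(\sfw d_C)\check b_1$ recorded above shows that this is exactly the curvature forced by the coalgebra data.

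Next I would produce the twisting cochain $\theta=\opr\sigma^{-1}\overline\bb:C\to\END(V[1])$ attached to the restriction $\overline\bb=\bb|$; it has degree~1, factors through $\opr$, so $\sfw\theta=0$, and equals $\sigma^{-1}\overline\bb$ on $\bar C$. The heart of the argument is to check that $\theta$ satisfies the twisting cochain equation \eqref{eq-CCO-COO-CCCOOO} for the above curved structure. I would do this by writing $\check b=\theta+\eps\bd=\sigma^{-1}\bb$ and substituting into the defining equation \eqref{eq-Dn-Cn-C1Fn} of a curved cofree $C$-coalgebra. Expanding $\sum_{x+y+z=n}\Delta_{T(x,y,z)}(\check b\tens\check b)\underset{x,y,z}\bull$ bilinearly produces the pure term $\sum\Delta_{T(x,y,z)}(\theta\tens\theta)\underset{x,y,z}\bull$; two mixed terms with a single unary factor $\eps\bd$, which by the counit axioms of $C$ (\eqref{eq-counitality-C(n)OC(1)}, \eqref{eq-counitality-C(1)nOC(n)}) collapse to $\theta\cdot\bd$ and $\sum_{x+1+z=n}(1^{\tens x}\tens\bd\tens 1^{\tens z})\cdot\theta$, whose sum is precisely $\theta d_\ce$ (the sign agreeing because $\theta$ is odd); and the double-unary term, which survives only for $n=1$ and equals $\delta_{n,1}\eps\bd^2=-\delta_{n,1}\eps m_0$. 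Together with $d_C\check b=d_C\theta$ (valid since $d_C\cdot\eps=0$), this rewrites \eqref{eq-Dn-Cn-C1Fn} term by term as \eqref{eq-CCO-COO-CCCOOO} for $\theta$. I expect this matching---especially recognising the two mixed terms as the inner derivation $\theta d_\ce$---to be the main obstacle; the remaining manipulations are routine.

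With $\theta\in\Tw(C,\END(V[1]))$ and $\sfw\theta=0$ in hand, I would invoke \corref{cor-uccOp(CobarCO)-wtheta0}: its bijection carries $\theta$ to a unique morphism $\alpha=(\alpha_1,0)\in\uccOp(\Cobar C,\END(V[1]))$ whose operad part $\alpha_1:\bar C[-1]\TT\to\END(V[1])$ is the homomorphism of free operads restricting to $\overline\bb$ on the generators $\bar C[-1]$. This $\alpha$, on the curved complex $(V[1],\bd)$, is the asserted curved $\Cobar C$-algebra.

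Finally, for functoriality I would note that a morphism $f:V[1]\to W[1]$ of curved cofree $C$-coalgebras is a degree~0 map intertwining the coactions, equivalently the structure maps $\check b$, and in particular the unary maps $\bd$. Because $\Cobar C$ is generated as a graded operad by $\bar C[-1]$, the intertwining of $\check b$ on these generators is exactly the condition that $f$ be a morphism of the associated curved $\Cobar C$-algebras. Preservation of identities and of composition is then immediate, so the assignment $(V[1],\bb)\mapsto(V[1],\bd,\alpha)$ is a functor.
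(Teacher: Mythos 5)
Your proposal is correct in substance, but it takes a genuinely different route from the paper's proof. The paper never mentions twisting cochains here: it encodes a curved cofree coalgebra as an algebra over the quotient operad $\tilde{\co}=C[-1]\Fo/(\im D)$ constructed just before the proposition, rewrites the vanishing of $\im D$ as the chain-map property \eqref{dia-d-alpha-alpha-[-d]} of the action against the derivation $d$ of \eqref{eq-a-partial-ad} on $C[-1]\Fo$ and the commutator $[-,\bd]$ on $\END(V[1])$, and then verifies, via the augmentation identities \eqref{eq-Delta-1w-w1-Delta}, that on $\bar{C}[-1]$ the summands of $d$ containing $\bd$ cancel against the summands of $\Delta_{T(x,y,z)}$ containing $\sfw$, leaving exactly the cobar differential \eqref{eq-a-partial-bar-Delta}. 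Your central computation---expanding $\sum_{x+y+z=n}\Delta_{T(x,y,z)}(\check{b}\tens\check{b})\underset{x,y,z}\bull$ after the substitution $\check{b}=\theta+\eps\bd$, recognizing the two mixed terms as $\theta d_\ce$ (the sign agreeing because $\theta$ is odd) and the double-unary term as $-\delta_{n,1}\eps m_0$---is the same cancellation, transplanted from the free-operad picture into the twisting-cochain equation; I checked it, including on the $\sfw$-component of $C(1)$, where \eqref{eq-Dn-Cn-C1Fn} and \eqref{eq-CCO-COO-CCCOOO} both reduce to $\bd^2=\sfw\bdelt_0-(\sfw d_C)\check{b}_1$. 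What your route buys is economy and transparency: the passage from data on generators to an operad morphism compatible with differentials \emph{and curvatures} is delegated to the bijection $\cOp(\Cobar C,\co)\simeq\Tw(C,\co)$, so the curvature condition is explicit, whereas the paper's proof leaves it implicit in \eqref{dia-d-alpha-alpha-[-d]} (it is encoded by the chain property evaluated on the generator $\bd=\sfw\sigma^{-1}$). What the paper's route buys is the explicit operad $\tilde{\co}$ governing curved cofree coalgebras. Your functoriality argument (generation of $\Cobar C$ by $\bar{C}[-1]$) is also fine, and is no less detailed than the paper's, which merely asserts it.

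One citation must be repaired. You invoke \corref{cor-uccOp(CobarCO)-wtheta0}, which is stated for $\co$ running over \emph{unit-complemented} curved operads; but $\END(V[1],\bd)$ need not be unit-complemented: by \exaref{exa-END(Vd)} a splitting $\sfv$ of the unit exists, for instance, when $V[1]$ has $\1$ as a direct summand, not in general. The slip is harmless because the splitting of the unit of the target never enters the relevant bijection: \propref{pro-unique-top-horizontal-bijections} gives $\cOp(\Cobar C,\co)\simeq\Tw(C,\co)$ for an \emph{arbitrary} curved operad $\co$, and by \eqref{eq-theta=(prinepsw)theta} one has $\sfw\theta=\sff_0$ under this bijection (since $\sfw\opr=0$ and $\sfw\eps=1$), so twisting cochains with $\sfw\theta=0$ correspond exactly to morphisms of the form $(\alpha_1,0)\in\cOp(\Cobar C,\END(V[1],\bd))$ whose restriction to the generators $\bar{C}[-1]$ is $\overline\bb$. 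Such a strict morphism is, by definition, a curved $\Cobar C$-algebra structure on $(V[1],\bd)$. Replace the appeal to the corollary by this restriction of the proposition's bijection, and your argument is complete.
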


\begin{proof}
A coderivation $b$ in $V\bot$ amounts to a degree~1 map $\bd=\bw\bb:V[1]\to V[1]$ such that $\bd^2=\sfw\bdelt_0-(\sfw d_C)\check{b}_1$ and an action \(\bar{C}[-1]\Fo\to\END(V[1])\).
Equation~\eqref{eq-nabla2-1dC2-h} or equivalently vanishing of \(\im D\) in \(\END(V[1])\) can be written as chain property for the action map $\alpha$:
\begin{diagram}[LaTeXeqno]
C[-1](n) &\rTTo^\alpha &\und\cw(V[1]^{\tens n},V[1])
\\
\dTTo<d &&\dTTo>{[-,\bd]}
\\
C[-1]\Fo(n) &\rTTo^\alpha &\und\cw(V[1]^{\tens n},V[1])
\label{dia-d-alpha-alpha-[-d]}
\end{diagram}
where \([f,\bd]=f\bd-(-1)^f\sum_{x+1+z=n}(1^{\tens x}\tens\bd\tens1^{\tens z})f\) is the usual commutator for an ``element'' \(f:X\to\und\cw(V[1]^{\tens n},V[1])\) and
\begin{align}
d &=\_\bull \bd -\sum_{x+1+z=n} \bd\underset{x,1,z}\bull\_ -\sigma D(n)
\label{eq-a-partial-ad}
\\
&=\_\bull\bd -\sum_{x+1+z=n} \bd\underset{x,1,z}\bull\_ +\sigma\bdelt_0 -\sigma d\sigma^{-1} -\sum_{x+y+z=n}\sigma\Delta_{T(x,y,z)}(\sigma^{-1}\underset{x,y,z}\bull \sigma^{-1}), \notag
\end{align}
where \(\bd=\sfw\sigma^{-1}\in C(1)[-1]^1\) is the composition \(\1\rto\sfw C(1)\rto{\sigma^{-1}} C(1)[-1]\), and \(\underset{x,y,z}\bull\) means the multiplication in \(C[-1]\Fo\):
\[ \underset{x,y,z}\bull: C[-1]\Fo(y)\tens C[-1]\Fo(x+1+z) \to C[-1]\Fo(x+y+z).
\]
For instance,
\[ \bd d =\bd\bull\bd +\bd\bull\bd +\sfw\bdelt_0 -\sfw d_C\sigma^{-1} -\bd\bull\bd =\bd\bull\bd +\sfw\bdelt_0 -\sfw d_C\sigma^{-1}.
\]

Due to \eqref{eq-Delta-1w-w1-Delta} the restriction of the derivation $d$ to \(\bar{C}(n)[-1]\) has its image in \(\bar{C}[-1]\Fo(n)\).
In fact, \(\sigma d\sigma^{-1}\) takes \(\bar{C}(n)[-1]\) to itself.
Summands of \eqref{eq-a-partial-ad} containing $\bd$ cancel against summands of $\Delta_{T(x,y,z)}$ containing $\sfw$.
Thus,
\begin{align}
d &=\sigma\bdelt_0 -\sigma d_C\sigma^{-1} -\sum_{x+y+z=n}\sigma\bar\Delta_{T(x,y,z)}(\sigma^{-1}\underset{x,y,z}\bull \sigma^{-1}) \notag
\\
&=\xi_0 +\xi_1 +\sum_{x+y+z=n} \bar\xi_{T(x,y,z)}\inj_{T(x,y,z)}: \bar{C}(n)[-1] \to \bar{C}[-1]\Fo(n).
\label{eq-a-partial-bar-Delta}
\end{align}
Equip $V[1]$ with the differential $\bw\bb$.
Then \eqref{dia-d-alpha-alpha-[-d]} shows that \(\bigl(V[1],\bw\bb,\alpha:\Cobar C\to\END(V[1])\bigr)\) is a \(\Cobar C\)-algebra.

Clearly, morphisms of curved cofree $C$\n-coalgebras are taken to morphisms of curved $\Cobar C$-algebras.
\end{proof}

\section{Curved algebras over curved operads}
\begin{definition}
A \emph{curved module} $M$ over\index{TTindex}{curved module} a curved operad $\co$ is a triple $(M,d_M,\alpha)$, consisting of a plain right $\co$\n-module $(M,\alpha:M\odot\co\to M)\in\cv^\mm$ and a map $d=d_M:M\to M$ of degree~1 (an \((\id_M;\id_\co,d_\co)\)-derivation) such that
\begin{diagram}
M\odot\co &\rTTo^\alpha &M
\\
\dTTo<{(1+d_Mp)\odot(1+d_\co p)} &= &\dTTo>{1+d_Mp}
\\
M\odot\co &\rTTo^\alpha &M
\end{diagram}
\begin{equation}
(1_M\tens m^\co_0)\alpha +d_M^2 =0: M(n) \to M(n).
\label{eq-(1m)alpha-d2}
\end{equation}
\emph{Morphism \(f:M\to L\) of curved modules} over a curved operad $\co$ is a morphism of plain $\co$\n-modules commuting with derivations:
\[ fd_L =d_Mf.
\]
The category of curved modules over $\co$ is denoted $\modur\co$.

A\index{TTindex}{curved algebra over a curved operad} \emph{curved $\co$\n-algebra} is a curved $\co$\n-module with $\mm=0$.
Equivalently, it is a triple $(V,d,\alpha)$, consisting of an object $V$, a map $d=d_V:V\to V$ of degree~1 and a morphism \((\alpha,0):\co\to\END(V,d)\in\cOp\), where curved structure of $\END(V,d)$ is that of \exaref{exa-END(Vd)}.
Thus, $\alpha$ is an action and
\[ \alpha d_{\END V} =d_\co \alpha, \qquad (1_V\tens m^\co_0)\alpha +d_V^2 =0.
\]
The category of curved algebras over $\co$ is denoted $\co\cAlg$.
\end{definition}

Dually to \secref{sec-Curved-coalgebras-over-curved-cooperads} one can prove that the both terms of equation~\eqref{eq-(1m)alpha-d2} are \((\id_M;\id_\co,d_\co^2)\)-derivations.

Recall that $(\cw,\tens)$ denotes the symmetric monoidal category $\cv^\NN$ of $\NN$\n-graded objects \(X=(X_n)_{n\in\NN}\) of $\cv$ equipped with the usual tensor product \((X\tens Y)_m=\oplus_{k+l=m}X_k\tens Y_l\).
The symmetry in $\cw$ is induced by that of $\cv$.

Let us describe another side of the bar construction: unit-complemented curved operad \(\co=(\1\oplus\bar\co,d)\) in $\cw$ with \(\bar\co\in\cw^\NN_{++}\) is taken to augmented curved cooperad \(C=(\1\oplus\bar C,d)\) in $\cw$ with \(\bar C\in\cw^\NN_{++}\).
Start with a unit-complemented curved operad $\co$ in $\cw$.
Produce the monad \(\top=\top_\co\) in\index{TTsymb}{top@$\top$} $\cw$:
\[ V\top =\bigoplus_{n\in\NN} V[-1]^{\tens n}\tens\co(n)[1].
\]

A $\top$\n-algebra consists of $\bigl(A,\alpha=(\alpha_n:A[-1]^{\tens n}\tens\co(n)[1]\to A)_{n\ge0}\bigr)$. 
The category of algebras over the monad $\top$ is isomorphic to the category of algebras over the operad $\co$ via the shift map \((A,\alpha)\mapsto\bigl(A[-1],(\alpha_n[-1]:A[-1]^{\tens n}\tens\co(n)\to A[-1])_{n\ge0}\bigr)\).
The category of $\top$\n-algebras $\cw^\top$ contains the full subcategory $\cw^\top_f$ of free $\top$\n-algebras.
They are of the form \((V\top,m:V\top\top\to V\top)\) for \(V\in\Ob\cw\), equivalently, of the form \((V[-1]\odot\co[1],1\odot\mu[1]:V[-1]\odot\co\odot\co[1]\to V[-1]\odot\co[1])\).

According to \lemref{lem-forgetful-functor-has-right-adjoint-T} the module of morphisms is
\[ \cw^\top_f(V\top,W\top) \simeq \cw(V,W\top).
\]
More generally, the following bijections are inverse to each other:
\begin{align*}
\cw(V,A)\; &\longleftrightarrow \;\cw^\top(V\top,A),
\\
f\; &\rMapsTo \;\hat f=\bigl( V\top \rTTo^{f\top} A\top \rTTo^\alpha A\bigr),
\\
\check g = \eta_\top\cdot g\; &\lMapsTo \;g.
\end{align*}

Consider an \((\id_{V\top[-1]};\id_\co,d)\)-derivation \(\nabla:V\top[-1]\to V\top[-1]\), \(V\in\cw\), see \eqref{dia-MMO-NNP-M-N}.
An example of such derivation is \(1_N\odot d=1\tens d\), given by \(1^{\tens n}\tens d:V[-1]^{\tens n}\tens\co(n)\to V[-1]^{\tens n}\tens\co(n)\), $n\ge0$.
Here the collection $N$ is $V[-1]$ concentrated in arity~0.
The difference of two such coderivations \(b=\nabla-1\tens d\) is an \((\id_{V\top[-1]};\id_\co,0)\)-derivation.
Furthermore, $\nabla^2$ and $1\tens d^2$ are \((\id_{V\top[-1]};\id_\co,d^2)\)-derivations of degree~2.
Hence, \(\nabla^2-1\tens d^2\) is an \((\id_{V\top[-1]};\id_\co,0)\)-derivation of degree~2.
Another \((\id_{V\top[-1]};\id_\co,0)\)-derivation $\hat{m}_0$ of degree~2 is determined by $m_0$ from the infinitesimal morphism of modules
\begin{equation}
1 +\hat{m}_0p =\bigl( N\odot\co \rTTo^{1_N\odot(\eta+m_0p)\odot1_\co} N\odot\co\odot\co \rTTo^{1_N\odot m} N\odot\co \bigr),
\label{eq-1m0p-NO-NOO}
\end{equation}
\(\deg p=-2\).
The module morphism property is obvious from
\begin{diagram}
N\odot\co\odot\co &\rTTo^{1_N\odot(\eta+m_0p)\odot1_\co\odot1_\co} &N\odot\co\odot\co\odot\co &\rTTo^{1_N\odot m\odot1_\co} &N\odot\co\odot\co
\\
\dTTo<{1\odot m} &= &\dTTo>{1\odot1\odot m} &= &\dTTo>{1\odot m}
\\
N\odot\co &\rTTo^{1_N\odot(\eta+m_0p)\odot1_\co} &N\odot\co\odot\co &\rTTo^{1_N\odot m} &N\odot\co
\end{diagram}

\begin{proposition}
Let \(V\in\cw\) and let $\co$ be a unit-complemented curved operad.
The following data are equivalent:
\begin{myitemize}
\item[1)] An \((\id_{V\top[-1]};\id_\co,0)\)-derivation $b:V\top[-1]\to V\top[-1]$ of degree~1 which satisfies the equation
\begin{equation}
(1\tens d+b)^2 -1\tens d^2 +\hat{m}_0 =0: V\top[-1] \to V\top[-1]
\label{eq-(1db)2-1d2-m0}
\end{equation}
where $\hat{m}_0$ is given by \eqref{eq-1m0p-NO-NOO}.

\item[2)] An \((\id_{V\top[-1]};\id_\co,0)\)-derivation $b:V\top[-1]\to V\top[-1]$ of degree~1 which satisfies the equations, $n\in\NN$,
\begin{equation}
\eta_\top[-1]b(1\tens d+b) +\delta_{n,1}1\tens m_0 =0:V[-1]\to V[-1]^{\tens n}\tens\co(n). 
\label{eq-nTb-(1db)-d1m0}
\end{equation}

\item[3)] A curved $\co$\n-algebra \((V[-1]\odot\co,1\odot d+b:V[-1]\odot\co\to V[-1]\odot\co)\), free\index{TTindex}{curved free algebra over an operad} as an $\co$\n-algebra.
\end{myitemize}
\end{proposition}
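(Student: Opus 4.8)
The plan is to mirror, in the operadic and module-theoretic setting, the argument used for curved cofree coalgebras in the proposition of \secref{sec-Curved-coalgebras-over-curved-cooperads}, replacing coderivations by derivations, the comonad $\bot$ by the tensoring monad $\top$, the cofree coaction by the free action, and the cooperad curvature equation by the operad curvature equation \eqref{eq-d2-(m01)m-(1m0)m}. Throughout I write $M=V[-1]\odot\co$ and $\nabla=1\tens d+b$, and I use that $b$ is by hypothesis an $(\id_M;\id_\co,0)$-derivation while $1\tens d$ is the canonical $(\id_M;\id_\co,d_\co)$-derivation; consequently $\nabla$ is automatically an $(\id_M;\id_\co,d_\co)$-derivation, so the comodule/module-morphism square in the definition of a curved $\co$-module holds for $(M,\nabla,\alpha)$ by construction.

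For the equivalence of 1) and 2), the key observation is that both summands $\nabla^2-1\tens d^2$ and $\hat m_0$ occurring in \eqref{eq-(1db)2-1d2-m0} are $(\id_M;\id_\co,0)$-derivations of degree~$2$. For the first this is the statement recorded just before \eqref{eq-(1m)alpha-d2} (dual to \remref{rem-r2-idN:idC:xi2-coderivation}, which gives that $\nabla^2$ and $1\tens d^2$ are $(\id_M;\id_\co,d_\co^2)$-derivations, so their difference kills the $d_\co^2$-part); for the second it is the defining property of $\hat m_0$ recorded in \eqref{eq-1m0p-NO-NOO}. Since $M$ is a free $\co$-module, \lemref{lem-forgetful-functor-has-right-adjoint-T} shows that a derivation out of $M$ is determined by its precomposition with the unit $\eta_\top[-1]\colon V[-1]\to V\top[-1]$; hence \eqref{eq-(1db)2-1d2-m0} is equivalent to its precomposition with $\eta_\top[-1]$. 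Here I would use $\eta d=0$ (valid for any operad) to obtain $\eta_\top[-1]\cdot(1\tens d)=0$, whence $\eta_\top[-1]\nabla^2=\eta_\top[-1]\,b\,(1\tens d+b)$ and $\eta_\top[-1]\cdot(1\tens d^2)=0$, together with $\eta_\top[-1]\hat m_0=\delta_{n,1}\,1\tens m_0$ read off \eqref{eq-1m0p-NO-NOO}. These three computations collapse \eqref{eq-(1db)2-1d2-m0} to precisely \eqref{eq-nTb-(1db)-d1m0}.

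For the equivalence of 1) and 3), note first that by the paragraph above $(M,\nabla,\alpha)$ verifies every axiom of a curved $\co$-algebra (the definition of curved module specialized to $\mm=0$) except possibly the curvature condition \eqref{eq-(1m)alpha-d2}, which reads $(1_M\tens m_0^\co)\alpha+\nabla^2=0$. It therefore suffices to identify the left-hand side of \eqref{eq-(1db)2-1d2-m0} with $(1_M\tens m_0^\co)\alpha+\nabla^2$, i.e.\ to establish the identity $1\tens d^2-\hat m_0=-(1_M\tens m_0^\co)\alpha$. This is the operadic dual of the identity $1\odot d_C^2+\hat\bdelt_0=\Delta_{T(0,n,0)}(1\tens\bdelt_0)$ used in the coalgebra case. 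I would prove it by expanding $\hat m_0$ through \eqref{eq-1m0p-NO-NOO} (so that $\hat m_0$ becomes the derivation inserting $m_0$ via the multiplication at the interior vertices of the $\co$-part of $M$) and substituting the operad curvature equation \eqref{eq-d2-(m01)m-(1m0)m} for $d^2$ on the tensor factor $\co(n)$: the bulk terms $\sum_{x+1+z=n}(m_0\tens1)m_{T(x,1,z)}$ match the contributions of $\hat m_0$, leaving exactly $-(1\tens m_0)m_{T(0,n,0)}=-(1_M\tens m_0^\co)\alpha$, the root-grafting which is the action of the arity-one curvature on the free module. Granting this identity, \eqref{eq-(1db)2-1d2-m0} becomes \eqref{eq-(1m)alpha-d2} verbatim.

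The main obstacle I anticipate is exactly this last identity: one must carefully track the two curvature contributions of \eqref{eq-d2-(m01)m-(1m0)m}, matching their signs and their placement in the tree (interior versus root) against the single composite defining $\hat m_0$, using that $m_0$ is concentrated in arity~$1$ and that the free action $\alpha$ is built from the operad multiplication $m$. Everything else—reducing statements about derivations on a free algebra to their restriction to generators via \lemref{lem-forgetful-functor-has-right-adjoint-T}, and recognizing the curvature axiom inside \eqref{eq-(1db)2-1d2-m0}—is a routine transcription of the already-proved coalgebraic statement.
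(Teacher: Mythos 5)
Your proposal is correct and takes essentially the same route as the paper's (very terse) proof: equivalence of 1) and 2) is obtained from freeness of $V\top$ by restricting derivations to generators (using $\eta d=0$ and $\eta_\top[-1]\hat m_0=\delta_{n,1}\,1\tens m_0$), and equivalence of 1) and 3) comes down to the identity $\hat m_0-1\odot d^2=(1\tens m_0^\co)\alpha$, which is precisely what the paper deduces from \eqref{eq-d2-(m01)m-(1m0)m}. Your expansion of that identity (leaf insertions of $m_0$ matching $\sum_{x+1+z=n}(m_0\tens1)m_{T(x,1,z)}$, the root term giving $(1\tens m_0)m_{T(0,n,0)}=(1_M\tens m_0^\co)\alpha$) is the correct filling-in of the paper's one-line justification.
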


\begin{proof}
Equivalence of 1) and 2) is due to freeness of $V\top$.

Properties 1) and 3) are equivalent since
\[ \hat{m}_0 -1\odot d^2 =(1\tens m_0^\co)\alpha: V[-1]\odot\co \to V[-1]\odot\co
\]
thanks to \eqref{eq-d2-(m01)m-(1m0)m}.
\end{proof}

\begin{proposition}
Let $\co$ be a unit-complemented curved operad in $\cw$ with \(\bar\co\in\cw^\NN_{++}\).
Then there is a functor
\[ (V\top,1\tens d+b:V\top[-1]\to V\top[-1])\mapsto(V[-1],\delta:V[-1]\to V[-1]\odot\bar\co[1]\botto)
\]
from the category of curved free $\co$\n-algebras to category \(\Bbar(\co,m,-d,m_0,\eta,\sfv)\cCoalg\) of curved $\Bbar(\co,m,-d,m_0,\eta,\sfv)$-coalgebras.
\end{proposition}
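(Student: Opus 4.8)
The plan is to dualize the construction of the preceding proposition (curved cofree $C$\n-coalgebras to curved $\Cobar C$\n-algebras), passing to the opposite category and interchanging operads with cooperads, free with cofree, $\TT$ with $\botto$, products with coproducts, and algebras with coalgebras. Throughout I write $N=V[-1]$ viewed as a collection concentrated in arity~$0$, so that $V\top[-1]=N\odot\co$ is the free right $\co$\n-module on $N$ and the full derivation $1\tens d+b$ is an \((\id_{N\odot\co};\id_\co,d_\co)\)\n-derivation, while $b$ itself is an \((\id_{N\odot\co};\id_\co,0)\)\n-derivation.

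First I would unpack $b$. By freeness of $V\top$, \lemref{lem-forgetful-functor-has-right-adjoint-T} shows that $b$ is determined by its corestriction $\check b=\eta_\top[-1]\cdot b\colon N\to N\odot\co$, with components $\check b_n\colon V[-1]\to V[-1]^{\tens n}\tens\co(n)$ of degree~$1$. Using the splitting $\co=\1\oplus\bar\co$ I would separate the arity-$1$ unit component, setting $d_{V[-1]}=\check b_1\cdot(1\tens\sfv)\colon V[-1]\to V[-1]$ (the analogue of $\bd=\bw\bb$), and defining degree-$0$ maps $\beta_n=\check b_n\cdot(1^{\tens n}\tens\sigma\opr)\colon V[-1]\to V[-1]^{\tens n}\tens\bar\co[1](n)$ by projecting the operad slot to $\bar\co[1]$. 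Since $\bar\co\in\cw^\NN_{++}$, the collection $X=\bar\co[1]$ lies in $\cw^\NN_{++}$, so \propref{pro-TX-coalgebra-structures} produces from $(\beta_n)$ an (automatically coassociative) non-counital $\bar\co[1]\bott$\n-comodule structure on $V[-1]$; extending along $\iota_\circ\colon\bott\to\botto$ gives the coaction $\delta\colon V[-1]\to V[-1]\odot\bar\co[1]\botto=V[-1]\odot\Bbar\co$. Thus the underlying plain $\Bbar\co$\n-comodule and the candidate differential $d_{V[-1]}$ are in hand, and no further equation is needed for $\delta$ to be a genuine comodule.

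The heart of the argument is to show that the curvature equation \eqref{eq-(1db)2-1d2-m0}, equivalently its corestricted form \eqref{eq-nTb-(1db)-d1m0}, is exactly the pair of curved-comodule axioms of \defref{def-curved-coalgebra-over-curved-augmented-cooperad}: compatibility of $\delta$ with the differentials $d_{V[-1]}$ and $d^{\Bbar\co}$, and the curvature identity \eqref{eq-dN2-delta-n}. To this end I would decompose both sides of \eqref{eq-nTb-(1db)-d1m0} over trees $t\in\tr(n)$, matching the tree-components of $b$ and of $b^2$ against the bar differential $\bar b_t$ and the curvature functional $\bdelt_0^{\Bbar\co}$ of $\Bbar\co$ furnished by \propref{pro-bar-construction}. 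This is the precise dual of diagram~\eqref{dia-d-alpha-alpha-[-d]} and identity~\eqref{eq-a-partial-bar-Delta} in the previous proof, now read on the comodule side rather than through an action map. The only delicate point is the bookkeeping of signs: since the coderivation on $\bar\co[1]$ used by $\Bbar$ is built from $b_1=-\sigma^{-1}d\sigma$, feeding $\Bbar$ the operad $(\co,m,-d,m_0,\eta,\sfv)$ is exactly what makes the bar differential agree with the $+d$ appearing in $1\tens d+b$. I expect this sign reconciliation, together with the isolation of the unary term $\hat m_0$ of \eqref{eq-1m0p-NO-NOO} as the source of $\bdelt_0^{\Bbar\co}$ in \eqref{eq-dN2-delta-n}, to be the main obstacle.

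Finally I would verify functoriality. A morphism of curved free $\co$\n-algebras is a degree-$0$ map $f\colon W[-1]\to V[-1]$ intertwining the corestricted derivations $\eta_\top[-1]b$; by naturality of the bijection of \propref{pro-TX-coalgebra-structures} such $f$ automatically commutes with the induced coactions $\delta$, and by construction it commutes with the differentials $d_{W[-1]}$ and $d_{V[-1]}$, hence is a morphism of curved $\Bbar\co$\n-coalgebras. Identities and composites are preserved on the nose, which yields the asserted functor from curved free $\co$\n-algebras to $\Bbar(\co,m,-d,m_0,\eta,\sfv)\cCoalg$.
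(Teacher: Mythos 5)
Your overall route is the paper's: you split the corestriction $\check b$ into a unit part and a $\bar\co[1]$-part, invoke \propref{pro-TX-coalgebra-structures} to get the (automatically coassociative) coaction $\delta$, and aim to read the two curved-comodule axioms of \defref{def-curved-coalgebra-over-curved-augmented-cooperad} off the curvature equation \eqref{eq-nTb-(1db)-d1m0}. But there is a gap at exactly the step you compress into ``decompose both sides of \eqref{eq-nTb-(1db)-d1m0} over trees''. That equation takes values in $V[-1]^{\tens n}\tens\co(n)$, which has no tree decomposition; only the comodule-side axiom does, because $d^{\Bbar\co}$ is the coderivation of $\bar\co[1]\botto$ extending $\check d$ via \corref{cor-coderivations-bottom}, with one component for every tree. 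Consequently \eqref{eq-nTb-(1db)-d1m0} yields the compatibility of $\delta$ with the differentials only after postcomposition with $1\tens\eps_{\botto}$ (the paper's diagram \eqref{dia-V[-1]-db(l)}), i.e. the components indexed by $\circ$, corollas and two-vertex trees; the infinitely many higher-tree components of the statement ``$\bd$ is an $(\id;\id,d^{\Bbar\co})$-coderivation'' are not present in \eqref{eq-nTb-(1db)-d1m0} and must be deduced. The paper's mechanism is the infinitesimal-deformation argument: the difference of the two paths of the compatibility square is a trivial infinitesimal deformation of the $\bar\co[1]\botto$-comodule $V[-1]$ (\exeref{exe-infinitesimal-C-coalgebra-deformation}), and such a deformation vanishes iff its corestriction under $1\tens\eps_{\botto}$ vanishes (\exaref{exa-infinitesimal-deformations}). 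Without citing this (or redoing it as a coassociativity induction over trees), your identification of \eqref{eq-nTb-(1db)-d1m0} with the curved-comodule axioms is asserted rather than proved. Your appeal to duality with diagram \eqref{dia-d-alpha-alpha-[-d]} points in the right direction, but on the algebra side the corresponding extension uses that derivations of a free operad are determined on generators, and it is precisely the dual statement for coderivations of cofree-type comodules that has to be named and used here.

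A smaller but real point: the differential must be $\bd=-\sS{_1}{\check{b}}(1\tens\sfv)$, with a minus sign; your $d_{V[-1]}=\check b_1\cdot(1\tens\sfv)$ has the wrong sign relative to the bar differential of $\Bbar(\co,m,-d,m_0,\eta,\sfv)$, and it is the relative sign between the $\bd$-terms and the $\check d$-terms in \eqref{dia-V[-1]-db(l)} that makes the corestricted equation come out as a chain-map condition rather than acquiring stray signs. You correctly flag sign bookkeeping as the delicate issue, but the fix is concrete: negate the unit component when splitting $\check b_1$.
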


\begin{proof}
A derivation $b$ determines a sequence of degree 1 maps
\(\sS{_n}{\check{b}}:V[-1]\to V[-1]^{\tens n}\tens\co(n)\), $n\in\NN$.
Equivalently, it determines a sequence of degree 0 maps
\[ \bigl\langle V[-1] \rTTo^{\sS{_n}{\check{b}}} V[-1]^{\tens n}\tens\co(n) \rTTo^{1\tens\sigma} V[-1]^{\tens n}\tens\co(n)[1] \bigr\rangle. 
\]
Namely, an equation dual to \eqref{eq-nabla-page} holds:
\begin{multline*}
b =\biggl\langle V\top[-1] =\bigoplus_{n\ge0} V[-1]^{\tens n}\tens\co(n)
\rTTo^{\oplus_n\sum1^{\tens x}\tens\sS{_y}{\check b}\tens1^{\tens z}\tens1}
\\
\bigoplus^{x+1+z=n}_{x,y,z\ge0} V[-1]^{\tens x}\tens V[-1]^{\tens y}\tens\co(y)\tens V[-1]^{\tens z}\tens\co(n) \rto\sim
\\
\bigoplus_{x,y,z\ge0} V[-1]^{\tens(x+y+z)}\tens\co(y)\tens\co(x+1+z) \rTTo^{(1\tens\underset{x,y,z}\bull)} \bigoplus_{l\ge0}V[-1]^{\tens l}\tens\co(l) =V\top[-1] \biggr\rangle.
\end{multline*}
The sequence \(\sS{_n}{\check{b}}\) splits into two parts: a map \(\bd=-\sS{_1}{\check{b}}(1\tens\sfv):V[-1]\to V[-1]\) and a sequence \(\sS{_n}{\check{\und b}}=\sS{_n}{\check{b}}(1\tens\opr):V[-1]\to V[-1]^{\tens n}\tens\bar\co(n)\), $n\in\NN$.
Assume that \(\bar\co\in\cw^\NN_{++}\).
The corresponding sequence of degree 0 maps
\[ \bb(n) =\bigl\langle V[-1] \rTTo^{\sS{_n}{\check{\und b}}} V[-1]^{\tens n}\tens\bar\co(n) \rTTo^{1\tens\sigma} V[-1]^{\tens n}\tens\bar\co[1](n) \bigr\rangle
\]
is equivalent to a non-counital \(\bar\co[1]\bott\)-coalgebra structure \((V[-1],(\bar\delta_n)_{n\in\NN})\) due to \propref{pro-TX-coalgebra-structures}, hence, to a counital \(\bar\co[1]\botto\)-coalgebra structure \((V[-1],(\delta_n)_{n\in\NN})\), \(\delta_1=(\bar\delta_1,\id):V[-1]\to(V[-1]\tens\bar\co[1]\bott(1))\oplus V[-1]\), \(\delta_n=\bar\delta_n\) for $n\ne1$.

Impose on $b$ restriction~\eqref{eq-(1db)2-1d2-m0}, or equivalently \eqref{eq-nTb-(1db)-d1m0}.
This means that the equations hold:
\begin{multline*}
\bigl\langle V[-1] \rTTo^{(\sS{_n}{\check{b}})} \oplus_{n\ge0} V[-1]^{\tens n}\tens\co(n) \rTTo^{1\tens d+b} \oplus_{l\ge0} V[-1]^{\tens l}\tens\co(l) \bigr\rangle
\\
+\bigl\langle V[-1] \rTTo^{1\tens m_0} V[-1]\tens\co(1) \rMono \oplus_{l\ge0} V[-1]^{\tens l}\tens\co(l) \bigr\rangle =0.
\end{multline*}
It is expanded to
\begin{multline*}
-\bigl\langle V[-1] \rTTo^{1\tens m_0} V[-1]\tens\co(1) \rMono \oplus_{l\ge0} V[-1]^{\tens l}\tens\co(l) \bigr\rangle
\\
\hskip\multlinegap =\Bigl\langle V[-1] \rTTo^{(\sS{_l}{\check{b}})} \bigoplus_{l\ge0} V[-1]^{\tens l}\tens\co(l) \rTTo^{1\tens d} \bigoplus_{l\ge0}V[-1]^{\tens l}\tens\co(l) \Bigr\rangle \hfill
\\
\hskip\multlinegap +\biggl\langle V[-1] \rTTo^{(\sS{_n}{\check{b}})} \bigoplus_{n\ge0} V[-1]^{\tens n}\tens\co(n)
\rTTo^{\oplus_n\sum1^{\tens x}\tens\sS{_y}{\check b}\tens1^{\tens z}\tens1} \hfill
\\
\bigoplus^{x+1+z=n}_{x,y,z\ge0} V[-1]^{\tens x}\tens V[-1]^{\tens y}\tens\co(y)\tens V[-1]^{\tens z}\tens\co(n) \rto\sim
\\
\hfill \bigoplus_{x,y,z\ge0}^{l=x+y+z} V[-1]^{\tens(x+y+z)}\tens\co(y)\tens\co(x+1+z) \rTTo^{(1\tens\underset{x,y,z}\bull)} \bigoplus_{l\ge0}V[-1]^{\tens l}\tens\co(l) \biggr\rangle \quad
\\
\hskip\multlinegap =\Bigl\langle V[-1] \rTTo^{(\sS{_l}{\check{\und b}})} \bigoplus_{l\ge0} V[-1]^{\tens l}\tens\bar\co(l) \rTTo^{1\tens d} \bigoplus_{l\ge0}V[-1]^{\tens l}\tens\co(l) \Bigr\rangle \hfill	
\\
\hskip\multlinegap +\biggl\langle V[-1] \rTTo^{(\sS{_n}{\check{\und b}})} \bigoplus_{n\ge0} V[-1]^{\tens n}\tens\bar\co(n)
\rTTo^{\oplus_n\sum1^{\tens x}\tens\sS{_y}{\check{\und b}}\tens1^{\tens z}\tens1} \hfill
\\
\bigoplus^{x+1+z=n}_{x,y,z\ge0} V[-1]^{\tens x}\tens V[-1]^{\tens y}\tens\bar\co(y)\tens V[-1]^{\tens z}\tens\bar\co(n) \rto\sim
\\
\hfill \bigoplus_{x,y,z\ge0}^{l=x+y+z} V[-1]^{\tens(x+y+z)}\tens\bar\co(y)\tens\bar\co(x+1+z) \rTTo^{(1\tens\underset{x,y,z}\bull)} \bigoplus_{l\ge0}V[-1]^{\tens l}\tens\co(l) \biggr\rangle \quad
\\
\hskip\multlinegap +\bigl\langle V[-1] \rTTo^\bd V[-1] \rTTo^{\bd\tens\eta} V[-1]\tens\co(1) \rMono \oplus_{l\ge0} V[-1]^{\tens l}\tens\co(l) \bigr\rangle \hfill
\\
\hskip\multlinegap -\Bigl\langle V[-1] \rTTo^\bd V[-1]  \rTTo^{(\sS{_l}{\check{\und b}})} \bigoplus_{l\ge0} V[-1]^{\tens l}\tens\bar\co(l) \Bigr\rangle \hfill
\\
\hskip\multlinegap -\Bigl\langle V[-1] \rTTo^{(\sS{_l}{\check{\und b}})} \bigoplus_{l\ge0} V[-1]^{\tens l}\tens\bar\co(l)
\rTTo^{\oplus_l\sum1^{\tens x}\tens\bd\tens1^{\tens z}\tens1}
\bigoplus_{l\ge0}V[-1]^{\tens l}\tens\bar\co(l) \Bigr\rangle. \hfill
\end{multline*}
One of the equations obtained by postcomposing with $1\tens\sfv$ says that
\begin{multline*}
\bigl\langle V[-1] \rTTo^{1\tens m_0} V[-1]\tens\co(1) \rTTo^{1\tens\sfv} V[-1] \bigr\rangle
\\
\hskip\multlinegap +\Bigl\langle V[-1] \rTTo^{\sS{_1}{\check{\und b}}} V[-1]\tens\bar\co(1) \rTTo^{1\tens d} V[-1]\tens\co(1) \rTTo^{1\tens\sfv} V[-1] \Bigr\rangle \hfill
\\
\hskip\multlinegap +\biggl\langle V[-1] \rTTo^{(\sS{_n}{\check{\und b}})} \bigoplus_{n=1}^2 V[-1]^{\tens n}\tens\bar\co(n) \rTTo^{\oplus_n\sum1^{\tens x}\tens\sS{_y}{\check{\und b}}\tens1^{\tens z}\tens1} \hfill
\\
\bigoplus^{x+y+z=1}_{x,y,z\ge0} V[-1]^{\tens x}\tens V[-1]^{\tens y}\tens\bar\co(y)\tens V[-1]^{\tens z}\tens\bar\co(x+1+z) \rto\sim
\\
\hfill \bigoplus_{x,y,z\ge0}^{x+y+z=1} V[-1]\tens\bar\co(y)\tens\bar\co(x+1+z) \rTTo^{(1\tens\underset{x,y,z}\bull)} V[-1]\tens\co(1) \rTTo^{1\tens\sfv} V[-1] \biggr\rangle \quad
\\
\hskip\multlinegap +\bigl\langle V[-1] \rTTo^{\bd^2} V[-1] \bigr\rangle =0. \hfill
\end{multline*}
In equivalent form
\begin{multline*}
d_V^2 +1_V\tens b_0\bv +\bigl\langle V[-1] \rTTo^{\bb(1)} V[-1]\tens\bar\co[1](1) \rTTo^{-1\tens b_1^\co} V[-1]\tens\co[1](1) \rTTo^{1\tens\bv} V[-1] \bigr\rangle
\\
\hskip\multlinegap +\biggl\langle V[-1] \rTTo^{(\bb(n))} \bigoplus_{n=1}^2 V[-1]^{\tens n}\tens\bar\co[1](n) \rTTo^{\oplus_n\sum1^{\tens x}\tens\bb(y)\tens1^{\tens z}\tens1} \hfill
\\
\bigoplus^{x+y+z=1}_{x,y,z\ge0} V[-1]^{\tens x}\tens V[-1]^{\tens y}\tens\bar\co[1](y)\tens V[-1]^{\tens z}\tens\bar\co[1](x+1+z) \rto\sim
\\
\bigoplus_{x,y,z\ge0}^{x+y+z=1} V[-1]\tens\bar\co[1](y)\tens\bar\co[1](x+1+z) \rTTo^{(1\tens b_{T(x,y,z)})} V[-1]\tens\co[1](1) \rTTo^{1\tens\bv} V[-1] \biggr\rangle =0.
\end{multline*}
Summing up,
\[ d_V^2 =\bigl\langle V \rTTo^{\delta_1} V\tens C(1) \rTTo^{1\tens\bdelt^C_0} V\bigr\rangle,
\]
where \(C=\Bbar(\co,m,-d,m_0,\eta,\sfv)\).

The remaining equations obtained by postcomposing with $1\tens\opr$ tell that for each $l\in\NN$
\begin{multline*}
\bigl\langle V[-1] \rTTo^\bd V[-1]  \rTTo^{\sS{_l}{\check{\und b}}} V[-1]^{\tens l}\tens\bar\co(l) \bigr\rangle \hfill
\\
\hskip\multlinegap =-\bigl\langle V[-1] \rTTo^{\sS{_l}{\check{\und b}}} V[-1]^{\tens l}\tens\bar\co(l)
\rTTo^{\sum_{x+1+z=l}1^{\tens x}\tens\bd\tens1^{\tens z}\tens1}
V[-1]^{\tens l}\tens\bar\co(l) \bigr\rangle \hfill
\\
\hskip\multlinegap +\bigl\langle V[-1] \rTTo^{\sS{_l}{\check{\und b}}} V[-1]^{\tens l}\tens\bar\co(l) \rTTo^{1\tens d} V[-1]^{\tens l}\tens\co(l) \rTTo^{1\tens\opr} V[-1]^{\tens l}\tens\bar\co(l) \bigr\rangle \hfill
\\
\hskip\multlinegap +\biggl\langle V[-1] \rTTo^{(\sS{_n}{\check{\und b}})} \bigoplus_{n\ge0} V[-1]^{\tens n}\tens\bar\co(n)
\rTTo^{\oplus_n\sum1^{\tens x}\tens\sS{_y}{\check{\und b}}\tens1^{\tens z}\tens1} \hfill
\\
\bigoplus^{x+1+z=n}_{x,y,z\ge0} V[-1]^{\tens x}\tens V[-1]^{\tens y}\tens\bar\co(y)\tens V[-1]^{\tens z}\tens\bar\co(n) \rto\sim
\\
\bigoplus_{x,y,z\ge0}^{x+y+z=l} V[-1]^{\tens l}\tens\bar\co(y)\tens\bar\co(x+1+z) \rTTo^{(1\tens\underset{x,y,z}\bull)} V[-1]^{\tens l}\tens\co(l) \rTTo^{1\tens\opr} V[-1]^{\tens l}\tens\bar\co(l) \biggr\rangle
\\
\hskip\multlinegap +\delta_{l,1}\bigl\langle V[-1] \rTTo^{1\tens m_0} V[-1]\tens\co(1) \rTTo^{1\tens\opr} V[-1]\tens\bar\co(1) \bigr\rangle =0. \hfill
\end{multline*}
Equivalent form is
\begin{multline*}
\bigl\langle V[-1] \rTTo^\bd V[-1]  \rTTo^{\bb(l)} V[-1]^{\tens l}\tens\bar\co[1](l) \bigr\rangle
\\
\hskip\multlinegap =\bigl\langle V[-1] \rTTo^{\bb(l)} V[-1]^{\tens l}\tens\bar\co[1](l)
\rTTo^{\sum_{x+1+z=l}1^{\tens x}\tens\bd\tens1^{\tens z}\tens1}
V[-1]^{\tens l}\tens\bar\co[1](l) \bigr\rangle \hfill
\\
\hskip\multlinegap +\delta_{l,1}\bigl\langle V[-1] \rTTo^{1\tens b_0} V[-1]\tens\co[1](1) \rTTo^{1\tens\opr} V[-1]\tens\bar\co[1](1) \bigr\rangle \hfill
\\
\hskip\multlinegap -\bigl\langle V[-1] \rTTo^{\bb(l)} V[-1]^{\tens l}\tens\bar\co[1](l) \rTTo^{1\tens d[1]} V[-1]^{\tens l}\tens\co[1](l) \rTTo^{1\tens\opr} V[-1]^{\tens l}\tens\bar\co[1](l) \bigr\rangle \hfill
\\
\hskip\multlinegap +\biggl\langle V[-1] \rTTo^{(\bb(n))} \bigoplus_{n\ge0} V[-1]^{\tens n}\tens\bar\co[1](n)
\rTTo^{\oplus_n\sum1^{\tens x}\tens\bb(y)\tens1^{\tens z}\tens1} \hfill
\\
\bigoplus^{x+1+z=n}_{x,y,z\ge0} V[-1]^{\tens x}\tens V[-1]^{\tens y}\tens\bar\co[1](y)\tens V[-1]^{\tens z}\tens\bar\co[1](n) \rto\sim
\\
\bigoplus_{x,y,z\ge0}^{l=x+y+z} V[-1]^{\tens(x+y+z)}\tens\bar\co[1](y)\tens\bar\co[1](x+1+z) \rTTo^{-1\tens(\sigma^{-1}\tens\sigma^{-1})\cdot(\underset{x,y,z}\bull)\cdot\sigma}
\\
V[-1]^{\tens l}\tens\co[1](l) \rTTo^{1\tens\opr} V[-1]^{\tens l}\tens\bar\co[1](l) \biggr\rangle.
\end{multline*}
This equation can be rewritten as commutativity of the following diagram
\begin{diagram}[nobalance,LaTeXeqno]
V[-1] &\rTTo^{\delta_l} &V[-1]^{\tens l}\tens\bar\co[1]\botto(l)
\\
\dTTo<\bd &= &\dTTo>{\sum_{x+1+z=l}1^{\tens x}\tens\bd\tens1^{\tens z}\tens\eps_{\botto}+1\tens\check d}
\\
V[-1] &\rTTo^{\bb(l)} &V[-1]^{\tens l}\tens\bar\co[1](l)
\label{dia-V[-1]-db(l)}
\end{diagram}
where \(\check d:\bar\co[1]\botto(l)\to\bar\co[1](l)\) is given by its restrictions $\check d_t$ to components \(\tens^{p\in\IV(t)}\bar\co[1]|p|\) indexed by $t$:
\begin{align}
d_\circ &=b_0\opr =\bar b_\circ: \1 \to \bar\co[1](1), \notag
\\
\check d_{\tau[l]} &=-d[1]\opr =-\bar b_{\tau[l]}: \bar\co[1](l) \to \bar\co[1](l), \qquad l\ge0, \notag
\\
\check d_{T(x,y,z)} &=-(\sigma^{-1}\tens\sigma^{-1})\cdot\underset{x,y,z}\bull\cdot\sigma\opr =\bar b_{T(x,y,z)}: \notag
\\
&\hspace*{6em} \bar\co[1](y)\tens\bar\co[1](x+1+z) \to \bar\co[1](x+y+z),
\label{eq-check-partial-OO-O}
\end{align}
for all \(x,y,z\in\NN\) and \(\check d_t=0\) for other \(t\in\tr\).

The degree (0,1) map $\check d$ extends to a coderivation \(d=d^{\Bbar(\co,-d)}:\bar\co[1]\botto\to\bar\co[1]\botto\) by \corref{cor-coderivations-bottom}.
The map $\delta_l$ is a chain map with respect to $\bd$ and $d$, that is, the square in the following diagram commutes:
\begin{diagram}[nobalance]
V[-1] &\rTTo^{\delta_l} &V[-1]^{\tens l}\tens\bar\co[1]\botto(l)
\\
\dTTo<\bd &&\dTTo>{\sum_{x+1+z=l}1^{\tens x}\tens\bd\tens1^{\tens z}\tens1+1\tens d}
\\
V[-1] &\rTTo^{\delta_l} &V[-1]^{\tens l}\tens\bar\co[1]\botto(l) &\rTTo^{1\tens\eps_{\botto}\;\;\;} &V[-1]^{\tens l}\tens\bar\co[1](l).
\end{diagram}
In fact, the difference of two paths in the square is an infinitesimal deformation of \(\bar\co[1]\botto\)-coalgebra $V[-1]$ by \exeref{exe-infinitesimal-C-coalgebra-deformation}.
It vanishes iff its postcomposition with $1\tens\eps_{\botto}$ vanishes by \exaref{exa-infinitesimal-deformations}.
And the last requirement is precisely commutativity of \eqref{dia-V[-1]-db(l)}.
As shown in \propref{pro-bar-construction} \(C=\Bbar(\co,m,-d,m_0,\eta,\sfv)\) is a curved augmented cooperad.
It follows from \defref{def-curved-coalgebra-over-curved-augmented-cooperad} that $V[-1]$ is a curved coalgebra over $C$.

Clearly, morphisms of curved free $\co$\n-algebras are mapped to morphisms of curved $\Bbar(\co,m,-d,m_0,\eta,\sfv)$-coalgebras.
\end{proof}

\section{Functors determined by twisting cochains}
We use twisting cochains in order to construct functors between categories of right (co)modules over (co)operads, cf. Berger and Moerdijk \cite[Section~8.5.3]{math/0502155}.

Let \(\theta:C\to\co\) be a twisting cochain between a curved cooperad $C$ and a curved operad $\co$.

\begin{proposition}\label{pro-twisting-cochain-functor}
A twisting cochain $\theta$ determines a functor \(\text-\odot^\theta C:\modur\co\to\comodur C\), \(M\mapsto M\odot^\theta C\), which is the cofree right $C$\n-comodule \(M\odot C\) equipped with the derivation
\begin{multline*}
d_{M\odot^\theta C} =S_1 +S_2 +S_3,
\\
\hskip\multlinegap S_1 =\bigl[ M(n_1)\tdt M(n_k)\tens C(k) \rTTo^{1^{\tens k}\tens d_C} M(n_1)\tdt M(n_k)\tens C(k) \bigr], \hfill
\\
\hskip\multlinegap S_2 =\bigl[ M(n_1)\tdt M(n_k)\tens C(k) \rTTo^{\sum_{i=0}^{k-1}1^{\tens i}\tens d\tens1^{\tens(k-i)}} M(n_1)\tdt M(n_k)\tens C(k) \bigr], \hfill
\\
S_3 = \!\!\sum_{x+y+z=k} \!\! \bigl[ M(n_1)\tdt M(n_k)\tens C(k) \rTTo^{1^{\tens k}\tens\Delta_{T(x,y,z)}} M(n_1)\tdt M(n_k)\tens C(y)\tens C(x+1+z)
\\
\rTTo^{1^{\tens k}\tens\theta\tens1} M(n_1)\tdt M(n_k)\tens\co(y)\tens C(x+1+z) \rto\sim
\\
M(n_1)\tdt M(n_x)\tens M(n_{x+1})\tdt M(n_{x+y})\tens\co(y) \tens M(n_{x+y+1})\tdt M(n_k)\tens C(x+1+z)
\\
\rTTo^{1^{\tens x}\tens\alpha_{n_{x+1},\dots,n_{x+y}}\tens1^{\tens(z+1)}}
\\
M(n_1)\tdt M(n_x)\tens M(n_{x+1}+\dots+n_{x+y})\tens M(n_{x+y+1})\tdt M(n_k)\tens C(x+1+z) \bigr].
\end{multline*}
\end{proposition}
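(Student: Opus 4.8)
The plan is to verify that the triple $(M\odot C,\,d_{M\odot^\theta C},\,\delta)$ satisfies the axioms of a curved comodule from \defref{def-curved-coalgebra-over-curved-augmented-cooperad}, where $\delta=1_M\odot\Delta$ is the cofree right $C$\n-comodule coaction. First I would record that $(M\odot C,\delta)$ is the cofree $C$\n-comodule cogenerated by the collection $M$; this is immediate from the tensoring comonad whose coalgebras are comodules over $C$ (the construction of $\bot$ in \secref{sec-Coderivations}), applied with $\mm$ arbitrary. Two statements then remain: that $d:=S_1+S_2+S_3$ is an $(\id_{M\odot C};\id_C,d_C)$\n-coderivation, i.e.\ the square in \defref{def-curved-coalgebra-over-curved-augmented-cooperad} commutes, and that $d$ satisfies the curvature identity \eqref{eq-dN2-delta-n}.

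For the coderivation property I would treat $S_1=1^{\tens k}\tens d_C$ as the canonical lift of the $\id_C$\n-coderivation $d_C$ through $\delta$, which is colinear up to the $d_C$\n-correction by coassociativity \eqref{dia-coassociativity-components}, while $S_2+S_3$ is an honest $(\id_{M\odot C};\id_C,0)$\n-coderivation. By the comodule version of \corref{cor-coderivations-are-in-bijection-with-homogeneous-maps}, equivalently by the general shape \eqref{eq-general-coderivation-rM-NC}, such a coderivation of the cofree comodule $M\odot C$ is determined by its corestriction $M\odot C\to M$. I would identify that corestriction as the derivation $d_M$ on the arity\n-one summand plus the $\theta$\n-twisted action, and then check that the coderivation recovered from it is exactly $S_2+S_3$, using the module associativity \eqref{eq-MMOO-MMO-M} and the compatibility of $\theta$ with $\Delta$.

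The heart of the argument is the curvature identity \eqref{eq-dN2-delta-n}. I would expand $d^2=(S_1+S_2+S_3)^2$. Since $d_C$ and $d_M$ are odd, the diagonal squares give $S_1^2=1^{\tens k}\tens d_C^2$ and $S_2^2=\sum_i 1^{\tens i}\tens d_M^2\tens1^{\tens(k-i)}$ (the off\n-diagonal terms of $S_2^2$ cancelling in anticommuting pairs), into which I substitute $d_C^2$ from \eqref{eq-d2-Delta(1delta0-Delta(delta01)} and $d_M^2=-(1\tens m_0)\alpha$ from \eqref{eq-(1m)alpha-d2}. The remaining contributions $S_3^2$, $S_1S_2+S_2S_1$, $S_1S_3+S_3S_1$ and $S_2S_3+S_3S_2$ I would reorganize, on each summand indexed by a splitting $(x,y,z)$, into the image of the defining twisting\n-cochain equation \eqref{eq-CCO-COO-CCCOOO}: the terms $\theta d+d\theta$ match the cross terms with $S_1,S_2$; the term $\sum\Delta_{T(x,y,z)}(\theta\tens\theta)\underset{x,y,z}\bull$ matches $S_3^2$; the curvature terms $\eps m_0$ and $\bdelt_0\eta$ on the right of \eqref{eq-CCO-COO-CCCOOO} absorb the $-(1\tens m_0)\alpha$ contribution and the $\sum\Delta_{T(x,1,z)}(\bdelt_0\tens1)$ part of $d_C^2$, respectively. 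What survives is exactly $\Delta_{T(0,k,0)}(1\tens\bdelt_0)$ acting in the cooperad factor, i.e.\ $\delta_n(1\tens\bdelt_0)$, as required. This repackaging --- matching each monomial of the expanded $d^2$ with a term of \eqref{eq-CCO-COO-CCCOOO} fed through coassociativity \eqref{dia-coassociativity-components} and the action axioms, with the correct shift signs --- is the main obstacle; it is the operadic counterpart of the analogous computation for curved (co)algebras in \cite{Lyu-curved-coalgebras}.

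Finally, functoriality is routine: a morphism $f:M\to L$ of curved $\co$\n-modules induces $f\odot 1_C:M\odot C\to L\odot C$, which is $C$\n-colinear and commutes with the twisted derivations because $f$ commutes with $d_M,d_L$ and intertwines the actions $\alpha$; preservation of identities and of composition follows at once from functoriality of $-\odot C$.
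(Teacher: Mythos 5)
Your proposal is correct and follows essentially the same route as the paper's proof: the paper establishes the coderivation property by checking directly that \(1\odot\Delta\) commutes with \(S_3\) (your cofreeness/corestriction packaging is an equivalent organization of the same coassociativity computation), and it proves the curvature identity by exactly the expansion of \((S_1+S_2+S_3)^2\) you describe, with the same substitutions for \(d_C^2\) and \(d_M^2\) and the same final bookkeeping in which \(\eps m_0\) absorbs \(S_2^2\) and \(\bdelt_0\eta\) absorbs the negative part of \(S_1^2\), leaving \(\Delta_{T(0,k,0)}(1\tens\bdelt_0)\).

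One caution for the execution: it is not true that every monomial of \(d^2\) matches a term of \eqref{eq-CCO-COO-CCCOOO}. The square \(S_3^2\) splits into three sums, indexed by the tree shapes of \eqref{eq-tangles-3-vertices} and \eqref{eq-tangle-3-vertices-(4)}, and the two sums in which the two \(\theta\)-insertions act on disjoint groups of factors cancel each other outright --- via cooperad coassociativity \eqref{dia-cooperad-3-OOOOOOO} together with the Koszul sign \(c(\theta\tens\theta)=-(\theta\tens\theta)c\) --- before anything is fed into the twisting-cochain equation; only the nested sum, rewritten using \eqref{dia-cooperad-4-OOOOO} and the module associativity \eqref{eq-MMOO-MMO-M}, matches the \((\theta\tens\theta)\) term of \eqref{eq-CCO-COO-CCCOOO}. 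Likewise most of \(S_2S_3+S_3S_2\) cancels internally, with only one sort of summand surviving to supply the \(\theta d_\co\) contribution that pairs with \(d_C\theta\) from \(S_1S_3+S_3S_1\).
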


\begin{proof}
Let us prove that \(1\odot\Delta:M\odot C\to M\odot C\odot C\) is a chain map.
That is, the equation
\begin{diagram}
M\odot C &\rTTo^{d_{M\odot C}} &M\odot C
\\
\dTTo<{1\odot\Delta} &= &\dTTo>{1\odot\Delta}
\\
M\odot C\odot C &\rTTo^{1\odot1\odot d_C+\sum1_{M\odot C}^{\tens-}\tens d_{M\odot C}\tens1_{M\odot C}^{\tens-}\tens1_C} &M\odot C\odot C
\end{diagram}
holds.
Clearly, it holds if we replace $d_{M\odot C}$ with $S_1+S_2$ -- the sum of differentials on all places.
Let us show that it holds (without the summand \(1\odot1\odot d_C\)) for the third summand $S_3$ in place of $d_{M\odot C}$.
Restrict the source to the summand \(M(p_1)\tdt M(p_n)\tens C(n)\) and postcompose the equation with the projection \(1\odot1\odot\pr_{C(k)}\).
Let us verify that
\begin{equation}
(1\odot\Delta) \cdot \sum_{s=1}^k 1_{M\odot C}^{\tens(s-1)}\tens S_3\tens1_{M\odot C}^{\tens(k-s)}\tens1_C =S_3 \cdot (1\odot\Delta).
\label{eq-1-Delta}
\end{equation}
The left hand side is the sum over $n^1,\dots,n^k\in\NN$, $1\le s\le k$ and $x,y,z\in\NN$ such that $n^1+\dots+n^k=n$ and $x+y+z=n^s$ of maps
\begin{multline}
\bigl[ M(p_1)\tdt M(p_n)\tens C(n) \rTTo^{1^{\tens n}\tens\Delta_{n^1,\dots,n^k}} M(p_1)\tdt M(p_n)\tens C(n^1)\tdt C(n^k)\tens C(k)
\\
\rTTo^{1^{\tens(n+s-1)}\tens\Delta_{T(x,y,z)}\tens1^{\tens(k-s+1)}}
\\
M(p_1)\tdt M(p_n)\tens C(n^1)\tdt C(n^{s-1})\tens C(y)\tens C(x+1+z)\tens C(n^{s+1})\tdt C(n^k)\tens C(k)
\\
\rTTo^{1^{\tens(n+s-1)}\tens\theta\tens1^{\tens(1+k-s+1)}}
\\
M(p_1)\tdt M(p_n)\tens C(n^1)\tdt C(n^{s-1})\tens\co(y)\tens C(x+1+z)\tens C(n^{s+1})\tdt C(n^k)\tens C(k)
\\
\rto\sim
M(p_1)\tdt M(p_{n^1+\dots+n^{s-1}+x})\tens M(p_{n^1+\dots+n^{s-1}+x+1})\tdt M(p_{n^1+\dots+n^{s-1}+x+y})\tens\co(y)
\\
\tens M(p_{n^1+\dots+n^{s-1}+x+y+1})\tdt M(p_n)
\\
\hfill \tens C(n^1)\tdt C(n^{s-1})\tens C(x+1+z)\tens C(n^{s+1})\tdt C(n^k)\tens C(k) \quad
\\
\rTTo^{1^{\tens(n^1+\dots+n^{s-1}+x)}\tens\alpha_{p_{n^1+\dots+n^{s-1}+x+1},\dots,p_{n^1+\dots+n^{s-1}+x+y}}\tens1^{\tens(z+n^{s+1}+\dots+n^k+k+1)}}
\\
\hskip\multlinegap M(p_1)\tdt M(p_{n^1+\dots+n^{s-1}+x})\tens M(p_{n^1+\dots+n^{s-1}+x+1}+\dots+p_{n^1+\dots+n^{s-1}+x+y}) \hfill
\\
\tens M(p_{n^1+\dots+n^{s-1}+x+y+1})\tdt M(p_n)
\\
\tens C(n^1)\tdt C(n^{s-1})\tens C(x+1+z)\tens C(n^{s+1})\tdt C(n^k)\tens C(k)
\bigr].
\label{eq-L-522}
\end{multline}
The right hand side of \eqref{eq-1-Delta} is the sum over $a,y,c\in\NN$ and $l_1,\dots,l_k\in\NN$ such that $a+y+c=n$ and $l_1+\dots+l_k=a+1+c$ of maps
\begin{multline}
\bigl[ M(p_1)\tdt M(p_n)\tens C(n) \rTTo^{1^{\tens n}\tens\Delta_{T(a,y,c)}} M(p_1)\tdt M(p_n)\tens C(y)\tens C(a+1+c)
\\
\rTTo^{1^{\tens n}\tens\theta\tens\Delta_{l_1,\dots,l_k}} M(p_1)\tdt M(p_n)\tens\co(y)\tens C(l_1)\tdt C(l_k)\tens C(k) \rto\sim
\\
\hskip\multlinegap M(p_1)\tdt M(p_a)\tens M(p_{a+1})\tdt M(p_{a+y})\tens\co(y) \hfill
\\
\hfill \tens M(p_{a+y+1})\tdt M(p_n)\tens C(l_1)\tdt C(l_k)\tens C(k) \quad
\\
\rTTo^{1^{\tens a}\tens\alpha_{p_{a+1},\dots,p_{a+y}}\tens1^{\tens(c+k+1)}}
\\
\hskip\multlinegap M(p_1)\tdt M(p_a)\tens M(p_{a+1}+\dots+p_{a+y}) \hfill
\\
\tens M(p_{a+y+1})\tdt M(p_n)\tens C(l_1)\tdt C(l_k)\tens C(k) \bigr].
\label{eq-T-523}
\end{multline}
Indexing sets of the two sums are in bijection.
Given \((n^1,\dots,n^k,s,x,y,z)\) define
\begin{gather}
a =n^1+\dots+n^{s-1}+x, \qquad c =z+n^{s+1}+\dots+n^k, \notag
\\
l_s =x+1+z, \qquad l_j =n^j \quad \text{if} \quad j\ne s, \quad 1\le j\le k.
\label{eq-lsx1z-ljnj}
\end{gather}
Given \((a,y,c,l_1,\dots,l_k)\) find $s$ and $x$ from \(a=l_1+\dots+l_{s-1}+x\), \(0\le x<l_s\); define \(z=l_s-x-1\), \(n^s=x+y+z\), \(n^j=l_j\) if $j\ne s$, $1\le j\le k$.
This bijection identifies targets of maps \eqref{eq-L-522} and \eqref{eq-T-523}.
Moreover, these two maps are equal due to equation~\eqref{dia-coassociativity-components} written for parameters~\eqref{eq-lsx1z-ljnj} with \(i_p^j=y\) if $j=s$, $p=x+1$ and \(i_p^j=1\) otherwise.
In fact, postcompose \eqref{dia-coassociativity-components} with the tensor product of identity maps and counits \(\eps:C(1)\to\1\) applied to factors \(C(i_p^j)\) with \((j,p)\ne(s,x+1)\).
This gives an identity which allows to conclude the required equality.

Let us compute $d^2=(S_1+S_2+S_3)^2$ for \(M\odot^\theta C\).
We have
\begin{multline*}
S_1^2 =1^{\tens k}\tens\bigl[\Delta_{T(0,k,0)}\cdot(1\tens\bdelt_0)\bigr] -1^{\tens k}\tens\sum_{x+1+z=k}\Delta_{T(x,1,z)}\cdot(\bdelt_0\tens1):
\\
\hfill M(n_1)\tdt M(n_k)\tens C(k) \to M(n_1)\tdt M(n_k)\tens C(k), \quad
\\
\hskip\multlinegap S_1S_2 +S_2S_1 =0, \hfill
\\
\hskip\multlinegap S_2^2 =-\sum_{i=0}^{k-1}1^{\tens i}\tens(1_M\tens m^\co_0)\alpha\tens1^{\tens(k-i)}: \hfill
\\
M(n_1)\tdt M(n_k)\tens C(k) \to M(n_1)\tdt M(n_k)\tens C(k),
\end{multline*}
\vspace{-1.4em}
\begin{multline*}
S_2S_3 =-\sum_{x+y+z=k} \sum_{0\le i\le k}^{i\ne x+y} \bigl[ M(n_1)\tdt M(n_k)\tens C(k) \rTTo^{1^{\tens k}\tens\Delta_{T(x,y,z)}}
\\
M(n_1)\tdt M(n_k)\tens C(y)\tens C(x+1+z) \rTTo^{1^{\tens k}\tens\theta\tens1} M(n_1)\tdt M(n_k)\tens\co(y)\tens C(x+1+z) \rto\sim
\\
M(n_1)\tdt M(n_x)\tens M(n_{x+1})\tdt M(n_{x+y})\tens\co(y) \tens M(n_{x+y+1})\tdt M(n_k)\tens C(x+1+z)
\\
\rTTo^{1^{\tens i}\tens d\tens1^{\tens(k-i+1)}}
\\
M(n_1)\tdt M(n_x)\tens M(n_{x+1})\tdt M(n_{x+y})\tens\co(y) \tens M(n_{x+y+1})\tdt M(n_k)\tens C(x+1+z)
\\
\rTTo^{1^{\tens x}\tens\alpha_{n_{x+1},\dots,n_{x+y}}\tens1^{\tens(z+1)}}
\\
M(n_1)\tdt M(n_x)\tens M(n_{x+1}+\dots+n_{x+y})\tens M(n_{x+y+1})\tdt M(n_k)\tens C(x+1+z) \bigr]
\end{multline*}
\vspace{-1.4em}
\begin{multline*}
S_3S_2 =\sum_{x+y+z=k} \sum_{j=0}^{x+z} \bigl[ M(n_1)\tdt M(n_k)\tens C(k) \rTTo^{1^{\tens k}\tens\Delta_{T(x,y,z)}}
\\
M(n_1)\tdt M(n_k)\tens C(y)\tens C(x+1+z) \rTTo^{1^{\tens k}\tens\theta\tens1} M(n_1)\tdt M(n_k)\tens\co(y)\tens C(x+1+z) \rto\sim
\\
M(n_1)\tdt M(n_x)\tens M(n_{x+1})\tdt M(n_{x+y})\tens\co(y) \tens M(n_{x+y+1})\tdt M(n_k)\tens C(x+1+z)
\\
\rTTo^{1^{\tens x}\tens\alpha_{n_{x+1},\dots,n_{x+y}}\tens1^{\tens(z+1)}}
\\
M(n_1)\tdt M(n_x)\tens M(n_{x+1}+\dots+n_{x+y})\tens M(n_{x+y+1})\tdt M(n_k)\tens C(x+1+z)
\\
\rTTo^{1^{\tens j}\tens d\tens1^{\tens(x+z-j+1)}}
\\
M(n_1)\tdt M(n_x)\tens M(n_{x+1}+\dots+n_{x+y})\tens M(n_{x+y+1})\tdt M(n_k)\tens C(x+1+z) \bigr]
\end{multline*}
The sum $S_3S_2$ cancels nearly entire against the sum $S_2S_3$.
Still one sort of summands survives from these two and it has the form of
\begin{multline*}
S_1S_3 +S_3S_1 =\sum_{x+y+z=k} \bigl[ M(n_1)\tdt M(n_k)\tens C(k) \rTTo^{1^{\tens k}\tens\Delta_{T(x,y,z)}}
\\
M(n_1)\tdt M(n_k)\tens C(y)\tens C(x+1+z) \rTTo^{1^{\tens k}\tens d\theta\tens1} M(n_1)\tdt M(n_k)\tens\co(y)\tens C(x+1+z) \rto\sim
\\
M(n_1)\tdt M(n_x)\tens M(n_{x+1})\tdt M(n_{x+y})\tens\co(y) \tens M(n_{x+y+1})\tdt M(n_k)\tens C(x+1+z)
\\
\rTTo^{1^{\tens x}\tens\alpha_{n_{x+1},\dots,n_{x+y}}\tens1^{\tens(z+1)}}
\\
M(n_1)\tdt M(n_x)\tens M(n_{x+1}+\dots+n_{x+y})\tens M(n_{x+y+1})\tdt M(n_k)\tens C(x+1+z) \bigr]
\end{multline*}
with the only distinction, namely, $d\theta$ is replaced with $\theta d$.

The square $S_3^2$ of the third summand of $d$ is split into three sums.
Some parameters of the first two of these are described graphically by trees on \eqref{eq-tangles-3-vertices} in that order.
The last one can be represented by tree \eqref{eq-tangle-3-vertices-(4)} with 3 vertices of (maximal) height 3.
\begin{multline}
S_3^2 =\sum_{v+w+u=k}^{x+y+z=u} \bigl[ M(n_1)\tdt M(n_k)\tens C(k) \rTTo^{1^{\tens k}\tens\Delta_{T(v,w,u)}}
\\
M(n_1)\tdt M(n_k)\tens C(w)\tens C(v+1+u)
\\
\rTTo^{1^{\tens(k+1)}\tens\Delta_{T(v+1+x,y,z)}} M(n_1)\tdt M(n_k)\tens C(w)\tens C(y)\tens C(v+1+x+1+z)
\\
\rTTo^{1^{\tens k}\tens\theta\tens\theta\tens1} M(n_1)\tdt M(n_k)\tens\co(w)\tens\co(y)\tens C(v+1+x+1+z) \rto\sim
\\
M(n_1)\tdt M(n_v)\tens M(n_{v+1})\tdt M(n_{v+w})\tens\co(w) \tens M(n_{v+w+1})\tdt M(n_{v+w+x})
\\
\tens M(n_{v+w+x+1})\tdt M(n_{k-z})\tens\co(y)\tens M(n_{k-z+1})\tdt M(n_k)\tens C(v+1+x+1+z)
\\
\rTTo^{1^{\tens v}\tens\alpha_{n_{v+1},\dots,n_{v+w}}\tens1^{\tens x}\tens\alpha_{n_{v+w+x+1},\dots,n_{k-z}}\tens1^{\tens(z+1)}}
\\
\hskip\multlinegap M(n_1)\tdt M(n_v)\tens M(n_{v+1}+\dots+n_{v+w})\tens M(n_{v+w+1})\tdt M(n_{v+w+x}) \hfill
\\
\tens M(n_{v+w+x+1}+\dots+n_{k-z})\tens M(n_{k-z+1})\tdt M(n_k)\tens C(v+1+x+1+z) \bigr]
\label{eq-d20-510}
\end{multline}
\vspace{-1.4em}
\begin{multline}
+\sum_{v+w+x=a}^{a+y+z=k} \bigl[ M(n_1)\tdt M(n_k)\tens C(k) \rTTo^{1^{\tens k}\tens\Delta_{T(a,y,z)}} M(n_1)\tdt M(n_k)\tens C(y)\tens C(a+1+z)
\\
\rTTo^{1^{\tens(k+1)}\tens\Delta_{T(v,w,x+1+z)}} M(n_1)\tdt M(n_k)\tens C(y)\tens C(w)\tens C(v+1+x+1+z)
\\
\rTTo^{1^{\tens k}\tens\theta\tens\theta\tens1} M(n_1)\tdt M(n_k)\tens\co(y)\tens\co(w)\tens C(v+1+x+1+z) \rto\sim
\\
M(n_1)\tdt M(n_v)\tens M(n_{v+1})\tdt M(n_{v+w})\tens\co(w) \tens M(n_{v+w+1})\tdt M(n_a)
\\
\tens M(n_{a+1})\tdt M(n_{a+y})\tens\co(y) \tens M(n_{a+y+1})\tdt M(n_k)\tens C(v+1+x+1+z)
\\
\rTTo^{1^{\tens v}\tens\alpha_{n_{v+1},\dots,n_{v+w}}\tens1^{\tens x}\tens\alpha_{n_{a+1},\dots,n_{a+y}}\tens1^{\tens(z+1)}}
\\
\hskip\multlinegap M(n_1)\tdt M(n_v)\tens M(n_{v+1}+\dots+n_{v+w})\tens M(n_{v+w+1})\tdt M(n_a) \hfill
\\
\tens M(n_{a+1}+\dots+n_{a+y})\tens M(n_{a+y+1})\tdt M(n_k)\tens C(v+1+x+1+z) \bigr]
\label{eq-d20-509}
\end{multline}
\vspace{-1.4em}
\begin{multline}
+\sum_{\substack{v+w=p\\ y+z=q}}^{p+x+q=k} \bigl[ M(n_1)\tdt M(n_k)\tens C(k) \rTTo^{1^{\tens k}\tens\Delta_{T(p,x,q)}} M(n_1)\tdt M(n_k)\tens C(x)\tens C(p+1+q)
\\
\rTTo^{1^{\tens(k+1)}\tens\Delta_{T(v,w+1+y,z)}} M(n_1)\tdt M(n_k)\tens C(x)\tens C(w+1+y)\tens C(v+1+z)
\\
\rTTo^{1^{\tens k}\tens\theta\tens\theta\tens1} M(n_1)\tdt M(n_k)\tens\co(x)\tens\co(w+1+y)\tens C(v+1+z) \rto\sim
\\
M(n_1)\tdt M(n_v)\tens M(n_{v+1})\tdt M(n_p)\tens M(n_{p+1})\tdt M(n_{p+x})\tens\co(x)\tens
\\
M(n_{p+x+1})\tdt M(n_{k-z})\tens\co(w+1+y)\tens M(n_{k-z+1})\tdt M(n_k)\tens C(v+1+z)
\\
\rTTo^{1^{\tens p}\tens\alpha_{n_{p+1},\dots,n_{p+x}}\tens1^{\tens(q+1)}}
\\
\hskip\multlinegap M(n_1)\tdt M(n_v)\tens M(n_{v+1})\tdt M(n_p)\tens M(n_{p+1}+\dots+n_{p+x})\tens M(n_{p+x+1}) \hfill
\\
\hfill \tdt M(n_{k-z})\tens\co(w+1+y)\tens M(n_{k-z+1})\tdt M(n_k)\tens C(v+1+z) \quad
\\
\rTTo^{1^{\tens v}\tens\alpha_{n_{v+1},\dots,n_p,n_{p+1}+\dots+n_{p+x},n_{p+x+1},\dots,n_{k-z}}\tens1^{\tens(z+1)}}
\\
M(n_1)\tdt M(n_v)\tens M(n_{v+1}+\dots+n_{k-z})\tens M(n_{k-z+1})\tdt M(n_k)\tens C(v+1+z) \bigr].
\label{eq-d20-511}
\end{multline}
Sums \eqref{eq-d20-510} and \eqref{eq-d20-509} cancel each other due to equation~\eqref{dia-cooperad-3-OOOOOOO}.
The minus sign here pops out from the relation \(c(\theta\tens\theta)=-(\theta\tens\theta)c\) for the symmetry $c=(12)$.
Sum~\eqref{eq-d20-511} can be transformed thanks to equations \eqref{dia-cooperad-4-OOOOO} and \eqref{eq-MMOO-MMO-M}.
So we obtain
\begin{multline*}
d^2 =S_1^2 +S_2^2 +\sum_{v+s+z=k} \bigl[ M(n_1)\tdt M(n_k)\tens C(k) \rTTo^{1^{\tens k}\tens\Delta_{T(v,s,z)}}
\\
M(n_1)\tdt M(n_k)\tens C(s)\tens C(v+1+z)
\\
\rTTo^{1^{\tens k}\tens(d\theta+\theta d)\tens1} M(n_1)\tdt M(n_k)\tens\co(s)\tens C(v+1+z) \rto\sim
\\
M(n_1)\tdt M(n_v)\tens M(n_{v+1})\tdt M(n_{v+s})\tens\co(s) \tens M(n_{v+s+1})\tdt M(n_k)\tens C(v+1+z)
\\
\rTTo^{1^{\tens v}\tens\alpha_{n_{v+1},\dots,n_{v+s}}\tens1^{\tens(z+1)}}
\\
M(n_1)\tdt M(n_v)\tens M(n_{v+1}+\dots+n_{v+s})\tens M(n_{v+s+1})\tdt M(n_k)\tens C(v+1+z) \bigr]
\\
\hskip\multlinegap +\sum_{\substack{v+w=p\\ y+z=q}}^{p+x+q=k} \bigl[ M(n_1)\tdt M(n_k)\tens C(k) \rTTo^{1^{\tens k}\tens\Delta_{T(v,w+x+y,z)}} \hfill
\\
M(n_1)\tdt M(n_k)\tens C(w+x+y)\tens C(v+1+z) \rTTo^{1^{\tens k}\tens\Delta_{T(w,x,y)}\tens1}
\\
M(n_1)\tdt M(n_k)\tens C(x)\tens C(w+1+y)\tens C(v+1+z)
\rTTo^{1^{\tens k}\tens\theta\tens\theta\tens1}
\\
M(n_1)\tdt M(n_k)\tens\co(x)\tens\co(w+1+y)\tens C(v+1+z) \rTTo^{1^{\tens k}\tens\mu_{T(w,x,y)}\tens1}
\\
M(n_1)\tdt M(n_k)\tens\co(w+x+y)\tens C(v+1+z) \rto\sim
\\
\hskip\multlinegap M(n_1)\tdt M(n_v)\tens M(n_{v+1})\tdt M(n_{k-z})\tens\co(w+x+y) \hfill
\\
\hfill \tens M(n_{k-z+1})\tdt M(n_k)\tens C(v+1+z) \quad
\\
\rTTo^{1^{\tens v}\tens\alpha_{n_{v+1},\dots,n_{k-z}}\tens1^{\tens(z+1)}}
\\
M(n_1)\tdt M(n_v)\tens M(n_{v+1}+\dots+n_{k-z})\tens M(n_{k-z+1})\tdt M(n_k)\tens C(v+1+z) \bigr].
\end{multline*}
Transform this expression applying equation~\eqref{eq-CCO-COO-CCCOOO} and denoting $w+x+y$ by $s$ in the last sum.
Only summands with $s=1$ survive:
\begin{multline*}
d^2 =\bigl[ M(n_1)\tdt M(n_k)\tens C(k) \rTTo^{1^{\tens k}\tens[\Delta_{T(0,k,0)}\cdot(1\tens\bdelt_0) -\sum_{x+1+z=k}\Delta_{T(x,1,z)}\cdot(\bdelt_0\tens1)]}
\\
\hfill M(n_1)\tdt M(n_k)\tens C(k) \bigr] \quad
\\
-\bigl[ M(n_1)\tdt M(n_k)\tens C(k) \rTTo^{\sum_{i=0}^{k-1}1^{\tens i}\tens(1_M\tens m^\co_0)\alpha\tens1^{\tens(k-i)}} M(n_1)\tdt M(n_k)\tens C(k) \bigr]
\\
+\sum_{v+1+z=k} \bigl[ M(n_1)\tdt M(n_k)\tens C(k) \rTTo^{1^{\tens k}\tens\Delta_{T(v,1,z)}} M(n_1)\tdt M(n_k)\tens C(1)\tens C(k)
\\
\rTTo^{1^{\tens k}\tens(\eps m_0+\bdelt_0\eta)\tens1} M(n_1)\tdt M(n_k)\tens\co(1)\tens C(k) \rto\sim
\\
M(n_1)\tdt M(n_v)\tens M(n_{v+1})\tens\co(1)\tens M(n_{v+2})\tdt M(n_k)\tens C(k)
\\
\rTTo^{1^{\tens v}\tens\alpha_{n_{v+1}}\tens1^{\tens(z+1)}} M(n_1)\tdt M(n_v)\tens M(n_{v+1})\tens M(n_{v+2})\tdt M(n_k)\tens C(k) \bigr]
\\
=\bigl[ M(n_1)\tdt M(n_k)\tens C(k) \rTTo^{1^{\tens k}\tens[\Delta_{T(0,k,0)}\cdot(1\tens\bdelt_0)]} M(n_1)\tdt M(n_k)\tens C(k) \bigr].
\end{multline*}
This is precisely the value required in \defref{def-curved-coalgebra-over-curved-augmented-cooperad}.

Clearly, $-\odot^\theta C$ takes a morphism $f$ to the morphism $f\odot1$, therefore, it is a functor.
\end{proof}

Let C be a curved augmented cooperad with \(\bar C\in\cw^\NN_{++}\).
Denote by \(\comodplur C\) the\index{TTsymb}{comod+C@$\comodplur C$} category of curved right $C$\n-comodules \(N\in\cw^\mm\) such that \(N(0)_0=0\), that is, \(N\in\cw^\mm_+\) (connected comodules).
Similarly, denote by \(\modplur\co\) the\index{TTsymb}{mod+O@$\modplur\co$} category of curved right $\co$\n-modules \(M\in\cw^\mm_+\).
Call them\index{TTindex}{connected module over an operad} \emph{connected}.
Due to \remref{rem-number-trees-finite} \(N\odot C\simeq N\bar\odot C\).
There is a functor \(\cw^\mm_+\to\cw^\mm_+\), \(X\mapsto X\odot C\).
Since \(C(0)_0=0=C(1)_0\), it is a comonad and \(\comodplur C\) is equivalent to the category of coalgebras over the comonad \(-\odot C:\cw^\mm_+\to\cw^\mm_+\).
The category of right $\co$\n-modules is equivalent to the category of algebras over the monad \(-\odot\co:\cw^\mm\to\cw^\mm\), which under assumption \(\co(0)_0=0\) restricts to a monad \(\cw^\mm_+\to\cw^\mm_+\).

\begin{proposition}
Let C be a curved augmented cooperad such that \(\bar C\in\cw^\NN_{++}\).
A twisting cochain $\theta:C\to\co$ determines a functor \(\comodplur C\to\modplur\co\), \(N\mapsto N\odot_\theta\co\), which is the free right $\co$\n-module \(N\odot\co\) equipped with the derivation
\begin{multline*}
d_{N\odot_\theta\co} =S_1 +S_2 -S_3,
\\
\hskip\multlinegap S_1 =\bigl[ N(n_1)\tdt N(n_k)\tens\co(k) \rTTo^{1^{\tens k}\tens d_\co} N(n_1)\tdt N(n_k)\tens\co(k) \bigr], \hfill
\\
\hskip\multlinegap S_2 =\bigl[ N(n_1)\tdt N(n_k)\tens\co(k) \rTTo^{\sum_{i=0}^{k-1}1^{\tens i}\tens d_N\tens1^{\tens(k-i)}} N(n_1)\tdt N(n_k)\tens\co(k) \bigr], \hfill
\\
\hskip\multlinegap S_3 =\sum_{x+1+z=k,\,y\in\NN}^{a_1+\dots+a_y=n_{x+1}} \bigl[ N(n_1)\tdt N(n_k)\tens\co(k) \rTTo^{1^{\tens x}\tens\delta_{a_1,\dots,a_y}\tens1^{\tens(z+1)}} \hfill
\\
N(n_1)\tdt N(n_x)\tens N(a_1)\tdt N(a_y)\tens C(y)\tens N(n_{x+2})\tdt N(n_k)\tens\co(k) \rto\sim
\\
N(n_1)\tdt N(n_x)\tens N(a_1)\tdt N(a_y)\tens N(n_{x+2})\tdt N(n_k)\tens C(y)\tens\co(k) \rTTo^{1^{\tens(x+y+z)}\tens\theta\tens1}
\\
N(n_1)\tdt N(n_x)\tens N(a_1)\tdt N(a_y)\tens N(n_{x+2})\tdt N(n_k)\tens\co(y)\tens\co(x+1+z)
\\
\rTTo^{1^{\tens(x+y+z)}\tens\mu_{T(x,y,z)}}
\\
N(n_1)\tdt N(n_x)\tens N(a_1)\tdt N(a_y)\tens N(n_{x+2})\tdt N(n_k)\tens\co(x+y+z) \bigr].
\end{multline*}
\end{proposition}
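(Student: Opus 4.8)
The plan is to mirror the proof of \propref{pro-twisting-cochain-functor}, exchanging the roles of operad and cooperad throughout; the minus sign in front of $S_3$ and the various sign conventions for shifted (co)operations are the dual counterparts of the signs appearing there. First I would record the standing setup: since $\bar C\in\cw^\NN_{++}$ and $C$ is connected, \remref{rem-number-trees-finite} gives $N\odot C\simeq N\bar\odot C$, so the coaction $\delta$ of a connected comodule $N$ lands in the coproduct and $-\odot\co$ restricts to a monad on $\cw^\mm_+$; thus $N\odot\co$ is a genuine free right $\co$\n-module and it only remains to equip it with a curved-module structure. The action is $\alpha=1_N\odot m$. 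The first step is to check that $\alpha$ is a chain map, i.e.\ that the square in the definition of a curved module commutes; because $N\odot\co$ is free this reduces, dually to equation~\eqref{eq-1-Delta}, to a single identity which follows from the associativity equations \eqref{dia-operad-3-OOOOOOOOOO}, \eqref{dia-operad-4-OOOOOOO} of $\co$ together with the comodule coassociativity \eqref{dia-N-NNC-NNNNC}. This simultaneously shows that $d_{N\odot_\theta\co}$ is an $(\id;\id_\co,d_\co)$\n-derivation.

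The core of the argument is the verification of \eqref{eq-(1m)alpha-d2}, namely $d^2=-(1_M\tens m_0^\co)\alpha$ for $d=S_1+S_2-S_3$. I would expand $d^2$ into the nine products $S_iS_j$ and treat them as in the dual proof. The term $S_1^2=1^{\tens k}\tens d_\co^2$ is rewritten by \eqref{eq-d2-(m01)m-(1m0)m}; the mixed terms satisfy $S_1S_2+S_2S_1=0$; the term $S_2^2$ collects the squares $d_N^2$ on the comodule factors, which by the curved-comodule condition \eqref{eq-dN2-delta-n} produce $\bdelt_0$\n-terms; and $S_2S_3+S_3S_2$ cancel up to one surviving family that combines with $S_1S_3+S_3S_1$ into the single combination $d_\co\theta+\theta d_\co$ sandwiched between $\Delta_{T(x,y,z)}$ and $\mu_{T(x,y,z)}$.

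The main obstacle is the square $S_3^2$, which I would split into three sums indexed by the two shapes of trees with three internal vertices, those of \eqref{eq-tangles-3-vertices} and \eqref{eq-tangle-3-vertices-(4)}. The two sums of the first shape cancel against one another by the operad associativity \eqref{dia-operad-3-OOOOOOOOOO} (the cancelling sign popping out of $c(\theta\tens\theta)=-(\theta\tens\theta)c$ for the symmetry $c=(12)$), while the sum of the second shape is reorganised using the operad associativity \eqref{dia-operad-4-OOOOOOO} and the comodule coassociativity \eqref{eq-NNNNNNNNNNNNNNNNNNNN}. Assembling all contributions leaves precisely the combination to which the twisting-cochain equation \eqref{eq-CCO-COO-CCCOOO} applies; only the summands with $y=1$ survive, and they collapse, together with the leftovers of $S_1^2$ and $S_2^2$, into $-(1_M\tens m_0^\co)\alpha$, which is exactly \eqref{eq-(1m)alpha-d2}. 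Functoriality is then immediate: a morphism $f$ of connected comodules is sent to $f\odot1$, which commutes with each of $S_1,S_2,S_3$ and hence with $d$, so $-\odot_\theta\co$ is a functor $\comodplur C\to\modplur\co$.
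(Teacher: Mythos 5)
Your proposal is correct and follows essentially the same route as the paper's own proof: the same reduction of the derivation/chain-map property to operad associativity via freeness of \(N\odot\co\), the same expansion of \(d^2\) into the \(S_iS_j\) products with the identical three-sum treatment of \(S_3^2\) indexed by the tree shapes \eqref{eq-tangles-3-vertices} and \eqref{eq-tangle-3-vertices-(4)}, and the same final application of the twisting-cochain equation \eqref{eq-CCO-COO-CCCOOO} together with \eqref{eq-d2-(m01)m-(1m0)m} and \eqref{eq-dN2-delta-n}. The only slip is notational: the surviving mixed terms carry \(d_C\theta+\theta d_\co\) (the coderivation of \(C\) precomposing \(\theta\), the derivation of \(\co\) postcomposing it), not \(d_\co\theta+\theta d_\co\).
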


\begin{proof}
This proposition is ``ideologically dual'' to \propref{pro-twisting-cochain-functor}.
The full duality is lacking, however, at the level of formulas it is exhibited.
The conditions on $N$ and $C$ imply that \(N\bar\odot C=N\odot C\) by \remref{rem-number-trees-finite}.
Therefore the sum $S_3$ makes sense.

Let us prove that \(1\odot m:N\odot\co\odot\co\to N\odot\co\) is a chain map.
That is, the equation
\begin{diagram}
N\odot\co\odot\co &\rTTo^{1\odot1\odot d_\co+\sum1_{N\odot\co}^{\tens-}\tens d_{N\odot\co}\tens1_{N\odot\co}^{\tens-}\tens1_\co} &N\odot\co\odot\co
\\
\dTTo<{1\odot m} &= &\dTTo>{1\odot m}
\\
N\odot\co &\rTTo^{d_{N\odot\co}} &N\odot\co
\end{diagram}
holds.
Clearly, it holds if we replace $d_{N\odot\co}$ with $S_1+S_2$ -- the sum of differentials on all places.
Let us show that it holds (without the summand \(1\odot1\odot d_\co\)) for the third summand $S_3$ in place of $d_{N\odot\co}$, that is,
\begin{equation*}
\sum_{s=1}^k \bigl[ 1_{N\odot\co}^{\tens(s-1)}\tens S_3\tens1_{N\odot\co}^{\tens(k-s)}\tens1 \bigr] \cdot (1\odot m) =(1\odot m) \cdot S_3: N\odot\co\odot\co \to N\odot\co.
\end{equation*}
It is easy to see that this equation reduces to associativity \eqref{dia-operad-4-OOOOOOO} of multiplication in operad $\co$.
Namely, two ways to write down the multiplication
\begin{multline*}
m_t: \co(y)\tens\co(n_1)\tdt\co(n_{s-1})\tens\co(x+1+z)\tens\co(n_{s+1})\tdt\co(n_k)\tens\co(k) \\
\to \co(n_1+\dots+n_{s-1}+x+y+z+n_{s+1}+\dots+n_k)
\end{multline*}
coincide.
Both correspond to the tree
\[ t=
\begin{tangles}{rcl}
&\object{\sss y} & \\
&\object{\sss x}\hstep\nw1\node\ne1\hstep\object{\sss z} & \\
\nw2\nw1\nodel{n_1}\n &\nw2\nw1\n\ne1\ne2 &\n\ne1\noder{n_k}\ne2 \\
& \nw3\step[1.5]\nodel{k}\hstep\n\Step\ne3 &
\end{tangles}
\]
and the identity is obtained from \eqref{dia-operad-4-OOOOOOO} by inserting \(n_1+\dots+n_{s-1}+x+z+n_{s+1}+\dots+n_k\) units \(1_\co\in\co(1)\).

Let us compute $d^2=(S_1+S_2-S_3)^2$ for \(N\odot_\theta\co\).
We have
\begin{multline*}
S_1^2 =1^{\tens k}\tens\sum_{x+1+z=k}(m_0\tens1)m_{T(x,1,z)} -1^{\tens k}\tens
\bigl[(1\tens m_0)m_{T(0,k,0)}\bigr]:
\\
\hfill N(n_1)\tdt N(n_k)\tens\co(k) \to N(n_1)\tdt N(n_k)\tens\co(k), \quad
\\
\hskip\multlinegap S_1S_2 +S_2S_1 =0, \hfill
\\
S_2^2 =\sum_{i=0}^{k-1}1^{\tens i}\tens\delta_1(1\tens\bdelt_0)\tens1^{\tens(k-i)}: N(n_1)\tdt N(n_k)\tens\co(k) \to N(n_1)\tdt N(n_k)\tens\co(k),
\end{multline*}
\vspace{-1.4em}
\begin{multline*}
-S_2S_3 -S_3S_2 =-\sum_{x+1+z=k,\,y\in\NN}^{a_1+\dots+a_y=n_{x+1}} \bigl[ N(n_1)\tdt N(n_k)\tens\co(k) \rTTo^{1^{\tens x}\tens\delta_{a_1,\dots,a_y}\tens1^{\tens(z+1)}}
\\
N(n_1)\tdt N(n_x)\tens N(a_1)\tdt N(a_y)\tens C(y)\tens N(n_{x+2})\tdt N(n_k)\tens\co(k) \rto\sim
\\
N(n_1)\tdt N(n_x)\tens N(a_1)\tdt N(a_y)\tens N(n_{x+2})\tdt N(n_k)\tens C(y)\tens\co(k)
\\
\rTTo^{1^{\tens(x+y+z)}\tens d\theta\tens1}
\\
N(n_1)\tdt N(n_x)\tens N(a_1)\tdt N(a_y)\tens N(n_{x+2})\tdt N(n_k)\tens\co(y)\tens\co(x+1+z)
\\
\rTTo^{1^{\tens(x+y+z)}\tens\mu_{T(x,y,z)}}
\\
N(n_1)\tdt N(n_x)\tens N(a_1)\tdt N(a_y)\tens N(n_{x+2})\tdt N(n_k)\tens\co(x+y+z) \bigr]
\end{multline*}
Contribution $-S_1S_3-S_3S_1$ differs from the above expression only in one map, namely, $d\theta$ is replaced with $\theta d$.

The square $S_3^2$ of the third summand of $d$ is split into three sums.
Some parameters of the first two of these are described graphically by trees on \eqref{eq-tangles-3-vertices}.
The remaining sum could be represented by tree \eqref{eq-tangle-3-vertices-(4)} with 3 vertices of (maximal) height~3.
The first two cancel each other due to identity
\begin{multline*}
\bigl[ C(w)\tens C(y)\tens\co(v+1+x+1+z) \rTTo^{1\tens\theta\tens1} C(w)\tens\co(y)\tens\co(v+1+x+1+z) \rTTo^{1\tens\underset{v+1+x,y,z}\bull\;}
\\
C(w)\tens\co(v+1+x+y+z) \rTTo^{\theta\tens1} \co(w)\tens\co(v+1+x+y+z) \rTTo^{\underset{v,w,x+y+z}\bull\;} \co(v+w+x+y+z) \bigr]
\\
\hskip\multlinegap =-\bigl[ C(w)\tens C(y)\tens\co(v+1+x+1+z) \rTTo^{\theta\tens1\tens1} \co(w)\tens C(y)\tens\co(v+1+x+1+z) \hfill
\\
\rTTo^{(12)}_\sim C(y)\tens\co(w)\tens\co(v+1+x+1+z) \rTTo^{1\tens\underset{v,w,x+1+z}\bull\;} C(y)\tens\co(v+w+x+1+z)
\\
\rTTo^{\theta\tens1} \co(y)\tens\co(v+w+x+1+z) \rTTo^{\underset{v+w+x,y,z}\bull\;} \co(v+w+x+y+z) \bigr],
\end{multline*}
see equation~\eqref{dia-operad-3-OOOOOOOOOO}.
Thus,
\begin{multline*}
S_3^2 =-\sum_{v+1+z=k,\,p\in\NN}^{a_1+\dots+a_p=n_{v+1}} \sum_{w+1+y=p,\,x\in\NN}^{c_1+\dots+c_x=a_{w+1}} \bigl[ N(n_1)\tdt N(n_k)\tens\co(k) \rTTo^{1^{\tens v}\tens\delta_{a_1,\dots,a_p}\tens1^{\tens(z+1)}} \hfill
\\
N(n_1)\tdt N(n_v)\tens N(a_1)\tdt N(a_p)\tens C(p)\tens N(n_{v+2})\tdt N(n_k)\tens\co(k)
\\
\rTTo^{1^{\tens(v+w)}\tens\delta_{c_1,\dots,c_x}\tens1^{\tens(y+1+z+1)}}
\\
\hskip\multlinegap N(n_1)\tdt N(n_v)\tens N(a_1)\tdt N(a_w)\tens N(c_1)\tdt N(c_x)\tens C(x) \hfill
\\
\hfill \tens N(a_{w+2})\tdt N(a_p)\tens C(p)\tens N(n_{v+2})\tdt N(n_k)\tens\co(k) \rto\sim \quad
\\
\hskip\multlinegap N(n_1)\tdt N(n_v)\tens N(a_1)\tdt N(a_w)\tens N(c_1)\tdt N(c_x) \hfill
\\
\hfill \tens N(a_{w+2})\tdt N(a_p)\tens N(n_{v+2})\tdt N(n_k)\tens C(x)\tens C(p)\tens\co(k) \quad
\\
\rTTo^{1^{\tens(v+w+x+y+z)}\tens\theta\tens\theta\tens1}
\\
\hskip\multlinegap N(n_1)\tdt N(n_v)\tens N(a_1)\tdt N(a_w)\tens N(c_1)\tdt N(c_x) \hfill
\\
\hfill \tens N(a_{w+2})\tdt N(a_p)\tens N(n_{v+2})\tdt N(n_k)\tens\co(x)\tens\co(p)\tens\co(k) \quad
\\
\rTTo^{1^{\tens(v+w+x+y+z+1)}\tens\mu_{T(v,p,z)}}
\\
\hskip\multlinegap N(n_1)\tdt N(n_v)\tens N(a_1)\tdt N(a_w)\tens N(c_1)\tdt N(c_x) \hfill
\\
\hfill \tens N(a_{w+2})\tdt N(a_p)\tens N(n_{v+2})\tdt N(n_k)\tens\co(x)\tens\co(v+p+z) \quad
\\
\rTTo^{1^{\tens(v+w+x+y+z)}\tens\mu_{T(v+w,x,y+z)}}
\\
\hskip\multlinegap N(n_1)\tdt N(n_v)\tens N(a_1)\tdt N(a_w)\tens N(c_1)\tdt N(c_x) \hfill
\\
\tens N(a_{w+2})\tdt N(a_p)\tens N(n_{v+2})\tdt N(n_k)\tens\co(v+w+x+y+z) \bigr].
\end{multline*}
This expression is transformed thanks to equations \eqref{eq-NNNNNNNNNNNNNNNNNNNN} and \eqref{dia-operad-4-OOOOOOO}.
Combining all previous formulae and using \eqref{eq-CCO-COO-CCCOOO} we get
\begin{multline*}
d_{N\odot\co}^2 =\Bigl[ 1^{\tens k}\tens\sum_{x+1+z=k}(m_0\tens1)m_{T(x,1,z)} -1^{\tens k}\tens[(1\tens m_0)m_{T(0,k,0)}]
\\
+\sum_{i=0}^{k-1}1^{\tens i}\tens\delta_1(1\tens\bdelt_0)\tens1^{\tens(k-i)}: N(n_1)\tdt N(n_k)\tens\co(k) \to N(n_1)\tdt N(n_k)\tens\co(k) \Bigr]
\\
\hskip\multlinegap -\sum_{x+1+z=k} \bigl[ N(n_1)\tdt N(n_k)\tens\co(k) \rTTo^{1^{\tens x}\tens\delta_1\tens1^{\tens(z+1)}} \hfill
\\
N(n_1)\tdt N(n_x)\tens N(n_{x+1})\tens C(1)\tens N(n_{x+2})\tdt N(n_k)\tens\co(k) \rto\sim
\\
N(n_1)\tdt N(n_x)\tens N(n_{x+1})\tens N(n_{x+2})\tdt N(n_k)\tens C(1)\tens\co(k) \rTTo^{1^{\tens k}\tens(\eps m_0+\bdelt_0\eta)\tens1}
\\
N(n_1)\tdt N(n_x)\tens N(n_{x+1})\tens N(n_{x+2})\tdt N(n_k)\tens\co(1)\tens\co(x+1+z)
\\
\hfill \rTTo^{1^{\tens k}\tens\mu_{T(x,1,z)}} N(n_1)\tdt N(n_k)\tens\co(k) \bigr] \quad
\\
=-\bigl[ 1^{\tens k}\tens[(1\tens m_0)m_{T(0,k,0)}]:
N(n_1)\tdt N(n_k)\tens\co(k) \to N(n_1)\tdt N(n_k)\tens\co(k) \bigr].
\end{multline*}
This is the second equation telling that \(N\odot_\theta\co\) is a curved $\co$\n-module.

Clearly, $-\odot_\theta\co$ takes a morphism $f$ to the morphism $f\odot1$, therefore, it is a functor.
\end{proof}

Ignoring (co)derivations we have by applying twice \lemref{lem-forgetful-functor-has-right-adjoint-T} a natural bijection
\[ \modplurnd\co(N\odot\co,M) \simeq \cw^\mm_+(N,M) \simeq \comodplurnd C(N,M\odot C),
\]
where $\modplurnd\co$ denotes plain $\co$\n-modules $M$ (not equipped with a derivation) such that \(M(0)_0=0\) and $\comodplurnd C$ denotes plain $C$\n-comodules $N$ (not equipped with a coderivation) such that \(N(0)_0=0\).
Thus, functors
\[ -\odot\co: \comodplurnd C \leftrightarrows \modplurnd\co: -\odot C
\]
are adjoint to each other.

\begin{proposition}
Let C be a curved augmented cooperad such that \(\bar C\in\cw^\NN_{++}\).
For any twisting cochain $\theta:C\to\co$ the functors
\[ -\odot_\theta\co: \comodplur C \leftrightarrows \modplur\co: -\odot^\theta C
\]
are adjoint to each other.
\end{proposition}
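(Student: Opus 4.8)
The plan is to upgrade the plain adjunction $-\odot\co\dashv-\odot C$ between $\comodplurnd C$ and $\modplurnd\co$, established just above by two applications of \lemref{lem-forgetful-functor-has-right-adjoint-T}, to the curved setting. Since $N\odot_\theta\co$ and $N\odot\co$ share the same underlying plain $\co$\n-module, and $M\odot^\theta C$ and $M\odot C$ share the same underlying plain $C$\n-comodule (see \propref{pro-twisting-cochain-functor} and its dual), the plain natural bijection
\[ \modplurnd\co(N\odot\co,M) \simeq \cw^\mm_+(N,M) \simeq \comodplurnd C(N,M\odot C) \]
is at our disposal. It therefore remains only to prove that this bijection restricts to a bijection between the \emph{curved} morphisms on each side, that is, between those plain morphisms that commute with the relevant (co)derivations. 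As the middle term $\cw^\mm_+(N,M)$ is common and naturality in $N$ and $M$ is inherited from the plain case, this restriction statement is all that is needed.

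Fix $g\in\cw^\mm_+(N,M)$ and let $\phi=\hat g:N\odot\co\to M$ and $\psi=\tilde g:N\to M\odot C$ be the corresponding plain $\co$\n-module and $C$\n-comodule morphisms. First I would show that each chain-map condition is controlled by a single map $N\to M$. Composing infinitesimal morphisms over $\DD$ exactly as in the remark containing \eqref{eq-frf-f-xi-gsg-g} shows that both $\phi d_M$ and $d_{N\odot_\theta\co}\phi$ are $(\phi;\id_\co,d_\co)$\n-derivations, so their difference is a $(\phi;\id_\co,0)$\n-derivation $N\odot\co\to M$; by the description of derivations out of a free module (the general form $r\colon L\odot\co\to N$ obtained in the Derivations section), such a derivation is determined by its restriction along the monad unit $\eta_\top\colon N\to N\odot\co$. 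Dually, by \remref{rem-r2-idN:idC:xi2-coderivation} the maps $d_N\psi$ and $\psi d_{M\odot^\theta C}$ are $(\psi;\id_C,d_C)$\n-coderivations, their difference is a $(\psi;\id_C,0)$\n-coderivation $N\to M\odot C$, determined by its corestriction along the comonad counit $\eps\colon M\odot C\to M$ via \eqref{eq-general-coderivation-rM-NC}. Hence $\phi$ is a curved morphism iff $\eta_\top\cdot(\phi d_M-d_{N\odot_\theta\co}\phi)=0$, and $\psi$ is a curved morphism iff $(d_N\psi-\psi d_{M\odot^\theta C})\cdot\eps=0$.

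The heart of the argument is then to verify that these two maps $N\to M$ are literally equal. Using $\eta_\top\cdot\phi=g$ and $\psi\cdot\eps=g$ together with the explicit formulas $d_{N\odot_\theta\co}=S_1+S_2-S_3$ and $d_{M\odot^\theta C}=S_1+S_2+S_3$ from \propref{pro-twisting-cochain-functor} and its module counterpart, I would compute both expressions. The $S_1$\n-contributions drop out (because $\eta d_\co=0$ on the module side and $d_C\eps=0$ on the comodule side), the $S_2$\n-contributions yield $d_N g$ against $g d_M$, and the $S_3$\n-contributions each produce a single term built from the coaction $\delta_N$, the twisting cochain $\theta$, and the action/multiplication. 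The opposite signs of $S_3$ in the two functors are exactly compensated by the reversal of direction in the adjunction, so both maps reduce to one and the same expression of the shape $g d_M-d_N g-(\theta\text{-twist})$. Consequently both chain-map conditions reduce to a single equation on $g$, the plain bijection restricts as desired, and adjointness follows formally with the triangle identities inherited from the plain adjunction.

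I expect the main obstacle to be the sign bookkeeping in this final comparison: in particular the Koszul sign incurred when passing $\theta$ past the symmetry (the identity $c(\theta\tens\theta)=-(\theta\tens\theta)c$ already exploited in \propref{pro-twisting-cochain-functor}) and the careful matching of the two $S_3$\n-terms through the unit $\eta_\top$ and the counit $\eps$. A secondary point to confirm is that the curvature equations \eqref{eq-dN2-delta-n} and \eqref{eq-(1m)alpha-d2} play no role beyond what is already encoded in the functors, so that the comparison is a purely degree\n-one statement; once the two maps $N\to M$ are identified, the equivalence of the two curvedness conditions, and hence the adjunction, is immediate.
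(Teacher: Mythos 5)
Your proposal is correct and follows essentially the same route as the paper: start from the plain adjunction, use that both sides of each chain-map condition are derivations (resp.\ coderivations) over the same morphism out of a free module (resp.\ into a cofree comodule), hence are determined by restriction along the unit (resp.\ corestriction along the counit), and then observe that the two restricted conditions on the single map $N\to M$ are the same equation. The only cosmetic difference is that you phrase the determination step via the \emph{difference} being a $(\phi;\id_\co,0)$\nobreakdash-derivation, while the paper compares the two sides directly; also, the sign subtlety $c(\theta\tens\theta)=-(\theta\tens\theta)c$ you anticipate never actually arises here, since only terms linear in $\theta$ appear in the restricted conditions.
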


\begin{proof}
A map \(f:N\to M\in\cw^\mm_+\) corresponds to a morphism of $\co$\n-modules \(g=\bigl(N\odot\co\rTTo^{f\odot1} M\odot\co\rto\alpha M\bigr)\) and to a morphism of $C$\n-comodules \(h=\bigl(N\rto\delta N\odot C\rTTo^{f\odot1} M\odot C\bigr)\).
To be a chain map $g$ has to satisfy equation
\begin{equation}
\bigl( N\odot_\theta\co \rto g M \rto d M \bigr) =\bigl( N\odot_\theta\co \rto d N\odot_\theta\co \rto g M \bigr).
\label{eq-NOMM-NONOM}
\end{equation}
Both sides of this equation are \((g;\id_\co,d_\co)\)\n-derivations and the source is a free $\co$\n-module.
Hence, both sides are determined by their restriction to $N$ and equation~\eqref{eq-NOMM-NONOM} is equivalent to
\[ \bigl( N =N\odot\1 \rMono^{1\odot\eta} N\odot\co \rto g M \rto d M \bigr) =\bigl( N =N\odot\1 \rMono^{1\odot\eta} N\odot_\theta\co \rto d N\odot_\theta\co \rto g M \bigr).
\]
Thus, $g$ is a chain map iff for all $n\ge0$
\begin{multline}
\bigl[ N(n) \rto f M(n) \rto d M(n) \bigr] =\bigl[ N(n) \rto d N(n) \rto f M(n) \bigr]
\\
\hskip\multlinegap -\sum^{y\in\NN}_{a_1+\dots+a_y=n} \bigl[ N(n) \rTTo^{\delta_{a_1,\dots,a_y}} N(a_1)\tdt N(a_y)\tens C(y) \hfill
\\
\rTTo^{f^{\tens y}\tens\theta} M(a_1)\tdt M(a_y)\tens\co(y) \rTTo^{\alpha_{a_1,\dots,a_y}} M(n) \bigr].
\label{eq-fd-df-delta-alpha}
\end{multline}
To be a chain map $h$ has to satisfy equation
\begin{equation}
\bigl( N \rto d N \rto h M\odot^\theta C \bigr) =\bigl( N \rto h M\odot^\theta C  \rto d M\odot^\theta C \bigr).
\label{eq-NNMC-NMCMC}
\end{equation}
Both sides of this equation are \((h;\id_C,d_C)\)\n-coderivations and the target is a cofree $C$\n-comodule.
Hence, both sides are determined by their projections to $M$ and equation~\eqref{eq-NNMC-NMCMC} is equivalent to
\[ \bigl( N \rto d N \rto h M\odot^\theta C \rTTo^{1\odot\eps} M\odot\1 =M \bigr) =\bigl( N \rto h M\odot^\theta C \rto d M\odot^\theta C \rTTo^{1\odot\eps} M\odot\1 =M \bigr).
\]
Thus, $h$ is a chain map iff for all $n\ge0$
\begin{multline*}
\bigl[ N(n) \rto d N(n) \rto f M(n) \bigr] =\bigl[ N(n) \rto f M(n) \rto d M(n) \bigr]
\\
\hskip\multlinegap +\sum^{k\in\NN}_{n_1+\dots+n_k=n} \bigl[ N(n) \rTTo^{\delta_{n_1,\dots,n_k}} N(n_1)\tdt N(n_k)\tens C(k) \hfill
\\
\rTTo^{f^{\tens k}\tens\theta} M(n_1)\tdt M(n_k)\tens\co(k) \rTTo^{\alpha_{n_1,\dots,n_k}} M(n) \bigr].
\end{multline*}
This condition coincides with \eqref{eq-fd-df-delta-alpha}, therefore, $g$ and $h$ are chain maps simultaneously.
\end{proof}

\backmatter
\providecommand{\bysame}{\leavevmode\hbox to3em{\hrulefill}\thinspace}

\Printindex{TTsymb}{Index of notation}
\Printindex{TTindex}{Index of terminology}

\end{document}